\documentclass[12pt,reqno]{amsart}
\usepackage{amsmath,amsthm,amsfonts,amssymb,amstext,amsxtra}
\usepackage{mathrsfs,color}
\usepackage{txfonts}
\usepackage[colorlinks=true,linkcolor=black,citecolor=blue,urlcolor=blue,
  pagebackref,hypertexnames=false]{hyperref}
\hypersetup{pdftitle={Weighted composition-differentiation operators on mixed-norm, derivative Hardy and F(p,q,s) spaces},
  pdfauthor={Jihua Sun, Junming Liu}}
\hypersetup{linktoc=all,bookmarksnumbered=true,bookmarksopen=true,bookmarksdepth=2}
\allowdisplaybreaks

\newtheorem{theorem}{Theorem}[section]
\newtheorem{proposition}{Proposition}[section]
\newtheorem{lemma}{Lemma}[section]
\newtheorem{corollary}[theorem]{Corollary}
\theoremstyle{definition}

\newtheorem{example}{Example}[section]
\theoremstyle{remark}

\numberwithin{equation}{section}
\title[Weighted composition--differentiation operators]
{Weighted composition--differentiation operators on\\
 mixed-norm, derivative Hardy and $F(p,q,s)$ spaces}
\subjclass[2020]{Primary 47B38; Secondary 30H10, 30H20}
\keywords{Weighted composition--differentiation operators, mixed-norm spaces,
 Hardy spaces, Carleson measures, dyadic covering conditions, compactness.}
\author{Jihua Sun, Junming Liu }
\address{Jihua Sun, School of Mathematics and Statistics, Guangdong University of Technology, Guangzhou, Guangdong, 510520, P.~R.~China}
\email{2112414014@mail2.gdut.edu.cn}

\address{Junming Liu, School of Mathematics and Statistics, Guangdong University of Technology, Guangzhou, Guangdong, 510520, P.~R.~China}
\email{jmliu@gdut.edu.cn}
\date{\today}

\begin{document}
\baselineskip=16pt
\begin{abstract}
We study weighted composition--differentiation operators on mixed-norm,
derivative Hardy and $F(p,q,s)$ spaces. Explicit dyadic criteria yield
boundedness, compactness and two-sided norm and essential-norm estimates,
including endpoint and quasi-Banach cases. We also obtain multiplier and
approximation results, Schatten criteria on weighted Dirichlet spaces, and
exact singular values for monomial weights and linear symbols.
\end{abstract}
\maketitle
\pagestyle{myheadings}
\baselineskip=16pt

\begingroup
\small
\pdfbookmark[1]{Contents}{contents}
\tableofcontents
\endgroup
\medskip

\section{Introduction}\label{sec:introduction}

\subsection{Notation and function spaces}\label{subsec:intro-spaces}

Let $\mathbb D=\{z\in\mathbb C:|z|<1\}$ and let $H(\mathbb D)$ be the
space of analytic functions on $\mathbb D$. We use normalized area
measure $dA=dx\,dy/\pi$ and normalized circle measure $dm=d\theta/(2\pi)$.
For $\phi\in H(\mathbb D)$ and an analytic self-map $\varphi$ of
$\mathbb D$, the operator considered throughout is
\[
 D_{\phi,\varphi}^{n}f(z)=\phi(z)f^{(n)}(\varphi(z)),
             \qquad n=0,1,2,\ldots,
\]
where $f^{(0)}=f$. Thus $D_{\phi,\varphi}^{0}$ is a weighted composition
operator, while $D_{1,\varphi}^{n}$ is differentiation followed by
composition. The order of these operations matters:
$D_{\phi,\varphi}^{n}$ does not denote the $n$th derivative of
$\phi(f\circ\varphi)$.

\emph{Hardy and mixed-norm spaces.}
For $0<p<\infty$ and $0\le t<1$, put
\[
 M_p(f,t)=\left(\int_0^{2\pi}|f(te^{i\theta})|^p\,
                       \frac{d\theta}{2\pi}\right)^{1/p},
 \qquad M_\infty(f,t)=\max_{|z|=t}|f(z)|.
\]
The Hardy space $H^p$, $0<p\le\infty$, consists of the functions
$f\in H(\mathbb D)$ for which
$\|f\|_{H^p}=\sup_{0\le t<1}M_p(f,t)<\infty$.
We use the usual boundary-value theory for finite Hardy exponents;
see \cite{Duren1970}.
For $0<p,q\le\infty$ and $\alpha>0$, define
\[
 \begin{aligned}
 \|f\|_{H(p,q,\alpha)}^q
   &=\alpha q\int_0^1(1-t)^{\alpha q-1}M_p(f,t)^q\,dt,
                                      &&q<\infty,\\
 \|f\|_{H(p,\infty,\alpha)}
   &=\sup_{0\le t<1}(1-t)^\alpha M_p(f,t),&&q=\infty.
 \end{aligned}
\]
The mixed-norm space $H(p,q,\alpha)$ consists of the analytic functions
with finite corresponding expression. The indices $p$ and $q$ measure
angular and radial integrability separately, and $\alpha$ specifies the
radial weight. Growth estimates and inclusions for these spaces are
studied in \cite{Arevalo2015,Arevalo2016}; their weighted atomic and
Carleson-measure theory is developed in \cite{PelaezRattyaSierra2021}.

For comparison, the weighted Bergman space $A_\gamma^p$, with
$0<p<\infty$ and $\gamma>-1$, has quasi-norm
$\|f\|_{A_\gamma^p}^p=\int_{\mathbb D}|f(z)|^p(1-|z|^2)^\gamma\,dA(z)$.
Then $H(p,p,\alpha)=A_{p\alpha-1}^p$ with equivalent norms;
away from the diagonal, the two mixed indices remain independent.
The supremum definition also gives
$H(p,\infty,0)=H^p$ if extended to $\alpha=0$, but our mixed-norm
results assume $\alpha>0$. Background on weighted Bergman spaces can
be found in \cite{DurenSchuster2004,Zhu2007}.

\emph{Derivative spaces.}
For $0<r\le\infty$, the derivative Hardy space is
\[
 S^r=\{f\in H(\mathbb D):f'\in H^r\},\qquad
 \|f\|_{S^r}=|f(0)|+\|f'\|_{H^r}.
\]
Composition operators on these spaces were studied by Roan
\cite{Roan1978} and MacCluer \cite{MacCluer1987}; the weighted problem
between different derivative Hardy spaces was considered by Contreras
and Hern\'andez-D\'iaz \cite{ContrerasHernandezDiaz2004}.
The associated mixed-norm derivative space is
\[
 S(p,q,\alpha)=\{f\in H(\mathbb D):f'\in H(p,q,\alpha)\},\qquad
 \|f\|_{S(p,q,\alpha)}=|f(0)|+\|f'\|_{H(p,q,\alpha)}.
\]
This is the notation of \cite{Borgohain2026}; its derivative condition
distinguishes $S(p,q,\alpha)$ from the derivative Hardy space $S^p$.

\emph{The $F(p,q,s)$ spaces.}
For $a\in\mathbb D$, set
$\sigma_a(z)=(a-z)/(1-\overline az)$ and
$g(z,a)=-\log|\sigma_a(z)|$. The latter is the Green function of
$\mathbb D$ with pole at $a$. Zhao introduced the family through
Green-function integrals in \cite[Definition~1.1]{Zhao1996}, including
a separate definition at $s=0$; see also \cite[Section~1]{Zhao2021}.
We work in the nontrivial range
\[
 0<p<\infty,\qquad q>-2,\qquad 0\le s<\infty,\qquad q+s>-1.
\]
The restriction $q+s>-1$ excludes the constant-only cases and
guarantees that the space contains all polynomials
\cite[Propositions~2.12--2.14]{Zhao1996},
\cite[Proposition~3.8]{Zhao2021}.
Following the M\"obius-kernel formulation used in
\cite[Section~1]{PauZhao2014}, we use the quasi-norm
\begin{equation}\label{eq:F-definition}
 \|f\|_{F(p,q,s)}^p
 =|f(0)|^p+\sup_{a\in\mathbb D}
       \int_{\mathbb D}|f'(z)|^p(1-|z|^2)^q
                         (1-|\sigma_a(z)|^2)^s\,dA(z).
\end{equation}
The space consists of the functions $f\in H(\mathbb D)$ with finite
quasi-norm. For $s>0$, the right-hand side is comparable to
\[
 |f(0)|^p+\sup_{a\in\mathbb D}
       \int_{\mathbb D}|f'(z)|^p(1-|z|^2)^q g(z,a)^s\,dA(z).
\]
This equivalence is \cite[Theorem~2.4]{Zhao1996}, restated as
\cite[Theorem~3.3]{Zhao2021}; see Section~\ref{sec:preliminaries}. The value at zero is
included because derivative integrals do not distinguish constants.
At $s=0$, this is the usual weighted Dirichlet quasi-norm, and the
supremum is redundant. For $s>0$, the little space
$F_0(p,q,s)$ consists of the functions in $F(p,q,s)$ such that
\[
 \lim_{|a|\uparrow1}\int_{\mathbb D}|f'(z)|^p(1-|z|^2)^q
                   (1-|\sigma_a(z)|^2)^s\,dA(z)=0,
\]
with the inherited quasi-norm. The little-space equivalence is
\cite[Theorem~2.5]{Zhao1996}. Zhao also uses the convention
$F_0(p,q,0)=F(p,q,0)$ \cite[Definition~1.1]{Zhao1996}; here we write
$F(p,q,0)$ directly and reserve $F_0$ for $s>0$.

The classical identifications in \cite[Section~1]{Zhao1996} and
\cite[Section~2]{Zhao2021} give, with equivalent norms,
\[
 F(p,p+\gamma,0)=A_\gamma^p\quad(0<p<\infty,\ \gamma>-1),
 \qquad F(2,1,0)=H^2.
\]
Also, $F(p,p-2,0)$ is the analytic Besov space for $1<p<\infty$,
and the diagonal derivative space satisfies
$S(p,p,\alpha)=F(p,p\alpha-1,0)$ for finite $p$ and $\alpha>0$.
For $s>0$, $F(2,0,s)=Q_s$ and $F_0(2,0,s)=Q_{s,0}$; in particular,
$F(2,0,1)=BMOA$ and $F_0(2,0,1)=VMOA$, the analytic spaces of bounded
and vanishing mean oscillation. See also \cite{Xiao2001}.
For $s>1$,
\[
 F(p,q,s)=\mathcal B^{(q+2)/p},\qquad
 F_0(p,q,s)=\mathcal B_0^{(q+2)/p},
\]
where $\mathcal B^b$ has norm
$|f(0)|+\sup_{z\in\mathbb D}(1-|z|^2)^b|f'(z)|$, and
$\mathcal B_0^b$ requires this weighted derivative to tend to zero at
the boundary. These last identifications follow from
\cite[Theorems~1.3--1.4]{Zhao1996}; see also
\cite[Theorems~2.3--2.4]{Zhao2021}.
For $0<s\le1$ the local integral condition must be retained; the
space cannot in general be identified by its pointwise growth exponent
alone \cite[Theorem~5.2]{Zhao2021}.

\emph{Weighted Dirichlet spaces and conventions.}
For $\beta\in\mathbb R$, we use the coefficient Hilbert space
\[
 \mathcal D_\beta=\left\{f(z)=\sum_{k=0}^\infty c_kz^k:
       \|f\|_{\mathcal D_\beta}^2
          =\sum_{k=0}^\infty(k+1)^{1-\beta}|c_k|^2<\infty\right\}.
\]
This is the coefficient convention in \cite{BarikSenPaul2026}. For
$\beta>-1$, the classical derivative representation gives
$\mathcal D_\beta=F(2,\beta,0)$ with equivalent norms
\cite[Section~1]{Zhao1996}, \cite[Section~2]{Zhao2021}.
With the displayed coefficient norm, $\mathcal D_1=H^2$ and
$\mathcal D_2=A^2$ hold isometrically.
We retain the coefficient norm when computing exact norms and singular
values. The derivative representation and its range of validity are
recorded in Section~\ref{sec:Schatten-exact}.

All integrability expressions are treated as quasi-norms when an index
is less than one, and $1/\infty=0$.
For nonnegative quantities, $A\approx B$ means $cB\le A\le CB$ with
positive constants independent of the varying functions or symbols,
unless otherwise specified. Space parameters and kernel orders are
fixed. In the lattice estimates and their tail versions, constants
are independent of finite supports, truncation depths and tail
indices. No uniformity as space parameters approach excluded endpoints
is asserted; Schatten comparison constants may depend on the fixed
exponent $t$.
For a bounded linear map $A:X\to Y$, its essential norm is
$\|A\|_e=\inf_K\|A-K\|$, where $K:X\to Y$ ranges over compact
linear maps. Schatten classes and approximation numbers will be defined
where they are used.

\subsection{Background and earlier results}\label{subsec:intro-history}

Weighted composition operators relate symbols to the geometry and
integrability of function spaces; see \cite{CowenMacCluer1995}.
For ordinary derivative Hardy spaces, Roan \cite{Roan1978}, MacCluer
\cite{MacCluer1987}, and Contreras and Hern\'andez-D\'iaz
\cite{ContrerasHernandezDiaz2004} relate composition and weighted
composition to Hardy-space problems. Products of composition and
differentiation between Hardy and Bergman spaces were investigated by
Hibschweiler and Portnoy \cite{HibschweilerPortnoy2005}. Higher orders,
weights and Dirichlet-space questions are treated in
\cite{AllenHellerPons2022,Fatehi2024,FatehiHammond2020,HanWang2021,HanWang2022}.
Barik, Sen and Paul \cite{BarikSenPaul2026} study the same operator
on a single weighted Dirichlet space, including essential norms,
Hilbert--Schmidt membership, multipliers and exact norm calculations.

Carleson embeddings provide a common approach to these problems.
Luecking established derivative inequalities and trace-ideal criteria
for Bergman embeddings in \cite{Luecking1985,Luecking1987}.
Pel\'aez, R\"atty\"a and Sierra \cite{PelaezRattyaSierra2021} developed
atomic decompositions and derivative embeddings for weighted mixed norms.
Separation results for sums of these operators on Bergman spaces
appear in \cite{ArroussiLiuTongYang2024,DuLiLiu2024,DuLiu2025}.
These results supply several of the special cases identified in
Subsection~\ref{subsec:intro-scope}.

For $F(p,q,s)$, the Green-kernel, M\"obius-kernel and Carleson-square
formulations originate in \cite[Theorems~2.4--2.5]{Zhao1996}.
Higher-derivative descriptions were developed in \cite{Rattya2003}
and are collected in \cite[Theorem~3.13]{Zhao2021}.
Pau and Zhao \cite{PauZhao2014} study function-value embeddings
$F(p,p\alpha-2,s)\to T_{p,s}^{\infty}(\mu)$, where
$\|h\|_{T_{p,s}^{\infty}(\mu)}^p=\sup_I|I|^{-s}\int_{S(I)}|h|^p\,d\mu$,
and apply them to Riemann--Stieltjes operators and multipliers.
Here $S(I)$ is the Carleson square defined in
Section~\ref{sec:preliminaries}. Their Theorems~3.1, 4.2 and 4.4
distinguish the critical, subcritical and supercritical regimes.
The last assumes $\alpha>1$, $s>0$, $s+p\alpha>1$ and either
$0<p\le1$, or $p>1$ with $s+p(\alpha-1)>1$, or
$1<p\le2$ with $s+p(\alpha-1)=1$.
Our positive-order derivative estimates do not use this
function-value embedding. Generalized weighted composition into
Bloch-type spaces is treated in \cite{Yang2012}, and corresponding
essential-norm estimates appear in \cite{SafarzadehEtAl2026}.

A further motivation is the distinction between pointwise growth and
an integral embedding. The necessary condition in
\cite[Theorem~3.1]{Borgohain2026} fails for the compact inclusion
$S(2,2,3/4)\to H^2$. Section~\ref{sec:counterexample} gives the
counterexample and identifies the stronger common-majorant property
that the proposed condition characterizes.

\subsection{Objectives, methods and contributions}\label{subsec:intro-objectives}

We characterize boundedness and compactness of $D_{\phi,\varphi}^n$
and obtain two-sided operator-norm and essential-norm estimates.
The source and target indices remain independent. In particular,
we do not assume that one of the two terms in
\[
 (D_{\phi,\varphi}^{n}f)'
   =\phi'(f^{(n)}\circ\varphi)
       +\phi\varphi'(f^{(n+1)}\circ\varphi)
\]
already satisfies a Carleson condition. Two fixed kernels on each
rectangle, followed by randomization of finite kernel sums, separate
these terms while controlling the source mixed or local quasi-norm.
Compact dilations give the upper essential-norm estimates. Finite kernel sums
indexed by outer rows, together with compactness and the Fatou
property, give the lower estimates without requiring weak convergence
of the test functions.

The resulting conditions are expressed in terms of the symbols'
local masses. Mixed sequence duality gives four explicit cases for
Hardy targets. For $F$ sources, the cases $s=0$ and $s>1$ reduce
to ordinary sequence norms. When $0<s\le1$, we evaluate the nested
square constraints by a dyadic covering integral if the target
exponent is at least the source exponent, and by a bottom-up power
recursion otherwise. Uniform tails of these quantities characterize
compactness. A fixed boundary pull-back measure also gives automatic
compactness for Hardy targets when the radial source exponent
exceeds the target exponent.

The same estimates yield comparisons under weight shifts, little-space
criteria and an all-weight multiplier characterization.
For symbols with relatively compact range, a Taylor expansion gives
geometric finite-rank approximations and rank-one expansions, with
nuclearity for Banach sources and targets.
In the Hilbert setting, separated finite matrices control the small
Schatten exponents, and a weighted-shift calculation gives exact
singular values for monomial weights and linear symbols.
The precise parameter ranges and comparisons with earlier results
are given in Subsection~\ref{subsec:intro-scope}.

\subsection{Organization of the paper}\label{subsec:intro-organization}

Section~\ref{sec:preliminaries} collects the common analytic and
compactness tools. Section~\ref{sec:counterexample} treats the
counterexample and the common radial majorant.
Sections~\ref{sec:hardy-targets} and \ref{sec:hardy-derivatives}
address Hardy and derivative Hardy targets, respectively.
Section~\ref{sec:F-spaces} proves the $F$-space criteria, evaluates
the sequence condition and treats little spaces and examples.
Section~\ref{sec:comparison-interior} contains the comparison theorem,
supremum targets and symbols with relatively compact range.
Section~\ref{sec:multipliers-approximation} concerns multipliers and
finite-rank approximation, and Section~\ref{sec:Schatten-exact}
concludes with Schatten classes and exact norms.

\subsection{Scope and relation to earlier results}\label{subsec:intro-scope}\label{sec:parameter-scope}

The following comparisons specify which earlier statements are recovered,
which subdomains follow directly from existing theorems, and what the
unified results establish on the remaining stated parameters.
Space identifications here are understood with equivalent norms unless
an isometric assertion is made explicitly.

\smallskip\noindent
\emph{Hardy targets: Theorems~\ref{thm:mixed-hardy}--\ref{thm:mixed-boundary}.}
Theorem~\ref{thm:mixed-hardy} treats $H(p,q,\alpha)\to H^r$ for
$0<p,q\le\infty$, $\alpha>0$, $0<r<\infty$ and every $n\ge0$.
Its finite-index boundedness and norm criterion follows from
\cite[Theorem~3]{PelaezRattyaSierra2021} with
$\omega(t)=\alpha q(1-t)^{\alpha q-1}$ and
$\widehat\omega(t)=(1-t)^{\alpha q}$.
This includes $p\ne q$, not only the Bergman diagonal.
The theorem here includes both infinite source indices and the uniform
tail characterization of the essential norm.
Theorem~\ref{thm:derivative-hardy} transfers these conclusions to
$S(p,q,\alpha)\to H^r$ for $n\ge1$, with exact equality of operator
and essential norms after lowering the order by one.
Theorem~\ref{thm:mixed-boundary} replaces the radial measure family
by a single boundary measure. For a nonzero weight, it includes the
necessary condition $|\varphi^*|<1$ almost everywhere.
Corollary~\ref{cor:Hardy-automatic-compactness} then gives automatic
compactness when $q>r$; Example~\ref{ex:Hardy-critical-q} shows that
$q\ge r$ cannot replace this condition in general.

\smallskip\noindent
\emph{Derivative Hardy targets: Theorem~\ref{thm:mixed-derivative}.}
For $S(p,q,\alpha)\to S^r$ the same mixed indices are allowed,
with $n\ge1$ and $0<r<\infty$.
The zero-order $S^p\to S^q$ theory of
\cite{ContrerasHernandezDiaz2004} concerns ordinary Hardy derivatives,
not the three-parameter derivative condition used here.
On the diagonal $p=q$, the derivative component of the source is
$A_{p\alpha-1}^p$; for $1<p,r<\infty$, the arbitrary-measure
separation theorem \cite[Theorem~1]{DuLiu2025}, applied on each
circle, gives this boundedness subcase.
Theorem~\ref{thm:mixed-derivative} treats independent mixed indices,
their infinite endpoints and a uniform essential-norm tail.
Corollary~\ref{cor:Hardy-derivative-automatic} gives automatic
compactness for $q>r$ also in this derivative-target problem.

\smallskip\noindent
\emph{Local $F$ spaces: Theorems~\ref{thm:F-boundedness}--\ref{thm:F-little-operators}.}
For $n\ge1$ these theorems allow every pair
$0<p_i<\infty$, $q_i>-2$, $s_i\ge0$, $q_i+s_i>-1$ ($i=1,2$),
without ordering $p_1,p_2$.
Theorem~\ref{thm:F-boundedness} gives boundedness and norm estimates;
Theorem~\ref{thm:F-essential-norm} gives the essential norm and
compactness; Theorem~\ref{thm:F-little-operators} gives the little-source
and little-target variants. In the latter, $s_1=0$ is treated with
source $F(p_1,q_1,0)$, and a little target requires $s_2>0$ and
$\phi,\phi\varphi\in F_0(p_2,q_2,s_2)$. For $s_1>0$ this boundedness
criterion uses a little source; when the tail vanishes, the entire
large source maps compactly into the little target.
Proposition~\ref{prop:explicit-sequence} evaluates the sequence
condition for every $s_1\ge0$, including both $p_2\ge p_1$ and
$p_2<p_1$ when $0<s_1\le1$.

At $(p_i,q_i,s_i)=(2,\beta,0)$ with the same $\beta>-1$, the
boundedness and essential-norm criteria recover
\cite[Theorems~1.1--1.2]{BarikSenPaul2026}, without requiring their
preliminary Carleson assumption. Taking also $\phi=1$ and $n=1$
gives the weighted Dirichlet case in \cite[Appendix]{HanWang2022};
$\beta=0$ gives the classical Dirichlet result of
\cite{AllenHellerPons2022}.
The larger zero-local-parameter subdomain
$s_1=s_2=0$, $1\le p_1,p_2<\infty$, is already obtained from
\cite[Theorem~1]{DuLiLiu2024}: putting $g=f'$ reduces the differentiated
operator to
\[
 g\longmapsto\phi'(g^{(n-1)}\circ\varphi)
       +\phi\varphi'(g^{(n)}\circ\varphi):
 A_{q_1}^{p_1}\longrightarrow A_{q_2}^{p_2},
\]
with one additional scalar coordinate.
For $q_1=q_2=s_1=s_2=0$, boundedness and compactness for all
$p_1,p_2>0$ also follow from
\cite[Theorem~1.8]{ArroussiLiuTongYang2024}.
If $s_1=0$ and $1<p_1,p_2<\infty$, \cite[Theorem~1]{DuLiu2025}
gives norm separation for each target measure; the local target
requires the uniform tail used in Theorem~\ref{thm:F-essential-norm}.

For $s_1>0$ and $s_2>1$, the target is
$\mathcal B^{(q_2+2)/p_2}$, and the large-source boundedness, compactness and essential-norm
statements reduce to \cite{Yang2012} and
\cite[Lemma~3.1 and Theorem~3.1]{SafarzadehEtAl2026}.
The full theorems retain both local parameters, including
$0<s_1,s_2\le1$, and combine these classical regimes with the
other admissible index combinations.
Setting $p_1=p_2=2$, $q_1=q_2=0$ and $s_1,s_2>0$ gives positive-order
$Q_{s_1}\to Q_{s_2}$ criteria; $s_1=s_2=1$ gives $BMOA$ and,
with little spaces, $VMOA$.
These identifications do not assert a general zero-order composition
theorem on those spaces.
Corollary~\ref{cor:F-explicit-regions} records the separate formulas,
and Corollary~\ref{cor:F-automatic-compactness} gives automatic
compactness when $s_2=0$ and either $p_2<p_1$ or $s_1>1$.
Example~\ref{ex:BMOA-local-dichotomy} illustrates a genuinely local
case in which the two exponent regimes differ.

\smallskip\noindent
\emph{Comparison and supremum targets: Theorems~\ref{thm:F-comparison} and \ref{thm:supremum-targets}.}
The weighted Dirichlet comparison in
\cite[Proposition~8.1]{BarikSenPaul2026} corresponds to equal source
and target triples with $p_1=p_2=2$, $s_1=s_2=0$ and equal weight
shifts. Theorem~\ref{thm:F-comparison} allows distinct admissible
triples and independent shifts $0\le h\le k$; a strict inequality
$k>h$ forces compactness, including a little target when $s_2>0$.
Theorem~\ref{thm:supremum-targets} supplies the $H^\infty$ and
$S^\infty$ counterparts for the source families listed in its table.
Weighted-type target problems were previously considered in
\cite{SafarzadehEtAl2026,Stevic2009,Yang2012}; the endpoint formulation
is kept separate from the finite Hardy integral criteria.

\smallskip\noindent
\emph{Zero order and relatively compact symbols: Proposition~\ref{prop:subcritical-zero-order} and Theorem~\ref{thm:relatively-compact-symbol}.}
For general self-maps, Proposition~\ref{prop:subcritical-zero-order}
treats zero-order Hardy-target maps when $\alpha+1/p<1$ for
$S(p,q,\alpha)$, or $(q+2)/p<1$ for an $F$ source.
It follows from compact inclusion in $H^\infty$ and also gives a
zero-order derivative-target reduction for $S(p,q,\alpha)$.
When $\|\varphi\|_\infty<1$, Theorem~\ref{thm:relatively-compact-symbol}
allows every $n\ge0$ and every listed source and target, including
their quasi-Banach and little-space cases. For a fixed target,
boundedness and compactness are characterized by the same symbol
memberships, independently of the order and the chosen source;
comparison constants may depend on both.
Its Dirichlet specialization, Corollary~\ref{cor:Dirichlet-interior},
gives the criterion requested in \cite[Problem~8.4]{BarikSenPaul2026}.

\smallskip\noindent
\emph{Multipliers and approximation: Theorems~\ref{thm:F-all-weights} and \ref{thm:interior-approximation}.}
The all-weight equivalence in \cite[Theorem~1.4]{BarikSenPaul2026}
is the $F(2,\beta,0)$ case of Theorem~\ref{thm:F-all-weights}.
The latter treats all admissible triples and their little spaces,
adds compactness, and gives weight-norm estimates for every fixed
$n\ge1$.
Theorem~\ref{thm:interior-approximation} treats every $n\ge0$ under
$\|\varphi\|_\infty<1$ on the full list of source and target spaces
in Theorem~\ref{thm:relatively-compact-symbol}.
It gives geometric approximation numbers and summable rank-one
expansions, including quasi-Banach cases; nuclearity is asserted
only for Banach sources and targets.
Related weighted sup-norm nuclearity is studied in
\cite{BonetGomezColladoJordaJornet2021}.
The rate here is a uniform exponential upper bound, not an exact
rate formula for arbitrary symbols.

\smallskip\noindent
\emph{Schatten classes: Theorem~\ref{thm:Dirichlet-Schatten}.}
The parameter domain is $\beta_1\in\mathbb R$, $\beta_2>-1$,
$n\ge0$, $\beta_1+2n>1$, and every $0<t<\infty$.
The choice $t=2$, $\beta_1=\beta_2>-1$ and $n\ge1$ recovers
\cite[Theorem~1.3]{BarikSenPaul2026}.
The full statement permits different indices and all Schatten exponents,
including $n=0$, $\beta_1>1$, and $n\ge2$, $1-2n<\beta_1\le-1$.
For $\beta_2>1$, the target has a weighted Bergman representation,
so a scalar pull-back and \cite{Luecking1987} already give the criterion.
For $\beta_2=1$, the Hardy boundary measure provides another scalar
route, as explained after the theorem.
The two-term proof applies to the whole stated range $\beta_2>-1$,
using finite-matrix separation also when $0<t<2$.

\smallskip\noindent
\emph{Exact models: Theorem~\ref{thm:monomial-exact}.}
For $\phi=z^m$ and $\varphi=\rho z$, this theorem permits arbitrary
real Dirichlet indices, $m,n\ge0$ and $|\rho|\le1$.
At $m=0$, $n=1$ and equal indices it recovers the coefficient
expression used in \cite[Section~7]{BarikSenPaul2026}; at $m=0$
and $\beta_1=\beta_2=1$ it recovers the Hardy norm in
\cite[Theorem~3.5]{Fatehi2024}.
The statement also gives the complete singular-value sequence,
all Schatten thresholds and the exact exponential rate for
$0<|\rho|<1$. These are coefficient-norm identities, unlike the
equivalent-norm comparisons used in the general criteria.

\smallskip\noindent
\emph{Parameters not treated by the general criteria.}
The displayed domains exclude $\alpha=0$ and $q_i=-2$.
General zero-order maps on derivative sources outside the subcritical
or relatively compact-symbol cases are not obtained by setting $n=0$
in a positive-order theorem. This includes the critical value-growth
thresholds and the remaining supercritical zero-order problem in
\cite{Borgohain2026}. Proposition~\ref{prop:radial-majorant}
characterizes a stronger majorant property; it does not supply a
general boundedness criterion in that regime.
The positivity used in the boundary-measure argument is genuine:
at $\alpha=n=0$, the extended convention $H(p,\infty,0)=H^p$
allows bounded inclusions $H^p\to H^r$ for $0<r\le p\le\infty$,
although the identity symbol has boundary modulus one.
For arbitrary symbols the Schatten criterion does not address
$\beta_1+2n\le1$ or $\beta_2\le-1$.
These are limits of the present results, not a list of cases known
to be open in the literature.

\section{Preliminaries}\label{sec:preliminaries}

We use the spaces and parameter conventions of
Subsection~\ref{subsec:intro-spaces}. The following kernel estimates,
compact approximations and test functions will be used throughout.

\subsection{Kernel estimates and Carleson squares}

We first record two standard integral estimates. The area estimate is
the supercritical case of \cite[Lemma~3.7]{Zhao2021}; see also
\cite{DurenSchuster2004,Zhu2007}.

\begin{lemma}\label{lem:circle-area-kernels}
For $u>1$, $\beta>-1$ and $\lambda>\beta+2$,
\[
 \int_0^{2\pi}\frac{d\theta}{|1-re^{i\theta}|^u}
       \le C(1-r)^{1-u},\qquad 0\le r<1,
\]
and
\begin{equation}\label{eq:weighted-area-kernel}
 \int_{\mathbb D}\frac{(1-|z|^2)^\beta}{|1-\overline wz|^\lambda}\,dA(z)
       \le C(1-|w|^2)^{\beta+2-\lambda},\qquad w\in\mathbb D.
\end{equation}
\end{lemma}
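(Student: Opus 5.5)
The circle estimate comes from the elementary two-sided bound $|1-re^{i\theta}|^2=(1-r)^2+4r\sin^2(\theta/2)$. For $\theta\in[-\pi,\pi]$ this gives
\[
 |1-re^{i\theta}|\ge c\bigl((1-r)+|\theta|\bigr).
\]
For $r\le 1/2$ the integrand is bounded, and the claim holds because $(1-r)^{1-u}$ is bounded below there. For $1/2<r<1$ we split the integral at $|\theta|=1-r$. On $|\theta|\le 1-r$ the integrand is at most $c^{-u}(1-r)^{-u}$, and the interval has length $2(1-r)$, which contributes $C(1-r)^{1-u}$. On $1-r<|\theta|\le\pi$ the integrand is at most $c^{-u}|\theta|^{-u}$. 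Since $u>1$, $\int_{1-r}^{\infty}\theta^{-u}\,d\theta=(1-r)^{1-u}/(u-1)$. Adding the two pieces gives the first inequality.

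For the area estimate we write $z=\rho e^{i\theta}$ in polar coordinates, so $dA=\pi^{-1}\rho\,d\rho\,d\theta$. Rotation invariance of $d\theta$ lets us assume $w=|w|\ge0$. Then $|1-\overline wz|=|1-|w|\rho e^{i\theta}|$, so the inner angular integral is the one just estimated with $r=|w|\rho$ and exponent $\lambda$. Here $\lambda>\beta+2>1$, so that estimate applies and the inner integral is at most $C(1-|w|\rho)^{1-\lambda}$. Since $\rho\le1$ and $1-\rho^2\approx1-\rho$, this leaves
\[
 C\int_0^1(1-\rho)^{\beta}(1-|w|\rho)^{1-\lambda}\,d\rho .
\]
If $|w|\le1/2$ the second factor is bounded, the integral converges because $\beta>-1$, and the bound follows since $(1-|w|^2)^{\beta+2-\lambda}$ is bounded below there.

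For $|w|>1/2$ we set $\delta=1-|w|$ and use $1-|w|\rho\approx(1-\rho)+\delta$. Split the $\rho$-integral at $1-\rho=\delta$. Where $1-\rho\le\delta$, the integrand is at most $C(1-\rho)^\beta\delta^{1-\lambda}$. Integrating $(1-\rho)^\beta$ over an interval of length $\delta$ gives $\delta^{\beta+1}$, so this part contributes $C\delta^{\beta+2-\lambda}$. Where $1-\rho>\delta$, the integrand is at most $C(1-\rho)^{\beta+1-\lambda}$. Because $\lambda>\beta+2$ the exponent is below $-1$, so the integral is at most $C\delta^{\beta+2-\lambda}$. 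Finally $1-|w|^2\approx\delta$, which gives \eqref{eq:weighted-area-kernel}.

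The only point requiring care is the hypothesis $\lambda>\beta+2$. It is exactly what makes the tail integral of the radial step converge at $\delta$. It also guarantees $\lambda>1$, which is needed to apply the circle estimate inside the area estimate. Everything else is routine.
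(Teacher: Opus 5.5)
Your proof is correct and follows the paper's argument: the comparison $|1-re^{i\theta}|\approx 1-r+|\theta|$ gives the circle estimate, and angular integration then reduces the area estimate to $\int_0^1(1-\rho)^\beta(1-|w|\rho)^{1-\lambda}\,d\rho$ with $1-|w|\rho\approx(1-\rho)+(1-|w|)$. The only cosmetic difference is in the last step: the paper substitutes $u=(1-\rho)/(1-|w|)$, whereas you split the radial integral at $1-\rho=1-|w|$, which is the same computation.
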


\begin{proof}
On $[-\pi,\pi]$, $|1-re^{i\theta}|\approx1-r+|\theta|$,
which proves the circle estimate by integration. For the area estimate,
angular integration reduces the question to
$\int_0^1(1-t)^\beta(1-|w|t)^{1-\lambda}\,dt$.
Use $1-|w|t\ge((1-|w|)+(1-t))/2$ and substitute
$u=(1-t)/(1-|w|)$. The resulting integral is finite because
$\beta>-1$ and $\lambda>\beta+2$; see also \cite{DurenSchuster2004}.
\end{proof}

For $s>0$, the inequality $-\log x\ge(1-x^2)/2$, $0<x<1$,
gives one direction of the Green-function and M\"obius-kernel comparison.
For the reverse inequality fix $a$ and denote the integral in
\eqref{eq:F-definition}, before taking the supremum, by $I_a(f)$.
On $|\sigma_a(z)|\ge1/2$, the two kernels are comparable.
On $|\sigma_a(z)|<1/2$, local submean estimates for $|f'|^p$, using
disks of radius comparable to $1-|a|^2$ contained in
$\{|\sigma_a|<3/4\}$, give
\[
 |f'(z)|^p\le C(1-|a|^2)^{-q-2}I_a(f).
\]
Both the weight $(1-|z|^2)^q$ and the Jacobian under
$z=\sigma_a(\zeta)$ have fixed comparisons on $|\zeta|<1/2$.
Thus the remaining Green integral is at most
\[
 C I_a(f)\int_{|\zeta|<1/2}
                     \left(\log\frac1{|\zeta|}\right)^s\,dA(\zeta)
 \le C' I_a(f).
\]
The last integral is finite, and the constants are independent of $a$.
Taking suprema proves the asserted norm equivalence. This argument
also shows that the two defining integrals vanish simultaneously as
$|a|\uparrow1$, so the little-space definition is consistent with the
Green-function convention as well.

For an arc $I\subset\partial\mathbb D$, let $|I|$ be its normalized
length and let
$S(I)=\{\rho\zeta:\zeta\in I,\ 1-|I|\le\rho<1\}$.
The full circle is allowed, with $S(\partial\mathbb D)=\mathbb D$.
For a positive Borel measure $\mu$, write
$\|\mu\|_s=\sup_I\mu(S(I))/|I|^s$.
This square convention agrees with \cite[Section~1]{PauZhao2014}.
Changing the square height by a fixed positive factor gives an equivalent
Carleson norm provided that the full-disk mass is retained; otherwise
shorter squares need not detect mass in a central disk.
Thus $\|\mu\|_0=\mu(\mathbb D)$, and for $s>0$ finiteness means that
$\mu$ is an $s$-Carleson measure. We use the
standard kernel characterization \cite[Lemma~2.2]{Zhao1996},
\cite[Lemma~3.2]{Zhao2021}:
\[
 \|\mu\|_s\approx\sup_{a\in\mathbb D}
       \int_{\mathbb D}\frac{(1-|a|^2)^s}
                              {|1-\overline az|^{2s}}\,d\mu(z).
\]
For $s=0$, this is equality. For $s>0$, the lower bound follows by
choosing $a=(1-|I|)\zeta_I$, where $\zeta_I$ is the midpoint of $I$;
for arcs of length at least $1/2$ one can use $a=0$.
For the upper bound, set $h=1-|a|$ and split the disk into squares of
lengths $h,2h,4h,\ldots$, centered in the direction of $a$, stopping
at the full circle. On the successive differences the kernel is at
most $C h^s/(2^k h)^{2s}$, whereas the measure is at most
$\|\mu\|_s(2^k h)^s$. Summing the geometric series proves the bound;
$|a|\le1/2$ is controlled directly by $\mu(\mathbb D)$.
Using $1-|\sigma_a(z)|^2=(1-|a|^2)(1-|z|^2)/|1-\overline az|^2$,
we obtain
\begin{equation}\label{eq:F-squares}
 \|f\|_{F(p,q,s)}^p
 \approx |f(0)|^p+
       \sup_I\frac{1}{|I|^s}\int_{S(I)}
                  |f'(z)|^p(1-|z|^2)^{q+s}\,dA(z).
\end{equation}
For $s>0$ this is the square characterization in
\cite[Theorem~2.4]{Zhao1996} and \cite[Theorem~3.3]{Zhao2021};
the bounded-measure case $s=0$ is noted after those theorems.

We will use the integral-mean estimate
\begin{equation}\label{eq:mean-growth}
 M_p(g,r)\le (1-r)^{-\alpha}\|g\|_{H(p,q,\alpha)},
 \qquad g\in H(p,q,\alpha),\quad 0\le r<1.
\end{equation}
This estimate is recorded in \cite[Section~3]{Arevalo2015}. Indeed, for
$q<\infty$, the monotonicity of $M_p(g,t)$ gives
\[
 \|g\|_{H(p,q,\alpha)}^q
 \ge\alpha q\int_r^1(1-t)^{\alpha q-1}M_p(g,r)^q\,dt
 =(1-r)^{\alpha q}M_p(g,r)^q.
\]
For $q=\infty$, \eqref{eq:mean-growth} follows directly from the definition.

\subsection{Local estimates and compact approximation}

The completeness and local evaluation properties of $F(p,q,s)$ are
classical; see \cite[Theorem~2.10 and Proposition~2.11]{Zhao1996}
for $p\ge1$ and \cite[p.~485]{PauZhao2014} for $0<p<1$.
We include the arguments with the exact quasi-norms used here, together
with the compact-dilation facts needed below.

\begin{lemma}\label{lem:space-basics}\label{lem:F-dilations}
Each of $H^r$, $S^r$, $H(p,q,\alpha)$, $S(p,q,\alpha)$ and
$F(p,q,s)$, with the parameters specified above, is complete and
has the Fatou property: if $f_k\to f$ locally uniformly, then
$\|f\|\le\liminf_k\|f_k\|$, with infinity allowed.
For any of these spaces $X$, every $0<R<1$ and integer $m\ge0$,
\begin{equation}\label{eq:source-local-interior}
 \max_{|z|\le R}|f^{(m)}(z)|\le C_{R,m,X}\|f\|_X.
\end{equation}
The dilation $C_\rho f(z)=f(\rho z)$ is a compact contraction on $X$
for $0<\rho<1$, and
\[
 \lim_{\rho\uparrow1}\sup_{\|f\|_X\le1}
       \max_{|z|\le R}|(f-C_\rho f)^{(m)}(z)|=0.
\]
\end{lemma}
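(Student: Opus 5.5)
The plan is to reduce everything to two facts: a pointwise estimate on compact subdisks, and the monotonicity or lower semicontinuity of the defining expressions under locally uniform limits. Completeness will then follow from the Fatou property together with \eqref{eq:source-local-interior}.

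\emph{Step 1: local estimate \eqref{eq:source-local-interior}.} For $H^r$ and $H(p,q,\alpha)$ we use \eqref{eq:mean-growth}. For $H^r$ we use the trivial bound $M_r(f,t)\le\|f\|$. Fix $R<R'<1$, for instance $R'=(1+R)/2$. The subharmonic mean-value inequality for $|f|^p$ over circles, or Cauchy's formula when $p\ge1$, gives $\max_{|z|\le R}|f(z)|\le C_R M_p(f,R')$. Cauchy estimates on the annulus between $R$ and $R'$ then control $f^{(m)}$. For $S^r$ and $S(p,q,\alpha)$ we apply this to $f'$ and integrate from $0$, adding $|f(0)|$. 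For $F(p,q,s)$ we bound the supremum in \eqref{eq:F-definition} below by the choice $a=0$. Since $(1-|z|^2)^{q+s}$ is bounded below on $|z|\le R'$, the quantity $\int_{|z|<R'}|f'|^p\,dA$ is at most $C\|f\|^p$. Area submean estimates for $|f'|^p$ then bound $f'$ on $|z|\le R$, and integration together with the Cauchy estimates handles all $m$.

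\emph{Step 2: Fatou property.} Suppose $f_k\to f$ locally uniformly. Then all derivatives converge locally uniformly as well. For fixed $t<1$, $M_p(f_k,t)\to M_p(f,t)$, so $\sup_t M_p(f,t)\le\liminf_k\|f_k\|_{H^p}$. For the mixed norms we apply Fatou's lemma to the $t$-integral, or take the supremum over $t$ when $q=\infty$. The derivative spaces are handled the same way, together with $f_k(0)\to f(0)$. For $F(p,q,s)$ we fix $a$ and apply Fatou's lemma to the $dA$ integral, and then take the supremum in $a$. Completeness follows. A Cauchy sequence is locally uniformly Cauchy by Step 1, so it has a locally uniform limit $f$. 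The Fatou property applied to $f_j-f_k$ as $k\to\infty$ gives $\|f_j-f\|\le\liminf_k\|f_j-f_k\|\to0$. This uses only the quasi-triangle inequality, so it works in the quasi-Banach range as well.

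\emph{Step 3: dilations.} The contraction property is where I expect the only real care to be needed, especially for $F(p,q,s)$. For Hardy and mixed norms, $M_p(C_\rho f,t)=M_p(f,\rho t)\le M_p(f,t)$ by monotonicity of integral means. For derivative spaces, $(C_\rho f)'=\rho\,(f')_\rho$, and the factor $\rho\le1$ only helps. For $F(p,q,s)$, monotonicity in $t$ is not available in the Möbius-invariant form. I would first write $|f'_\rho|^p$ as a subharmonic function and try to show that, for each $a$, the integral with $f'_\rho$ is at most the supremum over $a'$ of the integral with $f'$. This reduction looks difficult. If it fails, I would settle for boundedness of $C_\rho$, that is $\sup_\rho\|C_\rho\|<\infty$, using the square form \eqref{eq:F-squares}: $z\mapsto\rho z$ maps each $S(I)$ into a region covered by boundedly many squares of comparable size. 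Exact contraction might then require the equivalent norm, or renorming.

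\emph{Step 4: compactness and uniform convergence.} $C_\rho$ is compact because $f\mapsto f_\rho$ factors through the restriction of $f$ to $|z|\le\rho'$ with $\rho<\rho'<1$. On a bounded set these restrictions form a normal family by Step 1. Any locally uniformly convergent subsequence $f_k$ then gives $f_{k,\rho}$ converging in $C^m$ on $\overline{\mathbb D}$ for all $m$. For each space, that convergence controls the norm, since the weights are integrable ($\alpha>0$, $q>-1$ after accounting for the radial weight, and $q+s>-1$). Finally, the uniform estimate on $|z|\le R$ follows from Step 1 applied on $|z|\le R'$. We have $|f^{(m)}(z)-\rho^m f^{(m)}(\rho z)|\le|f^{(m)}(z)-f^{(m)}(\rho z)|+(1-\rho^m)|f^{(m)}(\rho z)|$. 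The first term is at most $(1-\rho)\max_{|w|\le R}|f^{(m+1)}(w)|$, and both terms are $O(1-\rho)$ uniformly over the unit ball.
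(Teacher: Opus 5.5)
Your Steps 1, 2 and 4 follow the paper's route: the $a=0$ integral with submean estimates for the local bound, Fatou's lemma in each defining integral followed by the suprema, completeness from Fatou applied to differences, normal families after a fixed dilation, and orders $m,m+1$ for the final limit.

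The gap is Step~3 for $F(p,q,s)$. The lemma asserts that $C_\rho$ is a contraction for the specific quasi-norm \eqref{eq:F-definition}. You leave this open and fall back on $\sup_\rho\|C_\rho\|<\infty$ or on a renorming. Neither proves the statement, and a renorming changes it. The exact inequality $\|C_\rho f\|\le\|f\|$ is also what the paper uses later, for instance for the norm equality between little and large sources in Theorem~\ref{thm:F-little-operators}(a).

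The missing idea is a Poisson majorization in the dilation variable, combined with rotation invariance of the family of weights.

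\begin{itemize}
\item For fixed $z\in\mathbb D$, the function $\zeta\mapsto|f'(\zeta z)|^p$ is subharmonic on a neighborhood of $\overline{\mathbb D}$ for every $p>0$. Hence
\[
 |f'(\rho z)|^p\le\int_0^{2\pi}P_\rho(\theta)\,|f'(e^{i\theta}z)|^p\,\frac{d\theta}{2\pi},
\]
where $P_\rho$ is the Poisson kernel at the point $\rho$.
\item Multiply by $(1-|z|^2)^q(1-|\sigma_a(z)|^2)^s$, integrate over $\mathbb D$, and use Tonelli.
\item In the inner integral substitute $w=e^{i\theta}z$. The measure $(1-|z|^2)^q\,dA(z)$ is rotation invariant, and $|\sigma_a(e^{-i\theta}w)|=|\sigma_{e^{i\theta}a}(w)|$. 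So the inner integral is the defining integral of $f$ with parameter $e^{i\theta}a$, hence at most $\|f\|_{F(p,q,s)}^p-|f(0)|^p$.
\item Since $\int P_\rho\,dm=1$ and $(C_\rho f)'=\rho f'(\rho\,\cdot)$, this gives
\[
 \|C_\rho f\|_{F(p,q,s)}^p\le|f(0)|^p+\rho^p\bigl(\|f\|_{F(p,q,s)}^p-|f(0)|^p\bigr)\le\|f\|_{F(p,q,s)}^p .
\]
\end{itemize}
This is exactly the reduction to a supremum over $a'$ that you anticipated. The rotation $a\mapsto e^{i\theta}a$ is what makes it work, and nothing requires $p\ge1$.

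A smaller point in Step 4: for $F(p,q,s)$ with $-2<q\le-1$, integrability of each weight $(1-|z|^2)^q(1-|\sigma_a(z)|^2)^s$ is not enough to turn uniform convergence of $(C_\rho f_k)'$ into norm convergence. You need the bound uniformly in $a$. This follows from \eqref{eq:F-squares}, because the weighted area of $S(I)$ is at most $C|I|^{q+s+2}$ and $q+2>0$; this is \eqref{eq:F-bounded-derivative}.
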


\begin{proof}
For $H^r$ and $H(p,q,\alpha)$, subharmonicity and the integral-mean
bound give local pointwise estimates; at an infinite exponent the
supremum bound suffices. For $F(p,q,s)$, the defining integral at
$a=0$ controls $f'$ in $L^p((1-|z|^2)^{q+s}dA)$ and hence locally
by subharmonicity. In a derivative space integrate from $0$ to bound
$f$ itself. Cauchy's formula then gives \eqref{eq:source-local-interior}.
Fatou's lemma in each defining integral, followed by the required
suprema, proves the Fatou property; the same assertion for a supremum
norm is immediate. A quasi-norm Cauchy sequence therefore has a local
analytic limit, and applying Fatou to its differences proves
completeness.

Monotonicity of integral means makes $C_\rho$ contractive on the Hardy
and mixed-norm spaces, and also on their derivative spaces since
$(C_\rho f)'=\rho f'(\rho\,\cdot)$. For $F(p,q,s)$, let $P_\rho$
be the Poisson kernel. Subharmonicity gives
\begin{equation}\label{eq:F-Poisson-dilation}
 |f'(\rho z)|^p\le\int_0^{2\pi}P_\rho(\theta)
                 |f'(e^{i\theta}z)|^p\,\frac{d\theta}{2\pi}.
\end{equation}
Integrating against the weight defining the $F$ quasi-norm and
rotating $z$ replaces $a$ by $e^{i\theta}a$. Tonelli's theorem and
$\int P_\rho\,dm=1$, together with
$(C_\rho f)'=\rho f'(\rho\,\cdot)$, yield
\[
 \|C_\rho f\|_{F(p,q,s)}^p
 \le |f(0)|^p+\rho^p\bigl(\|f\|_{F(p,q,s)}^p-|f(0)|^p\bigr)
 \le \|f\|_{F(p,q,s)}^p.
\]
This uses subharmonicity and remains valid for $p<1$.

For later use, \eqref{eq:F-squares} gives
\begin{equation}\label{eq:F-bounded-derivative}
 \|h\|_{F(p,q,s)}\le C\bigl(|h(0)|+\|h'\|_\infty\bigr)
                        \quad\text{when }h'\in H^\infty,
\end{equation}
since the weighted area of $S(I)$ is at most $C|I|^{q+s+2}$.
A bounded sequence in any of the stated spaces has, by Montel and
Fatou, a locally convergent subsequence with its limit in that space.
After a fixed dilation, the functions and their first derivatives
converge uniformly on $\mathbb D$. The defining norms, or
\eqref{eq:F-bounded-derivative}, turn this into norm convergence.
Thus $C_\rho$ is compact. Finally,
$(f-C_\rho f)^{(m)}=f^{(m)}-\rho^mf^{(m)}(\rho\,\cdot)$;
\eqref{eq:source-local-interior} at orders $m$ and $m+1$ proves the
last assertion.
\end{proof}

\begin{lemma}\label{lem:essential-test}
Let $X$ be a quasi-Banach space and let $Y\subset H(\mathbb D)$ be a
complete quasi-normed space whose norm convergence implies local
uniform convergence and whose quasi-norm has the Fatou property.
Let $A:X\to Y$ be bounded. If $\sup_k\|f_k\|_X\le B<\infty$,
$B>0$, and $Af_k\to0$ locally uniformly, then
\[
 \limsup_k\|Af_k\|_Y\le C_Y B\|A\|_e.
\]
Here $C_Y$ depends only on the quasi-triangle constant of $Y$.
Compact operators into any complete quasi-normed space are closed in
operator quasi-norm; in particular, essential norm zero is equivalent
to compactness.
\end{lemma}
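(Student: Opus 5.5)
To prove Lemma~\ref{lem:essential-test}, fix a compact $K:X\to Y$ and show that $\limsup_k\|Af_k\|_Y\le C_Y B\|A-K\|$; taking the infimum over $K$ then gives the inequality. Let $\kappa\ge1$ be the quasi-triangle constant of $Y$. For each $k$ we write
\[
 \|Af_k\|_Y\le\kappa\bigl(\|(A-K)f_k\|_Y+\|Kf_k\|_Y\bigr)
 \le\kappa B\|A-K\|+\kappa\|Kf_k\|_Y .
\]
It therefore suffices to show that $\|Kf_k\|_Y\to0$ along every subsequence, after passing to a further subsequence.

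For this step, note that $(f_k)$ is bounded, so compactness of $K$ gives a subsequence with $Kf_{k_j}\to g$ in $Y$. Norm convergence in $Y$ implies local uniform convergence, so $Kf_{k_j}\to g$ locally uniformly. On the other hand, $Af_{k_j}\to0$ locally uniformly by hypothesis. The obstacle is that we do not yet know $(A-K)f_{k_j}$ converges locally, so we cannot conclude $g=0$ directly. We handle this with the Fatou property. Since $(A-K)f_{k_j}=Af_{k_j}-Kf_{k_j}\to -g$ locally uniformly, the Fatou property gives
\[
 \|g\|_Y\le\liminf_j\|(A-K)f_{k_j}\|_Y\le B\|A-K\|.
\]
Combining this with the quasi-triangle inequality,
\[
 \limsup_j\|Af_{k_j}\|_Y\le\kappa\bigl(\limsup_j\|Af_{k_j}-Kf_{k_j}\|_Y+\|g\|_Y\bigr)\le 2\kappa B\|A-K\|.
\]
To reach the full $\limsup$, first choose a subsequence realizing $\limsup_k\|Af_k\|_Y$, and then extract the convergent sub-subsequence from it. This yields $C_Y=2\kappa$, which depends only on $\kappa$.

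For the closedness claim, let $K_j\to K$ in operator quasi-norm with each $K_j$ compact. The space $Y$ is complete and metrizable, since a quasi-norm is equivalent to a $\rho$-norm by Aoki--Rolewicz, so compactness of $K$ is equivalent to $K(\text{unit ball})$ being totally bounded. Given $\varepsilon>0$, pick $j$ with $\|K-K_j\|<\varepsilon$. A finite $\varepsilon$-net for $K_j(\text{ball})$ is then a $2\kappa\varepsilon$-net for $K(\text{ball})$, and completeness gives relative compactness. Finally, if $\|A\|_e=0$, there are compact operators $K_j$ with $\|A-K_j\|\to0$, so $A$ is compact by closedness; the converse is trivial.
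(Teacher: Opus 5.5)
Your argument is the same as the paper's: a subsequence realizing the $\limsup$, a further subsequence with $Kf_{k_j}\to g$ in $Y$, the Fatou property applied to $(A-K)f_{k_j}\to-g$ to get $\|g\|_Y\le B\|A-K\|$, then the quasi-triangle inequality. The closedness part (total boundedness of the image of the ball, plus completeness) is also the paper's. One step, however, is not justified as written, and it affects the constant. Your inequality
\[
 \limsup_j\|Af_{k_j}\|_Y\le\kappa\bigl(\limsup_j\|(A-K)f_{k_j}\|_Y+\|g\|_Y\bigr)
\]
tacitly uses $\limsup_j\|Kf_{k_j}\|_Y\le\|g\|_Y$. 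That is upper semicontinuity of the quasi-norm along norm-convergent sequences. A quasi-norm need not be continuous, and the Fatou property only gives the reverse (lower) inequality.

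For example, set $\|f\|_Y=\tfrac12\|f\|_{H^2}$ for constant $f$ and $\|f\|_Y=\|f\|_{H^2}$ otherwise. This satisfies every hypothesis on $Y$ with $\kappa=2$. Yet $1+z/k\to1$ in $Y$, while $\|1+z/k\|_Y\to1>\tfrac12=\|1\|_Y$.

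What norm convergence does give is $\|Kf_{k_j}\|_Y\le\kappa(\|Kf_{k_j}-g\|_Y+\|g\|_Y)$, hence $\limsup_j\|Kf_{k_j}\|_Y\le\kappa\|g\|_Y$. This yields $C_Y=\kappa(1+\kappa)$ rather than $2\kappa$. That is exactly the paper's constant and still depends only on $\kappa$, so the lemma itself is unaffected.

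Separately, your opening reduction to $\|Kf_k\|_Y\to0$ cannot work in general. Take $A=0$, $f_k\equiv1$ and $K$ the rank-one projection of $H^2$ onto the constants. You were right to abandon that route in favor of bounding $\|g\|_Y$.
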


\begin{proof}
Choose a subsequence realizing the limit superior. For a compact
$K:X\to Y$, pass to a further subsequence with $Kf_k\to h$ in $Y$.
The local limit of $(A-K)f_k$ is $-h$, so
$\|h\|_Y\le\liminf_k\|(A-K)f_k\|_Y\le B\|A-K\|$.
Let $\kappa$ be a quasi-triangle constant of $Y$. Since $Kf_k\to h$,
$\limsup_k\|Kf_k\|_Y\le\kappa\|h\|_Y$. Hence
$\limsup_k\|Af_k\|_Y\le\kappa(1+\kappa)B\|A-K\|$.
Taking the infimum over $K$ proves the estimate.
For closedness, operator-quasi-norm approximation by compact maps
makes the image of the unit ball totally bounded; completeness makes
its closure compact.
\end{proof}

\begin{lemma}\label{lem:primitive-reduction}
Let $X\subset H(\mathbb D)$ and $Y$ be quasi-Banach spaces, and set
$S(X)=\{f\in H(\mathbb D):f'\in X\}$, with $\|f\|_{S(X)}=|f(0)|+\|f'\|_X$.
If a linear analytic formula $A$ on $S(X)$ annihilates constants,
then $A$ and $AJ:X\to Y$, where $Jg(z)=\int_0^zg(\zeta)\,d\zeta$,
are bounded simultaneously, with equal norms. They are compact
simultaneously and have equal essential norms whenever bounded.
\end{lemma}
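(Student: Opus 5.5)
The idea is to compare $S(X)$ with the product $\mathbb C\oplus X$ through the map $f\mapsto(f(0),f')$, and to let the constants split off harmlessly. Every $f\in S(X)$ can be written uniquely as $f=f(0)+Jf'$, and this decomposition respects norms:
\[
 \|f\|_{S(X)}=|f(0)|+\|f'\|_X,\qquad \|Jg\|_{S(X)}=\|g\|_X ,
\]
because $Jg(0)=0$ and $(Jg)'=g$. Hence $J:X\to S(X)$ is a linear isometry onto the closed subspace $S_0(X)=\{f\in S(X):f(0)=0\}$. Since $A$ is linear and kills constants, we have $Af=A(Jf')=(AJ)f'$ for every $f\in S(X)$. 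In other words, $A=(AJ)\circ D_0$, where $D_0f=f'$ maps $S(X)$ into $X$ and satisfies $\|D_0f\|_X\le\|f\|_{S(X)}$.

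\emph{Boundedness and norms.} First suppose $AJ$ is bounded. Then
\[
 \|Af\|_Y\le\|AJ\|\,\|f'\|_X\le\|AJ\|\,\|f\|_{S(X)},
\]
so $\|A\|\le\|AJ\|$. Conversely, if $A$ is bounded, then $\|AJg\|_Y\le\|A\|\,\|Jg\|_{S(X)}=\|A\|\,\|g\|_X$, which gives $\|AJ\|\le\|A\|$. So the two operators are bounded together and have equal norms. The word "bounded" for the analytic formula $A$ here means that it maps $S(X)$ into $Y$ with finite quasi-norm, so the two inequalities above are all that is needed.

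\emph{Compactness.} Composing a compact operator with bounded ones on either side gives a compact operator. The relations $A=(AJ)D_0$ and $AJ=A\circ J$ therefore transfer compactness in both directions.

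\emph{Essential norms.} Take a compact $K:X\to Y$. Then $KD_0$ is compact on $S(X)$, and
\[
 \|A-KD_0\|=\|(AJ-K)D_0\|\le\|AJ-K\| .
\]
Taking the infimum over $K$ gives $\|A\|_e\le\|AJ\|_e$. In the other direction, let $K':S(X)\to Y$ be compact. Then $K'J$ is compact on $X$, and
\[
 \|AJ-K'J\|\le\|A-K'\|\,\|J\|=\|A-K'\| .
\]
This gives $\|AJ\|_e\le\|A\|_e$, so the essential norms are equal.

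\emph{Expected obstacle.} The only delicate point is the quasi-Banach setting. There the operator quasi-norm of a composition is still submultiplicative, since $\|TS\|\le\|T\|\,\|S\|$ follows directly from the definition without any triangle inequality. Also, $D_0$ and $J$ have norm at most one, so no quasi-triangle constants enter. This is why the norms, and not just their sizes up to constants, come out exactly equal.
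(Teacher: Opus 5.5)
Your proof is correct and follows the paper's argument. Both rest on the factorization $A=(AJ)\partial$, with differentiation $\partial$ contractive, $J$ isometric and $\partial J=I$. Composing operators and compact approximants with $J$ or $\partial$ on either side then gives equal norms, simultaneous compactness and equal essential norms.
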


\begin{proof}
Differentiation $\partial:S(X)\to X$ is contractive, $J$ is isometric,
and $\partial J=I$. Since $A=(AJ)\partial$, the two factorizations
give equality of norms and equivalence of compactness. Composing
compact approximants with $J$ or $\partial$ gives both inequalities
for the essential norms.
\end{proof}

The little-space square condition is
\cite[Theorem~2.5]{Zhao1996}. The dilation and polynomial-closure
characterization is recorded in \cite[Theorem~3.10(ii)]{Zhao2021};
for $p>1$ and $q\ge0$, it appears in
\cite[Proposition~2.3]{LindstromPalmberg2007}.
We give the proof for the quasi-norm in \eqref{eq:F-definition}.

\begin{lemma}\label{lem:F-little-approximation}
Let $0<p<\infty$, $q>-2$, $s>0$ and $q+s>-1$.
For $f\in F(p,q,s)$, the following conditions are equivalent:
\begin{enumerate}
 \renewcommand{\labelenumi}{\textup{(\arabic{enumi})}}
 \setlength{\itemsep}{2pt}\setlength{\parskip}{0pt}
 \item $f\in F_0(p,q,s)$.
 \item $\displaystyle\lim_{|I|\downarrow0}\frac{1}{|I|^s}
       \int_{S(I)}|f'(z)|^p(1-|z|^2)^{q+s}\,dA(z)=0$.
 \item $\displaystyle\lim_{\rho\uparrow1}
                         \|f-C_\rho f\|_{F(p,q,s)}=0$.
 \item $f$ is in the closure of the polynomials in $F(p,q,s)$.
\end{enumerate}
In particular, $F_0(p,q,s)$ is closed and complete, and every
function analytic on a neighborhood of $\overline{\mathbb D}$ belongs
to it. Each $C_\rho$, $\rho<1$, maps $F(p,q,s)$ compactly into
$F_0(p,q,s)$. For $s=0$ (and hence $q>-1$), polynomials are dense
in $F(p,q,0)$ as well.
\end{lemma}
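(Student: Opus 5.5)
I would prove the cycle (1)$\Leftrightarrow$(2), (2)$\Rightarrow$(3)$\Rightarrow$(4)$\Rightarrow$(1), and then read off the remaining assertions.

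\emph{Step 1: (1)$\Leftrightarrow$(2).} This is the localized form of the argument behind \eqref{eq:F-squares}. For the Carleson measure $d\mu_f=|f'|^p(1-|z|^2)^{q+s}dA$, the identity $1-|\sigma_a(z)|^2=(1-|a|^2)(1-|z|^2)/|1-\overline az|^2$ turns the integral in the definition of $F_0$ into $\int(1-|a|^2)^s|1-\overline az|^{-2s}d\mu_f$. The lower bound with $a=(1-|I|)\zeta_I$ gives (1)$\Rightarrow$(2). For (2)$\Rightarrow$(1), I would repeat the dyadic splitting with $h=1-|a|$. The squares of side $2^kh\le\delta$ contribute at most $C\sup_{|J|\le\delta}\mu_f(S(J))/|J|^s$. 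The squares with $2^kh>\delta$ contribute at most $C\|\mu_f\|_s\sum_{2^kh>\delta}(h/2^kh)^s\le C\|\mu_f\|_s(h/\delta)^s$, which tends to $0$ as $h\to0$.

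\emph{Step 2: (2)$\Rightarrow$(3).} Write $f_\rho=C_\rho f$. Then $(f-f_\rho)'=f'-\rho f'(\rho\cdot)$ and $(f-f_\rho)(0)=0$, so by \eqref{eq:F-squares} I must bound $|I|^{-s}\int_{S(I)}|(f-f_\rho)'|^p(1-|z|^2)^{q+s}dA$ uniformly in $I$. Fix $\delta>0$ and treat small and large squares separately.

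For $|I|<\delta$, bound the quantity by a constant times the sum of the integrals of $|f'|^p$ and of $|f'(\rho\cdot)|^p$. The first is small by (2). For the second, use the Poisson subordination \eqref{eq:F-Poisson-dilation}: after rotation, $\int_{S(I)}|f'(\rho z)|^p w\,dA\le\int P_\rho(\theta)\int_{S(e^{i\theta}I)}|f'|^p w\,dA\,dm(\theta)$. So this term is also controlled by $\sup_{|J|<\delta}$ of the normalized square integrals, uniformly in $\rho$.

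For $|I|\ge\delta$, split $S(I)$ at the radius $R=1-\eta$. On $|z|\le R$, $(f-f_\rho)'\to0$ uniformly by Lemma~\ref{lem:space-basics}, and $|I|^{-s}\le\delta^{-s}$. On $R<|z|<1$, cover the annulus by about $1/\eta$ squares of side $\eta$. Each contributes at most $\varepsilon(\eta)\eta^s$ by (2) and the Poisson bound, so the total is $\le C\varepsilon(\eta)\eta^{s-1}$.

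I expect the large-square regime to be the main obstacle, because for $s<1$ this count is wasteful. My first attempt is to bound the annular part directly by $\delta^{-s}\int_{|z|>R}|f'|^p(1-|z|^2)^{q+s}dA$. This integrand is integrable: taking $a=0$ in the definition shows that $f'\in L^p((1-|z|^2)^{q+s}dA)$, and the same holds for $f'(\rho\cdot)$ via Poisson subordination. Hence, for fixed $\delta$, this tail tends to $0$ as $\eta\to0$, uniformly in $\rho$. Then let $\rho\uparrow1$ and finally $\delta\to0$.

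\emph{Step 3: (3)$\Rightarrow$(4)$\Rightarrow$(1).} For (3)$\Rightarrow$(4), note that $f_\rho$ is analytic on a neighborhood of $\overline{\mathbb D}$. Its Taylor polynomials $P_N$ satisfy $P_N'\to f_\rho'$ uniformly on $\overline{\mathbb D}$, so \eqref{eq:F-bounded-derivative} gives $P_N\to f_\rho$ in $F(p,q,s)$. For (4)$\Rightarrow$(1), first observe that every $h$ with $h'\in H^\infty$ satisfies (2), since $\mu_h(S(I))\le C|I|^{q+s+2}$ and $q+2>0$ (recall $q>-2$). Now suppose $\|f-P\|_F$ is small. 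By the $p$-subadditivity (or quasi-triangle) estimate for $|f'|^p\le C(|(f-P)'|^p+|P'|^p)$, condition (2) for $f$ holds up to $C\|f-P\|^p$. Since this can be made arbitrarily small, (2) holds for $f$, and Step 1 gives (1).

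\emph{Step 4: the remaining assertions.} Closedness of $F_0$ follows from the same quasi-triangle argument applied to a norm-limit of $F_0$ functions, and completeness then follows from Lemma~\ref{lem:space-basics}. Functions analytic near $\overline{\mathbb D}$ have bounded derivative and so satisfy (2). The operator $C_\rho$ maps $F(p,q,s)$ into such functions, hence into $F_0$. It is compact into $F_0$ because $C_\rho$ is compact on $F(p,q,s)$ by Lemma~\ref{lem:space-basics}, and $F_0$ carries the inherited quasi-norm and is closed.

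For $s=0$ the argument of Step 2 simplifies to the single weighted integral. The tail of the integral is small by integrability, and dominated convergence handles the compact part. Then (3) and (4) give density of polynomials.
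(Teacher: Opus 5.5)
Your proposal is correct and follows the paper's proof essentially step for step. It uses the same dyadic-square kernel argument for (1)$\Leftrightarrow$(2); the only difference there is that you bound the far squares by the Carleson norm of $\mu_f$, while the paper uses its total mass. The other steps also match: Poisson subordination for short arcs plus a global-mass bound for long arcs in (2)$\Rightarrow$(3), Taylor polynomials of $C_\rho f$ with \eqref{eq:F-bounded-derivative} for (3)$\Rightarrow$(4), and closedness of the class defined by (2) for (4)$\Rightarrow$(1).

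Your ``first attempt'' for long arcs is exactly the paper's claim $\mu_{f-C_\rho f}(\mathbb D)\to0$. The paper gets the uniform-in-$\rho$ tail from monotonicity of integral means rather than Poisson subordination over the annulus, but either works, so you can drop the square-counting detour.
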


\begin{proof}
Put $d\mu_f(z)=|f'(z)|^p(1-|z|^2)^{q+s}\,dA(z)$. This is a finite
$s$-Carleson measure by \eqref{eq:F-squares}. The defining integral
for the little space is
\[
 \int_{\mathbb D}\frac{(1-|a|^2)^s}{|1-\overline az|^{2s}}\,d\mu_f(z).
\]
If it tends to zero as $|a|\uparrow1$, choose
$a=(1-|I|)\zeta_I$ as in the proof of \eqref{eq:F-squares}. The
kernel is at least $c|I|^{-s}$ on $S(I)$, proving (2).
Conversely, suppose (2) holds. Given $\varepsilon>0$, choose
$h_0>0$ so that $\mu_f(S(I))\le\varepsilon|I|^s$ when
$|I|\le h_0$. For $h=1-|a|$ small, use the concentric dyadic squares
of lengths $h,2h,4h,\ldots$ from the proof of
\eqref{eq:F-squares}. The terms whose outer square has length at
most $h_0$ have sum at most $C\varepsilon\sum_{k\ge0}2^{-ks}$.
In the remaining region the kernel is at most
$C h^s h_0^{-2s}$, after changing the constant for the single
transition square. Its contribution tends to zero, since $\mu_f$
is finite. Letting first $|a|\uparrow1$ and then
$\varepsilon\downarrow0$ proves (1).

The space described by (2) is closed. Indeed, the inequality
$|v+w|^p\le\max\{1,2^{p-1}\}(|v|^p+|w|^p)$ and
\eqref{eq:F-squares} give
\[
 \frac{\mu_f(S(I))}{|I|^s}
 \le C\|f-g\|_{F(p,q,s)}^p
               +C\frac{\mu_g(S(I))}{|I|^s}.
\]
Approximate $f$ in quasi-norm by members $g$ satisfying (2), and
then let $|I|\downarrow0$. Completeness follows from
Lemma~\ref{lem:F-dilations}. If $f'$ is bounded, then
$\mu_f(S(I))/|I|^s\le C\|f'\|_\infty^p|I|^{q+2}\to0$.
This proves the assertion about functions analytic past the closed
disk, including the polynomials and all dilations $C_\rho f$.

Assume (2). Integrating \eqref{eq:F-Poisson-dilation} over $S(I)$
and rotating the variable gives
\[
 \frac{\mu_{C_\rho f}(S(I))}{|I|^s}
 \le\rho^p\int_0^{2\pi}P_\rho(\theta)
       \frac{\mu_f(S(e^{i\theta}I))}{|I|^s}\,\frac{d\theta}{2\pi}.
\]
Thus the small-arc bound in (2) holds uniformly for $C_\rho f$,
$0<\rho<1$. By the elementary inequality for a $p$th power, it also
makes $\mu_{f-C_\rho f}(S(I))/|I|^s$ uniformly small on sufficiently
short arcs.

We claim that $\mu_{f-C_\rho f}(\mathbb D)\to0$ as $\rho\uparrow1$.
On each compact disk this follows from local uniform convergence of
the derivatives. On $R<|z|<1$, monotonicity of integral means gives
\[
 M_p((f-C_\rho f)',t)^p
 \le C\bigl(M_p(f',t)^p+\rho^pM_p(f',\rho t)^p\bigr)
 \le C' M_p(f',t)^p.
\]
After integration in $t$ with the weight
$2t(1-t^2)^{q+s}$, the last bound is uniformly small for $R$ close
to $1$, because $\mu_f(\mathbb D)<\infty$. This proves the claim.
For arcs with $|I|\ge h_0$, we now have
\[
 \frac{\mu_{f-C_\rho f}(S(I))}{|I|^s}
       \le h_0^{-s}\mu_{f-C_\rho f}(\mathbb D)\longrightarrow0.
\]
Together with the small-arc bound and
$(f-C_\rho f)(0)=0$, equation \eqref{eq:F-squares} proves (3).

For fixed $\rho<1$, the Taylor polynomials of $C_\rho f$ and their
first derivatives converge uniformly on $\overline{\mathbb D}$.
Equation \eqref{eq:F-bounded-derivative} turns this into convergence
in $F(p,q,s)$. Hence (3) implies (4). Closedness and the polynomial
case already proved show that (4) implies (1). Finally,
$C_\rho:F(p,q,s)\to F(p,q,s)$ is compact by
Lemma~\ref{lem:F-dilations}, its range is in $F_0(p,q,s)$, and this
subspace is closed with the inherited quasi-norm. It is therefore
compact also with this smaller codomain.
For $s=0$ and $q>-1$, the same global weighted-integral argument,
which needs no small-square assumption, gives $\|f-C_\rho f\|_{F(p,q,0)}\to0$ directly.
Taylor approximation of $C_\rho f$ then proves polynomial density.
\end{proof}

\subsection{A fixed partition and two kernels}

Put $r_j=1-2^{-j}$ and $\delta_j=2^{-j}$ for $j=0,1,\ldots$. For
$0\le l<2^{j+3}$, let
\[
 Q_{j,l}=\left\{z:r_j\le |z|<r_{j+1},\quad
       \frac{2\pi l}{2^{j+3}}\le\arg z<\frac{2\pi(l+1)}{2^{j+3}}\right\},
 \quad
 a_{j,l}=\frac{r_j+r_{j+1}}2
       \exp\!\left(\frac{2\pi i(l+1/2)}{2^{j+3}}\right).
\]
The displayed definition is for $z\ne0$, with $\arg z\in[0,2\pi)$.
We adjoin the origin to $Q_{0,0}$; the sets $Q_{j,l}$ then form a
Borel partition of $\mathbb D$, and
$|1-\overline{a_{j,l}}w|\approx\delta_j$ on $Q_{j,l}$.
Notice also that $|a_{j,l}|\ge1/4$. For $0<u,v\le\infty$, write
\[
 \|c\|_{\ell^{u,v}}
 =\left\|\left\{\|\{c_{j,l}\}_l\|_{\ell^u}\right\}_j\right\|_{\ell^v},
\]
with the usual supremum convention at infinity. All sums in $l$ have the
range specified above.

\begin{lemma}\label{lem:two-kernels}
Fix $A>0$, positive integers $M$ and $n$, and $0<r<\infty$. For
$u=0,1$, put
\[
 g_{j,l,u}(z)=\frac{\delta_j^{M+u-A}}
                    {(1-\overline{a_{j,l}}z)^{M+u}}.
\]
Then, for $w\in Q_{j,l}$ and $x,y\in\mathbb C$,
\begin{equation}\label{eq:two-kernels}
 \frac{|x|^r}{\delta_j^{r(A+n-1)}}
 +\frac{|y|^r}{\delta_j^{r(A+n)}}
 \le C\sum_{u=0}^1
       |xg_{j,l,u}^{(n-1)}(w)+yg_{j,l,u}^{(n)}(w)|^r.
\end{equation}
The constant is independent of $j,l,w,x,y$.
\end{lemma}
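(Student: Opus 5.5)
The plan is to compute both derivatives of $g_{j,l,u}$ in closed form and reduce \eqref{eq:two-kernels} to the invertibility of a $2\times2$ linear system whose entries are bounded above and below uniformly in $j,l,w$. For an integer $k\ge0$ write $(b)_k=b(b+1)\cdots(b+k-1)$ for the rising factorial. Put $a=a_{j,l}$, $\delta=\delta_j$ and $\xi=1-\overline a w$. Then
\[
 g_{j,l,u}^{(k)}(w)=(M+u)_k\,\overline a^{\,k}\,
   \frac{\delta^{M+u-A}}{\xi^{M+u+k}}.
\]
Since $(M+u)_n=(M+u)_{n-1}(M+u+n-1)$, factoring out the $(n-1)$st derivative gives
\[
 xg_{j,l,u}^{(n-1)}(w)+yg_{j,l,u}^{(n)}(w)
 =(M+u)_{n-1}\,\overline a^{\,n-1}
   \Bigl(\frac{\delta}{\xi}\Bigr)^{M+u+n-1}
   \bigl(X+(M+u+n-1)\,t\,Y\bigr).
\]
Here
\[
 X=\frac{x}{\delta^{A+n-1}},\qquad
 Y=\frac{y}{\delta^{A+n}},\qquad
 t=\frac{\overline a\,\delta}{\xi}.
\]
This normalization is chosen so that the left side of \eqref{eq:two-kernels} is exactly $|X|^r+|Y|^r$.

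Next we control the prefactor and $t$ uniformly. We use two facts recorded with the partition: $|1-\overline{a_{j,l}}w|\approx\delta_j$ on $Q_{j,l}$, and $1/4\le|a_{j,l}|<1$. These give $|\delta/\xi|\approx1$ and $c\le|t|\le C$, with constants independent of $j,l,w$. The prefactor $(M+u)_{n-1}|\overline a|^{n-1}|\delta/\xi|^{M+u+n-1}$ is therefore bounded below by a positive constant depending only on $M,n$. Hence it suffices to prove
\[
 |X|+|Y|\le C\bigl(|L_0|+|L_1|\bigr),\qquad
 L_u=X+c_u tY,\quad c_u=M+u+n-1.
\]

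This last inequality comes from the linear system with matrix $\begin{pmatrix}1&c_0t\\1&c_1t\end{pmatrix}$. Its determinant is $(c_1-c_0)t=t$, and $|t|\ge c$ is bounded away from zero. The entries are bounded because $|t|\le C$. Hence the inverse matrix has uniformly bounded entries, which gives $|X|+|Y|\le C(|L_0|+|L_1|)$. Finally, raise both sides to the power $r$ using $|X|^r+|Y|^r\le 2(|X|+|Y|)^r$ and $(|L_0|+|L_1|)^r\le 2^r(|L_0|^r+|L_1|^r)$. These elementary inequalities hold for every $0<r<\infty$, including the quasi-norm range $r<1$.

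The only genuinely delicate point is the uniform lower bound on $|t|$, that is, the two kernels stay distinguishable. This rests on $|a_{j,l}|\ge1/4$ together with the upper comparability $|\xi|\lesssim\delta_j$ on $Q_{j,l}$. I would verify this comparability directly for $j=0$, where $Q_{0,l}$ may contain points near the origin, by noting that $|\xi|\le 2=2\delta_0$ trivially. The remaining estimates are bookkeeping of constants depending only on $M$, $n$ and $r$.
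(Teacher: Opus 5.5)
Your proposal is correct and is essentially the paper's proof. The paper likewise computes the two derivatives in closed form, normalizes by $x_0=\delta_j^{-A-n+1}x$, $y_0=\delta_j^{-A-n}y$ and $\gamma=\overline{a}\,\delta_j/(1-\overline{a}w)$ (your $X$, $Y$, $t$), and reduces to $x_0+(M+n-1)\gamma y_0$ and $x_0+(M+n)\gamma y_0$, whose difference $\gamma y_0$ first controls $y_0$ and then $x_0$; your $2\times2$ determinant formulation is the same observation, and your elementary power inequalities for $0<r<1$ match the paper's use of $|v+w|^r\le\max\{1,2^{r-1}\}(|v|^r+|w|^r)$.
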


\begin{proof}
Write $a=a_{j,l}$ and $\delta=\delta_j$. If $(b)_k=b(b+1)\cdots(b+k-1)$,
with $(b)_0=1$, then
\[
 g_{j,l,u}^{(n-1)}(w)
 =\frac{(M+u)_{n-1}\overline a^{\,n-1}\delta^{M+u-A}}
             {(1-\overline aw)^{M+u+n-1}},
 \qquad
 g_{j,l,u}^{(n)}(w)
 =\frac{(M+u+n-1)\overline a}{1-\overline aw}
       g_{j,l,u}^{(n-1)}(w).
\]
Since $|a|\ge1/4$ and $|1-\overline aw|\approx\delta$ on $Q_{j,l}$,
we have $|\delta^{A+n-1}g_{j,l,u}^{(n-1)}(w)|\approx1$.
Set $x_0=\delta^{-A-n+1}x$, $y_0=\delta^{-A-n}y$, and
$\gamma=\overline a\delta/(1-\overline aw)$. Both $|\gamma|$ and
$|\gamma|^{-1}$ are bounded by fixed constants. After multiplication by
factors whose moduli are bounded above and below, the two expressions
on the right of \eqref{eq:two-kernels} become
$x_0+(M+n-1)\gamma y_0$ and $x_0+(M+n)\gamma y_0$. Their difference is
$\gamma y_0$, which first controls $y_0$ and then $x_0$. Taking $r$th powers
proves the assertion, using
$|v+w|^r\le\max\{1,2^{r-1}\}(|v|^r+|w|^r)$.
\end{proof}

\section{A counterexample and a common radial majorant}\label{sec:counterexample}

The necessary condition in \cite[Theorem~3.1]{Borgohain2026} is false.
That theorem assumes $1\le p<\infty$, $1<q\le\infty$,
$1/p+1/q\le1$, $0<\alpha<1$ and $\alpha+1/p>1$, with an analytic
weight and an analytic self-map of $\mathbb D$. Take $p=q=2$,
$\alpha=3/4$, $\phi\equiv1$ and $\varphi(z)=z$. Then
$1/p+1/q=1$ and $\alpha+1/p=5/4>1$, and the symbols satisfy the
analytic hypotheses. The mixed-norm normalization agrees with that
paper. The inclusion
$S(2,2,3/4)\to H^2$ is compact by
Proposition~\ref{prop:compact-inclusion} below, whereas the proposed
necessary quantity is
\[
 \sup_{0\le t<1}\int_0^{2\pi}\frac{d\theta}{(1-t^2)^{1/2}}
       =2\pi\sup_{0\le t<1}(1-t^2)^{-1/2}=\infty.
\]
In the original proof on p.~211, a test parameter $w$, initially
fixed, is replaced by $\varphi(z)$ inside the integral. The bound
for a fixed test function does not justify this substitution.
Proposition~\ref{prop:radial-majorant} identifies the stronger
property described by the integral. Theorems~\ref{thm:mixed-hardy}
and \ref{thm:derivative-hardy} subsequently treat Hardy targets for
mixed-norm sources and their positive-order derivative counterparts.

\begin{proposition}\label{prop:compact-inclusion}
Let $0<p,q\le\infty$ and $0<\alpha<1$. The inclusion
$S(p,q,\alpha)\to H^p$, equal to $D_{1,\operatorname{id}}^0$, is compact.
For $1\le p\le\infty$ it also satisfies
\[
 \|f\|_{H^p}\le |f(0)|+\frac{\|f'\|_{H(p,q,\alpha)}}{1-\alpha},
 \qquad f\in S(p,q,\alpha).
\]
If, in addition, $p<\infty$, $1<q\le\infty$, $1/p+1/q\le1$ and
$\alpha+1/p>1$, then $\phi\equiv1$ and $\varphi(z)=z$ satisfy all
hypotheses of \cite[Theorem~3.1]{Borgohain2026}, but not its asserted
necessary integral condition. In particular, $p=q=2$ and $\alpha=3/4$
give a counterexample in that range.
\end{proposition}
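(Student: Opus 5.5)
The plan is to write $f(e^{i\theta})-f(0)$ as a radial integral of $f'$ and to control $f'$ through the integral-mean bound \eqref{eq:mean-growth}. For a single $f\in S(p,q,\alpha)$, put $g=f'\in H(p,q,\alpha)$. Then for $0\le t<1$,
\[
 f(te^{i\theta})-f(0)=\int_0^t g(\rho e^{i\theta})e^{i\theta}\,d\rho .
\]

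\emph{The norm estimate for $1\le p\le\infty$.} Minkowski's integral inequality in $\theta$ gives
\[
 M_p(f,t)\le |f(0)|+\int_0^t M_p(g,\rho)\,d\rho .
\]
By \eqref{eq:mean-growth}, $M_p(g,\rho)\le(1-\rho)^{-\alpha}\|g\|_{H(p,q,\alpha)}$. Since $\int_0^1(1-\rho)^{-\alpha}\,d\rho=1/(1-\alpha)$ for $\alpha<1$, we obtain the displayed bound after taking the supremum over $t$. This part is routine.

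\emph{Compactness for $1\le p\le\infty$.} I would use the dilation $C_\rho$ on the source. By Lemma~\ref{lem:F-dilations}, $C_\rho$ is compact on $S(p,q,\alpha)$. Moreover, the inclusion composed with $C_\rho$ factors through functions analytic on a larger disk, so it is compact into $H^p$. It then suffices to show $\sup_{\|f\|\le1}\|f-C_\rho f\|_{H^p}\to0$. Write $h=f-C_\rho f$; it satisfies $h(0)=0$ and $h'=g-\rho g(\rho\,\cdot)$. Split the radial integral at $R<1$.
\begin{itemize}
 \item On $[0,R]$, $|h'|$ is uniformly small by the last assertion of Lemma~\ref{lem:F-dilations}.
 \item On $[R,1)$, use $M_p(h',\tau)\le M_p(g,\tau)+M_p(g,\rho\tau)\le 2(1-\tau)^{-\alpha}$, again by monotonicity of means and \eqref{eq:mean-growth}. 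This tail is at most $2(1-R)^{1-\alpha}/(1-\alpha)$, uniformly in $f$ and $\rho$.
\end{itemize}
Choosing $R$ first and then $\rho$ gives operator-norm convergence. Compactness follows because compact operators are closed (Lemma~\ref{lem:essential-test}).

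\emph{The case $0<p<1$.} This is the main obstacle, since Minkowski fails for quasi-norms. My first attempt is to avoid the radial integral altogether. For $p<1$ and $q\le\infty$, $H(p,q,\alpha)\subset H(p,\infty,\alpha)$, so $M_p(f',t)\le C(1-t)^{-\alpha}$. The classical Hardy–Littlewood estimate $M_p(F,t)$ versus $M_p(F',t)$ for $p<1$ would then give
\[
 M_p(f,r)^p\le |f(0)|^p+C\int_0^r (1-t)^{p-1}M_p(f',t)^p\,dt .
\]
Alternatively one can pass through the embedding into $H(p',q',\cdot)$ spaces. Both routes need $(1-t)^{p-1-\alpha p}$ to be integrable, i.e. $\alpha<1$. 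I am fairly confident of this inequality, which goes back to Flett, Hardy and Littlewood; I would cite \cite{Duren1970}, or prove it via the maximal radial function.

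With this substitute inequality in hand, the same $R$-splitting with $p$-th powers yields uniform smallness of $\|f-C_\rho f\|_{H^p}^p$. That gives compactness for every $0<p\le\infty$ (for $p=\infty$ the first argument already applies).

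\emph{The counterexample.} This remains to be verified. $\phi\equiv1$ and $\varphi=\operatorname{id}$ are analytic and meet the hypotheses of \cite[Theorem~3.1]{Borgohain2026}, since the parameter constraints are assumed. The asserted integral is $2\pi\sup_t(1-t^2)^{-1/2}=\infty$, exactly as computed before the statement, while the operator is bounded and even compact by the first part. The choice $p=q=2$, $\alpha=3/4$ satisfies $1/p+1/q=1$ and $\alpha+1/p=5/4>1$.
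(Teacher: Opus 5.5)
Your norm bound for $1\le p\le\infty$ is the paper's argument. For compactness you take a different route: you approximate the inclusion in operator norm by the compact maps $f\mapsto C_\rho f$ and split the radial integral of $(f-C_\rho f)'$ at $R$. The paper instead extracts a locally convergent subsequence (Montel plus Fatou) and applies a tail estimate, integrated from $r_J$, to the difference from its limit. Both use the same two ingredients: uniform convergence on $|z|\le R$, and the uniform tail $\int_R^1(1-t)^{-\alpha}\,dt$. So this part is fine.

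The case $0<p<1$ needs a correction. The inequality you wrote,
\[
 M_p(f,r)^p\le |f(0)|^p+C\int_0^r (1-t)^{p-1}M_p(f',t)^p\,dt ,
\]
is false uniformly in $r$. For $f(z)=z^{N+1}/(N+1)$ and $r=1/2$, the left side exceeds the integral by a factor at least $(Np+1)(N+1)^{-p}\to\infty$.

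What is true is the global estimate
\[
 \|f\|_{H^p}^p\le C\Bigl(|f(0)|^p+\int_0^1(1-t)^{p-1}M_p(f',t)^p\,dt\Bigr),
\]
and your splitting only needs this, applied to $h=f-C_\rho f$. It is a one-sided Littlewood--Paley type inequality, valid for $0<p\le 2$ and false for $p>2$ (lacunary series show this). It is not a two-sided Hardy--Littlewood comparison of $M_p(f,\cdot)$ and $M_p(f',\cdot)$; at this endpoint only this direction survives.

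Rather than cite it with hedged confidence, prove it the way the paper does:
\begin{enumerate}
 \item Put $A_j(\theta)=\sup_{r_j\le u\le r_{j+1}}|f'(ue^{i\theta})|$.
 \item Poisson domination of $|f'|^p$ on the circle of radius $r_{j+2}$ gives $\int A_j^p\,dm\le C M_p(f',r_{j+2})^p$.
 \item Subadditivity of the $p$th power then yields $\|f\|_{H^p}^p\le M_p(f,r_J)^p+C\sum_{j\ge J}\delta_j^pM_p(f',r_{j+2})^p$.
 \item Monotonicity of means bounds this sum by the integral above.
\end{enumerate}
The radii $r_{j+2}$ lie beyond the segment being integrated, which is why no version truncated at $r$ holds. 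With this estimate, either your $R$-splitting or the paper's tail bound $C\|f'\|^p\sum_{j\ge J}2^{-jp(1-\alpha)}$ gives compactness.

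For the counterexample in the general range, the exponent $1/2$ is specific to $p=q=2$, $\alpha=3/4$. In general the quantity is $2\pi\sup_t(1-t^2)^{-(\alpha p+1-p)}$. It is infinite because $\alpha p+1-p=p(\alpha+1/p-1)>0$.
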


\begin{proof}
For $1\le p\le\infty$, radial integration and Minkowski's inequality
(the supremum inequality when $p=\infty$) give
\[
 M_p(f,r)\le |f(0)|+\int_0^rM_p(f',t)\,dt
 \le |f(0)|+\|f'\|_{H(p,q,\alpha)}\int_0^r(1-t)^{-\alpha}\,dt.
\]
This proves the stated norm bound. Integrating instead from $r_J$
gives
$\|f\|_{H^p}\le M_p(f,r_J)+(1-\alpha)^{-1}
2^{-J(1-\alpha)}\|f'\|_{H(p,q,\alpha)}$.

For $0<p<1$, put
$A_j(\theta)=\sup_{r_j\le u\le r_{j+1}}|f'(ue^{i\theta})|$.
The Poisson inequality on the circle of radius $r_{j+2}$ gives
\[
 \int_0^{2\pi}A_j(\theta)^p\,\frac{d\theta}{2\pi}
       \le C M_p(f',r_{j+2})^p.
\]
Indeed, for $u\in[r_j,r_{j+1}]$, the kernels $P_{u/r_{j+2}}$
are bounded by a fixed multiple of $P_{r_{j+1}/r_{j+2}}$:
their distances from $1$ are comparable. This also holds for the
initial interval, whose ratios lie in $[0,2/3]$.
The radial increment over $[r_j,r_{j+1}]$, or over a subinterval,
is bounded by $\delta_j A_j(\theta)/2$. Subadditivity of the $p$th
power, followed by \eqref{eq:mean-growth}, therefore yields
\[
 \|f\|_{H^p}^p\le M_p(f,r_J)^p+
 C\|f'\|_{H(p,q,\alpha)}^p\sum_{j\ge J}2^{-jp(1-\alpha)}
 \le M_p(f,r_J)^p+C_{p,\alpha}2^{-Jp(1-\alpha)}
                      \|f'\|_{H(p,q,\alpha)}^p.
\]
For radii below $r_J$, the first term suffices by monotonicity of
integral means. Taking $J=0$ also proves boundedness.

For every $p$, a bounded source sequence has a locally uniformly
convergent subsequence with limit in the source, by
Lemma~\ref{lem:space-basics}. Apply the appropriate tail estimate to
the difference from its limit. For fixed $J$ the integral mean at
$r_J$ tends to zero, while the remaining term tends uniformly to zero
as $J\to\infty$. Thus the subsequence converges in $H^p$, proving
compactness.

Under the additional hypotheses, $\alpha p+1-p>0$. For the indicated
symbols, the proposed necessary quantity is
$2\pi\sup_{t<1}(1-t^2)^{-(\alpha p+1-p)}=\infty$.
\end{proof}

The bound $\alpha<1$ is sharp for compactness. Directly from the
mixed-norm definition, $\|z^k\|_{H(p,q,\alpha)}\approx(k+1)^{-\alpha}$.
Thus $(k+1)^{\alpha-1}z^k$ is bounded in $S(p,q,\alpha)$ and tends
locally to zero, but its $H^p$ norm is $(k+1)^{\alpha-1}$.
At $\alpha=1$ this excludes compactness; at $\alpha>1$ it excludes
boundedness.

The integral condition in \cite[Theorem~3.1]{Borgohain2026} does
characterize a stronger property: the image of the unit ball admits a
single pointwise majorant whose radial $L^p$ means are uniformly bounded.
The majorant is a finite-valued Borel function; it is not required to
be analytic.

\begin{proposition}\label{prop:radial-majorant}
Let $0<p<\infty$, $0<q\le\infty$ and $\alpha>0$ satisfy
$\alpha+1/p>1$. Let $\phi\in H(\mathbb D)$ and let $\varphi$ be an analytic
self-map of $\mathbb D$. The following conditions are equivalent:
\begin{enumerate}
 \renewcommand{\labelenumi}{\textup{(\arabic{enumi})}}
 \setlength{\itemsep}{2pt}
 \setlength{\parskip}{0pt}
 \item There is a nonnegative Borel function $G$ on $\mathbb D$ such that
 \[
  \sup_{0\le r<1}\int_0^{2\pi}G(re^{i\theta})^p\,d\theta<\infty,
 \]
 and $|D_{\phi,\varphi}^{0}f(z)|\le G(z)$ for every $z\in\mathbb D$ and
 every $f\in S(p,q,\alpha)$ with $\|f\|_{S(p,q,\alpha)}\le1$.
 \item The symbols satisfy
 \[
  \sup_{0\le r<1}\int_0^{2\pi}
  \frac{|\phi(re^{i\theta})|^p}
  {(1-|\varphi(re^{i\theta})|^2)^{\alpha p+1-p}}\,d\theta<\infty.
 \]
\end{enumerate}
Either condition implies that
$D_{\phi,\varphi}^{0}:S(p,q,\alpha)\to H^p$ is bounded. The converse fails in general,
even when the operator is compact.
\end{proposition}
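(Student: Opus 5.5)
The plan is to reduce both conditions to a sharp two-sided estimate for the point evaluations on the unit ball of $S(p,q,\alpha)$. Put
\[
 K(w)=\sup\{|f(w)|:\|f\|_{S(p,q,\alpha)}\le1\},\qquad w\in\mathbb D .
\]
The key claim is that, under $\alpha+1/p>1$,
\[
 K(w)\approx(1-|w|)^{1-\alpha-1/p},
\]
with constants independent of $w$. Granting this, the natural majorant is $G_0(z)=|\phi(z)|K(\varphi(z))$.

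\emph{Measurability and minimality of $G_0$.} $K$ is a supremum of continuous functions, so it is lower semicontinuous and hence Borel. It is finite by the upper bound. Therefore $G_0$ is a nonnegative Borel function. By construction $|D^0_{\phi,\varphi}f|\le G_0$ on the unit ball, and any admissible $G$ in (1) satisfies $G\ge G_0$ pointwise.

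\emph{Equivalence of (1) and (2).} Since $G\ge G_0$, condition (1) holds if and only if $G_0$ has uniformly bounded circular $L^p$ integrals. By the two-sided estimate,
\[
 G_0(z)^p\approx|\phi(z)|^p(1-|\varphi(z)|)^{-(\alpha p+1-p)}.
\]
Since $1-|\varphi|\approx1-|\varphi|^2$, this is exactly condition (2).

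\emph{Upper bound for $K$.} The growth estimate \eqref{eq:mean-growth} gives $M_p(f',t)\le(1-t)^{-\alpha}\|f'\|_{H(p,q,\alpha)}$. The standard pointwise bound
\[
 M_\infty(g,r)\le C(1-r)^{-1/p}M_p\bigl(g,(1+r)/2\bigr)
\]
(subharmonicity, valid for all $0<p<\infty$) then yields
\[
 |f'(z)|\le C(1-|z|)^{-\alpha-1/p}\|f\|_{S(p,q,\alpha)}.
\]
Integrate $f'$ along the radius from $0$ to $w$ and add $|f(0)|\le\|f\|_{S(p,q,\alpha)}$. Because $\alpha+1/p>1$, the integral $\int_0^{|w|}(1-t)^{-\alpha-1/p}\,dt$ is at most $C(1-|w|)^{1-\alpha-1/p}$.

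\emph{Lower bound for $K$.} Fix $b>0$ and set $c=b+\alpha+1/p-1>0$. Take
\[
 f_w(z)=\frac{(1-|w|^2)^b}{(1-\overline wz)^c}.
\]
Then $|f_w(0)|\le1$, and $|f_w'(z)|\approx(1-|w|^2)^b|1-\overline wz|^{-c-1}$. Since $(c+1)p>1$, the circle estimate of Lemma~\ref{lem:circle-area-kernels} gives
\[
 M_p(f_w',t)\le C(1-|w|^2)^b(1-|w|t)^{1/p-c-1}.
\]
The exponent satisfies $c+1-1/p=b+\alpha>\alpha$. Hence
\[
 \int_0^1(1-t)^{\alpha q-1}(1-|w|t)^{-(b+\alpha)q}\,dt\approx(1-|w|)^{-bq},
\]
with the obvious supremum version when $q=\infty$. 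This shows that $\|f_w\|_{S(p,q,\alpha)}$ is bounded independently of $w$. On the other hand, $|f_w(w)|\approx(1-|w|)^{b-c}=(1-|w|)^{1-\alpha-1/p}$, which gives the lower bound. The only real care needed in the whole argument is this computation, where the choice $c+1-1/p>\alpha$ must be respected uniformly in $w$.

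\emph{Consequences.} If (1) holds, then for $\|f\|\le1$,
\[
 \int_0^{2\pi}|D^0_{\phi,\varphi}f(re^{i\theta})|^p\,d\theta\le\int_0^{2\pi}G(re^{i\theta})^p\,d\theta,
\]
uniformly in $r$. So the operator is bounded into $H^p$.

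For the failure of the converse, take the example of Proposition~\ref{prop:compact-inclusion}: $\phi\equiv1$, $\varphi=\mathrm{id}$, $p=q=2$, $\alpha=3/4$. Here $\alpha+1/p>1$ and the operator is compact. However, the quantity in (2) equals $2\pi\sup_r(1-r^2)^{-(\alpha p+1-p)}=\infty$, since $\alpha p+1-p=1/2>0$. Thus neither (1) nor (2) holds for a compact operator.
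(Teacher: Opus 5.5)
Your proof is correct and follows the paper's argument. Both rest on the same two-sided point-evaluation estimate $\sup_{\|f\|_{S(p,q,\alpha)}\le1}|f(w)|\approx(1-|w|^2)^{1-\alpha-1/p}$: the upper bound comes from the Poisson/mean-growth bound plus radial integration, and the lower bound from the kernels $(1-|w|^2)^b(1-\overline wz)^{-c}$ (the paper takes $b=1$). The counterexample from Proposition~\ref{prop:compact-inclusion} is also the same. The only difference is that you route through the minimal majorant $|\phi|\,K\circ\varphi$, which needs your lower-semicontinuity remark for measurability, whereas the paper uses the explicit continuous majorant $C|\phi|(1-|\varphi|^2)^{1-\alpha-1/p}$.
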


\begin{proof}
Set $b=\alpha+1/p>1$. We first establish the point-evaluation estimate
\begin{equation}\label{eq:point-evaluation}
 \sup_{\|f\|_{S(p,q,\alpha)}\le1}|f(w)|
 \approx\frac{1}{(1-|w|^2)^{b-1}},
 \qquad w\in\mathbb D.
\end{equation}
For the upper bound, take $g\in H(p,q,\alpha)$, $z\in\mathbb D$ and
$R=(1+|z|)/2$. The Poisson estimate for the subharmonic function $|g|^p$
(see \cite[Chapter~1]{Duren1970}) and \eqref{eq:mean-growth} give
\[
 |g(z)|^p\le\frac{R+|z|}{R-|z|}M_p(g,R)^p
 \le C\frac{\|g\|_{H(p,q,\alpha)}^p}{(1-|z|)^{\alpha p+1}}.
\]
Applying this to $g=f'$ and integrating along a radius, we obtain
\[
 |f(w)| \le |f(0)|+C\|f'\|_{H(p,q,\alpha)} \int_0^{|w|}(1-t)^{-b}\,dt \le
 C\|f\|_{S(p,q,\alpha)}(1-|w|)^{1-b} \le
 C\frac{\|f\|_{S(p,q,\alpha)}}{(1-|w|^2)^{b-1}}.
\]
The last inequality uses $1-|w|^2\le2(1-|w|)$.

For the lower bound, consider
\[
 f_w(z)=\frac{1-|w|^2}{(1-\overline w z)^b},
 \qquad
 f_w'(z)=\frac{b\overline w(1-|w|^2)}{(1-\overline w z)^{b+1}}.
\]
The analytic power is chosen to equal $1$ at $z=0$. We verify directly
that $\sup_{w\in\mathbb D}\|f_w\|_{S(p,q,\alpha)}<\infty$; these are the
usual mixed-norm kernel estimates, as in
\cite[Proposition~2]{Arevalo2015} and \cite{ShieldsWilliams1971}.
By Lemma~\ref{lem:circle-area-kernels}, the circle integral
has the required bound for every exponent greater than one.
With $\gamma=p(b+1)>1$, this gives
$M_p(f_w',r)\le C(1-|w|^2)(1-r|w|)^{-\alpha-1}$.
Suppose first that $q<\infty$. Since
$1-r|w|\ge((1-r)+(1-|w|))/2$, the substitution
$u=(1-r)/(1-|w|)$ gives
\[
 \begin{aligned}
 \|f_w'\|_{H(p,q,\alpha)}^q
 &\le C(1-|w|^2)^q\int_0^1
     \frac{(1-r)^{\alpha q-1}}{(1-r|w|)^{(\alpha+1)q}}\,dr\\
 &\le C\left(\frac{1-|w|^2}{1-|w|}\right)^q
       \int_0^\infty\frac{u^{\alpha q-1}}{(1+u)^{(\alpha+1)q}}\,du
 \le C.
 \end{aligned}
\]
The last integral is finite because $\alpha q>0$ and $q>0$.
For $q=\infty$, the same integral-mean estimate yields
\[
 \begin{aligned}
 \|f_w'\|_{H(p,\infty,\alpha)}
 &\le C\sup_{0\le r<1}
       \left(\frac{1-r}{1-r|w|}\right)^\alpha
       \frac{1-|w|^2}{1-r|w|}
 \le 2C,
 \end{aligned}
\]
since $1-r\le1-r|w|$ and $1-|w|^2\le2(1-r|w|)$.
Also $|f_w(0)|=1-|w|^2\le1$, so the asserted uniform bound follows.
As $f_w(w)=(1-|w|^2)^{1-b}$, normalizing $f_w$ by a fixed upper bound
for its $S(p,q,\alpha)$ quasi-norm proves the lower estimate in
\eqref{eq:point-evaluation}.

Assume (1). At each fixed $z\in\mathbb D$, take the supremum over the
unit ball in the pointwise inequality. By \eqref{eq:point-evaluation},
\[
 G(z)\ge |\phi(z)|\sup_{\|f\|_{S(p,q,\alpha)}\le1}|f(\varphi(z))|
 \ge c\frac{|\phi(z)|}{(1-|\varphi(z)|^2)^{b-1}}.
\]
Raising this inequality to the power $p$ and integrating on each circle
proves (2), since $p(b-1)=\alpha p+1-p$.

Conversely, assume (2) and let $C$ be the upper-bound constant in
\eqref{eq:point-evaluation}. The continuous nonnegative function
$G(z)=C|\phi(z)|(1-|\varphi(z)|^2)^{1-b}$ satisfies (1).
For arbitrary $f\in S(p,q,\alpha)$, homogeneity then gives
\[
 \|D_{\phi,\varphi}^{0}f\|_{H^p}^p
 \le \frac{\|f\|_{S(p,q,\alpha)}^p}{2\pi}
       \sup_{0\le r<1}\int_0^{2\pi}G(re^{i\theta})^p\,d\theta.
\]
This proves boundedness. Finally,
Proposition~\ref{prop:compact-inclusion}, with $p=q=2$ and $\alpha=3/4$,
shows that compactness does not imply (2), and hence does not imply (1).
\end{proof}

\section{Operators into Hardy spaces}\label{sec:hardy-targets}

We first establish the sampling and synthesis estimates for the
fixed partition. They lead to the Hardy-target criteria and their
uniform tails. The finite-index antecedent is identified in
Subsection~\ref{subsec:intro-scope}.

For fixed $0<p,q\le\infty$ and $0<r<\infty$, set
\[
 P=\begin{cases}
       \infty,&p\le r,\\
       p/(p-r),&r<p<\infty,\\
       1,&p=\infty,
    \end{cases}
 \qquad
 Q=\begin{cases}
       \infty,&q\le r,\\
       q/(q-r),&r<q<\infty,\\
       1,&q=\infty.
    \end{cases}
\]
For every nonnegative sequence $d=\{d_{j,l}\}$, one has
\begin{equation}\label{eq:sequence-duality}
 \sup_{\substack{c\text{ finitely supported}\\
                  \|c\|_{\ell^{p,q}}\le1}}
       \sum_{j,l}d_{j,l}|c_{j,l}|^r
   =\|d\|_{\ell^{P,Q}}.
\end{equation}
For prescribed row norms $u_j=\|\{c_{j,l}\}_l\|_{\ell^p}$, the
supremum in row $j$ is $u_j^r\|\{d_{j,l}\}_l\|_{\ell^P}$.
Use H\"older's inequality for $r<p<\infty$, concentrate mass at one
coordinate for $p\le r$, and sum directly for $p=\infty$.
Applying the same argument to the row norms under $\|u\|_{\ell^q}\le1$
proves the identity. Finite truncations suffice when an exponent or
the right-hand side is infinite.

For reference, the sequence norm occurring in the theorems below has the
following explicit form. For a nonnegative array $d$,
\begin{equation}\label{eq:mixed-four-cases}
 \|d\|_{\ell^{P,Q}}=
 \begin{cases}
  \displaystyle\sup_{j,l}d_{j,l},&p\le r,\ q\le r,\\[1.5mm]
  \displaystyle\left(\sum_j(\sup_l d_{j,l})^Q\right)^{1/Q},
       &p\le r<q,\\[1.5mm]
  \displaystyle\sup_j\left(\sum_l d_{j,l}^P\right)^{1/P},
       &q\le r<p,\\[1.5mm]
  \displaystyle\left[\sum_j\left(\sum_l d_{j,l}^P\right)^{Q/P}
                 \right]^{1/Q},&r<\min\{p,q\}.
 \end{cases}
\end{equation}
In this display, $P=p/(p-r)$ when $r<p<\infty$, and $P=1$ when
$p=\infty$; similarly, $Q=q/(q-r)$ when $r<q<\infty$, and $Q=1$
when $q=\infty$. Thus the powers in the sums are always finite.
The first case tests one rectangle at a time. The second also sums over
radial scales, the third sums within each scale, and the fourth retains
both sums. The same formulas apply to a tail by starting the outer
index at $j=J$.

\begin{lemma}\label{lem:discrete-estimates}
Let $0<p,q\le\infty$, $\alpha>0$ and $n\ge0$ be an integer. Then
\[
 \left\|\left\{\delta_j^{\alpha+1/p+n}
          \sup_{w\in Q_{j,l}}|f^{(n)}(w)|\right\}_{j,l}\right\|_{\ell^{p,q}}
 \le C\|f\|_{H(p,q,\alpha)}.
\]
Moreover, put $\tau=\min\{1,p\}$, with $\tau=1$ when $p=\infty$, and
choose an integer $M>\alpha+1/\tau$. For every finitely supported sequence
$c=\{c_{j,l}\}$, the function
\[
 F_c(z)=\sum_{j,l}c_{j,l}
       \frac{\delta_j^{M-\alpha-1/p}}{(1-\overline{a_{j,l}}z)^M}
\]
satisfies $\|F_c\|_{H(p,q,\alpha)}\le C\|c\|_{\ell^{p,q}}$.
If $c_{j,l}=0$ for $j<J$ and $\|c\|_{\ell^{p,q}}\le1$, then, for each
$0<R<1$ and each integer $m\ge0$,
\[
 \max_{|z|\le R}|F_c^{(m)}(z)|
 \le C_{R,m}\,2^{-J(M-\alpha-1/\tau)}.
\]
All constants are independent of the support and the coefficients of $c$.
\end{lemma}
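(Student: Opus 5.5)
The plan is to prove the three assertions separately: the sampling inequality by subharmonicity on enlarged cells, the synthesis bound by integral-mean estimates followed by a discrete Schur-type argument in the radial index, and the interior tail bound by crude coefficient counting.

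\emph{Sampling.} For each cell $Q_{j,l}$ let $\widetilde Q_{j,l}$ be the enlarged polar rectangle. It uses radii in $[r_{j-1},r_{j+2}]$ (with $r_{-1}:=0$) and the two neighbouring angular sectors. These enlarged cells have bounded overlap within each row, and $\widetilde Q_{j,l}$ contains a disc of radius $c\delta_j$ about every $w\in Q_{j,l}$. For $p<\infty$, Cauchy's estimate and subharmonicity of $|f|^p$ give
\[
 \sup_{w\in Q_{j,l}}|f^{(n)}(w)|^p\le C\delta_j^{-np-2}\int_{\widetilde Q_{j,l}}|f|^p\,dA .
\]
Summing over $l$ and using bounded overlap, we integrate in polar coordinates. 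The angular integration yields $M_p(f,t)^p$, and monotonicity of integral means bounds each radius $t\le r_{j+2}$ by $M_p(f,r_{j+2})$. This gives $\bigl(\sum_l\sup_{Q_{j,l}}|f^{(n)}|^p\bigr)^{1/p}\le C\delta_j^{-n-1/p}M_p(f,r_{j+2})$. After multiplying by $\delta_j^{\alpha+1/p+n}$, the $j$th row norm is at most $C\delta_j^{\alpha}M_p(f,r_{j+2})$. Then $\delta_j^{\alpha q}M_p(f,r_{j+2})^q$ is dominated, again by monotonicity, by a constant times $\alpha q\int_{r_{j+2}}^{r_{j+3}}(1-t)^{\alpha q-1}M_p(f,t)^q\,dt$. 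Summing in $j$ gives the claim. The case $q=\infty$ follows with a supremum. The case $p=\infty$ is the same argument with the Cauchy estimate on a disc of radius $c\delta_j$ directly.

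\emph{Synthesis.} Write $F_c=\sum_jF_j$, where $F_j$ collects row $j$, and let $u_j=\|\{c_{j,l}\}_l\|_{\ell^p}$. The first step is the row estimate
\[
 M_p(F_j,t)\le C\,u_j\,\delta_j^{-\alpha}
   \Bigl(\frac{\delta_j}{\delta_j+1-t}\Bigr)^{M-1/\tau}.
\]
For $p\le1$ this follows from $|F_j|^p\le\sum_l|c_{j,l}|^p\delta_j^{p(M-\alpha)-1}|1-\overline{a_{j,l}}z|^{-Mp}$ together with the circle estimate of Lemma~\ref{lem:circle-area-kernels}; this needs $Mp>1$, which holds since $M>1/p$. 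For $1<p<\infty$ I would apply H\"older in the form $|\sum_lc_lk_l|\le(\sum_l|c_l|^p|k_l|)^{1/p}(\sum_l|k_l|)^{1/p'}$, where $k_l$ is the $l$th kernel. The discrete bound
\[
 \sum_l|1-\overline{a_{j,l}}z|^{-M}\le C\delta_j^{-1}(\delta_j+1-|z|)^{1-M}
\]
is a Riemann-sum version of the circle estimate. It is followed by the circle estimate for the single kernel $|k_l|$. The case $p=\infty$ uses the discrete bound alone.

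The second step sums over rows. Take $1-t\approx\delta_i$ and combine rows by Minkowski's inequality ($p\ge1$) or by the $p$-triangle inequality ($p<1$). This reduces the $H(p,q,\alpha)$ quasi-norm, up to discretizing the $t$-integral into dyadic pieces, to the $\ell^q$ norm of $v_i=\sum_jK(i,j)u_j$. Here
\[
 K(i,j)=(\delta_i/\delta_j)^{\alpha}\min\{1,\delta_j/\delta_i\}^{M-1/\tau}.
\]
This kernel decays geometrically in $|i-j|$: for $j\le i$ because $\alpha>0$, and for $j>i$ because $M-1/\tau>\alpha$. Hence it is bounded on $\ell^q$ for every $0<q\le\infty$, using the $q$-triangle inequality when $q<1$. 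I expect this step to be the main obstacle. Getting the row estimate with exactly the exponent $M-1/\tau$ for $p>1$, and making the two regimes of the kernel balance, is where the hypothesis $M>\alpha+1/\tau$ is consumed.

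\emph{Tail.} For $|z|\le R$ we have $|1-\overline a z|\ge1-R$, so every derivative of order $m$ of each kernel is bounded by $C_{R,m}$. Thus
\[
 |F_c^{(m)}(z)|\le C_{R,m}\sum_{j\ge J}\delta_j^{M-\alpha-1/p}\sum_l|c_{j,l}| .
\]
Row $j$ has $2^{j+3}$ entries. For $p\ge1$, H\"older gives $\sum_l|c_{j,l}|\le C\,2^{j(1-1/p)}u_j$, and for $p<1$ the embedding $\ell^p\subset\ell^1$ gives $\sum_l|c_{j,l}|\le u_j$. In either case the $j$th term is at most $C\delta_j^{M-\alpha-1/\tau}u_j$. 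Since $u_j\le1$, summing the geometric series from $j=J$ yields $C_{R,m}2^{-J(M-\alpha-1/\tau)}$. All constants come from kernel and row estimates and are therefore independent of the support of $c$.
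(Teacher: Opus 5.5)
Your proposal is correct and follows the paper's proof essentially step for step. The shared steps are: cellwise Cauchy and submean estimates summed row by row against $M_p(f,r_{j+2})$; the same row bound $M_p(F_j,t)\le Cu_j\delta_j^{-\alpha}\bigl(\delta_j/(\delta_j+1-t)\bigr)^{M-1/\tau}$, obtained by subadditivity for $p\le1$ and by H\"older with the Riemann-sum kernel bound for $p>1$; a geometrically decaying convolution in the radial index; and the same coefficient count for the interior tail.

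Two harmless corrections are needed.
\begin{itemize}
\item For $p<1$, the $p$-triangle inequality gives $\bigl(\delta_i^\alpha M_p(F_c,r_{i+1})\bigr)^p\le C\sum_jK(i,j)^pu_j^p$. It does not give a bound by $\sum_jK(i,j)u_j$. So the convolution should be run with kernel $K^p$ on $\ell^{q/p}$, as the paper does, using Young for $q/p\ge1$ and subadditivity for $q/p<1$.
\item For $j=0$, your enlarged polar cells do not contain discs about points near the origin. Use Euclidean $c\delta_j$-neighbourhoods as the paper does, or simply $\{|z|<r_2\}$ for that single row.
\end{itemize}
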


\begin{proof}
Suppose first that $p<\infty$. Let $Q_{j,l}^*$ be the Euclidean
$\delta_j/16$-neighborhood of $Q_{j,l}$. It lies in $\mathbb D$.
Cauchy's formula, followed by the submean inequality for $|f|^p$, gives
\[
 \sup_{w\in Q_{j,l}}|f^{(n)}(w)|^p
 \le C\delta_j^{-np-2}\int_{Q_{j,l}^*}|f(z)|^p\,dA(z).
\]
Indeed, one can first use Cauchy's
formula on the circle of radius $\delta_j/64$ about $w$, and then bound
$|f|^p$ on that circle by its averages over disks of radius
$\delta_j/64$, all contained in $Q_{j,l}^*$. At each fixed $j$ these
neighborhoods have bounded overlap and lie in a radial band of width at
most $C\delta_j$, inside $|z|<r_{j+2}$. Consequently,
\[
 \sum_l\left(\delta_j^{\alpha+1/p+n}
             \sup_{Q_{j,l}}|f^{(n)}|\right)^p
 \le C\delta_j^{\alpha p}M_p(f,r_{j+2})^p.
\]
If $p=\infty$, Cauchy's formula gives instead
$\delta_j^{\alpha+n}\sup_{Q_{j,l}}|f^{(n)}|
 \le C\delta_j^\alpha M_\infty(f,r_{j+2})$.
For $q<\infty$, monotonicity of integral means shows that
\[
 \sum_{j\ge0}\delta_j^{\alpha q}M_p(f,r_{j+2})^q
 \le C\alpha q\sum_{j\ge0}\int_{r_{j+2}}^{r_{j+3}}
          (1-t)^{\alpha q-1}M_p(f,t)^q\,dt
 \le C\|f\|_{H(p,q,\alpha)}^q.
\]
For $q=\infty$, the same conclusion follows directly from the definition.
This proves the first estimate.

We prove the estimate for $F_c$ next. Set
$u_j=\|\{c_{j,l}\}_l\|_{\ell^p}$ and denote the sum over one row by
$F_j$. Fix $0\le t<1$ and put $d=1-t$. The angular spacing of the points
$a_{j,l}$ is comparable to $\delta_j$. The elementary circle-kernel
estimate used in Proposition~\ref{prop:radial-majorant} gives, for
$0<p\le1$,
\[
 M_p(F_j,t)^p
 \le C u_j^p\delta_j^{p(M-\alpha)-1}(d+\delta_j)^{1-Mp}.
\]
Here $Mp>1$ by the choice of $M$.
For $1<p<\infty$, write
$k_l(\theta)=|1-\overline{a_{j,l}}te^{i\theta}|^{-M}$. Since $M>1$,
\[
 \sup_\theta\sum_l k_l(\theta)
 \le C\delta_j^{-1}(d+\delta_j)^{1-M},
 \qquad
 \int_0^{2\pi}k_l(\theta)\,\frac{d\theta}{2\pi}
 \le C(d+\delta_j)^{1-M}.
\]
These bounds follow by comparison with
$(d+\delta_j+|\theta-\arg a_{j,l}|)^{-M}$, with angular distance taken
modulo $2\pi$. Applying H\"older's inequality to the sum in $l$ and
then integrating yields
\[
 M_p(F_j,t)
 \le C u_j\delta_j^{M-\alpha-1}(d+\delta_j)^{1-M}.
\]
The same estimate for $p=\infty$ follows from the bound for
$\sum_l k_l$. Thus, in all cases,
\[
 M_p(F_j,t)
 \le C u_j\delta_j^{-\alpha}
       \left(\frac{\delta_j}{d+\delta_j}\right)^{M-1/\tau}.
\]
Using $M_p(\sum_jF_j,t)^\tau\le\sum_jM_p(F_j,t)^\tau$ and taking
$t=r_{k+1}$, we obtain
\[
 \bigl(\delta_k^\alpha M_p(F_c,r_{k+1})\bigr)^\tau \le C\sum_{j\le
 k}2^{-(k-j)\alpha\tau}u_j^\tau +C\sum_{j>k}2^{-(j-k)(M-\alpha-1/\tau)\tau}u_j^\tau.
\]
Both coefficient sequences decay geometrically. Convolution with either
is bounded on $\ell^{q/\tau}$ by Young's inequality for $q/\tau\ge1$,
by subadditivity for $q/\tau<1$, and by summability for $q=\infty$.
Finally, splitting the defining mixed-norm integral over
$[r_k,r_{k+1})$ and using monotonicity of integral means gives
\[
 \|F_c\|_{H(p,q,\alpha)}
 \le C\left\|\left\{\delta_k^\alpha
                       M_p(F_c,r_{k+1})\right\}_k\right\|_{\ell^q}
 \le C\|u\|_{\ell^q}.
\]
This argument includes the supremum definition when $q=\infty$.

To prove the last assertion, the finite-dimensional H\"older inequality
for $p>1$, and $\ell^p\subset\ell^1$ for $p\le1$, give
\[
 \sum_l|c_{j,l}|
 \le C\delta_j^{-(1/\tau-1/p)}u_j.
\]
On $|z|\le R$, the functions $(1-\overline{a_{j,l}}z)^{-M}$ and
their derivatives of order $m$ are bounded uniformly in $j,l$, with
bounds depending only on $R,m$ and $M$. Hence, if the coefficients
vanish for $j<J$,
\[
 \max_{|z|\le R}|F_c^{(m)}(z)|
 \le C_{R,m}\sum_{j\ge J}\delta_j^{M-\alpha-1/\tau}u_j
 \le C_{R,m}2^{-J(M-\alpha-1/\tau)},
\]
because $u_j\le1$. This completes the proof.
\end{proof}

\begin{lemma}\label{lem:measure-embedding}
Let $0<p,q\le\infty$, $0<r<\infty$, $\alpha>0$, and let $n\ge0$ be
an integer. For a finite positive Borel measure $\mu$ on $\mathbb D$,
put $d_{j,l}=\delta_j^{-r(n+\alpha+1/p)}\mu(Q_{j,l})$. Then
\begin{equation}\label{eq:measure-embedding}
 \sup_{\|f\|_{H(p,q,\alpha)}\le1}
       \int_{\mathbb D}|f^{(n)}(w)|^r\,d\mu(w)
 \approx\|d\|_{\ell^{P,Q}}.
\end{equation}
The constants are independent of $\mu$, and either side is allowed to
be infinite. For an integer $J\ge0$, suppose the tail
$\{d_{j,l}:j\ge J\}$ has finite positive $\ell^{P,Q}$ norm.
Then there is a finite sum $F_c$ as in
Lemma~\ref{lem:discrete-estimates}, with
$\|c\|_{\ell^{p,q}}\le1$ and $c_{j,l}=0$ for $j<J$, such that
\[
 \int_{\{|w|\ge r_J\}}|F_c^{(n)}(w)|^r\,d\mu(w)
 \ge c_0\|\{d_{j,l}:j\ge J\}\|_{\ell^{P,Q}},
\]
where $c_0>0$ depends only on the fixed parameters. The notation for the
tail means that the coordinates with $j<J$ are set equal to zero.
\end{lemma}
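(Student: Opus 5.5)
The proof splits into an upper bound by sampling and a lower bound by randomized synthesis, with the sequence duality \eqref{eq:sequence-duality} linking the two.

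\emph{Upper bound.} Partition the integral over the sets $Q_{j,l}$:
\[
 \int_{\mathbb D}|f^{(n)}|^r\,d\mu\le\sum_{j,l}\mu(Q_{j,l})\sup_{Q_{j,l}}|f^{(n)}|^r
 =\sum_{j,l}d_{j,l}\,|b_{j,l}|^r ,
\]
where $b_{j,l}=\delta_j^{n+\alpha+1/p}\sup_{Q_{j,l}}|f^{(n)}|$. The first estimate of Lemma~\ref{lem:discrete-estimates} gives $\|b\|_{\ell^{p,q}}\le C\|f\|_{H(p,q,\alpha)}$. Applying \eqref{eq:sequence-duality} to finite truncations of $b$ and passing to the limit by monotone convergence then bounds the left side of \eqref{eq:measure-embedding} by $C^r\|d\|_{\ell^{P,Q}}$. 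The same computation restricted to $\{|w|\ge r_J\}$ shows that only the rows $j\ge J$ contribute, which gives the upper tail version.

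\emph{Lower bound.} Here I use the kernel sums $F_c$ of Lemma~\ref{lem:discrete-estimates} together with Khintchine randomization. Fix a finitely supported $c$ with $\|c\|_{\ell^{p,q}}\le1$ and independent Rademacher signs $\varepsilon_{j,l}$. Replacing $c$ by $\varepsilon c$ leaves $\|F_{\varepsilon c}\|_{H(p,q,\alpha)}\le C$ for every choice of signs. Averaging and applying Khintchine's inequality pointwise,
\[
 \mathbb E\int|F_{\varepsilon c}^{(n)}|^r\,d\mu\approx\int\Bigl(\sum_{j,l}|c_{j,l}|^2|k_{j,l}^{(n)}(w)|^2\Bigr)^{r/2}d\mu(w),
\]
where $k_{j,l}=\delta_j^{M-\alpha-1/p}(1-\overline{a_{j,l}}z)^{-M}$. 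Restricting $w$ to $Q_{j,l}$ and keeping only the $(j,l)$ term of the square sum, the bound $|k_{j,l}^{(n)}(w)|\approx\delta_j^{-\alpha-1/p-n}$ on $Q_{j,l}$ (the computation in the proof of Lemma~\ref{lem:two-kernels}, using $|a_{j,l}|\ge1/4$) yields the lower bound $c\sum_{j,l}d_{j,l}|c_{j,l}|^r$. Some choice of signs achieves at least the average. Taking the supremum over $c$ and invoking \eqref{eq:sequence-duality} gives the reverse inequality in \eqref{eq:measure-embedding}, including the case where the right side is infinite.

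\emph{Tail statement.} Choose $c$ supported in $j\ge J$ with $\sum d_{j,l}|c_{j,l}|^r\ge\frac12\|\{d_{j,l}:j\ge J\}\|_{\ell^{P,Q}}$; this is possible because the tail norm is finite and positive and finite truncations suffice in \eqref{eq:sequence-duality}. The previous step already localized the integral to $\bigcup_{j\ge J}Q_{j,l}\subset\{|w|\ge r_J\}$, so after fixing a good choice of signs and absorbing them into $c$ the claimed bound follows with $c_0$ independent of $J$. The main obstacle is checking that the randomized lower bound holds with constants uniform in $j,l,J$ and valid for all $0<r<\infty$. The key observation is that Khintchine's inequality holds for every positive exponent $r$, and discarding the nonnegative cross terms inside the square function is always legitimate. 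The only genuine calculation is the two-sided derivative estimate for $k_{j,l}^{(n)}$ on $Q_{j,l}$, which needs no two-kernel separation here because no second term is present.
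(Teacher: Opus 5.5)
Your proposal is correct and follows the paper's proof essentially step for step. The upper bound comes from sampling via Lemma~\ref{lem:discrete-estimates} plus the duality \eqref{eq:sequence-duality}. The lower bound comes from Rademacher randomization of $F_c$ with Khintchine's inequality, retaining only the $(j,l)$ term on $Q_{j,l}$ and using the pointwise lower bound for the kernel derivatives there. The tail assertion is obtained exactly as in the paper: choose a near-extremal $c$ supported in $j\ge J$, restrict $\mu$ to $\{|w|\ge r_J\}$, and fix a sign choice that is at least the average.
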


\begin{proof}
For the upper bound, let
$b_{j,l}=\delta_j^{n+\alpha+1/p}\sup_{Q_{j,l}}|f^{(n)}|$.
Lemma~\ref{lem:discrete-estimates} and \eqref{eq:sequence-duality} give
\[
 \int_{\mathbb D}|f^{(n)}|^r\,d\mu
 \le\sum_{j,l}d_{j,l}|b_{j,l}|^r
 \le\|d\|_{\ell^{P,Q}}\|b\|_{\ell^{p,q}}^r
 \le C\|d\|_{\ell^{P,Q}}\|f\|_{H(p,q,\alpha)}^r.
\]
The sequence inequality extends from finite sums by monotone convergence.

For the reverse bound, choose $M$ as in Lemma~\ref{lem:discrete-estimates}
and write
$f_{j,l}(z)=\delta_j^{M-\alpha-1/p}(1-\overline{a_{j,l}}z)^{-M}$.
Direct differentiation and the geometry of $Q_{j,l}$ give
\[
 |f_{j,l}^{(n)}(w)|
 =\frac{M(M+1)\cdots(M+n-1)|a_{j,l}|^n
          \delta_j^{M-\alpha-1/p}}
        {|1-\overline{a_{j,l}}w|^{M+n}}
 \ge c\delta_j^{-n-\alpha-1/p},\qquad w\in Q_{j,l},
\]
where the empty product is $1$. The lower bound $|a_{j,l}|\ge1/4$ also
covers $n>0$ on the innermost row.

For a finitely supported $c$, let $\varepsilon_{j,l}$ be independent
random signs, taking the values $1$ and $-1$ with equal probability.
Khintchine's inequality \cite{Kahane1985}, followed by restriction to
one rectangle at a time, gives
\[
 \mathbb E\int_{\mathbb D} \left|\sum_{j,l}\varepsilon_{j,l}c_{j,l}
 f_{j,l}^{(n)}(w)\right|^r\,d\mu(w) \ge c\int_{\mathbb D}
 \left(\sum_{j,l}|c_{j,l}f_{j,l}^{(n)}(w)|^2\right)^{r/2} \,d\mu(w) \ge
 c\sum_{j,l}d_{j,l}|c_{j,l}|^r.
\]
The expectation is a finite average, so this interchange causes no
integrability issue. Every signed sum has $H(p,q,\alpha)$ quasi-norm
at most $C\|c\|_{\ell^{p,q}}$ by Lemma~\ref{lem:discrete-estimates}.
Taking the supremum over $c$ in \eqref{eq:sequence-duality} proves the
lower bound in \eqref{eq:measure-embedding}.

For the last assertion, apply \eqref{eq:sequence-duality} to the tail
and choose $c$, supported there, so that
$\sum d_{j,l}|c_{j,l}|^r$ is at least half its supremum. Apply the preceding
random-sign argument with $\mu$ restricted to $\{|w|\ge r_J\}$.
At least one choice of signs gives an integral no smaller than its
average. Absorb those signs into $c$. The norm and local estimates for
this $F_c$ are exactly those in Lemma~\ref{lem:discrete-estimates}.
\end{proof}

For $\phi\in H(\mathbb D)$, an analytic self-map $\varphi$ of $\mathbb D$,
and $0\le t<1$, define the finite positive Borel measure
\[
 \mu_t(E)=\int_{\{\theta:\,\varphi(te^{i\theta})\in E\}}
                 |\phi(te^{i\theta})|^r\,\frac{d\theta}{2\pi},
 \qquad E\subset\mathbb D\text{ Borel},
\]
and the sequence
$d_{t,j,l}^{(n)}=\delta_j^{-r(n+\alpha+1/p)}\mu_t(Q_{j,l})$.

\begin{theorem}\label{thm:mixed-hardy}
Let $0<p,q\le\infty$, $0<r<\infty$, $\alpha>0$ and $n\ge0$ be an
integer. Let $\phi\in H(\mathbb D)$ and let $\varphi$ be an analytic
self-map of $\mathbb D$. Then the following conditions are equivalent:
\begin{enumerate}
 \renewcommand{\labelenumi}{\textup{(\arabic{enumi})}}
 \setlength{\itemsep}{2pt}\setlength{\parskip}{0pt}
 \item $D_{\phi,\varphi}^{n}:H(p,q,\alpha)\to H^r$ is bounded.
 \item The following condition holds in the applicable parameter range:
 \[
 \begin{cases}
  \displaystyle\sup_{t,j,l} d_{t,j,l}^{(n)}<\infty,&p\le r,\ q\le r,\\[1mm]
  \displaystyle\sup_t\left[\sum_j(\sup_l d_{t,j,l}^{(n)})^Q\right]^{1/Q}<\infty,
                                 &p\le r<q,\\[1mm]
  \displaystyle\sup_{t,j}\left(\sum_l \bigl(d_{t,j,l}^{(n)}\bigr)^P\right)^{1/P}<\infty,
                                 &q\le r<p,\\[1mm]
  \displaystyle\sup_t\left[\sum_j
           \left(\sum_l \bigl(d_{t,j,l}^{(n)}\bigr)^P\right)^{Q/P}\right]^{1/Q}<\infty,
                                 &r<\min\{p,q\}.
 \end{cases}
 \]
 Here $0\le t<1$. In the rows where they are used,
 $P=p/(p-r)$ and $Q=q/(q-r)$, with $P=1$ for $p=\infty$
 and $Q=1$ for $q=\infty$.
\end{enumerate}
When these conditions hold,
\[
 \|D_{\phi,\varphi}^{n}\|_{H(p,q,\alpha)\to H^r}^{\,r}
 \approx\sup_{0\le t<1}\|\{d_{t,j,l}^{(n)}\}_{j,l}\|_{\ell^{P,Q}},
\]
and
\[
 \|D_{\phi,\varphi}^{n}\|_{e,H(p,q,\alpha)\to H^r}^{\,r}
 \approx\lim_{J\to\infty}\sup_{0\le t<1}
       \|\{d_{t,j,l}^{(n)}:j\ge J\}\|_{\ell^{P,Q}}.
\]
In particular, the bounded operator is compact if and only if the last
limit is zero. All comparison constants depend only on
$p,q,r,\alpha,n$ and the fixed partition, not on $\phi$ or $\varphi$.
\end{theorem}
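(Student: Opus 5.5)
The plan rests on one identity. By monotonicity of integral means and the definition of $\mu_t$,
\[
 \|D_{\phi,\varphi}^{n}f\|_{H^r}^r
 =\sup_{0\le t<1}\int_0^{2\pi}|\phi(te^{i\theta})|^r
   |f^{(n)}(\varphi(te^{i\theta}))|^r\,\frac{d\theta}{2\pi}
 =\sup_{0\le t<1}\int_{\mathbb D}|f^{(n)}(w)|^r\,d\mu_t(w).
\]
Taking the supremum over the unit ball of $H(p,q,\alpha)$ and interchanging the two suprema gives
\[
 \|D_{\phi,\varphi}^{n}\|^r=\sup_t\sup_{\|f\|\le1}\int|f^{(n)}|^r\,d\mu_t .
\]
Each $\mu_t$ is finite, since its total mass is $M_r(\phi,t)^r$. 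Lemma~\ref{lem:measure-embedding} then applies to each $\mu_t$ with constants independent of $t$. This yields the two-sided norm estimate and, through \eqref{eq:mixed-four-cases}, the equivalence (1)$\Leftrightarrow$(2), with either side allowed to be infinite.

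For the upper essential-norm bound, I would first note that boundedness forces $\phi\in H^r$. Test with $f=z^n/n!$, which lies in $H(p,q,\alpha)$ and satisfies $f^{(n)}=1$. Hence $\sup_t\mu_t(\mathbb D)=\|\phi\|_{H^r}^r<\infty$.

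Write $A=D^n_{\phi,\varphi}$. Then $A C_\rho$ is compact by Lemma~\ref{lem:space-basics}, and so $\|A\|_e\le\|A-AC_\rho\|$. Fix $f$ in the unit ball and put $g=f-C_\rho f$; its quasi-norm is at most a fixed constant. Split $\int|g^{(n)}|^r\,d\mu_t$ over $\{|w|<r_J\}$ and $\{|w|\ge r_J\}$. On the inner part the integral is at most
\[
 \|\phi\|_{H^r}^r\sup_{\|f\|\le1}\max_{|w|\le r_J}|(f-C_\rho f)^{(n)}(w)|^r ,
\]
which tends to $0$ as $\rho\uparrow1$ by the last assertion of Lemma~\ref{lem:space-basics}. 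The outer region is the union of the $Q_{j,l}$ with $j\ge J$. Repeating the upper-bound argument of Lemma~\ref{lem:measure-embedding} with $d$ replaced by its tail bounds this part by $C\|\{d^{(n)}_{t,j,l}:j\ge J\}\|_{\ell^{P,Q}}$. That argument combines the sampling estimate of Lemma~\ref{lem:discrete-estimates} with \eqref{eq:sequence-duality}. Take the supremum in $t$, let $\rho\uparrow1$, and then let $J\to\infty$; this gives the upper estimate. The limit in $J$ exists because the tails decrease in $J$.

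The lower bound is the main point. Let $L$ denote the limit of the tails, and assume $L>0$. For each $J$, choose $t_J$ so that the tail norm at $t_J$ is at least half its supremum, hence at least $L/2$. The second part of Lemma~\ref{lem:measure-embedding} then supplies $F_J=F_c$ with $\|c\|_{\ell^{p,q}}\le1$ and $c$ supported on $j\ge J$. Its norm satisfies $\|F_J\|_{H(p,q,\alpha)}\le C$, and
\[
 \|AF_J\|_{H^r}^r\ge\int_{\{|w|\ge r_J\}}|F_J^{(n)}|^r\,d\mu_{t_J}\ge c_0 L/2 .
\]

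To apply Lemma~\ref{lem:essential-test}, I need $AF_J\to0$ locally uniformly. This is the step where the choice $M>\alpha+1/\tau$ matters. On $|z|\le R'$, the set $\varphi(\{|z|\le R'\})$ lies in some disk $|w|\le R$ with $R<1$. On that disk, $|F_J^{(n)}|\le C_{R,n}2^{-J(M-\alpha-1/\tau)}\to0$ by Lemma~\ref{lem:discrete-estimates}, and $\phi$ is bounded on $|z|\le R'$. So no weak convergence of the test functions is needed. The target $H^r$ is complete, its norm convergence implies local uniform convergence, and it has the Fatou property. Lemma~\ref{lem:essential-test} therefore gives $L\lesssim\|A\|_e^r$.

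Compactness is then equivalent to $L=0$, by the last sentence of Lemma~\ref{lem:essential-test}. Every constant used comes from Lemmas~\ref{lem:discrete-estimates} and \ref{lem:measure-embedding} and from the quasi-triangle constant of $H^r$. None of these depends on $\phi$, $\varphi$ or $t$.
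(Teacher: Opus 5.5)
Your proposal is correct and follows the paper's proof essentially step by step. The shared steps are: the pull-back identity $M_r(D^n_{\phi,\varphi}f,t)^r=\int|f^{(n)}|^r\,d\mu_t$ with embedding constants uniform in $t$; the compact dilation approximants $D^n_{\phi,\varphi}C_\rho$, split at $|w|=r_J$, for the upper essential-norm bound; and the tail test functions of Lemma~\ref{lem:measure-embedding}, fed into Lemma~\ref{lem:essential-test}, for the lower bound. The only difference is that the paper cites \cite[Theorem~3]{PelaezRattyaSierra2021} for finite $p,q$ and uses Lemma~\ref{lem:measure-embedding} only for infinite source indices, whereas you use that lemma throughout; this is legitimate, since it is proved for all $0<p,q\le\infty$.
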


\begin{proof}
The definition of $\mu_t$ gives, for every analytic $f$,
\[
 M_r(D_{\phi,\varphi}^{n}f,t)^r
 =\int_{\mathbb D}|f^{(n)}(w)|^r\,d\mu_t(w).
\]
For finite $p,q,r$, apply \cite[Theorem~3]{PelaezRattyaSierra2021}
with $\omega(t)=\alpha q(1-t)^{\alpha q-1}$ and $\mu=\mu_t$. For either infinite
source index, use Lemma~\ref{lem:measure-embedding}. In both cases the
constants are independent of $t$; taking the two suprema proves the
boundedness assertion and the operator-norm estimate.

Assume from now on that $D_{\phi,\varphi}^{n}$ is bounded, and set
\[
 L_J=\sup_{0\le t<1}
       \|\{d_{t,j,l}^{(n)}:j\ge J\}\|_{\ell^{P,Q}},
 \qquad L=\lim_{J\to\infty}L_J.
\]
The limit exists because $L_J$ decreases and $L_0<\infty$.
Testing the operator on $z^n/n!$ shows that $\phi\in H^r$; thus
$\sup_t\mu_t(\mathbb D)=\|\phi\|_{H^r}^r<\infty$.

By Lemma~\ref{lem:space-basics}, $C_\rho$ is a compact contraction
on $H(p,q,\alpha)$, including its infinite-exponent cases. Hence
$D_{\phi,\varphi}^nC_\rho$ is compact.

For $\|f\|_{H(p,q,\alpha)}\le1$, write $h_\rho=f-C_\rho f$.
These functions have uniformly bounded mixed quasi-norms. For fixed
$J$, Lemma~\ref{lem:space-basics} gives uniform convergence of
$h_\rho^{(n)}$ to zero on $|w|\le r_J$, over the source unit ball.
Splitting the integral at $r_J$ and applying
\eqref{eq:measure-embedding} to the restricted measure on the outer part,
we obtain
\[
 \int_{\mathbb D}|h_\rho^{(n)}|^r\,d\mu_t \le \mu_t(\mathbb D) \max_{|w|\le
 r_J}|h_\rho^{(n)}(w)|^r +C L_J\|h_\rho\|_{H(p,q,\alpha)}^r.
\]
Taking the suprema over $f,t$ and then letting $\rho\uparrow1$ yields
$\|D_{\phi,\varphi}^{n}\|_e^r\le C L_J$. Letting $J\to\infty$ proves
the upper essential-norm estimate.

For the lower estimate, only $L>0$ needs consideration. Choose $t_J$ so
that the sequence norm of its tail is at least $L_J/2$. The last part of
Lemma~\ref{lem:measure-embedding} gives functions $f_J$ with
\[
 \|f_J\|_{H(p,q,\alpha)}\le C_0,
 \qquad
 \|D_{\phi,\varphi}^{n}f_J\|_{H^r}^r\ge cL_J\ge cL,
\]
where $C_0$ is independent of $J$. The local estimate in
Lemma~\ref{lem:discrete-estimates} shows that $f_J$ and all their fixed
derivatives tend locally uniformly to zero. Consequently,
$D_{\phi,\varphi}^{n}f_J\to0$ locally uniformly.

Lemma~\ref{lem:essential-test}, applied to these functions, gives
$L^{1/r}\le C\|D_{\phi,\varphi}^n\|_e$. Together with the upper bound
and closedness of compact operators, this proves the theorem.
\end{proof}

The primitive reduction transfers this criterion to $S(p,q,\alpha)$,
with exact equality of the operator and essential norms.

\begin{theorem}\label{thm:derivative-hardy}
Let $0<p,q\le\infty$, $0<r<\infty$, $\alpha>0$ and $n\ge1$ be an
integer. Let $\phi\in H(\mathbb D)$ and let $\varphi$ be an analytic
self-map of $\mathbb D$. The following conditions are equivalent:
\begin{enumerate}
 \renewcommand{\labelenumi}{\textup{(\arabic{enumi})}}
 \setlength{\itemsep}{2pt}\setlength{\parskip}{0pt}
 \item $D_{\phi,\varphi}^{n}:S(p,q,\alpha)\to H^r$ is bounded.
 \item $D_{\phi,\varphi}^{n-1}:H(p,q,\alpha)\to H^r$ is bounded.
 \item $\displaystyle\sup_{0\le t<1}
          \|\{d_{t,j,l}^{(n-1)}\}_{j,l}\|_{\ell^{P,Q}}<\infty$.
\end{enumerate}
Condition \textup{(3)} is the four-case test in
\eqref{eq:mixed-four-cases} with $d=d_t^{(n-1)}$, uniformly in $t$.
When these conditions hold, the operators in \textup{(1)} and
\textup{(2)} have equal norms and equal essential norms. In particular,
\[
 \begin{aligned}
 \|D_{\phi,\varphi}^{n}\|_{S(p,q,\alpha)\to H^r}^{\,r}
 &\approx\sup_{0\le t<1}
          \|\{d_{t,j,l}^{(n-1)}\}_{j,l}\|_{\ell^{P,Q}},\\
 \|D_{\phi,\varphi}^{n}\|_{e,S(p,q,\alpha)\to H^r}^{\,r}
 &\approx\lim_{J\to\infty}\sup_{0\le t<1}
          \|\{d_{t,j,l}^{(n-1)}:j\ge J\}\|_{\ell^{P,Q}}.
 \end{aligned}
\]
Under these boundedness conditions, the operator in \textup{(1)} is
compact if and only if the operator in \textup{(2)} is compact,
equivalently, if and only if the last limit is zero.
\end{theorem}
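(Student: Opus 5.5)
The plan is to reduce everything to Lemma~\ref{lem:primitive-reduction} and Theorem~\ref{thm:mixed-hardy}. Take $X=H(p,q,\alpha)$, so that $S(X)=S(p,q,\alpha)$ with exactly the norm $|f(0)|+\|f'\|_{H(p,q,\alpha)}$. Take $Y=H^r$, and let $A=D_{\phi,\varphi}^n$, regarded as a linear analytic formula on $S(p,q,\alpha)$. Since $n\ge1$, $A$ annihilates constants. With $Jg(z)=\int_0^z g$, the identity $(Jg)^{(n)}=g^{(n-1)}$ gives $AJ=D_{\phi,\varphi}^{n-1}$ as a formula on $H(p,q,\alpha)$.

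Before invoking the lemma, I would check its standing hypotheses. Completeness of $H(p,q,\alpha)$ and $H^r$ follows from Lemma~\ref{lem:space-basics}. Containment $X\subset H(\mathbb D)$ holds by definition. The lemma's proof uses only $A=(AJ)\partial$ together with $\|\partial\|\le1$, $\|J\|=1$ and $\partial J=I$, which hold for these spaces: $\|Jg\|_{S}=\|g\|_X$ because $Jg(0)=0$. The factorization $A=(AJ)\partial$ holds pointwise because $f^{(n)}=(J f')^{(n)}$ for $n\ge1$; the constant $f(0)$ is lost but $A$ kills constants. The lemma then gives \textup{(1)}$\Leftrightarrow$\textup{(2)}, equal norms, equal essential norms, and simultaneous compactness.

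Theorem~\ref{thm:mixed-hardy} with $n-1\ge0$ in place of $n$ gives \textup{(2)}$\Leftrightarrow$\textup{(3)}, since its condition (2) is the four-case display \eqref{eq:mixed-four-cases} applied to $d_t^{(n-1)}$ uniformly in $t$. It also supplies the two-sided estimates for the operator norm and essential norm of $D_{\phi,\varphi}^{n-1}$. Transferring these through the equalities yields the displayed estimates for $D^n_{\phi,\varphi}$ on $S(p,q,\alpha)$. The compactness statement follows from the equality of essential norms and Lemma~\ref{lem:essential-test}, which says that essential norm zero is equivalent to compactness.

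The only point needing care is the essential-norm equality in the quasi-Banach setting. For a compact $K:X\to Y$, the map $K\partial$ is compact on $S(X)$ and satisfies $\|A-K\partial\|\le\|AJ-K\|$. Conversely, for compact $K':S(X)\to Y$, the map $K'J$ is compact and satisfies $\|AJ-K'J\|\le\|A-K'\|$. Both steps are immediate and involve no quasi-triangle constants, so I expect no genuine obstacle. The proof is a direct combination of the two cited results.
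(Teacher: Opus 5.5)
Your proposal is correct and follows the paper's own proof: Lemma~\ref{lem:primitive-reduction} with $X=H(p,q,\alpha)$, $Y=H^r$ and $AJ=D_{\phi,\varphi}^{n-1}$ gives (1)$\Leftrightarrow$(2) with equal norms, equal essential norms and simultaneous compactness, and Theorem~\ref{thm:mixed-hardy} at order $n-1$ then supplies (3) and the two-sided estimates. Your explicit verification of the essential-norm equality via $K\partial$ and $K'J$ is exactly the argument contained in that lemma.
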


\begin{proof}
Apply Lemma~\ref{lem:primitive-reduction} to the mixed-norm
derivative space and the target $H^r$, with $A=D_{\phi,\varphi}^n$. Since $n\ge1$, constants are
annihilated and $AJ=D_{\phi,\varphi}^{n-1}$. The norm and compactness
assertions follow, and Theorem~\ref{thm:mixed-hardy}, at order $n-1$,
gives the stated criteria.
\end{proof}

The order reduction requires $n\ge1$. In both theorems above, the
compactness condition is uniform in $t$. For every fixed $t<1$, the
measure $\mu_t$ is supported on the compact set
$\varphi(\{z:|z|=t\})$, so its tail is eventually zero regardless of
whether the operator is bounded. Thus pointwise vanishing in $t$ cannot
replace the uniform condition.

\subsection{A single boundary measure and automatic compactness}

The radial criteria do not require boundary values. For finite Hardy
exponents, a single boundary measure gives an equivalent formulation.
We write a star for radial limits. A function in $H^r$, $0<r<\infty$,
has radial limits almost everywhere, its $H^r$ norm equals the $L^r$
norm of these limits, and a nonzero such function has nonzero limits
almost everywhere; see \cite[Chapters~1--2]{Duren1970}.
The bounded analytic map $\varphi$ also has radial limits almost
everywhere. At a point where $|\varphi^*|<1$, continuity of the
analytic derivative inside the disk identifies the limit of
$f^{(n)}(\varphi(z))$.

\begin{theorem}\label{thm:mixed-boundary}
Let $0<p,q\le\infty$, $0<r<\infty$, $\alpha>0$, and let
$n\ge0$ be an integer. Let $\phi\not\equiv0$ be analytic and let $\varphi$ be an analytic
self-map of $\mathbb D$. Then
$D_{\phi,\varphi}^n:H(p,q,\alpha)\to H^r$ is bounded if and only if
\begin{enumerate}
 \renewcommand{\labelenumi}{\textup{(\arabic{enumi})}}
 \setlength{\itemsep}{2pt}\setlength{\parskip}{0pt}
 \item $\phi\in H^r$;
 \item $|\varphi^*|<1$ almost everywhere on $\partial\mathbb D$;
 \item the finite measure
 \[
  \mu_{\partial}(E)=\int_{\{\zeta:\varphi^*(\zeta)\in E\}}
                         |\phi^*(\zeta)|^r\,dm(\zeta)
 \]
 satisfies $d_{\partial}^{(n)}\in\ell^{P,Q}$, where
 $d_{\partial,j,l}^{(n)}=
 \delta_j^{-r(n+\alpha+1/p)}\mu_{\partial}(Q_{j,l})$.
\end{enumerate}
The norm in \textup{(3)} is given by the four explicit cases in
\eqref{eq:mixed-four-cases}, with no radial supremum. When these
conditions hold,
\[
 \begin{aligned}
  \|D_{\phi,\varphi}^n\|^{r}
       &\approx\|d_{\partial}^{(n)}\|_{\ell^{P,Q}},\\
  \|D_{\phi,\varphi}^n\|_e^{r}
       &\approx\lim_{J\to\infty}
           \|\{d_{\partial,j,l}^{(n)}:j\ge J\}\|_{\ell^{P,Q}}.
 \end{aligned}
\]
In particular, compactness is equivalent to vanishing of the last
limit. If $\phi\equiv0$, the operator is zero and no condition on
$\varphi^*$ is required.
\end{theorem}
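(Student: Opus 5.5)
The plan is to compare the radial measures $\mu_t$ with the boundary measure $\mu_\partial$ and transfer Theorem~\ref{thm:mixed-hardy}. For necessity, assume boundedness. Testing on $z^n/n!$ gives $\phi\in H^r$, which is (1). For (3) and the norm lower bound, fix a finitely supported $c$ and form the randomized sum $F_c$ from Lemma~\ref{lem:measure-embedding}. Such a function is analytic on a neighborhood of $\overline{\mathbb D}$, so $F_c^{(n)}$ is bounded and continuous on the closed disk. Where $|\varphi^*|<1$, the boundary function of $D^n_{\phi,\varphi}F_c$ is $\phi^*\,(F_c^{(n)}\circ\varphi^*)$. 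Hence
\[
\|D^n_{\phi,\varphi}F_c\|_{H^r}^r\ge\int_{\{|\varphi^*|<1\}}|\phi^*|^r|F_c^{(n)}\circ\varphi^*|^r\,dm=\int_{\mathbb D}|F_c^{(n)}|^r\,d\mu_\partial .
\]
Averaging over signs and applying \eqref{eq:sequence-duality}, as in the proof of Lemma~\ref{lem:measure-embedding}, yields $\|d_\partial^{(n)}\|_{\ell^{P,Q}}\lesssim\|D^n_{\phi,\varphi}\|^r$. Here $\mu_\partial$ is taken to live only on $\mathbb D$, so mass sent to the circle is discarded.

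The step I expect to be the main obstacle is (2): showing $|\varphi^*|<1$ a.e. when $\phi\not\equiv0$. This is where $\alpha>0$ must enter. My first attempt will use the test functions $f_J=F_c$ with support in rows $j\ge J$, from the tail part of Lemma~\ref{lem:measure-embedding}. Suppose instead that $E=\{|\varphi^*|=1\}$ has positive measure. Since $\phi^*\ne0$ a.e., $\int_E|\phi^*|^r\,dm>0$. I will build functions bounded in $H(p,q,\alpha)$ whose $n$th derivatives have modulus $\gtrsim 2^{J(n+\alpha+1/p)}$ on a set of arcs of total measure comparable to one. Composing with $\varphi$ near $E$ then forces $\|D^n_{\phi,\varphi}f\|_{H^r}\to\infty$.

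A cleaner route first tries a single-point argument. For $w$ near the boundary, normalized kernels $k_w=(1-|w|)^{M-\alpha-1/p}(1-\bar w z)^{-M}$ have bounded norm, and $|k_w^{(n)}|\approx(1-|w|)^{-n-\alpha-1/p}\to\infty$ on a Carleson box. Since $\alpha>0$, the exponent stays positive even when $n=0$ and $p=\infty$. Rather than single kernels, I will take a full row, $c_{J,l}=2^{-J/p}$ for all $l$; this is admissible because $\|c\|_{\ell^{p,q}}\approx1$. Khintchine then gives the expected pointwise size $\gtrsim 2^{J(n+\alpha)}$ throughout a whole annulus near $r_J$, and also on the annulus outside it, via decay comparisons. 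Taking radial limits along $E$, where $\varphi(t\zeta)$ eventually enters every outer annulus, and applying Fatou's lemma shows that $\int_E|\phi^*|^r\,dm\cdot 2^{rJ\alpha}$ stays bounded. Letting $J\to\infty$ gives a contradiction. The delicate point is arranging the lower bound on the outer region; if it fails, I will sum several consecutive rows with random signs.

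For sufficiency and the upper bounds, assume (1)--(3). For $f\in H(p,q,\alpha)$, the first part of Lemma~\ref{lem:measure-embedding} gives $\int|f^{(n)}|^r\,d\mu_\partial\lesssim\|d_\partial\|\,\|f\|^r$. I apply this first to the dilations $f_\rho=C_\rho f$, which are analytic across the circle. By (2),
\[
\|D^n_{\phi,\varphi}f_\rho\|_{H^r}^r=\int|f_\rho^{(n)}\circ\varphi^*|^r|\phi^*|^r\,dm=\int|f_\rho^{(n)}|^r\,d\mu_\partial\lesssim\|d_\partial\|\,\|f\|^r,
\]
using that $C_\rho$ is a contraction. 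As $\rho\uparrow1$, $D^n_{\phi,\varphi}f_\rho\to D^n_{\phi,\varphi}f$ locally uniformly, and the Fatou property from Lemma~\ref{lem:space-basics} gives the bound for $f$. This proves boundedness and the norm estimate.

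For the essential norm, I follow the proof of Theorem~\ref{thm:mixed-hardy} with $\mu_\partial$ in place of $\mu_t$. For the upper bound, split $\int|h_\rho^{(n)}|^r\,d\mu_\partial$ at $r_J$, again passing through dilations and Fatou. For the lower bound, use the tail functions of Lemma~\ref{lem:measure-embedding} together with the boundary identity from the necessity step, and then apply Lemma~\ref{lem:essential-test}. The case $\phi\equiv0$ is trivial.
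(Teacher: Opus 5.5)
Your proposal is correct. The sufficiency argument (dilations, the boundary identity for $C_\rho f$, then Fatou) and the essential-norm argument (dilations split at $r_J$, tail functions from Lemma~\ref{lem:measure-embedding}, Lemma~\ref{lem:essential-test}) follow the paper. The differences are in the necessity part.

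\emph{Proof of (2).} The paper proves (2) first, with the monomials $f_k=(k+1)^\alpha z^{k+n}$. These are bounded in $H(p,q,\alpha)$ because $\|z^k\|_{H(p,q,\alpha)}\approx(k+1)^{-\alpha}$, and their images are multiples of $\phi\varphi^k$. Since $|\varphi^*|^{kr}=1$ on $E=\{|\varphi^*|=1\}$, one obtains $(k+1)^{\alpha r}\bigl((k+n)!/k!\bigr)^r\int_E|\phi^*|^r\,dm\le C$, which forces $\int_E|\phi^*|^r\,dm=0$. This uses only monomial norms and $\alpha+n>0$, with no Khintchine inequality and no geometry of the partition.

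Your randomized full row also works, and the ``delicate point'' you flag is not actually delicate. For every $w$ with $r_J\le|w|\le1$, the nearest marked point of row $J$ satisfies $|1-\overline{a_{J,l}}w|\lesssim\delta_J$. That single term already gives $\mathbb E|F_{\varepsilon c}^{(n)}(w)|^r\gtrsim2^{rJ(n+\alpha)}$ on the whole closed outer region, so no additional rows are needed. Because $F_c^{(n)}$ is continuous on $\overline{\mathbb D}$, the boundary values on $E$ are simply $\phi^*F_c^{(n)}(\varphi^*)$, and Fatou's lemma is unnecessary. Your final bound is $2^{rJ(n+\alpha)}$ rather than $2^{rJ\alpha}$, which is harmless.

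\emph{Proof of (3).} The paper derives (3) and the lower norm bound after (2), from the identity \eqref{eq:boundary-norm-identity} for arbitrary source functions. You instead apply the boundary computation only to test functions analytic past $\overline{\mathbb D}$ and discard the mass sent to the circle. This makes the lower bound and (3) independent of (2), a small structural gain. The paper's ordering, in exchange, gives the identity for every source function and uses it directly in the essential-norm upper bound, without a second dilation.
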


\begin{proof}
Suppose first that the operator is bounded. The test $z^n/n!$ gives
$\phi\in H^r$. Directly from the mixed-norm definition,
$\|z^k\|_{H(p,q,\alpha)}\approx(k+1)^{-\alpha}$: for finite $q$
this is the beta-integral estimate, and for $q=\infty$ it follows
by maximizing $(1-t)^\alpha t^k$. The integral mean of $z^k$ is
$t^k$, so this computation also covers $p=\infty$.
Consequently $f_k(z)=(k+1)^\alpha z^{k+n}$ is bounded in the source.
Let $E=\{\zeta:|\varphi^*(\zeta)|=1\}$. Since
$\phi\varphi^k\in H^r$, its boundary norm gives
\[
 (k+1)^{\alpha r}\left(\frac{(k+n)!}{k!}\right)^r
          \int_E|\phi^*|^r\,dm
    \le\|D_{\phi,\varphi}^nf_k\|_{H^r}^r\le C.
\]
The prefactor tends to infinity, because $\alpha+n>0$. Hence the
integral is zero. A nonzero Hardy function has nonzero boundary values
almost everywhere; thus $m(E)=0$.

For any source function $f$, condition (2) and continuity of
$f^{(n)}$ at each interior point imply
$(D_{\phi,\varphi}^nf)^*=\phi^*f^{(n)}(\varphi^*)$ almost
everywhere. The Hardy boundary-norm identity therefore gives
\begin{equation}\label{eq:boundary-norm-identity}
 \|D_{\phi,\varphi}^nf\|_{H^r}^r
       =\int_{\mathbb D}|f^{(n)}(w)|^r\,d\mu_{\partial}(w).
\end{equation}
Lemma~\ref{lem:measure-embedding} proves (3) and the two-sided
operator-norm estimate.

Conversely, assume (1)--(3). For $0<\rho<1$ and $f$ in the source,
$f_\rho=C_\rho f$ has bounded derivatives on $\mathbb D$.
Thus $\phi(f_\rho^{(n)}\circ\varphi)\in H^r$, and its boundary
values give \eqref{eq:boundary-norm-identity} with $f_\rho$ in place
of $f$. The measure-embedding lemma and the contraction property of
$C_\rho$ yield
\[
 \|D_{\phi,\varphi}^nf_\rho\|_{H^r}^r
     \le C\|d_{\partial}^{(n)}\|_{\ell^{P,Q}}
                                      \|f\|_{H(p,q,\alpha)}^r.
\]
Letting $\rho\uparrow1$ and applying the Hardy Fatou inequality proves
boundedness for the undilated functions.

For the essential norm, put
$L_J=\|\{d_{\partial,j,l}^{(n)}:j\ge J\}\|_{\ell^{P,Q}}$ and
$L=\lim_JL_J$. The operators $D_{\phi,\varphi}^nC_\rho$ are compact
by the dilation argument in Theorem~\ref{thm:mixed-hardy}.
For $\|f\|_{H(p,q,\alpha)}\le1$, split the right-hand side of
\eqref{eq:boundary-norm-identity}, applied to $f-C_\rho f$, at
$|w|=r_J$. On the inner disk the derivatives tend uniformly to zero,
and $\mu_{\partial}$ is finite. On the outer part,
Lemma~\ref{lem:measure-embedding} bounds the integral by $CL_J$.
Taking first $\rho\uparrow1$ and then $J\to\infty$ gives
$\|D_{\phi,\varphi}^n\|_e^r\le CL$.

If $L>0$, the tail assertion of that lemma supplies functions $f_J$
with uniformly bounded source quasi-norms, tending locally uniformly
to zero with their fixed derivatives, and satisfying
\[
 \|D_{\phi,\varphi}^nf_J\|_{H^r}^r\ge cL_J\ge cL.
\]
Lemma~\ref{lem:essential-test} gives the reverse essential-norm bound.
Closedness of compact operators gives the compactness criterion.
\end{proof}

\begin{corollary}\label{cor:Hardy-automatic-compactness}
Let $0<p,q\le\infty$, $0<r<\infty$, $\alpha>0$, and suppose $q>r$.
Every bounded $D_{\phi,\varphi}^n:H(p,q,\alpha)\to H^r$, $n\ge0$,
is compact. The same conclusion holds with source $S(p,q,\alpha)$
when $n\ge1$.
\end{corollary}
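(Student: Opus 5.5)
Since $\phi\equiv0$ gives the zero operator, we may assume $\phi\not\equiv0$ and use Theorem~\ref{thm:mixed-boundary}. Boundedness then gives $|\varphi^*|<1$ almost everywhere, a finite boundary measure $\mu_\partial$, and $d_\partial^{(n)}\in\ell^{P,Q}$ with $\|D_{\phi,\varphi}^n\|^r\approx\|d_\partial^{(n)}\|_{\ell^{P,Q}}$. By the same theorem, compactness is equivalent to
\[
 \lim_{J\to\infty}\|\{d_{\partial,j,l}^{(n)}:j\ge J\}\|_{\ell^{P,Q}}=0 .
\]
The point is that this is a single fixed array, with no supremum over $t$.

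Since $q>r$, the outer exponent $Q$ is finite: $Q=q/(q-r)$ if $r<q<\infty$, and $Q=1$ if $q=\infty$. In either case the $\ell^{P,Q}$ quasi-norm has the form
\[
 \Bigl(\sum_j \|\{d_{\partial,j,l}^{(n)}\}_l\|_{\ell^P}^{\,Q}\Bigr)^{1/Q},
\]
where the inner row norm is a supremum if $p\le r$. This is the second or fourth row of \eqref{eq:mixed-four-cases}, or the $Q=1$ variant. Finiteness of this $\ell^Q$ sum of row norms forces its tails to tend to zero. Setting the coordinates with $j<J$ to zero removes exactly the first $J$ terms. Hence the tail norm tends to $0$ as $J\to\infty$, and the operator is compact. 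The only thing to check is that the boundedness characterization really uses the full array with finite $Q$; this is precisely where $q>r$ enters. When $q\le r$ the outer norm is a supremum, and the tail need not vanish (cf.\ Example~\ref{ex:Hardy-critical-q}).

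For the derivative source $S(p,q,\alpha)$ with $n\ge1$, apply Theorem~\ref{thm:derivative-hardy}. Boundedness of $D_{\phi,\varphi}^n:S(p,q,\alpha)\to H^r$ is equivalent to boundedness of $D_{\phi,\varphi}^{n-1}:H(p,q,\alpha)\to H^r$, and the two operators have equal essential norms. The first part, applied at order $n-1\ge0$, shows that the latter operator is compact. Hence the former is compact as well; alternatively, Lemma~\ref{lem:primitive-reduction} gives this directly.

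The one step needing care is the passage through the boundary measure rather than the radial family $\mu_t$. The radial criterion in Theorem~\ref{thm:mixed-hardy} has $\sup_t$ outside the $\ell^Q$ sum, so its tails need not vanish uniformly. Theorem~\ref{thm:mixed-boundary} removes this supremum, and that is what makes the argument work. It requires $\phi\not\equiv0$, which is handled separately as noted above.
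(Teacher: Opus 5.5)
Your proposal is correct and matches the paper's proof: dispose of $\phi\equiv0$, pass to the single boundary array $d_\partial^{(n)}$ of Theorem~\ref{thm:mixed-boundary}, and note that $q>r$ makes the outer exponent $Q$ finite, so the radial tails of this fixed array tend to zero. As in the paper, the derivative-source case then follows from Theorem~\ref{thm:derivative-hardy}.
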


\begin{proof}
For a nonzero weight, Theorem~\ref{thm:mixed-boundary} applies.
When $q>r$, the outer sequence exponent is finite:
$Q=q/(q-r)$ for finite $q$, and $Q=1$ for $q=\infty$.
The radial tails of a fixed member of $\ell^{P,Q}$ therefore tend
to zero, regardless of whether $P$ is finite. The theorem gives
compactness. The zero weight is immediate, and the derivative-source
assertion follows from Theorem~\ref{thm:derivative-hardy}.
\end{proof}

The word ``fixed'' matters in this proof. Uniform boundedness of the
family $(d_t^{(n)})$ in a sequence space with finite outer exponent
does not, by itself, imply uniform decay of its tails. The passage to
$\mu_{\partial}$ is what permits the preceding conclusion.

For a geometric version in the first case of
\eqref{eq:mixed-four-cases}, put
$\Delta(w,\eta)=\{z:|(w-z)/(1-\overline wz)|<\eta\}$, where
$0<\eta<1$ is fixed. Then, when $r\ge\max\{p,q\}$, condition (3)
in Theorem~\ref{thm:mixed-boundary} is equivalent to
\[
 \sup_{w\in\mathbb D}
  \frac{\mu_{\partial}(\Delta(w,\eta))}
       {(1-|w|^2)^{r(n+\alpha+1/p)}}<\infty.
\]
Under boundedness, compactness is equivalent to this quotient tending
to zero as $|w|\uparrow1$. To verify both statements, a disk of this
fixed pseudohyperbolic radius meets a uniformly bounded number of
rectangles of comparable size. Conversely, each rectangle can be
covered by a fixed finite number of such disks, with centers at
comparable distance from the boundary. The two comparisons preserve
both the supremum and its boundary limit. The same argument applies
to each $\mu_t$, with constants independent of $t$.

\begin{example}\label{ex:Hardy-critical-q}
The condition $q>r$ in Corollary~\ref{cor:Hardy-automatic-compactness}
cannot be replaced by $q\ge r$. For $\phi(z)=1-z$ and
$\varphi(z)=(1+z)/2$, the operator
$D_{\phi,\varphi}^0:H(2,2,1/2)\to H^2$ is bounded but not compact.
\end{example}

\begin{proof}
The boundary measure is supported on the circle
$w(\theta)=(1+e^{i\theta})/2$, where
$1-|w(\theta)|^2=\sin^2(\theta/2)$ and
$|\phi(e^{i\theta})|^2=4\sin^2(\theta/2)$.
Fix $0<\eta<1$. The portion of this circle in $\Delta(w,\eta)$
has angular length at most $C_\eta(1-|w|^2)$, since the Euclidean
radius of the disk is comparable to $1-|w|^2$ and the circle has
radius $1/2$. On this portion,
$1-|w(\theta)|^2\approx1-|w|^2$. Hence
\[
 \mu_{\partial}(\Delta(w,\eta))\le C_\eta(1-|w|^2)^2.
\]
For $w=w(\theta_0)$ with $\theta_0\to0$, the interval
$|\theta-\theta_0|<c_\eta(1-|w|^2)$ lies in the preimage of the
disk when $c_\eta$ is small enough. Integrating over that interval
gives the reverse inequality. The geometric criterion above has
exponent $r(\alpha+1/p)=2$, so it proves both boundedness and
noncompactness. Theorem~\ref{thm:derivative-hardy} gives the same
verdict for $D_{\phi,\varphi}^1:S(2,2,1/2)\to H^2$.
\end{proof}

\section{Operators into derivative Hardy spaces}\label{sec:hardy-derivatives}

The target is now $S^r$ and the source is $S(p,q,\alpha)$.
The identity
\[
 (D_{\phi,\varphi}^{n}f)'
 =\phi'(f^{(n)}\circ\varphi)
   +\phi\varphi'(f^{(n+1)}\circ\varphi)
\]
does not, on its own, allow either term to be estimated from their sum.
Lemma~\ref{lem:two-kernels} separates the two terms on each fixed
rectangle. Related separation arguments appear in
\cite{BarikSenPaul2026,DuLiLiu2024}.

For the rest of this section, let $P,Q$ be the exponents defined in
Section~\ref{sec:hardy-targets}, and put $b_n=\alpha+1/p+n-1$.
For $0\le t<1$, define
\[
 v_{t,j,l} =\delta_j^{-rb_n} \int_{\{\theta:\,\varphi(te^{i\theta})\in Q_{j,l}\}}
 |\phi'(te^{i\theta})|^r\,\frac{d\theta}{2\pi} +\delta_j^{-r(b_n+1)}
 \int_{\{\theta:\,\varphi(te^{i\theta})\in Q_{j,l}\}}
 |\phi(te^{i\theta})\varphi'(te^{i\theta})|^r \,\frac{d\theta}{2\pi}.
\]
The two integrals are finite at each fixed $t<1$.

\begin{theorem}\label{thm:mixed-derivative}
Let $0<p,q\le\infty$, $0<r<\infty$, $\alpha>0$ and $n\ge1$ be an
integer. Let $\phi\in H(\mathbb D)$ and let $\varphi$ be an analytic
self-map of $\mathbb D$. The following conditions are equivalent:
\begin{enumerate}
 \renewcommand{\labelenumi}{\textup{(\arabic{enumi})}}
 \setlength{\itemsep}{2pt}\setlength{\parskip}{0pt}
 \item $D_{\phi,\varphi}^{n}:S(p,q,\alpha)\to S^r$ is bounded.
 \item The following condition holds in the applicable parameter range:
 \[
 \begin{cases}
  \displaystyle\sup_{t,j,l} v_{t,j,l}<\infty,&p\le r,\ q\le r,\\[1mm]
  \displaystyle\sup_t\left[\sum_j(\sup_l v_{t,j,l})^Q\right]^{1/Q}<\infty,
                                 &p\le r<q,\\[1mm]
  \displaystyle\sup_{t,j}\left(\sum_l v_{t,j,l}^P\right)^{1/P}<\infty,
                                 &q\le r<p,\\[1mm]
  \displaystyle\sup_t\left[\sum_j
           \left(\sum_l v_{t,j,l}^P\right)^{Q/P}\right]^{1/Q}<\infty,
                                 &r<\min\{p,q\}.
 \end{cases}
 \]
 Here $0\le t<1$. In the rows where they are used,
 $P=p/(p-r)$ and $Q=q/(q-r)$, with $P=1$ for $p=\infty$
 and $Q=1$ for $q=\infty$.
\end{enumerate}
When these conditions hold,
\[
 \|D_{\phi,\varphi}^{n}\|_{S(p,q,\alpha)\to S^r}^{\,r}
 \approx
 \frac{|\phi(0)|^r}{(1-|\varphi(0)|^2)^{rb_n}}
       +\sup_{0\le t<1}\|v_t\|_{\ell^{P,Q}},
\]
and
\[
 \|D_{\phi,\varphi}^{n}\|_{e,S(p,q,\alpha)\to S^r}^{\,r}
 \approx\lim_{J\to\infty}\sup_{0\le t<1}
             \|\{v_{t,j,l}:j\ge J\}\|_{\ell^{P,Q}}.
\]
Thus the bounded operator is compact if and only if the last limit is
zero. All comparison constants depend only on the fixed parameters and
the partition, not on the two symbols.
\end{theorem}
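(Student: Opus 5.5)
First reduce by Lemma~\ref{lem:primitive-reduction}: since $n\ge1$, $D_{\phi,\varphi}^n$ annihilates constants. So $D_{\phi,\varphi}^n:S(p,q,\alpha)\to S^r$ is equivalent, with equal norms and essential norms, to $T=D_{\phi,\varphi}^{n-1}:H(p,q,\alpha)\to S^r$. Writing $g=f'$ and using the identity for $(D_{\phi,\varphi}^nf)'$, we get
\[
 \|Tg\|_{S^r}=|\phi(0)||g^{(n-1)}(\varphi(0))|
 +\sup_t\Bigl(\int_{\mathbb D}|\,\cdot\,|^r\Bigr)^{1/r}.
\]
The second term is the $H^r$ norm of $\phi'(g^{(n-1)}\circ\varphi)+\phi\varphi'(g^{(n)}\circ\varphi)$. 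On each circle this equals
\[
 \int_0^{2\pi}|x(\theta)g^{(n-1)}(w)+y(\theta)g^{(n)}(w)|^r\,\frac{d\theta}{2\pi},
\]
with $w=\varphi(te^{i\theta})$, $x=\phi'$ and $y=\phi\varphi'$. The scalar term is handled separately. By the point-evaluation estimate of Lemma~\ref{lem:discrete-estimates}, together with a single normalized kernel test, $\sup_{\|g\|\le1}|g^{(n-1)}(w)|\approx(1-|w|^2)^{-b_n}$. This produces the term $|\phi(0)|^r(1-|\varphi(0)|^2)^{-rb_n}$. That term is compact (rank one) and does not enter the essential norm.

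\emph{Upper bound.} Pointwise, $|xg^{(n-1)}+yg^{(n)}|^r\lesssim|x|^r|g^{(n-1)}|^r+|y|^r|g^{(n)}|^r$. Split each circle integral according to the rectangle $Q_{j,l}$ containing $w$, and bound $|g^{(n-1)}(w)|\le\delta_j^{-b_n}\beta_{j,l}$ and $|g^{(n)}(w)|\le\delta_j^{-b_n-1}\beta'_{j,l}$. Here $\beta,\beta'$ are the sampled sequences of Lemma~\ref{lem:discrete-estimates} at orders $n-1$ and $n$, each of $\ell^{p,q}$ norm $\lesssim\|g\|$. The circle integral is then $\lesssim\sum_{j,l}v_{t,j,l}(\beta_{j,l}^r+\beta'^r_{j,l})$, and \eqref{eq:sequence-duality} bounds this by $\|v_t\|_{\ell^{P,Q}}\|g\|^r$. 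This gives sufficiency and the norm upper bound. For the essential norm, use the compact operators $TC_\rho$ exactly as in Theorem~\ref{thm:mixed-hardy}. Split at $|w|=r_J$. The inner part tends to zero uniformly by Lemma~\ref{lem:space-basics}, because $\sup_t\int(|\phi'|^r+|\phi\varphi'|^r)$ over $\{|\varphi|\le r_J\}$ is finite: it is bounded by a multiple of the $j<J$ part of $v_t$, hence by $\|v_t\|$. The outer part is $\lesssim L_J$.

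\emph{Lower bound: the main obstacle.} The two terms must be separated without assuming either is Carleson on its own. For this I would use Lemma~\ref{lem:two-kernels} with $A=\alpha+1/p$ and a large $M$ as in Lemma~\ref{lem:discrete-estimates}. For a finitely supported $c$ and each $u\in\{0,1\}$, form the random sums $G_u=\sum\varepsilon_{j,l}c_{j,l}g_{j,l,u}$. The $u=1$ family is the same synthesis with $M$ replaced by $M+1$, so $\|G_u\|_{H(p,q,\alpha)}\lesssim\|c\|_{\ell^{p,q}}$. Apply Khintchine's inequality in the signs and restrict to the diagonal term on each rectangle. Summing over $u$ and invoking \eqref{eq:two-kernels} with $x=\phi'$, $y=\phi\varphi'$ and $w\in Q_{j,l}$ gives
\[
 \sum_u\mathbb E\|TG_u\|^r\gtrsim\sum_{j,l}v_{t,j,l}|c_{j,l}|^r.
\]
Note that the weight $\delta_j^{-rb_n}$ matches exactly the pairing $A+n-1=b_n$. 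Taking the supremum over $c$ via \eqref{eq:sequence-duality}, and then over $t$, gives necessity and the lower norm bound. The estimate $|\phi(0)|^r(1-|\varphi(0)|^2)^{-rb_n}\lesssim\|T\|^r$ follows from the point-evaluation test.

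For the lower essential-norm bound, restrict $c$ to $j\ge J$ and choose $t_J$ nearly attaining $L_J$. Fix signs, and pick one $u$, realizing at least a fixed fraction of $L_J$ for $\int_{|w|\ge r_J}$. The local estimate of Lemma~\ref{lem:discrete-estimates} then shows that these functions tend locally uniformly to zero with all derivatives, and hence so does $TG$. Lemma~\ref{lem:essential-test} with $Y=S^r$, which has the Fatou property by Lemma~\ref{lem:space-basics}, then gives $L\lesssim\|T\|_e^r$. The closedness of compact operators completes the compactness criterion.
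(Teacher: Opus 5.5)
Your proposal is correct and follows essentially the paper's proof. It uses the same scalar point-evaluation estimate, the sampling/duality upper bound at orders $n-1$ and $n$, the two-kernel random-sign test functions for necessity, dilations for the upper essential-norm bound, and tail test functions with Lemma~\ref{lem:essential-test} for the lower bound. The differences are cosmetic: you first pass to $D_{\phi,\varphi}^{n-1}:H(p,q,\alpha)\to S^r$ via Lemma~\ref{lem:primitive-reduction} instead of using primitives of the kernels. You also bound the inner pull-back masses by finitely many entries of $v_t$, rather than by testing on $z^n/n!$ and $z^{n+1}/(n+1)!$; the resulting $J$-dependent constant is harmless because $J$ is fixed before $\rho\uparrow1$.
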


\begin{proof}
We first record the derivative point-evaluation estimate
\[
 \sup_{\|f\|_{S(p,q,\alpha)}\le1}|f^{(n)}(w)|
       \approx(1-|w|^2)^{-b_n},\qquad w\in\mathbb D.
\]
The upper bound follows by applying
Lemma~\ref{lem:discrete-estimates} to $f'$ at order $n-1$.
For the lower bound, take the primitive of a single kernel from that
lemma in the rectangle containing $w$. Its source quasi-norm is bounded,
and its $n$th derivative at $w$ has modulus comparable to
$\delta_j^{-b_n}\approx(1-|w|^2)^{-b_n}$.
Consequently the
norm of the scalar map $f\mapsto\phi(0)f^{(n)}(\varphi(0))$ is
comparable to $|\phi(0)|(1-|\varphi(0)|^2)^{-b_n}$.

Suppose (2) holds. At a fixed $t$, use the pull-back measures obtained
from $|\phi'|^r\,d\theta/(2\pi)$ and
$|\phi\varphi'|^r\,d\theta/(2\pi)$ on the circle of radius $t$.
Apply Lemma~\ref{lem:measure-embedding} to $f'$ with orders $n-1$ and
$n$, respectively. Each associated sequence is bounded coordinatewise
by $v_t$. The derivative formula and the elementary inequality for the
$r$th power of a sum therefore give
\[
 M_r((D_{\phi,\varphi}^{n}f)',t)^r
       \le C\|v_t\|_{\ell^{P,Q}}
                    \|f'\|_{H(p,q,\alpha)}^r.
\]
Together with the scalar estimate, this proves boundedness and the
upper operator-norm estimate.

For the converse, suppose (1) holds. Fix $A=\alpha+1/p$ and an integer
$M>\alpha+1/\min\{1,p\}$, and for finitely supported $c$ define
\[
 f_{c,u}(z)=\int_0^z\sum_{j,l}c_{j,l}g_{j,l,u}(\zeta)\,d\zeta,
                  \qquad u=0,1.
\]
Lemma~\ref{lem:discrete-estimates}, applied with $M$ and $M+1$, shows
that $\|f_{c,u}\|_{S(p,q,\alpha)}\le C\|c\|_{\ell^{p,q}}$.
Let $\varepsilon_{j,l}$ be independent random signs and write
$\varepsilon c=\{\varepsilon_{j,l}c_{j,l}\}$. For each fixed $t$,
Khintchine's inequality and \eqref{eq:two-kernels} give
\[
 \sum_{j,l}v_{t,j,l}|c_{j,l}|^r \le C\sum_{u=0}^1\mathbb E
 M_r((D_{\phi,\varphi}^{n}f_{\varepsilon c,u})',t)^r \le
 C\|D_{\phi,\varphi}^{n}\|^r\|c\|_{\ell^{p,q}}^r.
\]
To justify the first inequality, apply Khintchine pointwise, retain the
$(j,l)$th summand only on
$\{\theta:\varphi(te^{i\theta})\in Q_{j,l}\}$, and add over the
disjoint sets. Then apply \eqref{eq:two-kernels} with
$x=\phi'(te^{i\theta})$ and
$y=\phi(te^{i\theta})\varphi'(te^{i\theta})$.
The kernels are fixed before integration. Taking the supremum in
\eqref{eq:sequence-duality} and then over $t$ proves (2). The scalar
estimate gives the remaining part of the lower norm bound.

Assume henceforth that the operator is bounded, and put
\[
 L_J=\sup_{0\le t<1}\|\{v_{t,j,l}:j\ge J\}\|_{\ell^{P,Q}},
                 \qquad L=\lim_{J\to\infty}L_J.
\]
The limit exists and is finite. Testing on $z^n/n!$ and
$z^{n+1}/(n+1)!$ gives $\phi,\phi\varphi\in S^r$. Hence
$\phi'\in H^r$ and
$\phi\varphi'=(\phi\varphi)'-\phi'\varphi\in H^r$.
In particular, the total masses of both pull-back measures are bounded
uniformly in $t$.

Lemma~\ref{lem:space-basics} makes
$D_{\phi,\varphi}^{n}C_\rho$ compact for $\rho<1$.
For $f$ in the unit ball,
set $h_\rho=f-C_\rho f$. Its $S(p,q,\alpha)$ quasi-norm is uniformly
bounded, and its fixed derivatives tend uniformly to zero on every
compact disk, uniformly over that unit ball. Split the two derivative
integrals into $|\varphi(te^{i\theta})|<r_J$ and its complement.
On the first set, their integrals tend to zero as $\rho\uparrow1$,
uniformly in $t$, by the total-mass bounds. The scalar value at zero
also tends to zero. On the second set,
Lemma~\ref{lem:measure-embedding}, applied to the restricted measures,
bounds the sum of the integrals by $CL_J$. Therefore
\[
 \|D_{\phi,\varphi}^{n}\|_e^r
 \le\limsup_{\rho\uparrow1}
       \|D_{\phi,\varphi}^{n}(I-C_\rho)\|^r
 \le CL_J.
\]
Letting $J\to\infty$ proves the upper essential-norm estimate.

For the lower bound, suppose $L>0$. Choose $t_J$ such that its tail
norm is at least $L_J/2$. By \eqref{eq:sequence-duality}, choose
finitely supported $c$ with $\|c\|_{\ell^{p,q}}\le1$, supported in
$j\ge J$, for which
$\sum v_{t_J,j,l}|c_{j,l}|^r\ge L_J/4$.
The same random-sign estimate as above, now integrated only where
$|\varphi(t_Je^{i\theta})|\ge r_J$, selects one value of $u$ and one
choice of signs giving a function $f_J$ such that
\[
 \|f_J\|_{S(p,q,\alpha)}\le C_0,
       \qquad \|D_{\phi,\varphi}^{n}f_J\|_{S^r}^r\ge cL_J\ge cL.
\]
The last estimate in Lemma~\ref{lem:discrete-estimates} shows that
$f_J$ and its fixed derivatives tend locally uniformly to zero.
The same is true of $D_{\phi,\varphi}^{n}f_J$.

Lemma~\ref{lem:essential-test}, with target $S^r$, now gives
$L^{1/r}\le C\|D_{\phi,\varphi}^n\|_e$. The upper bound and closedness
of compact operators complete the proof.
\end{proof}

\begin{corollary}\label{cor:Hardy-derivative-automatic}
In the parameter range of Theorem~\ref{thm:mixed-derivative}, if
$q>r$, boundedness of
$D_{\phi,\varphi}^n:S(p,q,\alpha)\to S^r$ is equivalent to compactness.
\end{corollary}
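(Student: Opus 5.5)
The plan is to repeat the proof of Corollary~\ref{cor:Hardy-automatic-compactness}, now with a boundary version of the two-term quantity $v_t$ from Theorem~\ref{thm:mixed-derivative}. That theorem shows that compactness is equivalent to uniform decay of the radial tails of $v_t$, but uniform boundedness of a family in $\ell^{P,Q}$ does not give uniform tail decay. So I would replace the family $(v_t)$ by one fixed array $v_\partial$, built from boundary measures, for which finiteness of the outer exponent forces the tails to vanish.

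\textbf{Degenerate cases.} If $\phi\equiv0$ the operator is zero. If $\varphi$ is constant, then $(D^n_{\phi,\varphi}f)'=\phi'f^{(n)}(\varphi(0))$ has rank at most one. Otherwise $\phi\varphi'\not\equiv0$.

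\textbf{Step 1: $|\varphi^*|<1$ almost everywhere.} From the proof of Theorem~\ref{thm:mixed-derivative}, boundedness gives $\phi'\in H^r$ and $\phi\varphi'\in H^r$. Test on $f_k=(k+1)^\alpha z^{k+n+1}$, which is bounded in $S(p,q,\alpha)$ by the estimate $\|z^k\|_{H(p,q,\alpha)}\approx(k+1)^{-\alpha}$ used in Theorem~\ref{thm:mixed-boundary}. Then
\[
(D^n_{\phi,\varphi}f_k)'=a_k\phi'\varphi^{k+1}+b_k\phi\varphi'\varphi^{k},
\]
with $a_k\approx(k+1)^{\alpha+n}$ and $b_k\approx(k+1)^{\alpha+n+1}$. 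Let $E=\{|\varphi^*|=1\}$. On $E$ the boundary values satisfy $|b_k(\phi\varphi')^*|\le|(D^nf_k)'^*|+|a_k\phi'^*|$. Taking $L^r(E)$ norms (with the quasi-triangle inequality if $r<1$) and dividing by $b_k$ gives
\[
\int_E|(\phi\varphi')^*|^r\,dm\lesssim b_k^{-r}\bigl(C+a_k^r\|\phi'\|_{H^r}^r\bigr)\longrightarrow0 .
\]
Hence $\int_E|(\phi\varphi')^*|^r\,dm=0$. Since $\phi\varphi'$ is a nonzero $H^r$ function, its boundary values are nonzero almost everywhere, so $m(E)=0$. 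This step is the main obstacle: the two terms must be separated on $E$, and the extra power of $k$ in $b_k$ is what makes it work.

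\textbf{Step 2: a boundary criterion.} Define boundary pull-back measures from $|\phi'^*|^r\,dm$ and $|(\phi\varphi')^*|^r\,dm$ under $\varphi^*$, and form $v_{\partial,j,l}$ exactly as $v_{t,j,l}$ with these measures. Using $m(E)=0$ and continuity of $f^{(n)},f^{(n+1)}$ inside $\mathbb D$, for dilated functions $C_\rho f$ one has
\[
\|(D^n_{\phi,\varphi}C_\rho f)'\|_{H^r}^r=\int|\phi'^*|^r\,|(C_\rho f)^{(n)}\circ\varphi^*|^r\,dm+\text{(the analogous $\phi\varphi'$ term)},
\]
where the right side is a boundary integral of a sum. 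From here I would rerun the proof of Theorem~\ref{thm:mixed-derivative} with a single measure in place of the radial family. The two-kernel randomization with Lemma~\ref{lem:two-kernels} and Khintchine's inequality, tested on $C_\rho f_{\varepsilon c,u}$ with $\rho\uparrow1$ and the Fatou property, gives $v_\partial\in\ell^{P,Q}$. Lemma~\ref{lem:measure-embedding} and dilation give the essential-norm bound $\|D^n_{\phi,\varphi}\|_e^r\le C\lim_J\|\{v_{\partial,j,l}:j\ge J\}\|_{\ell^{P,Q}}$. This upper bound is exactly the argument of Theorem~\ref{thm:mixed-boundary}: split at $|w|=r_J$, use finite total mass on the inner part, and use the tail norm on the outer part.

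\textbf{Step 3: conclusion.} When $q>r$, the outer exponent $Q$ is finite. Therefore the tails of the fixed array $v_\partial\in\ell^{P,Q}$ tend to zero, whether or not $P$ is finite. The essential norm is then zero, and Lemma~\ref{lem:essential-test} gives compactness. The converse implication, compact implies bounded, is trivial.
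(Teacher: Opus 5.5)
Your argument is correct in substance, but it takes a different route from the paper. The paper separates the two terms first and passes to the boundary afterwards. The necessity half of the proof of Theorem~\ref{thm:mixed-derivative} already shows that $v_t$ dominates, coordinatewise, the single-weight arrays $d_t^{(n-1)}$ (weight $\phi'$) and $d_t^{(n)}$ (weight $\phi\varphi'$). Hence Lemma~\ref{lem:measure-embedding} (equivalently Theorem~\ref{thm:mixed-hardy}) makes $D^{n-1}_{\phi',\varphi}$ and $D^{n}_{\phi\varphi',\varphi}$ separately bounded from $H(p,q,\alpha)$ to $H^r$. Corollary~\ref{cor:Hardy-automatic-compactness} then makes each of them compact. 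Since $(D^n_{\phi,\varphi}f)'=(D^{n-1}_{\phi',\varphi}+D^{n}_{\phi\varphi',\varphi})f'$ and the value at $0$ has rank one, the whole operator is compact.

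In that order, the obstacle you single out disappears. The property $|\varphi^*|<1$ a.e.\ comes from Theorem~\ref{thm:mixed-boundary} applied to the nonzero single weight $\phi\varphi'$, so no two-term separation on $E$ is needed. Your route passes to the boundary first and separates there. It is longer, but it is self-contained and proves more: a single-measure analogue of Theorem~\ref{thm:mixed-boundary} for $S(p,q,\alpha)\to S^r$, with the fixed array $v_\partial$ controlling the norm and the essential norm.

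Two small repairs are needed.

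\emph{Test functions in Step~1.} The function $f_k=(k+1)^\alpha z^{k+n+1}$ is not bounded in $S(p,q,\alpha)$. Its quasi-norm is $(k+1)^\alpha(k+n+1)\|z^{k+n}\|_{H(p,q,\alpha)}\approx k+1$. Use $(k+1)^{\alpha-1}z^{k+n+1}$ instead. Your ratio argument still works, because $a_k/b_k\to0$ and $b_k/(k+1)\approx(k+1)^{\alpha+n}\to\infty$.

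\emph{The identity in Step~2.} The displayed identity should read $\int|\phi'^*\,g^{(n)}(\varphi^*)+(\phi\varphi')^*g^{(n+1)}(\varphi^*)|^r\,dm$, an integral of the $r$th power of the sum, not a sum of two integrals. Once $m(E)=0$, this holds for every source function $g$, since $(D^n_{\phi,\varphi}g)'\in H^r$ by boundedness. So the dilation and the Fatou step are unnecessary for the test functions $f_{\varepsilon c,u}$, which are already analytic across $\overline{\mathbb D}$.
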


\begin{proof}
The proof of Theorem~\ref{thm:mixed-derivative} and
Lemma~\ref{lem:measure-embedding} show that both
$D_{\phi',\varphi}^{n-1}$ and $D_{\phi\varphi',\varphi}^{n}$ are
bounded from $H(p,q,\alpha)$ to $H^r$. They are compact by
Corollary~\ref{cor:Hardy-automatic-compactness}.
The derivative of $D_{\phi,\varphi}^nf$ is their sum applied to $f'$.
Differentiation maps the source boundedly into $H(p,q,\alpha)$,
and taking a primitive maps $H^r$ boundedly into $S^r$.
The remaining scalar coordinate $\phi(0)f^{(n)}(\varphi(0))$ has
rank at most one. Thus the entire operator is compact.
\end{proof}

\section{Operators between \texorpdfstring{$F(p,q,s)$}{F(p,q,s)} spaces}
\label{sec:F-spaces}

We use Zhao's square formulation \eqref{eq:F-squares} to prove
sampling and synthesis estimates for every admissible triple.
After evaluating the associated sequence condition, we apply these
estimates to boundedness, essential norms and little-space ranges.

\begin{lemma}\label{lem:positive-Carleson-kernel}
Let $s\ge0$, $\beta>-1$, $\beta+2>s$ and $\lambda>\beta+2$.
If $\|\mu\|_s<\infty$, define a positive measure $\nu$ by
\[
 d\nu(z)=(1-|z|^2)^\beta
       \left(\int_{\mathbb D}
          \frac{(1-|w|^2)^{\lambda-\beta-2}}
               {|1-\overline wz|^\lambda}\,d\mu(w)\right)dA(z).
\]
Then $\|\nu\|_s\le C\|\mu\|_s$.
\end{lemma}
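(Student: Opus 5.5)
The plan is to use Tonelli's theorem to reduce the estimate to a pointwise bound for a kernel integral over a single Carleson square. Then decompose $\mu$ dyadically around that square, and let the condition $\beta+2>s$ make the resulting series converge. For an arc $I$, Tonelli gives
\[
 \nu(S(I))=\int_{\mathbb D}(1-|w|^2)^{\lambda-\beta-2}K_I(w)\,d\mu(w),
 \qquad
 K_I(w)=\int_{S(I)}\frac{(1-|z|^2)^\beta}{|1-\overline wz|^\lambda}\,dA(z).
\]

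\emph{The case $s=0$.} Here $\|\mu\|_0=\mu(\mathbb D)$. Since $\beta>-1$ and $\lambda>\beta+2$, estimate \eqref{eq:weighted-area-kernel} in Lemma~\ref{lem:circle-area-kernels} gives $K_I(w)\le K_{\partial\mathbb D}(w)\le C(1-|w|^2)^{\beta+2-\lambda}$. Hence $\nu(\mathbb D)\le C\mu(\mathbb D)$. The same global bound also handles every arc with $|I|\ge c$ for $s>0$, since then $\mu(\mathbb D)\le \|\mu\|_s\le C|I|^{-s}\|\mu\|_s$. So assume from now on that $s>0$ and $|I|$ is small.

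\emph{The case $s>0$.} Let $I_k$ be the arc with the same midpoint as $I$ and length $\min\{2^k|I|,1\}$. Split $\mathbb D$ into the near region $S(I_1)$ and the shells $S(I_{k+1})\setminus S(I_k)$ for $k\ge1$, stopping once $I_k$ is the full circle.
\begin{itemize}
 \item On the near region, the global bound on $K_I$ gives a contribution of at most $C\mu(S(I_1))\le C2^s|I|^s\|\mu\|_s$.
 \item On the $k$th shell, I need the geometric fact that $|1-\overline wz|\ge c\,2^k|I|$ and $1-|w|\le C2^k|I|$ whenever $z\in S(I)$ and $w\in S(I_{k+1})\setminus S(I_k)$. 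Either $1-|w|\ge 2^k|I|$, or the argument of $w$ is at angular distance $\gtrsim 2^k|I|$ from $I$; in both cases $|1-\overline wz|\gtrsim (1-|w|)+|\arg w-\arg z|\gtrsim 2^k|I|$.
 \item With this, $K_I(w)\le C(2^k|I|)^{-\lambda}\int_{S(I)}(1-|z|^2)^\beta dA\le C(2^k|I|)^{-\lambda}|I|^{\beta+2}$, using $\beta>-1$.
 \item Since $\lambda-\beta-2>0$, also $(1-|w|^2)^{\lambda-\beta-2}\le C(2^k|I|)^{\lambda-\beta-2}$. The integrand is therefore at most $C2^{-k(\beta+2)}$.
 \item The $k$th shell thus contributes at most $C2^{-k(\beta+2)}\mu(S(I_{k+1}))\le C2^{-k(\beta+2)}(2^{k+1}|I|)^s\|\mu\|_s$.
\end{itemize}
Summing over $k$ gives $\nu(S(I))\le C|I|^s\|\mu\|_s\sum_k2^{-k(\beta+2-s)}$. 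This series converges precisely because $\beta+2>s$. Dividing by $|I|^s$ and taking the supremum over $I$ proves $\|\nu\|_s\le C\|\mu\|_s$.

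\emph{Main obstacle.} The only delicate step is the uniform geometric lower bound for $|1-\overline wz|$ on the shells. This includes the last, possibly truncated, shell when $I_{k+1}$ becomes the full circle, where $S(\partial\mathbb D)=\mathbb D$ and I would just use $1-|w|\le1\lesssim 2^k|I|$. Handling it requires the standard comparison $|1-\overline wz|\approx(1-|w|)+(1-|z|)+|\arg w-\arg z|$, together with care about the height convention $1-|I|\le\rho$ for the squares.
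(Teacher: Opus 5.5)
Your proposal is correct and is essentially the paper's proof. The shared steps are Tonelli, the global weighted area-kernel bound on a near square (the paper uses $S(I_2)$; your $S(I_1)$ works equally well), and concentric dyadic shells on which $|1-\overline wz|\ge c\,2^k|I|$ and $1-|w|^2\le C\,2^k|I|$, which produce the convergent series $\sum_k 2^{-k(\beta+2-s)}$. One small slip: for arcs with $|I|\ge c$ the chain you need is $\nu(S(I))\le C\mu(\mathbb D)\le C\|\mu\|_s\le C c^{-s}|I|^{s}\|\mu\|_s$, with $|I|^{s}$ rather than $|I|^{-s}$, but since $|I|$ is bounded below this changes nothing.
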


\begin{proof}
For $s=0$, integrate over $\mathbb D$ and apply
\eqref{eq:weighted-area-kernel}; the result is
$\nu(\mathbb D)\le C\mu(\mathbb D)$. Suppose $s>0$ and fix an arc
$I$ of length $h<1/4$. Let $I_k$ be its concentric dilate of length
$\min\{2^k h,1\}$, and stop when this is the full circle.
For $w\in S(I_2)$, use \eqref{eq:weighted-area-kernel} to bound the
inner $z$-integral over $S(I)$ by its integral over $\mathbb D$.
After multiplication by $(1-|w|^2)^{\lambda-\beta-2}$, this contribution
is at most $C\mu(S(I_2))\le C\|\mu\|_s h^s$.
For $w\in S(I_{k+1})\setminus S(I_k)$, $k\ge2$, and $z\in S(I)$,
the geometry of the squares gives
$|1-\overline wz|\ge c2^k h$ and $1-|w|^2\le C2^k h$.
Since $\lambda-\beta-2>0$ and
$\int_{S(I)}(1-|z|^2)^\beta\,dA(z)\le C h^{\beta+2}$,
the contribution from this difference is at most
\[
 C h^{\beta+2}(2^k h)^{-\beta-2}
                  \mu(S(I_{k+1}))
 \le C\|\mu\|_s h^s\,2^{-k(\beta+2-s)}.
\]
The series converges because $\beta+2>s$. For $h\ge1/4$, the global
bound from \eqref{eq:weighted-area-kernel} suffices. This proves the
lemma. All interchanges are between nonnegative integrands.
\end{proof}

Using the same points $a_{j,l}$ and numbers $\delta_j$ as before, define
\[
 \|c\|_{\mathcal E_{p,s}}^p
 =\sup_I\frac{1}{|I|^s}
       \sum_{a_{j,l}\in S(I)}|c_{j,l}|^p\delta_j^s,
                  \qquad 0<p<\infty,\quad s\ge0.
\]
In particular, $\mathcal E_{p,0}=\ell^p$ on the set of all pairs
$(j,l)$. Choosing an arc of length comparable to $\delta_j$ containing
$a_{j,l}$ in its square shows that
$|c_{j,l}|\le C\|c\|_{\mathcal E_{p,s}}$. Conversely, a sequence with
one coefficient of modulus $1$ has $\mathcal E_{p,s}$ quasi-norm
bounded by a fixed constant. The quasi-norm is unchanged by changing
the phases of the coefficients and decreases under coordinatewise
restrictions.

The higher-derivative integral characterizations are due to
R\"atty\"a \cite[Theorems~3.2--3.3]{Rattya2003}; see also
\cite[Theorem~3.13]{Zhao2021}. The test primitives below are related
to the kernel families in \cite[Lemma~2.6]{PauZhao2014} and
\cite[Theorem~4.2]{Zhao2021}.
We prove the lattice estimates explicitly, since the argument later
uses finite sums with arbitrary phases and constants independent of
the support, not just the bounds for individual kernels.

\begin{lemma}\label{lem:F-sampling-synthesis}
Let $0<p<\infty$, $-2<q<\infty$, $0\le s<\infty$, $q+s>-1$, and put $A=(q+2)/p$.
For each integer $m\ge1$ and $f\in F(p,q,s)$,
\[
 \left\|\left\{\delta_j^{A+m-1}
          \sup_{w\in Q_{j,l}}|f^{(m)}(w)|\right\}_{j,l}
       \right\|_{\mathcal E_{p,s}}
       \le C\|f\|_{F(p,q,s)}.
\]
Choose a positive integer $M$ such that $M>A+1$ and
$Mp>q+s+2+2\max\{p-1,0\}$. For every finitely supported $c$, put
\[
 G_c(z)=\sum_{j,l}c_{j,l}
            \frac{\delta_j^{M-A}}{(1-\overline{a_{j,l}}z)^M},
            \qquad f_c(z)=\int_0^zG_c(\zeta)\,d\zeta.
\]
Then $\|f_c\|_{F(p,q,s)}\le C\|c\|_{\mathcal E_{p,s}}$.
If the coefficients vanish for $j<J$ and
$\|c\|_{\mathcal E_{p,s}}\le1$, then, for each fixed $R<1$ and integer
$k\ge0$,
\[
 \max_{|z|\le R}|G_c^{(k)}(z)|
                   \le C_{R,k}2^{-J(M-A-1)}.
\]
Finally, for every integer $m\ge1$,
\begin{equation}\label{eq:F-derivative-evaluation}
 \sup_{\|f\|_{F(p,q,s)}\le1}|f^{(m)}(w)|
       \approx(1-|w|^2)^{-A-m+1},\qquad w\in\mathbb D.
\end{equation}
\end{lemma}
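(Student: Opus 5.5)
Four things have to be shown: the sampling estimate, the synthesis bound for $f_c$, the local decay of $G_c$, and the two-sided point evaluation \eqref{eq:F-derivative-evaluation}. I would prove them in that order, always working through the square characterization \eqref{eq:F-squares} and the measure $d\mu_f=|f'|^p(1-|z|^2)^{q+s}\,dA$.

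\emph{Sampling.} Fix $(j,l)$ and let $Q_{j,l}^*$ be the $\delta_j/16$-neighbourhood used in Lemma~\ref{lem:discrete-estimates}. Cauchy's formula applied to $f'$ at order $m-1$, followed by the submean inequality for $|f'|^p$, gives
\[
 \sup_{Q_{j,l}}|f^{(m)}|^p\le C\delta_j^{-(m-1)p-2}\int_{Q_{j,l}^*}|f'|^p\,dA .
\]
Since $1-|z|^2\approx\delta_j$ on $Q_{j,l}^*$, this yields
\[
 \delta_j^{(A+m-1)p}\delta_j^{s}\sup_{Q_{j,l}}|f^{(m)}|^p\le C\mu_f(Q_{j,l}^*).
\]
Now fix an arc $I$. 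The neighbourhoods $Q_{j,l}^*$ with $a_{j,l}\in S(I)$ have bounded overlap and lie in $S(\tilde I)$, where $\tilde I$ is a fixed dilate of $I$; for long arcs one uses the full disk. Summing therefore gives at most $C\mu_f(S(\tilde I))\le C|I|^s\|f\|^p$, and this is the first estimate.

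\emph{Synthesis.} We have $f_c'=G_c$, so by \eqref{eq:F-squares} it suffices to bound $\sup_I|I|^{-s}\int_{S(I)}|G_c|^p(1-|z|^2)^{q+s}\,dA$. Set
\[
 \mu=\sum_{j,l}|c_{j,l}|^p\delta_j^{s+2}\,\delta_{a_{j,l}},
\]
the point masses at the $a_{j,l}$. This is an $s$-Carleson measure, since $\|\mu\|_s\approx\|c\|_{\mathcal E_{p,s}}^p$ for $s>0$, and $\|\mu\|_0=\|c\|_{\ell^p}^p$ up to the factor $\delta_j^2$, which I must track. The weights are set up so that $(1-|a|^2)^{s+2}$ plays the role of the area of $Q_{j,l}$.

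For $p\le1$, subadditivity gives
\[
 |G_c|^p\le\sum|c_{j,l}|^p\delta_j^{(M-A)p}|1-\overline a z|^{-Mp},
\]
and I write $\delta_j^{(M-A)p}=\delta_j^{s+2}\,\delta_j^{Mp-q-s-2}$. Then $|G_c|^p(1-|z|^2)^{q+s}\,dA$ is dominated by the measure $\nu$ of Lemma~\ref{lem:positive-Carleson-kernel}, with $\beta=q+s$ and $\lambda=Mp$. The hypotheses hold: $\beta>-1$ because $q+s>-1$; $\beta+2>s$ because $q>-2$; and $\lambda>\beta+2$ is the condition on $M$. Lemma~\ref{lem:positive-Carleson-kernel} then bounds $\|\nu\|_s$ by $C\|\mu\|_s$.

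For $p>1$, Hölder's inequality splits $\delta_j^{(M-A)}|1-\overline a z|^{-M}$ into the product of $|1-\overline az|^{-M\theta}$ and the remaining factor. The point is that $\sum_{j,l}\delta_j^{\epsilon}|1-\overline a z|^{-\kappa}\le C(1-|z|)^{\epsilon-\kappa+2}$, which follows from the lattice analogue of \eqref{eq:weighted-area-kernel}. This reduces the problem to the $p\le1$ structure with $\lambda=Mp-2(p-1)$, and the extra room $2\max\{p-1,0\}$ in the choice of $M$ is exactly what is needed to keep $\lambda>\beta+2$. \textbf{I expect this Hölder bookkeeping to be the main obstacle}, because the exponents must simultaneously make the lattice sum converge and leave $\lambda>\beta+2$. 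The value $|f_c(0)|=0$ is trivial.

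\emph{Local decay.} On $|z|\le R$ the kernels and all their derivatives are uniformly bounded, so $|G_c^{(k)}(z)|\le C_{R,k}\sum_{j\ge J}\delta_j^{M-A}\sum_l|c_{j,l}|$. Using $|c_{j,l}|\le C$ and the Carleson sum bound $\sum_l|c_{j,l}|^p\delta_j^s\le C$ (from the full-circle square), Hölder or the embedding $\ell^p\subset\ell^1$ gives $\sum_l|c_{j,l}|\le C\delta_j^{-1}$ up to a power that the choice $M>A+1$ absorbs. This produces the bound $2^{-J(M-A-1)}$.

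\emph{Point evaluation.} The upper bound in \eqref{eq:F-derivative-evaluation} is the single-term case of the sampling estimate. For the lower bound, take $c$ to be the indicator of the rectangle containing $w$. Then $\|f_c\|\le C$, and $|f_c^{(m)}(w)|=|G_c^{(m-1)}(w)|\approx\delta_j^{-A-m+1}$ by the computation in Lemma~\ref{lem:measure-embedding}.
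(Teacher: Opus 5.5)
Your sampling estimate, local decay bound and two-sided point evaluation follow the paper's proof. Your synthesis strategy is also the paper's: subadditivity or H\"older, then Lemma~\ref{lem:positive-Carleson-kernel}. However, the synthesis bookkeeping is wrong in a way that matters.

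The point masses must carry $\delta_j^{s}$, not $\delta_j^{s+2}$. With $\mu_c=\sum_{j,l}|c_{j,l}|^p\delta_j^{s}\,\delta_{a_{j,l}}$ one has $\|\mu_c\|_s=\|c\|_{\mathcal E_{p,s}}^p$ exactly, for every $s\ge0$, so there is no factor to track at $s=0$. Your identity $\delta_j^{(M-A)p}=\delta_j^{s+2}\delta_j^{Mp-q-s-2}$ is false, since $(M-A)p=Mp-q-2$. The correct factorization is $\delta_j^{(M-A)p}=\delta_j^{s}\delta_j^{Mp-q-s-2}$. With your weights, the domination of $|G_c|^p(1-|z|^2)^{q+s}\,dA$ by the measure $\nu$ of that lemma fails by the unbounded factor $\delta_j^{-2}$. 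After the correction, the case $p\le1$ is exactly the paper's argument.

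For $p>1$ the same slip hides a genuinely missing step. Since $A(Q_{j,l})\approx\delta_j^2$, your lattice sum satisfies
\[
 \sum_{j,l}\frac{\delta_j^{e}}{|1-\overline{a_{j,l}}z|^{\kappa}}
 \approx\int_{\mathbb D}\frac{(1-|w|^2)^{e-2}}{|1-\overline wz|^{\kappa}}\,dA(w)
 \le C(1-|z|^2)^{e-\kappa}\qquad(1<e<\kappa),
\]
not $C(1-|z|)^{e-\kappa+2}$. So a H\"older split that produces $\lambda=Mp-2(p-1)$ while keeping $\beta=q+s$ leaves the conjugate factor at the critical point:
\[
 \sum_{j,l}\delta_j^2|1-\overline{a_{j,l}}z|^{-2}\approx1+\log\frac{1}{1-|z|^2},
\]
which is unbounded. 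You must move slightly off this point, and that costs a negative power of $1-|z|^2$ which has to be absorbed into the weight.

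The paper takes the conjugate kernel exponent $2+\varepsilon/(p-1)$, which gives
\[
 |G_c(z)|^p\le C(1-|z|^2)^{-\varepsilon}
 \sum_{j,l}\frac{|c_{j,l}|^p\delta_j^{Mp-q-2p}}{|1-\overline{a_{j,l}}z|^{\lambda}},
 \qquad\lambda=Mp-2(p-1)-\varepsilon.
\]
It then absorbs the loss by applying Lemma~\ref{lem:positive-Carleson-kernel} with the lowered exponent $\beta=q+s-\varepsilon$. The conditions $\beta>-1$ and $\beta+2>s$ require $0<\varepsilon<\min\{q+s+1,q+2\}$. This is possible precisely because the hypotheses $q+s>-1$ and $q>-2$ are strict. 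Meanwhile $Mp>q+s+2+2(p-1)$ gives $\lambda>\beta+2$ for every such $\varepsilon$. Your plan provides no such room, so as written the synthesis bound for $p>1$ is not established.

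A minor point in the decay step: $M>A+1$ absorbs no extra power. The exact bound $\sum_l|c_{j,l}|\le C\delta_j^{-1}$, from $|c_{j,l}|\le C$ and the $2^{j+3}$ entries of row $j$, is what gives $2^{-J(M-A-1)}$.
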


\begin{proof}
Write $b_{j,l}=\delta_j^{A+m-1}\sup_{Q_{j,l}}|f^{(m)}|$.
Apply the local Cauchy and submean estimate from
Lemma~\ref{lem:discrete-estimates} to $f'$ with order $m-1$.
On $Q_{j,l}^*$, $1-|z|^2\approx\delta_j$, and hence
\[
 |b_{j,l}|^p\delta_j^s
       \le C\int_{Q_{j,l}^*}|f'(z)|^p
                                  (1-|z|^2)^{q+s}\,dA(z).
\]
The sets $Q_{j,l}^*$ have bounded overlap, also when all rows are
included: at a given point their row sizes are comparable to its
distance from the boundary. If $a_{j,l}\in S(I)$, then $Q_{j,l}^*$ is
contained in the square of a concentric arc whose length is at most
$C|I|$, or in $\mathbb D$ when this exceeds $1$.
Summing the displayed inequality and using \eqref{eq:F-squares}
proves the sampling estimate.

For the synthesis estimate, let
$\mu_c=\sum_{j,l}|c_{j,l}|^p\delta_j^s\delta_{a_{j,l}}$, where
$\delta_a$ denotes unit point mass at $a$. Then
$\|\mu_c\|_s=\|c\|_{\mathcal E_{p,s}}^p$.
Suppose first that $0<p\le1$. Subadditivity gives
\[
 |G_c(z)|^p(1-|z|^2)^{q+s}
 \le\sum_{j,l}|c_{j,l}|^p\delta_j^s
       \frac{\delta_j^{Mp-q-s-2}(1-|z|^2)^{q+s}}
            {|1-\overline{a_{j,l}}z|^{Mp}}.
\]
Since $1-|a_{j,l}|^2\approx\delta_j$, the measure on the right is
bounded by a constant times the measure in
Lemma~\ref{lem:positive-Carleson-kernel}, with
$\beta=q+s$, $\lambda=Mp$ and $\mu=\mu_c$.
Its hypotheses hold: $\beta>-1$, $\beta+2-s=q+2>0$ and
$Mp>\beta+2$. The lemma and \eqref{eq:F-squares}, with $f_c(0)=0$,
prove the required estimate.

Now let $p>1$, and choose
$0<\varepsilon<\min\{q+s+1,q+2\}$.
The geometry of the partition and
\eqref{eq:weighted-area-kernel} with $\beta=0$ imply
\[
 \sum_{j,l}\frac{\delta_j^2} {|1-\overline{a_{j,l}}z|^{2+\varepsilon/(p-1)}} \le
 C\int_{\mathbb D} \frac{dA(w)}{|1-\overline wz|^{2+\varepsilon/(p-1)}} \le
 C(1-|z|^2)^{-\varepsilon/(p-1)}.
\]
Indeed, $A(Q_{j,l})\approx\delta_j^2$ and
$|1-\overline{a_{j,l}}z|\approx|1-\overline wz|$ for
$w\in Q_{j,l}$ and $z\in\mathbb D$.
The latter comparison follows from $|w-a_{j,l}|\le C\delta_j$ and
both lower bounds $|1-\overline wz|\ge c\delta_j$ and
$|1-\overline{a_{j,l}}z|\ge c\delta_j$.
Set $\lambda=Mp-2(p-1)-\varepsilon$.
H\"older's inequality for the finite sum defining $G_c$ gives
\[
 \begin{aligned}
 |G_c(z)|^p
 &\le\left(\sum_{j,l}
       \frac{|c_{j,l}|^p\delta_j^{Mp-q-2p}}
            {|1-\overline{a_{j,l}}z|^\lambda}\right)
       \left(\sum_{j,l}
       \frac{\delta_j^2}
            {|1-\overline{a_{j,l}}z|^{2+\varepsilon/(p-1)}}
       \right)^{p-1}\\
 &\le C(1-|z|^2)^{-\varepsilon}
       \sum_{j,l}\frac{|c_{j,l}|^p\delta_j^{Mp-q-2p}}
                         {|1-\overline{a_{j,l}}z|^\lambda}.
 \end{aligned}
\]
Put $\beta=q+s-\varepsilon$. Multiplication by
$(1-|z|^2)^{q+s}$ rewrites the last bound as
\[
 C\sum_{j,l}|c_{j,l}|^p\delta_j^s
        \frac{\delta_j^{\lambda-\beta-2}(1-|z|^2)^\beta}
             {|1-\overline{a_{j,l}}z|^\lambda}.
\]
Our choice of $\varepsilon$ ensures $\beta>-1$ and $\beta+2>s$,
and the choice of $M$ ensures $\lambda>\beta+2$.
Lemma~\ref{lem:positive-Carleson-kernel} therefore applies once again.
This proves the synthesis estimate for $p>1$ as well.

For the local tail estimate, the coordinate bound for
$\mathcal E_{p,s}$ and the $2^{j+3}$ entries in row $j$ give
$\sum_l|c_{j,l}|\le C\delta_j^{-1}$.
On $|z|\le R$, all fixed derivatives of
$(1-\overline{a_{j,l}}z)^{-M}$ are uniformly bounded. Thus
\[
 \max_{|z|\le R}|G_c^{(k)}(z)|
       \le C_{R,k}\sum_{j\ge J}\delta_j^{M-A-1}
       \le C_{R,k}2^{-J(M-A-1)}.
\]
For the upper bound in \eqref{eq:F-derivative-evaluation}, combine
sampling with the coordinate bound for $\mathcal E_{p,s}$.
For the lower bound, choose the rectangle containing $w$ and take
$f_c$ with only that coefficient nonzero and equal to $1$.
Its quasi-norm is uniformly bounded by the synthesis estimate.
Direct differentiation, using $|a_{j,l}|\ge1/4$ and
$|1-\overline{a_{j,l}}w|\approx\delta_j$, gives
$|f_c^{(m)}(w)|\ge c\delta_j^{-A-m+1}$.
This proves \eqref{eq:F-derivative-evaluation}.
\end{proof}

For $0<p,r<\infty$, $s\ge0$ and a nonnegative sequence $v$, put
\[
 \mathfrak D_{p,s;r}(v)
 =\sup_{\substack{c\text{ finitely supported}\\
                   \|c\|_{\mathcal E_{p,s}}\le1}}
       \left(\sum_{j,l}v_{j,l}|c_{j,l}|^r\right)^{1/r}.
\]
Infinite values are allowed, with zero coefficients contributing zero.
The defining inequality extends to all $c\in\mathcal E_{p,s}$ by
finite restrictions and monotone convergence. In the case $s=0$,
ordinary sequence duality gives
\[
 \mathfrak D_{p,0;r}(v)^r
 =\begin{cases}
      \|v\|_{\ell^\infty},&p\le r,\\
      \|v\|_{\ell^{p/(p-r)}},&r<p.
   \end{cases}
\]

\subsection{Explicit evaluation of the sequence condition}

For $0<s\le1$, the nested square constraints lead to two formulas:
a dyadic covering integral when $r\ge p$, and an integral of a
bottom-up power recursion when $r<p$. The distinction remains necessary
at $s=1$. Dyadic capacities and fractional Carleson
conditions also occur in the tent-space setting of \cite{DafniXiao2004};
the sequence calculations needed here are proved below.

Identify $\partial\mathbb D$ with $[0,1)$ and let $\mathcal T$ be
its binary tree of half-open dyadic arcs, including the full circle.
For $I\in\mathcal T$, write $I_-,I_+$ for its two children. The
angular interval of $Q_{j,l}$ is the arc $I_{j,l}$ of length
$2^{-j-3}=\delta_j/8$. Attach $v_{I_{j,l}}=v_{j,l}$ to this node,
and put $v_I=0$ at depths $0,1,2$. When $0<s\le1$, set
$b_I=|I|^s$ and $\theta=r/p$.

For $N\ge0$, retain only rows $0\le j\le N$, so that the finite
tree ends at depth $N+3$. Define $R_I^{(N)}(t)$ for $t>0$ upwards
from the leaves, setting a missing child's value to zero. There are
two rules:
\begin{equation}\label{eq:tree-superlinear-rule}
 R_I^{(N)}(t)=
 \begin{cases}
 b_I,&v_I/b_I>t,\\
 \min\{b_I,R_{I_-}^{(N)}(t)+R_{I_+}^{(N)}(t)\},&v_I/b_I\le t,
 \end{cases}
 \qquad r\ge p,
\end{equation}
and
\begin{equation}\label{eq:tree-sublinear-rule}
 R_I^{(N)}(t)=\min\left\{b_I,
       \left(\frac{\theta v_I}{t b_I^\theta}\right)^{1/(1-\theta)}
       +R_{I_-}^{(N)}(t)+R_{I_+}^{(N)}(t)\right\},\qquad r<p.
\end{equation}
A zero $v_I$ contributes zero in the second formula. Define
\begin{equation}\label{eq:explicit-sequence-functional}
 \mathcal C_{p,s;r}(v)=
 \begin{cases}
  \displaystyle\sup_{j,l}v_{j,l},&s=0,\ p\le r,\\[1mm]
  \displaystyle\left(\sum_{j,l}v_{j,l}^{p/(p-r)}\right)^{(p-r)/p},
                                         &s=0,\ r<p,\\[1mm]
  \displaystyle\lim_{N\to\infty}\int_0^\infty
           R_{\partial\mathbb D}^{(N)}(t)\,dt,&0<s\le1,\\[1mm]
  \displaystyle\sum_{j,l}v_{j,l},&s>1.
 \end{cases}
\end{equation}
The limit is increasing. Infinite values are allowed; if an entry of
$v$ is infinite, set $\mathcal C_{p,s;r}(v)=\infty$. At finite depth
both recursions use only addition, minimum and powers of the given
entries. In \eqref{eq:tree-sublinear-rule} the functions are piecewise
of the form $A+Bt^{-1/(1-\theta)}$, with finitely many breakpoints.
Thus the remaining one-dimensional integral is also explicit.
For a tail $j\ge J$, first set the earlier entries to zero and use
exactly the same rule; the intervening ancestors are not removed.
The comparison below is uniform in $J$, since it holds for every
nonnegative array with constants independent of its support.

\begin{proposition}\label{prop:explicit-sequence}
For every $0<p,r<\infty$, $s\ge0$ and nonnegative array $v$,
\begin{equation}\label{eq:sequence-explicit-equivalence}
       \mathfrak D_{p,s;r}(v)^r\approx\mathcal C_{p,s;r}(v),
\end{equation}
with constants independent of $v$ and of finite truncations.
The equality in the cases $s=0$ is exact.
For $0<s\le1$ and $r\ge p$, the finite-depth integrand in
\eqref{eq:explicit-sequence-functional} is the least dyadic
$s$-covering cost of the arcs satisfying $v_I/b_I>t$.
In particular, when $s=1$ and $r\ge p$,
\begin{equation}\label{eq:critical-shadow-maximal}
 \mathcal C_{p,1;r}(v)
     =\int_{\partial\mathbb D}
        \sup_{\substack{j,l\ge0\,;\ I_{j,l}\ni\zeta}}
                    \frac{v_{j,l}}{|I_{j,l}|}\,dm(\zeta),
\end{equation}
where $l$ always has its prescribed finite range in row $j$.
\end{proposition}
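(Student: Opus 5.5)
The plan is to reduce every case to an extremal problem on the dyadic tree $\mathcal T$, with the constraint $\|c\|_{\mathcal E_{p,s}}\le1$ rewritten as a packing condition. Since both $\mathfrak D_{p,s;r}(v)$ and $\mathcal C_{p,s;r}(v)$ depend only on $|c_{j,l}|$, I substitute $x_{j,l}=|c_{j,l}|^p$ and put the masses $y_I=x_{j,l}\,b_I$ on the nodes $I=I_{j,l}$, where $b_I=|I|^s\approx\delta_j^s$. First I compare Carleson squares with dyadic tents: each $S(I)$ is covered by a bounded number of tents $\{a_{j,l}:I_{j,l}\subset I'\}$ with $I'$ dyadic and $|I'|\approx|I|$, and conversely. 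So, up to constants, $\|c\|_{\mathcal E_{p,s}}\le1$ is equivalent to $\sum_{J\subset I}y_J\le b_I$ for every $I\in\mathcal T$. The objective becomes $\sum_I v_I\,(y_I/b_I)^\theta$ with $\theta=r/p$, and finite truncations in depth are harmless by monotone convergence.

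The cases $s=0$ and $s>1$ are direct. For $s=0$ the exact formula is the ordinary duality already displayed before the proposition. For $s>1$, row $j$ has about $|I|/\delta_j$ points in $S(I)$ when $\delta_j\le|I|$, so
\[
\sum_{a_{j,l}\in S(I)}\delta_j^s\approx\sum_{\delta_j\le|I|}|I|\,\delta_j^{s-1}\approx|I|^s .
\]
Hence $\mathcal E_{p,s}=\ell^\infty$ with equivalent quasi-norms: the coordinate bound gives one inclusion and this computation the other. The supremum is then attained by taking all $|c_{j,l}|$ equal to a constant, which gives $\sum v_{j,l}$.

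For $0<s\le1$ and $r\ge p$ (so $\theta\ge1$), I use convexity and the coordinate bound $y_I\le Cb_I$. Together with a dyadic rounding of each $x_I$ to a power of $2$ and splitting into boundedly many subfamilies, they should show that the supremum is comparable to the supremum of $\sum_{I\in\mathcal S}v_I$ over $s$-packings $\mathcal S$, meaning $\sum_{J\in\mathcal S,\,J\subset I}b_J\le b_I$. I then apply the layer cake formula $v_I=b_I\int_0^\infty\mathbf 1_{\{v_I/b_I>t\}}\,dt$. I verify by induction from the leaves that rule \eqref{eq:tree-superlinear-rule} computes exactly the least dyadic $s$-covering cost $\operatorname{cap}(E_t)$ of $E_t=\{I:v_I/b_I>t\}$. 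The upper bound $\sum_{\mathcal S}v_I\le C\int\operatorname{cap}(E_t)\,dt$ follows because a packing charges any cover by at most its cost. The lower bound is the main obstacle: a single packing must nearly realize $\operatorname{cap}(E_t)$ simultaneously for all levels $t$. I would take the optimal covers at the levels $t=2^k$; since $s\le1$, these can be chosen nested as $k$ increases, and I would select among them a Carleson subfamily by a stopping-time argument, using $s\le1$ so that the geometric sums over descendants stay bounded. At $s=1$ the cost is additive, so optimal covers are the maximal arcs of $E_t$ and the cost equals $m(\bigcup E_t)$. Integrating in $t$ gives the shadow maximal formula \eqref{eq:critical-shadow-maximal} exactly.

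For $r<p$ (so $\theta<1$), the objective is concave and the constraints $\sum_{J\subset I}y_J\le b_I$ form a laminar family, so a greedy algorithm on the tree is optimal. The marginal value of $y_I$ is $\theta v_I\,y_I^{\theta-1}b_I^{-\theta}$. It exceeds $t$ exactly when
\[
y_I<\Bigl(\frac{\theta v_I}{t\,b_I^\theta}\Bigr)^{1/(1-\theta)} .
\]
Therefore $R_I^{(N)}(t)$ in \eqref{eq:tree-sublinear-rule} is the largest feasible mass in the subtree of $I$ all of whose marginal values are at least $t$, with the minimum against $b_I$ imposing the nested constraint bottom-up. The optimum then equals $\int_0^\infty R_{\partial\mathbb D}^{(N)}(t)\,dt$ up to the factor $\theta$; this identity is exact for the tree program, and the constants enter only through the square-to-tent comparison. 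Uniformity in $J$ and in truncations holds because every comparison above is carried out for an arbitrary nonnegative array.
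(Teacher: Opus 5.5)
Your architecture is the paper's: the square-to-tree comparison \eqref{eq:tree-Carleson-comparison}, exact duality at $s=0$, reduction to $\ell^\infty$ for $s>1$, the covering-cost reading of \eqref{eq:tree-superlinear-rule} with layer integration when $r\ge p$, a marginal-value reading of \eqref{eq:tree-sublinear-rule} when $r<p$, and additivity of the cost at $s=1$.

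\textbf{The gap.} The lower bound for $0<s\le1$, $r\ge p$ is not proved. You flag it as the main obstacle, but the proposed fix does not work. You suggest a stopping-time selection of a Carleson subfamily of nested optimal covers at the levels $t=2^k$. The covering arcs are typically ancestors with $v_K=0$, so a packing of them contributes nothing to $\sum_I v_I$.

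What is needed is one feasible mass distribution supported on the nodes of $E_t=\{I:h_I>t\}$, where $h_I=v_I/b_I$. Its mass in $E_t$ must dominate $\operatorname{cap}(E_t)$ for all $t$ simultaneously.

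The paper gets this from a greedy selection with slack. Process the nodes in nonincreasing order of $h_I$. Accept $I$ exactly when adding mass $b_I$ keeps the accepted mass in every $K\supseteq I$ at most $2b_K$. Write $\mathcal A_t$ for the accepted nodes with $h_I>t$.
\begin{itemize}
\item If a node with $h_I>t$ is rejected, some $K\supseteq I$ already carried accepted mass greater than $2b_K-b_I\ge b_K$. All of that mass comes from earlier nodes, hence from $\mathcal A_t$.
\item The inclusion-maximal such $K$ are disjoint. They cover every rejected node of $E_t$ at total cost below the accepted mass inside them.
\item The accepted nodes outside them cover themselves.
\end{itemize}
Hence $R^{(N)}_{\partial\mathbb D}(t)\le\sum_{I\in\mathcal A_t}b_I$ for every $t$. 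Putting mass $y_I=b_I/2$ on the accepted set is feasible, so the maximum is at least $2^{-\theta}\sum_{I\ \text{accepted}}h_Ib_I\ge2^{-\theta}\int_0^\infty R^{(N)}_{\partial\mathbb D}(t)\,dt$.

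\textbf{The rounding step.} Your reduction to integral packings, by rounding $x_I=y_I/b_I$ to powers of $2$, does not work as described. The level-$k$ family is in general only $2^k$-Carleson: at $s=1$, take all nodes on $2^k$ consecutive full levels with $x_I=2^{-k}$. Such a family does not split into boundedly many $O(1)$-Carleson families. Splitting it into $O(2^k)$ of them costs a factor $\sum_k2^{k(1-\theta)}$, which diverges at $\theta=1$, i.e.\ at $r=p$.

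The step is also unnecessary. For every feasible $y$ one has $(y_I/b_I)^\theta\le y_I/b_I$. Also $\sum_{h_I>t}y_I\le\operatorname{cap}(E_t)$, because the subtree constraints charge any cover by at most its cost. Layer integration then gives the upper bound for fractional masses directly, as in the paper.

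\textbf{The case $r<p$.} The identity $\max_y\sum_Iv_I(y_I/b_I)^\theta=\int_0^\infty R^{(N)}_{\partial\mathbb D}(t)\,dt$ is exact, with no extra factor $\theta$. The factor $\theta$ already sits inside \eqref{eq:tree-sublinear-rule}; for a single node, $\int_0^\infty\min\{b,(\theta w/t)^{1/(1-\theta)}\}\,dt=wb^\theta$.

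Your greedy/marginal-value heuristic is made rigorous in the paper through the penalized value $\Psi_I(t)$:
\begin{itemize}
\item its maximizer has total mass $D_I(t)=R^{(N)}_I(t)$, by induction from the leaves;
\item comparing optimal allocations at $t$ and $t+h$ gives $\Psi_I'=-D_I$;
\item integrating from $0$ to $\infty$ gives the identity.
\end{itemize}
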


\begin{proof}
The cases $s=0$ follow from the sequence duality already proved.
For $s>1$, the coordinate estimate gives
$\|c\|_{\ell^\infty}\le C\|c\|_{\mathcal E_{p,s}}$.
Conversely, each square $S(I)$ contains at most $C|I|/\delta_j$
marked points from a row with $\delta_j\le C|I|$, so
\[
 \sum_{a_{j,l}\in S(I)}\delta_j^s
    \le C|I|\sum_{\delta_j\le C|I|}\delta_j^{s-1}
    \le C'|I|^s.
\]
Thus $\mathcal E_{p,s}$ has quasi-norm equivalent to
$\ell^\infty$, and its multiplier expression is comparable to
$\sum v_{j,l}$.

We turn to $0<s\le1$. Attach $c_{j,l}$ to $I_{j,l}$, with zero
coefficients on the first three levels. The geometric comparison
\begin{equation}\label{eq:tree-Carleson-comparison}
 \|c\|_{\mathcal E_{p,s}}^p\approx
   \sup_{I\in\mathcal T}\frac{1}{|I|^s}
         \sum_{J\subseteq I}|c_J|^p|J|^s
\end{equation}
has constants independent of the support. Indeed, all marked points
attached to descendants of $I$ lie in a square over a concentric arc
of length at most $C|I|$, or in the full disk. This gives one
inequality, since $\delta_j=8|I_{j,l}|$.
Conversely, if $a_{j,l}\in S(K)$ for an arbitrary arc $K$ of length
$h$, then $1-|a_{j,l}|=3\delta_j/4\le h$, so
$|I_{j,l}|\le h/6$. Choose a dyadic length between $h$ and $2h$
(or the full circle when $h>1/2$). The ancestor of $I_{j,l}$ at that
level intersects $K$, and at most three arcs at that level can do so.
Their subtree bounds control the square over $K$. This proves the
other inequality.

It suffices, therefore, to work on a finite dyadic tree with the
constraints
\begin{equation}\label{eq:tree-mass-constraints}
 x_I\ge0,\qquad \sum_{J\subseteq I}x_J\le b_I\quad(I\in\mathcal T),
       \qquad x_I=|c_I|^p b_I.
\end{equation}
Coordinates on the first three levels, and coordinates with $v_I=0$,
may be set to zero. The quantity to be evaluated is the maximum of
$\sum_I v_I(x_I/b_I)^\theta$ under these constraints.
The maximum exists because the feasible set is a finite compact
polytope. All constants obtained below are independent of its depth.

Suppose first that $\theta\ge1$, and put $h_I=v_I/b_I$.
For a collection $\mathcal A$ of nodes, let its covering cost be the
minimum of $\sum_{K\in\mathcal B}b_K$ over collections of dyadic
ancestors such that every $I\in\mathcal A$ is contained in some
$K\in\mathcal B$. Redundant nested covering arcs can be removed.
For the set $\{I:h_I>t\}$, this cost is
$R_{\partial\mathbb D}^{(N)}(t)$ in
\eqref{eq:tree-superlinear-rule}. To see this, at a node already in
the set its whole arc must be covered, at cost $b_I$; otherwise one
chooses between that cost and the sum of the optimal child costs.
Because $0<s\le1$, covering a whole arc by its proper dyadic
subarcs cannot cost less than $|I|^s$: their lengths sum to at
least $|I|$, and $\sum_j u_j^s\ge(\sum_j u_j)^s$.
Thus this is also the usual dyadic $s$-content of the finite union
of the selected arcs.

Every feasible $x$ assigns mass at most the covering cost to any
selected collection. Also $x_I\le b_I$ and $\theta\ge1$.
Layer integration consequently gives
\[
 \sum_Iv_I(x_I/b_I)^\theta
    \le\sum_Ih_Ix_I
    =\int_0^\infty\sum_{h_I>t}x_I\,dt
    \le\int_0^\infty R_{\partial\mathbb D}^{(N)}(t)\,dt.
\]
For the reverse comparison, order the nodes with $h_I>0$ by
nonincreasing $h_I$. Starting with the empty collection, accept a
node $I$ exactly when assigning it mass $b_I$ would leave the total
accepted mass in every node $K\supseteq I$, including $K=I$,
at most $2b_K$.
Let $\mathcal A_t$ be the accepted nodes with $h_I>t$.
If a node with $h_I>t$ was rejected, at that time one of its ancestors
$K$ had accepted mass greater than $2b_K-b_I\ge b_K$.
This remains true for $\mathcal A_t$. Choose the inclusion-maximal
such witnessing ancestors; they are disjoint. They cover all rejected
nodes above level $t$ and cost no more than the accepted mass inside
them. Cover the remaining accepted nodes by their own arcs. This
proves
\[
 R_{\partial\mathbb D}^{(N)}(t)
           \le\sum_{I\in\mathcal A_t}b_I.
\]
The final accepted collection satisfies all constraints with the
factor $2$. Assigning $x_I=b_I/2$ on that collection and zero
elsewhere is feasible. Integrating the preceding inequality gives
\[
 \max_x\sum_Iv_I(x_I/b_I)^\theta
    \ge2^{-\theta}\sum_{I\text{ accepted}}v_I
    \ge2^{-\theta}\int_0^\infty
                         R_{\partial\mathbb D}^{(N)}(t)\,dt.
\]
This proves the comparison for $r\ge p$.

Now suppose $0<\theta<1$, and put $w_I=v_I/b_I^\theta$.
We prove an exact finite-tree identity. For a subtree rooted at $I$,
let $\Psi_I(t)$ be the maximum of
$\sum_{J\subseteq I}w_Jx_J^\theta-t\sum_{J\subseteq I}x_J$
under its constraints \eqref{eq:tree-mass-constraints}.
Zero-weight variables are discarded. Strict concavity makes the
maximizer unique on the remaining coordinates, and write $D_I(t)$
for its total mass.
Without the constraint at $I$, the local variable has optimal mass
$(\theta w_I/t)^{1/(1-\theta)}$, and the two subtrees contribute
$D_{I_-}(t)$ and $D_{I_+}(t)$. If the sum is at most $b_I$, the allocation is feasible. Otherwise
choose a common penalty $u>t$ making its total mass equal to $b_I$.
The child demands are continuous, nonincreasing and tend to zero by
the induction hypothesis; the local demand has the same properties.
Thus such a $u$ exists.
The resulting allocation maximizes the constrained objective at the
original price: it maximizes the objective at price $u$, and among
allocations of mass at most $b_I$ the added term $(u-t)$ times the
mass is largest at mass $b_I$. Hence
\[
 D_I(t)=\min\left\{b_I,
        (\theta w_I/t)^{1/(1-\theta)}
                           +D_{I_-}(t)+D_{I_+}(t)\right\}.
\]
If the unconstrained sum never exceeds $b_I$, no adjustment is
needed. This also covers subtrees with zero weights.
Induction proves that $D_I=R_I^{(N)}$ from
\eqref{eq:tree-sublinear-rule}, and that this function is continuous
for $t>0$.

Comparing optimal allocations at $t$ and $t+h$, for $h>0$, yields
\[
 -hD_I(t)\le\Psi_I(t+h)-\Psi_I(t)\le-hD_I(t+h).
\]
Thus $\Psi_I'(t)=-D_I(t)$. The feasible set is compact,
$\Psi_I(t)\to\Psi_I(0)$ as $t\downarrow0$, and
$\Psi_I(t)\to0$ as $t\to\infty$.
For the last limit one may omit all constraints and maximize the
finitely many expressions $w_Jx^\theta-tx$ separately; each such
maximum tends to zero. Integrating the derivative gives
\[
 \max_x\sum_Iw_Ix_I^\theta
      =\Psi_{\partial\mathbb D}(0)
      =\int_0^\infty R_{\partial\mathbb D}^{(N)}(t)\,dt.
\]
This proves the required finite-tree identity for $r<p$.

Increasing the depth or an entry of $v$ increases both recursions.
Every finite coefficient support is contained in a depth truncation,
so passage to increasing depths and
\eqref{eq:tree-Carleson-comparison} prove
\eqref{eq:sequence-explicit-equivalence} for infinite arrays as well.
For $s=1$, the least covering cost of a finite union of dyadic arcs
is its normalized length. Layer integration then gives
\eqref{eq:critical-shadow-maximal}, first at finite depth and then
by monotone convergence. This completes the proof.
\end{proof}

The calculation makes the distinction between $r\ge p$ and $r<p$
visible. For example, put $v_{j,l}=|I_{j,l}|^s$ at every node, with
$0<s\le1$. If $r\ge p$, the covering integrand is $1$ for
$0<t<1$ and zero for $t\ge1$, so $\mathcal C_{p,s;r}(v)=1$;
the same is true after removing any finite number of rows.
If $r<p$ and $s<1$, testing one full row in
\eqref{eq:tree-mass-constraints}, with equal masses summing to $1$,
gives values growing as $2^{(j+3)(1-s)(1-r/p)}$.
If $s=1$, assign mass $|I|/K$ at every node on $K$ consecutive full
levels; the value is $K^{1-r/p}$. Thus the expression is infinite
when $r<p$. This example concerns the sequence criterion itself;
it does not assume that an arbitrary array is realized by analytic
symbols.

Rotating the angular grid, or replacing it by a fixed dyadic angular
refinement, does not change the full or tail criteria except for
comparison constants. A common refinement has a bounded number of
pieces in each old rectangle, all at the same radial scale.
The finite-sum inequalities for positive powers compare the mixed
sequence tests. For $\mathcal E_{p,s}$, duplicating a coefficient
onto these pieces, or replacing their coefficients by their maximum,
changes the square bound by at most a fixed factor, using a fixed
enlargement of the arc. Its multiplier test, and therefore
$\mathcal C$, has the same comparison. Row restrictions preserve
these arguments, so the constants are independent of the tail index.

Let the two parameter triples satisfy
\[
 0<p_i<\infty,\qquad -2<q_i<\infty,\qquad 0\le s_i<\infty,\qquad q_i+s_i>-1,
                   \qquad i=1,2.
\]
For $n\ge1$, set $b_n=(q_1+2)/p_1+n-1$ and
\[
 d\lambda_a(z)=(1-|z|^2)^{q_2}
                   (1-|\sigma_a(z)|^2)^{s_2}\,dA(z).
\]
For Borel $E\subset\mathbb D$, define
\[
 \nu_{a,0}(E)=\int_{\varphi^{-1}(E)}|\phi'(z)|^{p_2}\,d\lambda_a(z),
 \qquad
 \nu_{a,1}(E)=\int_{\varphi^{-1}(E)}
               |\phi(z)\varphi'(z)|^{p_2}\,d\lambda_a(z),
\]
and put
\[
 v_{a,j,l}
 =\frac{\nu_{a,0}(Q_{j,l})}{\delta_j^{p_2b_n}}
   +\frac{\nu_{a,1}(Q_{j,l})}{\delta_j^{p_2(b_n+1)}}.
\]
These nonnegative integrals may be infinite. No finiteness or Carleson
condition on either pull-back measure is assumed in advance.

\begin{theorem}\label{thm:F-boundedness}
Let $0<p_i<\infty$, $q_i>-2$, $0\le s_i<\infty$ and $q_i+s_i>-1$
for $i=1,2$. Let $n\ge1$ be an integer, let
$\phi\in H(\mathbb D)$ and let $\varphi$ be an analytic self-map of
$\mathbb D$. The following conditions are equivalent:
\begin{enumerate}
 \renewcommand{\labelenumi}{\textup{(\arabic{enumi})}}
 \setlength{\itemsep}{2pt}\setlength{\parskip}{0pt}
 \item $D_{\phi,\varphi}^{n}:F(p_1,q_1,s_1)\to F(p_2,q_2,s_2)$ is
       bounded.
 \item $\displaystyle\sup_{a\in\mathbb D}
                     \mathcal C_{p_1,s_1;p_2}(v_a)<\infty$.
\end{enumerate}
The quantity in \textup{(2)} is given explicitly by
\eqref{eq:explicit-sequence-functional}: its middle range uses the
covering rule for $p_2\ge p_1$ and the power rule for $p_2<p_1$.
When these conditions hold,
\[
 \|D_{\phi,\varphi}^{n}\|_{F(p_1,q_1,s_1)\to F(p_2,q_2,s_2)}
 \approx
 \frac{|\phi(0)|}{(1-|\varphi(0)|^2)^{b_n}}
       +\left(\sup_{a\in\mathbb D}\mathcal C_{p_1,s_1;p_2}(v_a)\right)^{1/p_2}.
\]
The comparison constants depend only on the two parameter triples,
$n$ and the fixed partition. In particular, neither $p_1\le p_2$ nor
$p_i\ge1$ is required.
\end{theorem}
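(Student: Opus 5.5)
The plan is to follow the proof of Theorem~\ref{thm:mixed-derivative}, with the Hardy circle integrals replaced by the measures $\lambda_a$. The mixed sequence duality \eqref{eq:sequence-duality} is replaced by the functional $\mathfrak D_{p_1,s_1;p_2}$, and Proposition~\ref{prop:explicit-sequence} then converts everything to $\mathcal C_{p_1,s_1;p_2}$. By \eqref{eq:F-definition}, the target quasi-norm of $D^n_{\phi,\varphi}f$ is
\[
|\phi(0)|^{p_2}|f^{(n)}(\varphi(0))|^{p_2}
+\sup_a\int_{\mathbb D}\bigl|\phi'(f^{(n)}\circ\varphi)+\phi\varphi'(f^{(n+1)}\circ\varphi)\bigr|^{p_2}d\lambda_a .
\]
By \eqref{eq:F-derivative-evaluation} with $m=n$, the scalar map $f\mapsto\phi(0)f^{(n)}(\varphi(0))$ has norm comparable to $|\phi(0)|(1-|\varphi(0)|^2)^{-b_n}$. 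This accounts for the first term in the norm estimate, so it remains to compare the supremum over $a$ with $\sup_a\mathcal C(v_a)$.

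\emph{Sufficiency.} Fix $a$ and put $c_{j,l}=\delta_j^{b_n}\sup_{Q_{j,l}}|f^{(n)}|$ and $c'_{j,l}=\delta_j^{b_n+1}\sup_{Q_{j,l}}|f^{(n+1)}|$. By the sampling estimate in Lemma~\ref{lem:F-sampling-synthesis}, applied with $m=n$ and $m=n+1$ and $A=(q_1+2)/p_1$, both sequences have $\mathcal E_{p_1,s_1}$ quasi-norm at most $C\|f\|_{F(p_1,q_1,s_1)}$. Change variables $w=\varphi(z)$, split the pull-back integrals over the partition $Q_{j,l}$, and use $|x+y|^{p_2}\le C(|x|^{p_2}+|y|^{p_2})$. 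This bounds the integral by
\[
C\sum_{j,l}v_{a,j,l}\bigl(|c_{j,l}|^{p_2}+|c'_{j,l}|^{p_2}\bigr)\le C\,\mathfrak D_{p_1,s_1;p_2}(v_a)^{p_2}\|f\|^{p_2}.
\]
Here the sum is extended to infinite sequences by monotone convergence, as noted after the definition of $\mathfrak D$. Proposition~\ref{prop:explicit-sequence} then replaces $\mathfrak D^{p_2}$ by $\mathcal C(v_a)$, and taking the supremum over $a$ gives the upper bound.

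\emph{Necessity.} Assume (1) and fix $a$ and a finitely supported $c$ with $\|c\|_{\mathcal E_{p_1,s_1}}\le1$. Choose $M$ satisfying the hypotheses of Lemma~\ref{lem:F-sampling-synthesis}, with $M$ and $M+1$ both admissible. For $u=0,1$ define the primitives $f_{\varepsilon c,u}$ of $\sum\varepsilon_{j,l}c_{j,l}g_{j,l,u}$, where the kernels $g_{j,l,u}$ are those of Lemma~\ref{lem:two-kernels} with $A=(q_1+2)/p_1$. For $u=0$ this is exactly $f_c$ of Lemma~\ref{lem:F-sampling-synthesis}. For $u=1$ it is the same construction with $M+1$ and $A$ unchanged, so $\|f_{\varepsilon c,u}\|\le C$, since the $\mathcal E$ quasi-norm is phase invariant. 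Because $(f_{\varepsilon c,u})^{(n)}=\sum\varepsilon c\,g^{(n-1)}$, apply Khintchine's inequality pointwise in $z$. Then keep only the summand with $\varphi(z)\in Q_{j,l}$ and apply \eqref{eq:two-kernels} with $x=\phi'(z)$ and $y=\phi(z)\varphi'(z)$. The two powers $\delta_j^{-p_2(A+n-1)}$ and $\delta_j^{-p_2(A+n)}$ match $b_n$ and $b_n+1$. The result is
\[
\sum_{j,l}v_{a,j,l}|c_{j,l}|^{p_2}\le C\sum_u\mathbb E\int|(D^n_{\phi,\varphi}f_{\varepsilon c,u})'|^{p_2}d\lambda_a\le C\|D^n_{\phi,\varphi}\|^{p_2}.
\]
The expectation is a finite average and all integrands are nonnegative, so no finiteness of $\nu_{a,i}$ is needed in advance. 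Taking the supremum over $c$ gives $\mathfrak D(v_a)^{p_2}\le C\|D\|^{p_2}$, uniformly in $a$, and Proposition~\ref{prop:explicit-sequence} converts this to $\mathcal C$.

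The main obstacle is ensuring that the synthesis estimate applies simultaneously to both kernel orders $M$ and $M+1$ with the same exponent shift $A$, while keeping constants independent of the support and of the signs. This is exactly why Lemma~\ref{lem:F-sampling-synthesis} is stated for arbitrary finite sums and all sufficiently large $M$. The quasi-Banach ranges $p_i<1$ need no separate treatment, since only subadditivity of $p_2$-th powers and Khintchine's inequality for all positive exponents are used.
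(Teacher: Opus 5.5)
Your proposal is correct and is essentially the paper's own proof. Sufficiency comes from the sampling estimate of Lemma~\ref{lem:F-sampling-synthesis} at orders $n$ and $n+1$ combined with the definition of $\mathfrak D_{p_1,s_1;p_2}$. Necessity comes from randomized primitives of the two kernel families of Lemma~\ref{lem:two-kernels}, using Khintchine's inequality and \eqref{eq:two-kernels} with $r=p_2$. The scalar term comes from \eqref{eq:F-derivative-evaluation} at $\varphi(0)$, and Proposition~\ref{prop:explicit-sequence} converts $\mathfrak D$ into $\mathcal C$. The only slip is cosmetic: the expression you display as the target quasi-norm of $D^n_{\phi,\varphi}f$ is its $p_2$th power.
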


\begin{proof}
Suppose (2) holds. By Proposition~\ref{prop:explicit-sequence},
$B=\sup_a\mathfrak D_{p_1,s_1;p_2}(v_a)$ is finite and
$B^{p_2}\approx\sup_a\mathcal C_{p_1,s_1;p_2}(v_a)$. For
$f\in F(p_1,q_1,s_1)$, set
\[
 c_{j,l}^{(0)}=\delta_j^{b_n}\sup_{Q_{j,l}}|f^{(n)}|,
       \qquad
 c_{j,l}^{(1)}=\delta_j^{b_n+1}\sup_{Q_{j,l}}|f^{(n+1)}|.
\]
Lemma~\ref{lem:F-sampling-synthesis} gives
$\|c^{(u)}\|_{\mathcal E_{p_1,s_1}}\le C\|f\|_{F(p_1,q_1,s_1)}$,
$u=0,1$. The derivative formula, the partition, and the definition of
$\mathfrak D$ imply, for every $a\in\mathbb D$,
\[
 \int_{\mathbb D}|(D_{\phi,\varphi}^{n}f)'(z)|^{p_2}\,d\lambda_a(z) \le
 C\sum_{j,l}v_{a,j,l} \bigl(|c_{j,l}^{(0)}|^{p_2} +|c_{j,l}^{(1)}|^{p_2}\bigr) \le C
 B^{p_2}\|f\|_{F(p_1,q_1,s_1)}^{p_2}.
\]
Also, \eqref{eq:F-derivative-evaluation} gives
\[
 |D_{\phi,\varphi}^{n}f(0)|
 \le C\frac{|\phi(0)|}{(1-|\varphi(0)|^2)^{b_n}}
                         \|f\|_{F(p_1,q_1,s_1)}.
\]
Taking the supremum over $a$ proves boundedness and the upper norm
estimate.

Conversely, suppose (1) holds. Put $A=(q_1+2)/p_1$, and choose $M$
as in Lemma~\ref{lem:F-sampling-synthesis} for the source parameters.
Use the kernels $g_{j,l,u}$ of Lemma~\ref{lem:two-kernels} and define
\[
 f_{c,u}(z)=\int_0^z\sum_{j,l}c_{j,l}g_{j,l,u}(\zeta)\,d\zeta,
                       \qquad u=0,1,
\]
for each finitely supported $c$. The synthesis estimate, with $M$ and
$M+1$, gives
$\|f_{c,u}\|_{F(p_1,q_1,s_1)}\le C\|c\|_{\mathcal E_{p_1,s_1}}$.
Fix $a\in\mathbb D$ and change the coefficients by independent random
signs. Khintchine's inequality, restricted successively to
$\varphi^{-1}(Q_{j,l})$, and \eqref{eq:two-kernels} with exponent
$r=p_2$ yield
\[
 \sum_{j,l}v_{a,j,l}|c_{j,l}|^{p_2} \le C\sum_{u=0}^1\mathbb E \int_{\mathbb D}
 |(D_{\phi,\varphi}^{n}f_{\varepsilon c,u})'(z)|^{p_2} \,d\lambda_a(z) \le
 C\|D_{\phi,\varphi}^{n}\|^{p_2} \|c\|_{\mathcal E_{p_1,s_1}}^{p_2}.
\]
The algebra in \eqref{eq:two-kernels} is applied at $w=\varphi(z)$
with $x=\phi'(z)$ and $y=\phi(z)\varphi'(z)$, but the kernels themselves
remain fixed. Taking the supremum over $c$ and then over $a$ proves (2)
and the lower bound for the sequence term. The lower bound in
\eqref{eq:F-derivative-evaluation}, applied at $\varphi(0)$, proves
\[
 \frac{|\phi(0)|}{(1-|\varphi(0)|^2)^{b_n}}
                  \le C\|D_{\phi,\varphi}^{n}\|.
\]
Combining these inequalities with \eqref{eq:sequence-explicit-equivalence} finishes the proof.
\end{proof}

\subsection{Compactness and the essential norm}

We retain the preceding pull-back measures and use the compact
dilations from Lemma~\ref{lem:space-basics}.

\begin{theorem}\label{thm:F-essential-norm}
Let $0<p_i<\infty$, $q_i>-2$, $0\le s_i<\infty$ and $q_i+s_i>-1$
for $i=1,2$. Let $n\ge1$ and let $\phi,\varphi$ be as in
Theorem~\ref{thm:F-boundedness}. Suppose
$D_{\phi,\varphi}^{n}:F(p_1,q_1,s_1)\to F(p_2,q_2,s_2)$ is bounded.
For $J=0,1,\ldots$, put
\begin{equation}\label{eq:F-tail-limit}
 L_J=\left[\sup_{a\in\mathbb D}\mathcal C_{p_1,s_1;p_2}
          (\{v_{a,j,l}:j\ge J\})\right]^{1/p_2},
              \qquad L=\lim_{J\to\infty}L_J,
\end{equation}
where the coordinates with $j<J$ are set equal to zero. Then
\[
 \|D_{\phi,\varphi}^{n}\|_{e,F(p_1,q_1,s_1)\to F(p_2,q_2,s_2)}
                    \approx L.
\]
In particular, the bounded operator is compact if and only if $L=0$.
All comparison constants depend only on the parameter triples, $n$
and the fixed partition, not on the two symbols.
\end{theorem}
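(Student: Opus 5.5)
The argument follows the pattern of Theorems~\ref{thm:mixed-hardy} and \ref{thm:mixed-derivative}. Sampling and synthesis come from Lemma~\ref{lem:F-sampling-synthesis}, and Proposition~\ref{prop:explicit-sequence} converts between the multiplier quantity $\mathfrak D_{p_1,s_1;p_2}$ and $\mathcal C_{p_1,s_1;p_2}$. That conversion is uniform in the tail index, so $L_J^{p_2}\approx\sup_a\mathfrak D_{p_1,s_1;p_2}(\{v_{a,j,l}:j\ge J\})^{p_2}$ with constants independent of $J$. Both $L_J$ and its $\mathfrak D$-version decrease in $J$, so the limits exist and are comparable.

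\emph{Upper bound.} By Lemma~\ref{lem:space-basics}, $D_{\phi,\varphi}^nC_\rho$ is compact, so $\|D^n_{\phi,\varphi}\|_e\le\limsup_{\rho\uparrow1}\sup_{\|f\|\le1}\|D^n_{\phi,\varphi}h_\rho\|$ with $h_\rho=f-C_\rho f$. The functions $h_\rho$ are uniformly bounded in $F(p_1,q_1,s_1)$, and their fixed derivatives tend to zero uniformly on $|w|\le r_J$, uniformly over the unit ball. The scalar term $\phi(0)h_\rho^{(n)}(\varphi(0))$ tends to zero.

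For the integral part, fix $a$ and split $\int|(D^n_{\phi,\varphi}h_\rho)'|^{p_2}\,d\lambda_a$ according to whether $|\varphi(z)|<r_J$. On the inner set the integrand is bounded by $C\varepsilon_\rho(|\phi'|^{p_2}+|\phi\varphi'|^{p_2})$ with $\varepsilon_\rho\to0$, so we need $\sup_a\int(|\phi'|^{p_2}+|\phi\varphi'|^{p_2})\,d\lambda_a<\infty$. To get it, test boundedness on $z^n/n!$ and $z^{n+1}/(n+1)!$. This gives $\phi,\phi\varphi\in F(p_2,q_2,s_2)$, hence $\sup_a\int|\phi'|^{p_2}\,d\lambda_a<\infty$. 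Writing $\phi\varphi'=(\phi\varphi)'-\phi'\varphi$ and using $|\varphi|\le1$ gives the same bound for the second density, uniformly in $a$.

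On the outer set only rectangles with $j\ge J$, or rectangles meeting $|w|\ge r_J$, occur; these are essentially rows $j\ge J-1$, and a harmless index shift absorbs this. The sampling argument of Theorem~\ref{thm:F-boundedness}, applied to $h_\rho$, bounds this part by $C\,\mathfrak D(\{v_{a,j,l}:j\ge J\})^{p_2}\|h_\rho\|^{p_2}\le CL_J^{p_2}$. Letting $\rho\uparrow1$ and then $J\to\infty$ gives $\|D^n_{\phi,\varphi}\|_e\le CL$.

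\emph{Lower bound.} Assume $L>0$. For each $J$, choose $a_J$ whose tail quantity is at least half of $\sup_a$, and then a finitely supported $c$ with $\|c\|_{\mathcal E_{p_1,s_1}}\le1$, supported in $j\ge J$, such that $\sum v_{a_J,j,l}|c_{j,l}|^{p_2}\gtrsim L_J^{p_2}$. Build $f_{\varepsilon c,u}$ from the kernels of Lemma~\ref{lem:two-kernels} as in the proof of Theorem~\ref{thm:F-boundedness}. The Khintchine argument combined with \eqref{eq:two-kernels} shows that some sign choice and some $u\in\{0,1\}$ give $f_J$ with $\|f_J\|\le C_0$ and
\[
\int|(D^n_{\phi,\varphi}f_J)'|^{p_2}\,d\lambda_{a_J}\ge cL_J^{p_2}\ge cL^{p_2}.
\]
The local tail estimate of Lemma~\ref{lem:F-sampling-synthesis} gives $2^{-J(M-A-1)}\to0$, so $f_J$ and all its fixed derivatives tend to zero locally uniformly, and hence $D^n_{\phi,\varphi}f_J\to0$ locally uniformly. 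The target $F(p_2,q_2,s_2)$ is complete, has the Fatou property, and its convergence implies local uniform convergence by Lemma~\ref{lem:space-basics}. Lemma~\ref{lem:essential-test} therefore yields $L\le C\|D^n_{\phi,\varphi}\|_e$. Compactness being equivalent to $L=0$ follows from the closedness statement in that lemma.

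\emph{Main obstacle.} The delicate point is the inner-set estimate in the upper bound: the argument needs the uniform-in-$a$ finiteness of $\int|\phi\varphi'|^{p_2}\,d\lambda_a$ without assuming any Carleson property in advance. The decomposition $\phi\varphi'=(\phi\varphi)'-\phi'\varphi$ handles it. A secondary point is the bookkeeping of the rows near $r_J$, which only requires checking that $\mathcal C$ is monotone under adding finitely many rows; this is absorbed by replacing $J$ with $J-1$.
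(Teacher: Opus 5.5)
Your proposal is correct and follows the paper's proof essentially step for step. The upper bound uses the compact approximants $D_{\phi,\varphi}^nC_\rho$, the inner/outer split at $r_J$, and the uniform pull-back mass bound obtained from $\phi,\phi\varphi\in F(p_2,q_2,s_2)$ and $\phi\varphi'=(\phi\varphi)'-\phi'\varphi$; the lower bound uses the Khintchine two-kernel test functions, the local tail estimate and Lemma~\ref{lem:essential-test}. The index shift you worry about is unnecessary: $\{|w|\ge r_J\}$ is exactly the union of the rectangles $Q_{j,l}$ with $j\ge J$.
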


\begin{proof}
Proposition~\ref{prop:explicit-sequence} gives, uniformly in $J$,
$L_J\approx\sup_a\mathfrak D_{p_1,s_1;p_2}(\{v_{a,j,l}:j\ge J\})$.
The numbers $L_J$ decrease and $L_0<\infty$ by
Theorem~\ref{thm:F-boundedness}, so the limit is well defined.
Testing the operator on $z^n/n!$ and $z^{n+1}/(n+1)!$ gives
$\phi,\phi\varphi\in F(p_2,q_2,s_2)$. Since $|\varphi|\le1$ and
$\phi\varphi'=(\phi\varphi)'-\phi'\varphi$, it follows that
\begin{equation}\label{eq:F-pullback-masses}
 \sup_{a\in\mathbb D}
       \bigl(\nu_{a,0}(\mathbb D)+\nu_{a,1}(\mathbb D)\bigr)<\infty.
\end{equation}

We first prove the upper estimate. By Lemma~\ref{lem:F-dilations},
$D_{\phi,\varphi}^{n}C_\rho:F(p_1,q_1,s_1)\to F(p_2,q_2,s_2)$ is compact whenever $\rho<1$.
For $\|f\|_{F(p_1,q_1,s_1)}\le1$, set $h_\rho=f-C_\rho f$. The functions
$h_\rho$ are uniformly bounded in $F(p_1,q_1,s_1)$, and, for fixed $J$, their
$n$th and $(n+1)$st derivatives tend uniformly to zero on
$|w|\le r_J$, uniformly over the unit ball of $F(p_1,q_1,s_1)$.
Split each derivative integral into $|\varphi(z)|<r_J$ and its
complement. The derivative formula gives
\[
 \int_{\mathbb D}|(D_{\phi,\varphi}^{n}h_\rho)'(z)|^{p_2}\,d\lambda_a(z) \le
 C\int_{\mathbb D}|h_\rho^{(n)}(w)|^{p_2}\,d\nu_{a,0}(w) +C\int_{\mathbb
 D}|h_\rho^{(n+1)}(w)|^{p_2}\,d\nu_{a,1}(w).
\]
On $|w|<r_J$, the two integrals tend to zero as $\rho\uparrow1$,
uniformly in $a$ and $f$, by \eqref{eq:F-pullback-masses}.
On $|w|\ge r_J$, the sampling estimate in
Lemma~\ref{lem:F-sampling-synthesis} and the definition of
$\mathfrak D$, together with \eqref{eq:sequence-explicit-equivalence}, bound their sum by
$CL_J^{p_2}\|h_\rho\|_{F(p_1,q_1,s_1)}^{p_2}\le CL_J^{p_2}$.
Here $|w|\ge r_J$ is exactly the union of the rectangles with $j\ge J$.
The scalar value
$D_{\phi,\varphi}^{n}h_\rho(0)=\phi(0)h_\rho^{(n)}(\varphi(0))$
also tends to zero uniformly in $f$. Consequently,
\begin{equation}\label{eq:F-dilation-upper}
 \limsup_{\rho\uparrow1}
       \|D_{\phi,\varphi}^{n}(I-C_\rho)\|_{F(p_1,q_1,s_1)\to F(p_2,q_2,s_2)}
                    \le C L_J.
\end{equation}
Taking the infimum over compact operators and then letting
$J\to\infty$ proves $\|D_{\phi,\varphi}^{n}\|_e\le CL$.

For the lower estimate, suppose $L>0$. Choose $a_J\in\mathbb D$
such that
$\mathfrak D_{p_1,s_1;p_2}(\{v_{a_J,j,l}:j\ge J\})\ge c_*L_J$.
Here $c_*>0$ is independent of $J$. By the definition of this quantity,
there is a finitely supported sequence $c$ with $\|c\|_{\mathcal E_{p_1,s_1}}\le1$, vanishing
for $j<J$, such that
\[
 \sum_{j\ge J,l}v_{a_J,j,l}|c_{j,l}|^{p_2}
                    \ge (c_*L_J/2)^{p_2}.
\]
Take $A=(q_1+2)/p_1$ and a fixed $M$ satisfying the two inequalities
in Lemma~\ref{lem:F-sampling-synthesis}. For $u=0,1$, let
$f_{c,u}$ be the primitives of the finite sums of $g_{j,l,u}$ used
in the proof of Theorem~\ref{thm:F-boundedness}. The same
Khintchine argument, now restricted to $|\varphi(z)|\ge r_J$,
yields
\[
 \sum_{j\ge J,l}v_{a_J,j,l}|c_{j,l}|^{p_2}
 \le C\sum_{u=0}^1\mathbb E
       \int_{\{|\varphi(z)|\ge r_J\}}
         |(D_{\phi,\varphi}^{n}f_{\varepsilon c,u})'(z)|^{p_2}
                         \,d\lambda_{a_J}(z).
\]
Select one value of $u$ and one choice of the finitely many signs for
which the integral is at least a fixed multiple of $L_J^{p_2}$.
The resulting function $f_J$ satisfies
\[
 \|f_J\|_{F(p_1,q_1,s_1)}\le C_0,\qquad
        \|D_{\phi,\varphi}^{n}f_J\|_{F(p_2,q_2,s_2)}\ge cL_J\ge cL.
\]
The local tail estimate in Lemma~\ref{lem:F-sampling-synthesis}
and $f_J(0)=0$ show that $f_J$ and all its fixed derivatives tend
locally uniformly to zero. Hence $D_{\phi,\varphi}^{n}f_J\to0$
locally uniformly. Each $f_J$ is analytic on a neighborhood of
$\overline{\mathbb D}$, since it is the primitive of a finite sum
of kernels with their poles outside that closed disk.

Lemma~\ref{lem:essential-test}, with the large target
$F(p_2,q_2,s_2)$, gives $L\le C\|D_{\phi,\varphi}^n\|_e$.
Its closedness assertion proves the compactness criterion.
\end{proof}

The supremum over $a$ in \eqref{eq:F-tail-limit} precedes the limit in
$J$. It is this uniform tail condition, not a separate limit for each
fixed $a$, that occurs in the essential norm. When $s_2=0$, the
measures do not depend on $a$, and the redundant supremum may be
omitted.

\subsection{The little spaces}

The same outer-row tests belong to the little source spaces. What
changes for a little target is the range condition on the two images
of the test polynomials.

\begin{theorem}\label{thm:F-little-operators}
Let $0<p_i<\infty$, $q_i>-2$, $s_i\ge0$ and $q_i+s_i>-1$ for
$i=1,2$, and let $n\ge1$. Use the symbols and sequence criterion of
Theorem~\ref{thm:F-boundedness}, and the tail limit $L$ in
\eqref{eq:F-tail-limit}.

\textup{(a)} If $s_1>0$, Theorems~\ref{thm:F-boundedness} and
\ref{thm:F-essential-norm} remain valid with source
$F_0(p_1,q_1,s_1)$ and target $F(p_2,q_2,s_2)$. Restriction to the
little source preserves the operator norm, and both essential norms
are comparable to $L$.

\textup{(b)} If $s_1,s_2>0$, the operator
$D_{\phi,\varphi}^n:F_0(p_1,q_1,s_1)\to F_0(p_2,q_2,s_2)$ is bounded
if and only if
\[
 \sup_a\mathcal C_{p_1,s_1;p_2}(v_a)<\infty,
       \qquad \phi,\phi\varphi\in F_0(p_2,q_2,s_2).
\]
Its norm equals the corresponding large-space operator norm, and its
essential norm is comparable to $L$. It is compact if and only if
$L=0$, equivalently if and only if the same formula defines a compact
map from $F(p_1,q_1,s_1)$ into $F_0(p_2,q_2,s_2)$.
When $s_1=0$ and $s_2>0$, all assertions in \textup{(b)} hold with
source $F(p_1,q_1,0)$ instead of a little source.
\end{theorem}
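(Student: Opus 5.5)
The plan is to reduce everything to the large-space results, Theorems~\ref{thm:F-boundedness} and \ref{thm:F-essential-norm}, using two facts. First, the test functions used there are little-space functions. Second, the compact dilations $C_\rho$ map large spaces into little ones (Lemma~\ref{lem:F-little-approximation}).

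\emph{Part (a).} For $s_1>0$, the norm of the restriction to $F_0(p_1,q_1,s_1)$ is at most the large-space norm. For the reverse comparison, recall that the lower bounds in Theorem~\ref{thm:F-boundedness} used only the primitives $f_{c,u}$ of finite kernel sums and the single-kernel functions from \eqref{eq:F-derivative-evaluation}. These are analytic near $\overline{\mathbb D}$, so they lie in $F_0$ by Lemma~\ref{lem:F-little-approximation}. The same sequence lower bound therefore holds for the restricted operator, and both norms are comparable to the stated quantity.

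To get equality of the norms, take $f$ in the large unit ball. Then $C_\rho f\in F_0$ with $\|C_\rho f\|\le\|f\|$, and $D^n_{\phi,\varphi}C_\rho f\to D^n_{\phi,\varphi}f$ locally uniformly. The Fatou property (Lemma~\ref{lem:space-basics}) then gives $\|D^n_{\phi,\varphi}f\|\le\liminf_\rho\|D^n_{\phi,\varphi}C_\rho f\|\le\|D^n_{\phi,\varphi}|_{F_0}\|\,\|f\|$.

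For the essential norm of the restriction:
\begin{itemize}
\item The upper bound is inherited, since restricting the compact approximants $D^n_{\phi,\varphi}C_\rho$ preserves \eqref{eq:F-dilation-upper}.
\item The lower bound uses the functions $f_J$ from the proof of Theorem~\ref{thm:F-essential-norm}. They lie in $F_0$, and Lemma~\ref{lem:essential-test} applies with the quasi-Banach source $F_0(p_1,q_1,s_1)$, which is complete by Lemma~\ref{lem:F-little-approximation}.
\end{itemize}

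\emph{Part (b).} Suppose the operator $F_0\to F_0$ is bounded. Since $z^n/n!$ and $z^{n+1}/(n+1)!$ lie in $F_0$, their images $\phi$ and $\phi\varphi$ must lie in $F_0(p_2,q_2,s_2)$. The target norm is the inherited one, so part (a) gives the sequence condition and the norm identity.

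The converse is the main obstacle: showing that $D^n_{\phi,\varphi}$ maps $F_0$ into $F_0$. I would argue via polynomial density (Lemma~\ref{lem:F-little-approximation}(4)) and closedness of $F_0$. By boundedness into the large target, it suffices to show $D^n_{\phi,\varphi}z^k\in F_0$ for every $k$. This image is $c_k\phi\varphi^{k-n}$. I would prove by induction that $\phi\varphi^m\in F_0(p_2,q_2,s_2)$ for all $m\ge0$, using
\[
(\phi\varphi^{m+1})'=(\phi\varphi)'\varphi^m+m\,\phi\varphi'\varphi^{m}.
\]
The first term is dominated by $|(\phi\varphi)'|$, whose measure satisfies the vanishing condition (2) of Lemma~\ref{lem:F-little-approximation}. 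For the second term, write $\phi\varphi'=(\phi\varphi)'-\phi'\varphi$. Its modulus is bounded by $|(\phi\varphi)'|+|\phi'|$, and both have vanishing small-square measures because $\phi,\phi\varphi\in F_0$. The elementary $p$th power inequality then yields condition (2) for $\phi\varphi^{m+1}$; in fact the induction is unnecessary, since each $\phi\varphi^m$ is handled directly this way.

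For compactness and the essential norm in (b), the upper bound again uses $D^n_{\phi,\varphi}C_\rho$. I need this to be compact from the large source into $F_0$: $C_\rho$ maps $F$ compactly into $F_0$, and $D^n_{\phi,\varphi}$ maps $F_0$ boundedly into $F_0$, so the composition is compact into $F_0$. Estimate \eqref{eq:F-dilation-upper} then bounds the essential norm, for either source, by $CL_J$. The lower bound follows from Lemma~\ref{lem:essential-test} with target $F_0$, which has the Fatou property as a closed subspace with the inherited quasi-norm, applied to the little-space functions $f_J$.

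Finally, if $L=0$, the large-source operator into $F_0$ is a norm limit of the compact maps $D^n_{\phi,\varphi}C_\rho$, hence compact by closedness. Conversely, compactness into $F_0$ forces $L=0$ by the lower estimate.

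When $s_1=0$, polynomials are dense in $F(p_1,q_1,0)$ by Lemma~\ref{lem:F-little-approximation}, so the same argument applies verbatim with that source.
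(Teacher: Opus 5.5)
Your proposal is correct and follows the paper's proof essentially step by step. The shared ingredients are Fatou applied to $C_\rho f$ for the norm equality in (a), test functions analytic past $\overline{\mathbb D}$ for the lower bounds, polynomial density plus closedness of $F_0(p_2,q_2,s_2)$ for sufficiency in (b), and the compact maps $D^n_{\phi,\varphi}C_\rho$ into the little target for the upper essential-norm bound and for the case $L=0$. One small correction: $F_0(p_2,q_2,s_2)$ does not have the Fatou property in the sense of Lemma~\ref{lem:essential-test} (for example $C_\rho f\to f$ locally with $f\in F\setminus F_0$), so for the lower bound in (b) either note that the lemma's proof applies Fatou only to a limit already in $Y$, or argue as the paper does that a compact map into $F_0(p_2,q_2,s_2)$ is compact into $F(p_2,q_2,s_2)$, so the large-target lower bound $cL$ applies directly.
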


\begin{proof}
For (a), restriction cannot increase the norm. Conversely, if the
operator is bounded on the little source, then $C_\rho f$ belongs to
that source and has norm at most $\|f\|_{F(p_1,q_1,s_1)}$.
As $\rho\uparrow1$, its image converges locally to
$D_{\phi,\varphi}^nf$. Fatou in the large target proves boundedness
on the large source with no increase in norm. This proves the norm
equality and the common boundedness criterion.

Restriction of compact maps gives the upper essential-norm bound.
For the reverse bound, the test functions in
Theorem~\ref{thm:F-essential-norm} are analytic past
$\overline{\mathbb D}$ and hence belong to the little source with the
same quasi-norm. Lemma~\ref{lem:essential-test}, with the large target,
gives the lower bound by $cL$. Thus the compactness criterion is also
unchanged.

For (b), the tests $z^n/n!$ and $z^{n+1}/(n+1)!$ give the
necessary symbol memberships, and (a) gives the sequence condition
(Theorem~\ref{thm:F-boundedness} does so directly when $s_1=0$). Conversely, these memberships and
$\phi\varphi'=(\phi\varphi)'-\phi'\varphi$ imply
\[
 \lim_{|a|\uparrow1}\int_{\mathbb D}
       \bigl(|\phi'|^{p_2}+|\phi\varphi'|^{p_2}\bigr)\,d\lambda_a=0.
\]
Since $(\phi\varphi^k)'=\phi'\varphi^k+
 k\phi\varphi'\varphi^{k-1}$ for $k\ge1$, all polynomial images
belong to $F_0(p_2,q_2,s_2)$. Polynomial density in the source and
closedness of the little target prove sufficiency. The target norm
is inherited. For $s_1>0$, (a) gives equality of operator norms;
for $s_1=0$, the source itself is unchanged.

A compact map into the little target is also compact into the large
one; the preceding lower essential-norm bound therefore applies.
For the upper bound, $D_{\phi,\varphi}^nC_\rho$ is compact into the
little target: $C_\rho$ is compact from the large source into the little
source when $s_1>0$, and compact on $F(p_1,q_1,0)$ when $s_1=0$;
compose it with the bounded map just proved. By
\eqref{eq:F-dilation-upper}, its error is at most $CL_J$
in the limit $\rho\uparrow1$. Letting $J\to\infty$ gives the upper
bound by $CL$. If $L=0$, these same compact maps, defined on the
entire large source, converge in operator norm into the large target.
Their ranges lie in its closed little subspace. Thus the limiting
operator is compact into the little target as well. The reverse
implication follows by restriction.

The case $s_1=0$ uses polynomial density in $F(p_1,q_1,0)$ from
Lemma~\ref{lem:F-little-approximation}.
\end{proof}

The symbol memberships in (b) alone do not assert a little-space
range for the entire large source when $s_1>0$; that conclusion uses
$L=0$.

\subsection{Forms of the criteria in the separate parameter ranges}

The quantity $\mathcal C$ makes all the preceding $F$-space criteria
explicit. In particular, the same case distinction must be used in the
boundedness condition and in its uniform tail; one cannot replace a
nested condition by a global sum outside the range that justifies it.
The following consequences isolate the simplest forms.

\begin{corollary}\label{cor:F-explicit-regions}
Let $0<p_i<\infty$, $q_i>-2$, $0\le s_i<\infty$ and
$q_i+s_i>-1$ for $i=1,2$, and let $n\ge1$.
Use the symbols, $b_n$, $\lambda_a$ and $v_a$ from
Theorem~\ref{thm:F-boundedness}.
\begin{enumerate}
 \renewcommand{\labelenumi}{\textup{(\arabic{enumi})}}
 \setlength{\itemsep}{3pt}\setlength{\parskip}{0pt}
 \item If $s_1=0$, boundedness is equivalent to
 \[
 \begin{cases}
  \displaystyle\sup_{a,j,l}v_{a,j,l}<\infty,&p_1\le p_2,\\[1mm]
  \displaystyle\sup_a
       \left(\sum_{j,l}v_{a,j,l}^{p_1/(p_1-p_2)}
       \right)^{(p_1-p_2)/p_1}<\infty,&p_2<p_1.
 \end{cases}
 \]
 \item If $0<s_1\le1$, boundedness is equivalent to
 \[
  \sup_{a\in\mathbb D}\lim_{N\to\infty}
      \int_0^\infty R_{\partial\mathbb D}^{(N)}(t;v_a)\,dt<\infty.
 \]
 Use the covering rule \eqref{eq:tree-superlinear-rule} when
 $p_2\ge p_1$, and the power rule \eqref{eq:tree-sublinear-rule}
 when $p_2<p_1$, with $\theta=p_2/p_1$ and $b_I=|I|^{s_1}$.
 In the particular case $s_1=1$ and $p_2\ge p_1$, this is the
 boundary integral condition
 \[
  \sup_a\int_{\partial\mathbb D}
     \sup_{I_{j,l}\ni\zeta}\frac{v_{a,j,l}}{|I_{j,l}|}\,dm(\zeta)
       <\infty.
 \]
 \item If $s_1>1$, boundedness is equivalent to
 \begin{equation}\label{eq:F-Bloch-source-integral}
 \sup_a\int_{\mathbb D}
  \left(
    \frac{|\phi'(z)|^{p_2}}
         {(1-|\varphi(z)|^2)^{p_2b_n}}
    +\frac{|\phi(z)\varphi'(z)|^{p_2}}
         {(1-|\varphi(z)|^2)^{p_2(b_n+1)}}
  \right)d\lambda_a(z)<\infty.
 \end{equation}
\end{enumerate}
In each case the $p_2$th power of the operator norm is comparable
to the stated quantity plus
$|\phi(0)|^{p_2}(1-|\varphi(0)|^2)^{-p_2b_n}$.
Under boundedness, its essential norm to the power $p_2$ is comparable
to
\[
 \lim_{J\to\infty}\sup_{a\in\mathbb D}
      \mathcal C_{p_1,s_1;p_2}(\{v_{a,j,l}:j\ge J\}).
\]
In (1)--(2), evaluate the indicated formula after setting all earlier
entries to zero. In (3), the equivalent tail is obtained by restricting
the symbol integral to $|\varphi(z)|\ge R$, taking $\sup_a$ first,
and then letting $R\uparrow1$. Compactness is equivalent to a zero limit.
The same replacements apply to the operators on little spaces in
Theorem~\ref{thm:F-little-operators}, with its stated symbol
memberships unchanged.
\end{corollary}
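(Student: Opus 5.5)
The corollary is a direct specialization of Theorems~\ref{thm:F-boundedness}, \ref{thm:F-essential-norm} and \ref{thm:F-little-operators}, combined with the explicit evaluation in Proposition~\ref{prop:explicit-sequence}. In each case I insert the formula for $\mathcal C_{p_1,s_1;p_2}$ from \eqref{eq:explicit-sequence-functional} into condition (2) of Theorem~\ref{thm:F-boundedness}. The norm statement is the norm comparison of that theorem raised to the power $p_2$, using $(x+y)^{p_2}\approx x^{p_2}+y^{p_2}$. The essential-norm statement is Theorem~\ref{thm:F-essential-norm} raised to the power $p_2$. The little-space remark is Theorem~\ref{thm:F-little-operators}, which uses the same quantities.

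For (1), with $s_1=0$, the first two rows of \eqref{eq:explicit-sequence-functional} apply. They give the supremum when $p_1\le p_2$ and the $\ell^{p_1/(p_1-p_2)}$ norm when $p_2<p_1$. For (2), the third row applies. The rule is the covering rule when $\theta=p_2/p_1\ge1$ and the power rule otherwise. The $s_1=1$, $p_2\ge p_1$ formula is \eqref{eq:critical-shadow-maximal}. The tails in (1)--(2) are handled by the convention, stated before Proposition~\ref{prop:explicit-sequence}, that earlier entries are set to zero while ancestors are kept. That comparison is uniform in $J$.

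Case (3), $s_1>1$, is the only one needing work. Here $\mathcal C(v_a)=\sum_{j,l}v_{a,j,l}$. Since the $Q_{j,l}$ partition $\mathbb D$, the sum is
\[
 \int_{\mathbb D}\Bigl(\delta_{j(\varphi)}^{-p_2b_n}|\phi'|^{p_2}+\delta_{j(\varphi)}^{-p_2(b_n+1)}|\phi\varphi'|^{p_2}\Bigr)d\lambda_a,
\]
where $j(\varphi(z))$ is the row containing $\varphi(z)$. On $Q_{j,l}$ we have $1-|w|^2\approx\delta_j$, with constants independent of $j$; this includes $Q_{0,0}$ with the origin adjoined, where both quantities are comparable to $1$. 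This gives \eqref{eq:F-Bloch-source-integral} up to constants, including the case where either side is infinite.

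For the tail in (3), I compare the restriction to $|\varphi(z)|\ge R$ with the row tails. If $r_J\le R<r_{J+1}$, the region $\{|\varphi|\ge R\}$ lies between $\{|\varphi|\ge r_{J+1}\}$ and $\{|\varphi|\ge r_J\}$. Hence the restricted integral is squeezed between the $(J+1)$-tail sum and the $J$-tail sum, up to constants. The supremum over $a$ preserves these inequalities. Since both sides are monotone, the two limits as $R\uparrow1$ and as $J\to\infty$ are comparable.

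The main point needing care is this tail comparison in (3), and in particular making sure the supremum over $a$ is taken before the limit, as Theorem~\ref{thm:F-essential-norm} requires. It causes no difficulty because the comparison is pointwise in $a$ with constants independent of $a$. Everything else is bookkeeping. Compactness is equivalent to $L=0$ by Theorem~\ref{thm:F-essential-norm}.
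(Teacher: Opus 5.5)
Your proposal is correct and follows the paper's proof essentially step for step. Parts (1)--(2) come from inserting the explicit formulas of Proposition~\ref{prop:explicit-sequence} into Theorems~\ref{thm:F-boundedness}--\ref{thm:F-little-operators}. Part (3) uses $\mathcal C(v_a)=\sum_{j,l}v_{a,j,l}$, the comparison $1-|w|^2\approx\delta_j$ on $Q_{j,l}$, the fact that the rows $j\ge J$ fill exactly $\{|w|\ge r_J\}$, and monotonicity to pass from $r_J$ to arbitrary $R\uparrow1$, with all constants independent of $a$.
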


\begin{proof}
Parts (1)--(2) are
Proposition~\ref{prop:explicit-sequence} applied to
Theorems~\ref{thm:F-boundedness}--\ref{thm:F-little-operators}.
For (3), the proposition gives $\mathcal C(v_a)=\sum_{j,l}v_{a,j,l}$.
On $Q_{j,l}$, $1-|w|^2\approx\delta_j$; summing the two pull-back
terms and using their definitions gives
\eqref{eq:F-Bloch-source-integral}. For tails, the union of all
rectangles with $j\ge J$ is exactly $\{|w|\ge r_J\}$.
The same comparison therefore holds for restricted integrals. The
limit over arbitrary $R\uparrow1$ agrees with the limit along $r_J$
by monotonicity. All constants are independent of $a,J,R$.
\end{proof}

In (3), it is the source that is a Bloch-type space, by the
identifications in Subsection~\ref{subsec:intro-spaces}; the target
may still have a local integral norm.

In (1) with $p_1\le p_2$, an equivalent geometric criterion is
\[
 \sup_{a,w}\left(
  \frac{\nu_{a,0}(\Delta(w,\eta))}{(1-|w|^2)^{p_2b_n}}
  +\frac{\nu_{a,1}(\Delta(w,\eta))}{(1-|w|^2)^{p_2(b_n+1)}}
             \right)<\infty,
\]
for any fixed $0<\eta<1$. Its boundary tail, $\lim_{R\uparrow1}\sup_{a,\,|w|>R}$ of the
parenthesized expression, is comparable to the essential norm to the
power $p_2$. This follows
from the rectangle--disk covering argument after
Corollary~\ref{cor:Hardy-automatic-compactness}, applied separately
to the two measures. It is not asserted in place of the summation
condition when $p_2<p_1$.

\begin{corollary}\label{cor:F-automatic-compactness}
Let the parameter triples be in the range of
Theorem~\ref{thm:F-boundedness}, with $s_2=0$. If either $p_2<p_1$
or $s_1>1$, every bounded operator
\[
 D_{\phi,\varphi}^n:F(p_1,q_1,s_1)\longrightarrow F(p_2,q_2,0),
                  \qquad n\ge1,
\]
is compact. For $s_1>0$, the same conclusion holds with the little
source $F_0(p_1,q_1,s_1)$.
\end{corollary}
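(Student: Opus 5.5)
The plan is to show that, when $s_2=0$ and either $p_2<p_1$ or $s_1>1$, the quantity $\mathcal C_{p_1,s_1;p_2}$ applied to $v_a$ is a finite sum of positive powers of the entries, so the tails of a fixed finite quantity vanish. When $s_2=0$ the measures $\lambda_a=(1-|z|^2)^{q_2}dA$ do not depend on $a$, so $v_a=v$ is a single fixed array and the supremum over $a$ in Theorem~\ref{thm:F-boundedness} and in \eqref{eq:F-tail-limit} is redundant. Boundedness gives $\mathcal C_{p_1,s_1;p_2}(v)<\infty$ by Theorem~\ref{thm:F-boundedness}, and by Theorem~\ref{thm:F-essential-norm} compactness reduces to
\[
 \lim_{J\to\infty}\mathcal C_{p_1,s_1;p_2}(\{v_{j,l}:j\ge J\})=0 .
\]

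The first case is $s_1>1$. By \eqref{eq:explicit-sequence-functional}, $\mathcal C(v)=\sum_{j,l}v_{j,l}<\infty$, and the tail is $\sum_{j\ge J}\sum_l v_{j,l}$, which tends to zero as the remainder of a convergent series.

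The second case is $s_1=0$ with $p_2<p_1$. Here $\mathcal C(v)=\bigl(\sum v_{j,l}^{p_1/(p_1-p_2)}\bigr)^{(p_1-p_2)/p_1}$, a finite sum with finite exponent, so again the tails vanish.

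The third case, $0<s_1\le1$ with $p_2<p_1$, is the only nontrivial one and the step I expect to be the main obstacle, because $\mathcal C$ is given through the tree recursion \eqref{eq:tree-sublinear-rule} rather than as a sum. I would avoid analysing $R^{(N)}$ directly and instead bound the dual quantity $\mathfrak D$, using \eqref{eq:sequence-explicit-equivalence}, which holds uniformly in tails.

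The key fact is that the constraints \eqref{eq:tree-mass-constraints} include the root constraint. So every $c$ with $\|c\|_{\mathcal E_{p_1,s_1}}\le1$ satisfies
\[
 \sum_{j,l}|c_{j,l}|^{p_1}\delta_j^{s_1}\le C .
\]
Put $\theta=p_2/p_1<1$. H\"older's inequality with exponents $1/\theta$ and $1/(1-\theta)$ then gives
\[
 \sum_{j\ge J}v_{j,l}|c_{j,l}|^{p_2}
 =\sum_{j\ge J}\bigl(v_{j,l}\delta_j^{-s_1\theta}\bigr)\bigl(|c_{j,l}|^{p_1}\delta_j^{s_1}\bigr)^{\theta}
 \le C\Bigl(\sum_{j\ge J}\bigl(v_{j,l}\delta_j^{-s_1\theta}\bigr)^{1/(1-\theta)}\Bigr)^{1-\theta}.
\]
This sufficient bound alone does not give the tail decay without knowing that the right-hand sum is finite, which boundedness need not supply. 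So I would replace it by a splitting argument.

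Fix $\varepsilon>0$ and decompose an extremal $c$ for the tail. By concavity, the optimal value is a concave function of the total mass budget, with value zero at zero mass. The tail value at depth $J$ is dominated by the full optimum restricted to nodes of depth at least $J$, and those restricted optima decrease to a limit $\ell$. If $\ell>0$, one obtains for each $J$ allocations with value at least $\ell/2$ supported in ever deeper disjoint row blocks $[J_k,J_{k+1})$. Each of these is feasible up to a factor $2$ after a finite truncation.

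Sum $K$ such allocations scaled by $1/K$; the scaled sum remains feasible, because each constraint $\sum_{J'\subseteq I}x_{J'}\le b_I$ is additive and each block satisfies it. The resulting value is at least $K\cdot K^{-\theta}\ell/2=K^{1-\theta}\ell/2$. This grows without bound since $\theta<1$, contradicting finiteness of $\mathcal C(v)$ (the same mechanism as in the example following Proposition~\ref{prop:explicit-sequence}). Hence $\ell=0$ and the tails vanish.

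The little-source assertion then follows from Theorem~\ref{thm:F-little-operators}(a), since the essential norm on the little source is also comparable to $L$.
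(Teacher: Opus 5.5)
Your proof is correct and follows the paper's route: you use the single fixed array when $s_2=0$, summability of tails when $s_1>1$, and, for $p_2<p_1$, a contradiction from concatenating $K$ disjointly supported near-extremal blocks, which forces growth like $K^{1-p_2/p_1}$. The only difference is that the paper runs the block argument directly in $\mathcal E_{p_1,s_1}$, where disjointness gives $\|\sum_k c^{(k)}\|_{\mathcal E_{p_1,s_1}}^{p_1}\le K$; this covers every $s_1\ge0$ at once, so your separate $s_1=0$ case, the passage to tree masses and the concavity remark are not needed.
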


\begin{proof}
The condition $s_2=0$ leaves one fixed array $v$, with no parameter
$a$. If $s_1>1$, boundedness gives $\sum v_{j,l}<\infty$, whose
tails tend to zero. Theorem~\ref{thm:F-essential-norm} applies.

For $p_2<p_1$, we show directly that a bounded sequence multiplier
has vanishing tails, for every $s_1\ge0$. Otherwise one could choose
successive, disjoint finite blocks of rows and coefficient arrays
$c^{(k)}$, supported in those blocks, with
\[
 \|c^{(k)}\|_{\mathcal E_{p_1,s_1}}\le1,
       \qquad \sum_{j,l}v_{j,l}|c_{j,l}^{(k)}|^{p_2}\ge\varepsilon^{p_2}
\]
for a fixed $\varepsilon>0$ and every $k$.
This follows from the finite-support definition of $\mathfrak D$
and the assumed nonvanishing tail. Disjointness gives
\[
 \left\|\sum_{k=1}^Kc^{(k)}\right\|_{\mathcal E_{p_1,s_1}}^{p_1}
       \le K,
 \qquad
 \sum_{j,l}v_{j,l}\left|\sum_{k=1}^Kc_{j,l}^{(k)}\right|^{p_2}
       \ge K\varepsilon^{p_2}.
\]
Boundedness of the multiplier bounds the second expression by
$\mathfrak D(v)^{p_2}K^{p_2/p_1}$, a contradiction as $K\to\infty$.
Thus the tails vanish, and Theorem~\ref{thm:F-essential-norm}
again gives compactness. The statement for the little source follows
from Theorem~\ref{thm:F-little-operators}(a).
\end{proof}

The fixed-array hypothesis in this corollary is essential to the
argument. When $s_2>0$, separate tail convergence for each $a$
does not establish the required uniform limit.

\smallskip\noindent
\emph{Discretizing the local parameter.}
Fix $0<\eta<1$ and a uniformly separated pseudohyperbolic net
$\{b_k\}$ whose $\eta$-disks cover $\mathbb D$.
If $|\sigma_b(a)|<\eta$, the automorphism identity gives
\[
 \frac{1-\eta}{1+\eta}
 \le
 \frac{(1-|a|^2)/|1-\overline az|^2}
      {(1-|b|^2)/|1-\overline bz|^2}
 \le\frac{1+\eta}{1-\eta},\qquad z\in\mathbb D.
\]
Indeed, this ratio equals
$(1-|c|^2)/|1-\overline c\,\sigma_b(z)|^2$, with $c=\sigma_b(a)$.
The densities of $\lambda_a$ and $\lambda_b$ therefore differ by
at most $((1+\eta)/(1-\eta))^{s_2}$.
Both $\mathfrak D(v)^{p_2}$ and $\mathcal C(v)$ are monotone and
homogeneous of degree one in $v$; for $\mathcal C$, this follows
from $R_I(t;cv)=R_I(t/c;v)$ when $c>0$.
Consequently, for every $J\ge0$,
\[
 \sup_k\mathcal C_{p_1,s_1;p_2}(v_{b_k}^{\ge J})
 \le \sup_a\mathcal C_{p_1,s_1;p_2}(v_a^{\ge J})
 \le\left(\frac{1+\eta}{1-\eta}\right)^{s_2}
       \sup_k\mathcal C_{p_1,s_1;p_2}(v_{b_k}^{\ge J}),
\]
where $v_a^{\ge J}$ denotes the array with earlier rows set to zero.
Thus the full and tail tests may be evaluated on a fixed countable net.

Finite-depth values increase with the depth and with the entries.
No data-independent stopping depth follows: an array with a single
entry equal to $1$ arbitrarily far out has all earlier truncations
zero and full $\mathcal C$-value $1$.
A certified truncation error requires an estimate of the discarded
tail. For example, directly from its definition,
$\mathfrak D(v)^{p_2}\le\mathfrak D(v^{\le N})^{p_2}+
\mathfrak D(v^{>N})^{p_2}$, where the superscripts denote row
restrictions. In a symbol criterion the tail bound must hold
uniformly over the local parameter, or equivalently over the net.

\begin{example}\label{ex:BMOA-local-dichotomy}
Let $r>0$, $\phi(z)=2+z$ and $\varphi(z)=z$. Then
\[
 D_{2+z,\operatorname{id}}^1:BMOA=F(2,0,1)
               \longrightarrow F(r,2r-2,1)
\]
is bounded if and only if $r\ge2$. Whenever it is bounded, it is not
compact. More precisely, for $r\ge2$,
$\mathcal C_{2,1;r}(\{v_{a,j,l}:j\ge J\})\approx1$ uniformly in
$a\in\mathbb D$ and $J\ge0$.
\end{example}

\begin{proof}
All target triples are admissible. Here $b_1=1$, $\phi'=1$ and
$1\le|2+z|\le3$, while
\[
 d\lambda_a(z)=(1-|z|^2)^{2r-1}
                     \frac{1-|a|^2}{|1-\overline az|^2}\,dA(z).
\]
On $Q_{j,l}$, the two terms of $v_a$ are comparable to
$\delta_j^{r+1}$ and $\delta_j$, respectively, times the same kernel.
Consequently,
\begin{equation}\label{eq:local-example-array}
 v_{a,j,l}\approx\delta_j
                \frac{1-|a|^2}{|1-\overline a a_{j,l}|^2},
\end{equation}
with constants independent of $a,j,l$.

Suppose $r\ge2$. If $\zeta\in I_{j,l}$, then
$|1-\overline a\zeta|\le C|1-\overline a a_{j,l}|$:
use $|\zeta-a_{j,l}|\le C\delta_j$ and
$|1-\overline a a_{j,l}|\ge3\delta_j/4$.
Along the nested arcs containing $\zeta$, the marked points tend to
$\zeta$. Since $|I_{j,l}|=\delta_j/8$, formula
\eqref{eq:critical-shadow-maximal} therefore gives, for every $J$,
\[
 \mathcal C_{2,1;r}(\{v_{a,j,l}:j\ge J\})
 \approx\int_{\partial\mathbb D}
              \frac{1-|a|^2}{|1-\overline a\zeta|^2}\,dm(\zeta)=1.
\]
Theorems~\ref{thm:F-boundedness} and \ref{thm:F-essential-norm}
prove boundedness and noncompactness.

If $r<2$, take $a=0$, so that $v_{0,j,l}\approx\delta_j$.
In \eqref{eq:tree-mass-constraints}, put $x_I=|I|/K$ on $K$
consecutive full levels and zero elsewhere. Every subtree constraint
holds, and the objective is comparable to $K^{1-r/2}$.
Letting $K\to\infty$ gives $\mathcal C_{2,1;r}(v_0)=\infty$, which
proves unboundedness.

For a direct boundedness check, the local Cauchy estimate gives
$\|h\|_{F(2,2,1)}\le C\|f\|_{BMOA}$ for $h=(2+z)f'$;
apply it to $f'$ on enlarged squares and use bounded overlap.
The same estimate gives $(1-|z|^2)^2|h'(z)|\le C\|f\|_{BMOA}$.
For $r\ge2$, multiplying the square integrand for $F(2,2,1)$ by
$((1-|z|^2)^2|h'|)^{r-2}$ proves the claimed $F(r,2r-2,1)$ bound.
The noncompactness also has a direct test. The polynomials $f_k=z^k$
have bounded $BMOA$ norms and tend locally to zero. For
$h_k=(2+z)f_k'$, one has
$h_k'=kz^{k-2}(2(k-1)+kz)$. On
$1-1/k<|z|<1-1/(2k)$, $k\ge4$, its modulus is at least $ck^2$.
The defining target integral at $a=0$ is therefore at least
$c_r k^{2r}k^{-(2r-1)}k^{-1}=c_r>0$.
Thus these images cannot have a norm-convergent subsequence.
\end{proof}

\section{Comparison and symbols with relatively compact range}
\label{sec:comparison-interior}

\subsection{A comparison theorem}

Increasing the weight exponent enlarges an $F$ space. The next result
compares simultaneous shifts and shows that a strictly larger target
shift forces compactness.

\begin{theorem}\label{thm:F-comparison}
Let $0<p_i<\infty$, $q_i>-2$, $s_i\ge0$ and $q_i+s_i>-1$ for
$i=1,2$, and let $n\ge1$. Suppose
$D_{\phi,\varphi}^n:F(p_1,q_1,s_1)\to F(p_2,q_2,s_2)$ is bounded.
For $0\le h\le k$, the same formula defines a bounded operator
\[
 D_{\phi,\varphi}^n:
 F(p_1,q_1+p_1h,s_1)\longrightarrow F(p_2,q_2+p_2k,s_2).
\]
Its operator norm and essential norm are at most
$C((1+|\varphi(0)|)/(1-|\varphi(0)|))^h$ times the respective
original norms. In particular, compactness is preserved. If $k>h$,
the shifted operator is compact even when the original operator is
only bounded; if also $s_2>0$, it is compact with codomain
$F_0(p_2,q_2+p_2k,s_2)$.

For $s_1>0$ the norm comparisons hold with both sources little.
For $s_1,s_2>0$ they also hold with both sources and both targets
little, provided the original little-space operator is bounded.
Constants depend only on the fixed parameters, $n,h,k$ and the partition.
\end{theorem}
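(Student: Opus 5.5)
The plan is to compare the sequence arrays attached to the shifted spaces with those of the original operator, and then invoke Theorems~\ref{thm:F-boundedness}, \ref{thm:F-essential-norm} and \ref{thm:F-little-operators}. First check that the shifted triples are admissible: $q_1+p_1h>-2$, $(q_1+p_1h)+s_1>-1$, and similarly for the target. Write $A'$ and $b_n'$ for the shifted source quantities. Then $A'=(q_1+p_1h+2)/p_1=A+h$, so $b_n'=b_n+h$. The shifted target measure is $d\lambda_a'=(1-|z|^2)^{p_2k}\,d\lambda_a$. Hence the new array is
\[
 v'_{a,j,l}=\int_{\varphi^{-1}(Q_{j,l})}(1-|z|^2)^{p_2k}
 \Bigl(\frac{|\phi'|^{p_2}}{\delta_j^{p_2(b_n+h)}}
 +\frac{|\phi\varphi'|^{p_2}}{\delta_j^{p_2(b_n+h+1)}}\Bigr)\,d\lambda_a .
\]
The key pointwise input is the Schwarz--Pick estimate
\[
 \frac{1-|z|}{1-|\varphi(z)|}\le \frac{1+|\varphi(0)|}{1-|\varphi(0)|}=:K .
\]
On $\varphi^{-1}(Q_{j,l})$ it gives $1-|z|^2\le CK\delta_j$. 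Therefore $(1-|z|^2)^{p_2k}\le (CK\delta_j)^{p_2h}(1-|z|^2)^{p_2(k-h)}$, and since the last factor is at most one we get the entrywise bound $v'_{a,j,l}\le CK^{p_2h}v_{a,j,l}$, and the same for every row-restricted tail.

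Since $\mathcal C_{p_1,s_1;p_2}$ is monotone and homogeneous of degree one in the array (as noted in the discretization remark), and the quantity $\mathcal C$ for the shifted source uses the same $p_1,s_1$, Theorem~\ref{thm:F-boundedness} bounds the sequence part of the norm. The scalar term satisfies
\[
 \frac{|\phi(0)|}{(1-|\varphi(0)|^2)^{b_n+h}}\le \frac{1}{(1-|\varphi(0)|^2)^{h}}\cdot\frac{|\phi(0)|}{(1-|\varphi(0)|^2)^{b_n}} ,
\]
and $(1-|\varphi(0)|^2)^{-h}\le K^h$. Together these give the operator-norm bound. Applying the same entrywise bound to the tails in \eqref{eq:F-tail-limit} and using Theorem~\ref{thm:F-essential-norm} gives the essential-norm bound, and hence preservation of compactness.

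For $k>h$ we exploit the leftover factor $(1-|z|^2)^{p_2(k-h)}$. Fix $R<1$ and split $\varphi^{-1}(Q_{j,l})$ according to $|z|\ge R$ or $|z|<R$.
\begin{itemize}
\item On $|z|\ge R$ the factor is at most $(1-R^2)^{p_2(k-h)}$, so that part of the array is bounded by $(1-R^2)^{p_2(k-h)}CK^{p_2h}v_a$.
\item On $|z|<R$, Schwarz--Pick gives $|\varphi(z)|\le\rho_R<1$. Once $r_J>\rho_R$, this part contributes nothing to the rows $j\ge J$.
\end{itemize}
By monotonicity and homogeneity of $\mathcal C$ (plus its quasi-subadditivity, or simply monotonicity after dominating by the first piece once $J$ is large), the shifted tail satisfies $L'_J\le C(1-R^2)^{(k-h)}K^{h}L_0$ for all large $J$. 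Thus $L'=0$, and Theorem~\ref{thm:F-essential-norm} yields compactness. The step I expect to need the most care is precisely this uniformity in $a$: the bound on the $|z|<R$ part must be independent of $a$. It is, because it is an exact vanishing for $j\ge J$ rather than an estimate.

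For the little codomain when $s_2>0$, I plan to use Theorem~\ref{thm:F-little-operators}(b). Testing the original bounded operator on $z^n/n!$ and $z^{n+1}/(n+1)!$ gives $\phi,\phi\varphi\in F(p_2,q_2,s_2)$. The extra weight $(1-|z|^2)^{p_2k}$ with $k>0$ is at most $C|I|^{p_2k}$ on $S(I)$. Hence the normalized square integrals in condition (2) of Lemma~\ref{lem:F-little-approximation} tend to zero, so $\phi,\phi\varphi\in F_0(p_2,q_2+p_2k,s_2)$. Since $L'=0$, part (b) gives compactness from the large shifted source into the little target.

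For little sources with $s_1>0$, Theorem~\ref{thm:F-little-operators}(a) identifies the norms and essential norms with the large-source ones, up to comparability with the same $L$, so the comparison transfers. When both sources and targets are little, the original little boundedness gives $\phi,\phi\varphi\in F_0(p_2,q_2,s_2)$. This membership persists under the weight shift because $(1-|z|^2)^{p_2k}\le1$. Part (b) then gives the same comparisons in that setting.
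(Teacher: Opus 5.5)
Your proposal is correct and follows the paper's proof essentially step by step. It uses the same Schwarz--Pick bound $1-|z|^2\le 2K\delta_j$ on $\varphi^{-1}(Q_{j,l})$, the same entrywise array comparison combined with monotonicity and homogeneity of $\mathcal C$, the same scalar factor $K^h$, and then Theorems~\ref{thm:F-boundedness}, \ref{thm:F-essential-norm} and \ref{thm:F-little-operators}. There are two local differences. For $k>h$, the paper applies the Schwarz--Pick bound to the whole extra weight, which gives $v^{(h,k)}_{a,j,l}\le(2K)^{p_2k}\delta_j^{p_2(k-h)}v_{a,j,l}$ and an explicit tail decay $O(K^k2^{-J(k-h)})$; your split at $|z|=R$ reaches $L'=0$ without a rate. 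You also check the little-target membership through the Carleson-square condition rather than the paper's M\"obius-kernel splitting.
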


\begin{proof}
Put $b=\varphi(0)$ and $K=(1+|b|)/(1-|b|)$. Schwarz--Pick gives
\begin{equation}\label{eq:comparison-Schwarz-Pick}
 1-|z|^2\le1-|\sigma_b(\varphi(z))|^2
 =\frac{(1-|b|^2)(1-|\varphi(z)|^2)}
        {|1-\overline b\varphi(z)|^2}
 \le K(1-|\varphi(z)|^2).
\end{equation}
In particular, $1-|z|^2\le2K\delta_j$ when
$\varphi(z)\in Q_{j,l}$; see also \cite{CowenMacCluer1995}.
For the shifted spaces, the target weight is
$(1-|z|^2)^{p_2k}d\lambda_a(z)$ and the source exponent is $b_n+h$.
Write $v_a^{(h,k)}$ for the resulting array. Since $k\ge h$,
\[
 v_{a,j,l}^{(h,k)}\le(2K)^{p_2h}v_{a,j,l},\qquad
 v_{a,j,l}^{(h,k)}\le(2K)^{p_2k}
                      \delta_j^{p_2(k-h)}v_{a,j,l}.
\]
The first inequality uses $(1-|z|^2)^{p_2(k-h)}\le1$; the second
uses \eqref{eq:comparison-Schwarz-Pick} on the full additional weight.
The source sequence space $\mathcal E_{p_1,s_1}$ is unchanged.
Thus monotonicity and homogeneity of $\mathfrak D$, followed by
Proposition~\ref{prop:explicit-sequence}, control both the full
sequence quantity and each tail. The scalar term satisfies
\[
 \frac{|\phi(0)|}{(1-|b|^2)^{b_n+h}}
       \le K^h\frac{|\phi(0)|}{(1-|b|^2)^{b_n}}.
\]
Theorems~\ref{thm:F-boundedness} and \ref{thm:F-essential-norm}
now give the norm comparisons. If $k>h$, the second array inequality
bounds the shifted tail, after taking its $p_2$th root, by
$C K^k2^{-J(k-h)}\sup_a\mathfrak D_{p_1,s_1;p_2}(v_a)$.
It tends to zero, proving compactness.

If $s>0$ and $u>0$, then
\begin{equation}\label{eq:strict-weight-little}
 F(p,q,s)\subset F_0(p,q+pu,s).
\end{equation}
Indeed, on $|z|>R$ the new defining integral is at most
$(1-R^2)^{pu}\|f\|_{F(p,q,s)}^p$, uniformly in its local parameter.
On $|z|\le R$ its kernel tends uniformly to zero as $|a|\uparrow1$.
Let first $|a|\uparrow1$ and then $R\uparrow1$.
For $k>h$, the polynomial tests give
$\phi,\phi\varphi\in F(p_2,q_2,s_2)$, and $k>0$ puts both functions
in the shifted little target by \eqref{eq:strict-weight-little}.
Together with the vanishing shifted tail,
Theorem~\ref{thm:F-little-operators}(b) gives compactness into that
target, also when $s_1=0$.

For little sources, apply Theorem~\ref{thm:F-little-operators}(a).
For little sources and targets, its part (b) applies because increasing
the weight exponent preserves the two little-space memberships.
The same sequence comparisons then prove the claimed norm bounds.
\end{proof}

The dependence on $\varphi(0)$ in the operator-norm comparison cannot
in general be removed when $h>0$. Indeed, take $\phi\equiv1$ and
$\varphi\equiv b\in\mathbb D$. The output is the constant
$f^{(n)}(b)$, whose norm is its modulus in both target spaces.
By \eqref{eq:F-derivative-evaluation}, the ratio of the shifted
operator norm to the original one is comparable to
$(1-|b|^2)^{-h}$, which is unbounded as $|b|\uparrow1$.
The same test applies to a little source: the kernels giving the
evaluation lower bound are analytic past $\overline{\mathbb D}$.
The output is constant and hence also belongs to every permitted
little target. When $s_1=0$ there is no distinct little source under
our convention.

\subsection{Supremum-norm targets}

The finite Hardy exponent used in the integral criteria is not needed
for a supremum-norm target. The endpoint has a direct pointwise form.

In the following table, mixed-norm parameters satisfy
$0<p,q\le\infty$, $\alpha>0$; the $F$ parameters satisfy
$0<p<\infty$, $q>-2$, $s\ge0$, $q+s>-1$, with $s>0$ for $F_0$.

\begin{theorem}\label{thm:supremum-targets}
Let $X$, the integer $n$, and the exponent $b$ be chosen from
\[
\begin{array}{c|c|c}
 X&n&b\\ \hline
 H(p,q,\alpha)&n\ge0&\alpha+1/p+n\\
 S(p,q,\alpha)&n\ge1&\alpha+1/p+n-1\\
 F(p,q,s)\text{ or }F_0(p,q,s)&n\ge1&(q+2)/p+n-1.
\end{array}
\]
Let $\phi\in H(\mathbb D)$ and let $\varphi$ be an analytic self-map.
Then:

\textup{(a)} $D_{\phi,\varphi}^n:X\to H^\infty$ is bounded if and only if
\[
 \sup_{z\in\mathbb D}
       \frac{|\phi(z)|}{(1-|\varphi(z)|^2)^b}<\infty,
\]
and its norm is comparable to this supremum.

\textup{(b)} $D_{\phi,\varphi}^n:X\to S^\infty$ is bounded if and only if
\[
 \sup_{z\in\mathbb D}\left(
       \frac{|\phi'(z)|}{(1-|\varphi(z)|^2)^b}
       +\frac{|\phi(z)\varphi'(z)|}{(1-|\varphi(z)|^2)^{b+1}}
                       \right)<\infty.
\]
Its norm is comparable to this supremum plus
$|\phi(0)|(1-|\varphi(0)|^2)^{-b}$.

In either part, under boundedness the essential norm is comparable to
the limiting supremum of the displayed pointwise expression over
$\{z:|\varphi(z)|>R\}$ as $R\uparrow1$. Thus compactness is equivalent
to that limit being zero. The supremum of an empty set is taken as zero.
All comparison constants depend only on the source parameters and $n$.
\end{theorem}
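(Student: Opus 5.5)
The plan is to reduce everything to the point-evaluation estimate
\[
 \sup_{\|f\|_X\le1}|f^{(m)}(w)|\approx(1-|w|^2)^{-(b_0+m)},\qquad w\in\mathbb D,
\]
where $b_0$ denotes the table exponent at order zero, together with its two-kernel analogue. For $X=H(p,q,\alpha)$ the upper bound comes from the first estimate of Lemma~\ref{lem:discrete-estimates}, since a single coordinate is dominated by the $\ell^{p,q}$ norm. The lower bound comes from a single kernel $f_{j,l}$ as in the proof of Lemma~\ref{lem:measure-embedding}. For $S(p,q,\alpha)$ we use the estimate recorded at the start of the proof of Theorem~\ref{thm:mixed-derivative}. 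For $F$ and $F_0$ we use \eqref{eq:F-derivative-evaluation}; its lower-bound kernels are analytic past $\overline{\mathbb D}$ and so lie in $F_0$ by Lemma~\ref{lem:F-little-approximation}.

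Part (a) is then immediate. The upper bound follows from $|\phi(z)f^{(n)}(\varphi(z))|\le C|\phi(z)|(1-|\varphi(z)|^2)^{-b}\|f\|_X$. For the lower bound, at each $z$ we test on the normalized kernel for the rectangle containing $\varphi(z)$ and evaluate at $z$, using $\|g\|_{H^\infty}\ge|g(z)|$.

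For part (b) we differentiate to get $(D^n_{\phi,\varphi}f)'=\phi'(f^{(n)}\circ\varphi)+\phi\varphi'(f^{(n+1)}\circ\varphi)$. The upper bound follows as in (a) at orders $n$ and $n+1$, with the scalar term $|\phi(0)||f^{(n)}(\varphi(0))|$ handled the same way. The lower bound, which is the main obstacle, needs the two terms separated at a single point. We fix $z$, take $w=\varphi(z)\in Q_{j,l}$, and apply Lemma~\ref{lem:two-kernels} with $r=1$, $x=\phi'(z)$, $y=\phi(z)\varphi'(z)$, and $A=b_0$, reading off the $A$ appropriate to each row of the table. For $H(p,q,\alpha)$ the test functions are $g_{j,l,u}$ themselves, with the order index shifted by one, so that their $n$th and $(n+1)$st derivatives appear in place of orders $n-1$ and $n$. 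For $S$ and $F$ they are the primitives of $g_{j,l,u}$. Their source norms are bounded by Lemma~\ref{lem:discrete-estimates} or Lemma~\ref{lem:F-sampling-synthesis}, with $M$ large. One of the two choices of $u$ then gives $|(D^n_{\phi,\varphi}g)'(z)|$ at least a constant times the displayed expression. The zero-order index shift in Lemma~\ref{lem:two-kernels} must be checked: its proof uses only the ratio $\gamma$, so an analogous identity holds at orders $n,n+1$ when $n\ge0$. The scalar part of the norm is handled by the evaluation lower bound at $\varphi(0)$.

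For the essential norm we argue as in Theorems~\ref{thm:mixed-hardy} and \ref{thm:F-essential-norm}. For the upper bound we use the compact maps $D^n_{\phi,\varphi}C_\rho$. On $\{|\varphi(z)|\le R\}$ the derivatives of $f-C_\rho f$ tend to zero uniformly by Lemma~\ref{lem:space-basics} or Lemma~\ref{lem:F-little-approximation}. This needs $\phi\in H^\infty$ for (a), and $\phi',\phi\varphi'\in H^\infty$ for (b), which we obtain by testing on $z^n/n!$ and $z^{n+1}/(n+1)!$. On $\{|\varphi(z)|>R\}$ the pointwise upper bound gives the restricted supremum. For the lower bound we choose $z_k$ with $|\varphi(z_k)|\to1$ nearly attaining the limsup, and take the kernels above, which are bounded and tend locally uniformly to zero since $\delta_j\to0$ (the local tail estimates). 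Lemma~\ref{lem:essential-test} applies because $H^\infty$ and $S^\infty$ are complete with the Fatou property. If $\{|\varphi|>R\}$ is eventually empty, the upper bound alone gives compactness.
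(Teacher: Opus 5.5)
Your proposal is correct and follows essentially the same route as the paper. Two-sided point-evaluation bounds from the synthesis lemmas give (a). For the lower bound in (b) you use Lemma~\ref{lem:two-kernels} at orders $n,n+1$ for the mixed-norm source and at orders $n-1,n$ for the primitives. For the essential norm you use compact dilations for the upper bound, and localized kernels with Lemma~\ref{lem:essential-test} for the lower bound.

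Two small remarks. First, the index shift needs no separate verification, since you can simply apply Lemma~\ref{lem:two-kernels} with $n+1\ge1$ in place of $n$. Second, for the $S$ and $F$ rows the natural kernel normalization is $A=\alpha+1/p$, respectively $A=(q+2)/p$, not the order-zero table exponent. This does not affect the argument: \eqref{eq:two-kernels} is homogeneous under changing $A$, so any choice works once the test functions are normalized in $X$.
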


\begin{proof}
The sampling and synthesis estimates give
\[
 \sup_{\|f\|_X\le1}|f^{(n)}(w)|\approx(1-|w|^2)^{-b},
 \qquad
 \sup_{\|f\|_X\le1}|f^{(n+1)}(w)|\approx(1-|w|^2)^{-b-1}.
\]
For a mixed-norm source use Lemma~\ref{lem:discrete-estimates}; for
its derivative space integrate the kernels once; for the $F$ sources
use Lemma~\ref{lem:F-sampling-synthesis}. The one-kernel test functions
are analytic past the closed disk, so the lower bounds apply to the
little source as well. Taking suprema proves (a).

The product rule and these estimates prove the upper bound in (b).
For its reverse bound, fix $z$ and choose the rectangle containing
$\varphi(z)$. Apply Lemma~\ref{lem:two-kernels} to
$x=\phi'(z)$ and $y=\phi(z)\varphi'(z)$, using the two kernels for a
mixed-norm source and their primitives for a derivative source.
For the former use that lemma with derivative orders $n,n+1$;
for the latter use orders $n-1,n$. Their source norms are uniformly
bounded by the same synthesis estimates. This bounds the two-term
pointwise expression by $C\|D_{\phi,\varphi}^n\|$. Evaluation at zero
supplies the remaining scalar term.

For the essential-norm upper bound use the compact approximants
$D_{\phi,\varphi}^nC_\rho$ from Lemma~\ref{lem:space-basics}.
On $|\varphi(z)|\le R$, the needed derivatives of $f-C_\rho f$
converge uniformly to zero on the source unit ball. The coefficient
functions are bounded: polynomial tests give $\phi\in H^\infty$
in (a), and $\phi,\phi\varphi\in S^\infty$ in (b), which also gives
$\phi\varphi'\in H^\infty$. On the complement use the displayed
pointwise expression and the uniformly bounded source quasi-norm of
$f-C_\rho f$. The value at zero in (b) also tends to zero. Let first
$\rho\uparrow1$ and then $R\uparrow1$.

For a positive boundary limit, choose $z_j$ with
$|\varphi(z_j)|\to1$ realizing a fixed fraction of that limit. The
one-kernel test in (a), or one of the two kernels in (b), gives functions
of uniformly bounded source norm whose image norms are bounded below
by a fixed fraction of the limit. Choose the kernel order sufficiently
large as in the synthesis lemmas. Their local tail estimates show that
the functions and their fixed derivatives tend locally to zero, and
so do their images. Lemma~\ref{lem:essential-test}, with target
$H^\infty$ or $S^\infty$, proves the lower bound and the compactness
criterion.
\end{proof}

\begin{proposition}\label{prop:subcritical-zero-order}
Let $X$ be $S(p,q,\alpha)$ with $\alpha+1/p<1$, or $F(p,q,s)$
with $(q+2)/p<1$, in the parameter ranges above; the little $F$
source is also allowed. For every analytic self-map $\varphi$ and
$0<r\le\infty$,
\[
 D_{\phi,\varphi}^0:X\to H^r\text{ bounded}
 \iff D_{\phi,\varphi}^0:X\to H^r\text{ compact}
 \iff \phi\in H^r.
\]
Its norm is comparable to $\|\phi\|_{H^r}$, uniformly in $\varphi$.

For the source $S(p,q,\alpha)$ in this subcritical range,
$D_{\phi,\varphi}^0:S(p,q,\alpha)\to S^r$ is bounded if and only if
$\phi\in S^r$ and
$D_{\phi\varphi',\varphi}^0:H(p,q,\alpha)\to H^r$ is bounded.
Its norm is comparable to $\|\phi\|_{S^r}$ plus the norm of this last
operator, and their essential norms are equal. Thus
Theorem~\ref{thm:mixed-hardy} (or Theorem~\ref{thm:supremum-targets}
when $r=\infty$) gives an explicit criterion at order zero. If $q>r$, boundedness
also implies compactness in this derivative-target assertion.
\end{proposition}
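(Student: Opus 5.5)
The plan rests on a single fact: in the subcritical range, $X$ embeds compactly into the disc algebra, hence into $H^\infty$. For $S(p,q,\alpha)$ with $b:=\alpha+1/p<1$, the Poisson estimate in the proof of Proposition~\ref{prop:radial-majorant} gives $|f'(z)|\le C\|f\|(1-|z|)^{-b}$. This is integrable along radii, so $f$ is bounded with $\|f\|_\infty\le C\|f\|_{S(p,q,\alpha)}$. Integrating from $r_J$ outward, the tail is at most $C2^{-J(1-b)}\|f\|$, uniformly in $\theta$. For an $F$ source with $A=(q+2)/p<1$ we argue the same way, using the case $m=1$ of \eqref{eq:F-derivative-evaluation}, namely $|f'(w)|\le C(1-|w|)^{-A}\|f\|$. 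A bounded sequence then has a locally convergent subsequence (Lemma~\ref{lem:space-basics}), and the uniform radial tail bound upgrades local convergence to uniform convergence on $\mathbb D$. So the inclusion $X\to H^\infty$ is compact. The little $F$ source is a closed subspace carrying the same quasi-norm, so the same conclusion holds for it.

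Next come the equivalences for $D^0_{\phi,\varphi}$. If the operator is bounded, testing on $f\equiv1$ shows $\phi\in H^r$, with $\|\phi\|_{H^r}\le\|D^0_{\phi,\varphi}\|$. Conversely, if $\phi\in H^r$, we factor
\[
 X\hookrightarrow H^\infty\xrightarrow{\;C_\varphi\;}H^\infty\xrightarrow{\;M_\phi\;}H^r .
\]
Here $C_\varphi$ is a contraction on $H^\infty$, $M_\phi$ has norm at most $\|\phi\|_{H^r}$, and the first arrow is compact. Hence the operator is compact, with norm at most $C\|\phi\|_{H^r}$, and the constant $C$ is independent of $\varphi$. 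This gives bounded $\iff$ compact $\iff$ $\phi\in H^r$ together with the norm comparison. The argument also covers $r=\infty$ and $r<1$, since $M_\phi$ is still bounded in the quasi-norm.

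For the derivative target, write $(D^0_{\phi,\varphi}f)'=\phi'(f\circ\varphi)+\phi\varphi'(f'\circ\varphi)$. The first term defines $T_1f=\phi'(f\circ\varphi)$. If $\phi\in S^r$, then $\phi'\in H^r$, and the first paragraph shows $T_1:S(p,q,\alpha)\to H^r$ is compact with norm $\lesssim\|\phi\|_{S^r}$. The value at the origin, $\phi(0)f(\varphi(0))$, is a rank-one functional of size $\lesssim|\phi(0)|$. The second term is $D^0_{\phi\varphi',\varphi}\circ\partial$, where $\partial:S(p,q,\alpha)\to H(p,q,\alpha)$ is differentiation.

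Sufficiency follows: if $\phi\in S^r$ and $D^0_{\phi\varphi',\varphi}:H(p,q,\alpha)\to H^r$ is bounded, then composing with a primitive gives boundedness into $S^r$, with the stated norm bound. Necessity runs the other way. Testing on $1$ gives $\phi\in S^r$. Subtracting the compact, bounded pieces (the point value and $T_1$) shows that $D^0_{\phi\varphi',\varphi}\circ\partial$ is bounded into $H^r$. Precomposing with the isometry $J$ of Lemma~\ref{lem:primitive-reduction}, and using $\partial J=I$, shows that $D^0_{\phi\varphi',\varphi}$ itself is bounded on $H(p,q,\alpha)$, with norm at most $\|D^0_{\phi,\varphi}\|+C\|\phi\|_{S^r}$. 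So the norms are comparable.

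For the essential norms, the operator differs from $f\mapsto\int_0^z(\phi\varphi')(f'\circ\varphi)$ by a compact operator. Essential norms are invariant under compact perturbations, up to the quasi-triangle constant, which is harmless for $r\ge1$; for $r<1$ I would instead track the infimum over $K$ directly. The map $\int_0^z\cdot$ from $H^r$ to $S^r$ is an isometry onto functions vanishing at $0$. Combining this with the $\partial$, $J$ factorization as in Lemma~\ref{lem:primitive-reduction} gives equality with $\|D^0_{\phi\varphi',\varphi}\|_e$. Finally, Corollary~\ref{cor:Hardy-automatic-compactness} makes $D^0_{\phi\varphi',\varphi}$ compact whenever $q>r$, which yields the last assertion.

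The main obstacle is proving exact equality of essential norms, rather than mere comparability, in the quasi-Banach case. Exact equality requires that compact perturbations change the infimum exactly. I would handle this by absorbing the compact pieces into the approximating $K$: for every compact $K$, the operator $K+(\text{compact part})$ is again compact, and the map $K\mapsto K+(\text{compact part})$ is a bijection of the compact operators. So the infimum of $\|A-K\|$ is unchanged, and exact equality follows.
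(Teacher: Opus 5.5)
Your proposal is correct and follows essentially the paper's proof: a compact embedding $X\hookrightarrow H^\infty$ coming from the derivative growth $(1-|z|)^{-a}$ with $a<1$, then the factorization through $C_\varphi$ on $H^\infty$ and multiplication by $\phi$, and, for the $S^r$ target, the product-rule splitting into the compact term $\phi'(f\circ\varphi)$, a rank-one point evaluation and $J D^0_{\phi\varphi',\varphi}\partial$, with essential norms compared by absorbing the compact part into the approximants as in Lemma~\ref{lem:primitive-reduction}. Two minor points: for $p=\infty$ the growth bound follows directly from \eqref{eq:mean-growth} rather than from the Poisson estimate, and invariance of the essential norm under a compact perturbation is exact with no quasi-triangle constant, as your last paragraph already observes.
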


\begin{proof}
In each source, $|f'(z)|\le C\|f\|_X(1-|z|)^{-a}$ for some
$0<a<1$. Radial integration gives $\|f\|_\infty\le C\|f\|_X$ and
\[
 \|f-C_\rho f\|_\infty
       \le \frac{C}{1-a}(1-\rho)^{1-a}\|f\|_X.
\]
Indeed, integrate $zf'(tz)$ for $\rho\le t\le1$ and use
$1-t|z|\ge1-t$. A fixed $C_\rho:X\to H^\infty$ is compact by
local normality. The displayed estimate therefore makes the inclusion
$X\to H^\infty$ compact. The map $h\mapsto\phi(h\circ\varphi)$ from
$H^\infty$ to $H^r$ has norm $\|\phi\|_{H^r}$ when $\phi\in H^r$.
Composition proves sufficiency and compactness, and the test $1$
proves necessity and the lower norm bound.

For the derivative target, testing $1$ gives $\phi\in S^r$.
The just proved compact inclusion makes
$f\mapsto\phi'(f\circ\varphi):S(p,q,\alpha)\to H^r$ compact.
The identity
\[
 (D_{\phi,\varphi}^0f)'=
       \phi'(f\circ\varphi)+D_{\phi\varphi',\varphi}^0(f')
\]
and the primitive right inverse therefore prove the boundedness
criterion and both norm inequalities. The scalar value at zero is
finite rank. After integrating the displayed identity, the difference
between $D_{\phi,\varphi}^0$ and
$J D_{\phi\varphi',\varphi}^0\partial$ is compact.
Differentiation and $J$ are contractions in both directions between
the relevant derivative spaces and their derivative components.
Composing compact approximants with these maps proves equality of
essential norms, as in Lemma~\ref{lem:primitive-reduction}.
\end{proof}

\subsection{Symbols with relatively compact range}

When $\varphi(\mathbb D)$ stays in a compact subdisk, the source
functions and their derivatives are uniformly controlled there.
The issue is then membership of the two functions $\phi$ and
$\phi\varphi$ in the target derivative space.

In the next theorem, the source $X$ can be $H(p,q,\alpha)$ or
$S(p,q,\alpha)$, with $0<p,q\le\infty$ and $\alpha>0$, or it can
be $F(p_1,q_1,s_1)$, with
$0<p_1<\infty$, $q_1>-2$, $s_1\ge0$ and $q_1+s_1>-1$.
We also allow $X=F_0(p_1,q_1,s_1)$ when $s_1>0$.
The target $Y$ can be $H^r$ or $S^r$, with $0<r\le\infty$, or
$F(p_2,q_2,s_2)$, with
$0<p_2<\infty$, $q_2>-2$, $s_2\ge0$ and $q_2+s_2>-1$.
The target $F_0(p_2,q_2,s_2)$ is allowed when $s_2>0$.
All little spaces carry the inherited quasi-norms.

\begin{theorem}\label{thm:relatively-compact-symbol}
Let $X,Y$ be any of the source and target spaces just specified.
Let $n\ge0$ be an integer, let $\phi\in H(\mathbb D)$, and let
$\varphi$ be an analytic self-map of $\mathbb D$ with
$\|\varphi\|_\infty<1$. The following conditions are equivalent:
\begin{enumerate}
 \renewcommand{\labelenumi}{\textup{(\arabic{enumi})}}
 \setlength{\itemsep}{2pt}\setlength{\parskip}{0pt}
 \item $D_{\phi,\varphi}^{n}:X\to Y$ is bounded.
 \item $D_{\phi,\varphi}^{n}:X\to Y$ is compact.
 \item If $Y=H^r$, then $\phi\in H^r$. For any of the derivative
       targets $S^r$, $F(p_2,q_2,s_2)$ and $F_0(p_2,q_2,s_2)$,
       one has $\phi,\phi\varphi\in Y$.
\end{enumerate}
When these conditions hold,
\[
 \|D_{\phi,\varphi}^{n}\|_{X\to Y}
 \approx
 \begin{cases}
  \|\phi\|_{H^r},&Y=H^r,\\
  \|\phi\|_Y+\|\phi\varphi\|_Y,&
       Y=S^r,\ F(p_2,q_2,s_2)\text{ or }F_0(p_2,q_2,s_2).
 \end{cases}
\]
For each fixed $0<\rho<1$, the comparison constants can be chosen
uniformly for $\|\varphi\|_\infty\le\rho$; they depend only on
$\rho$, $n$ and the parameters of $X,Y$.
\end{theorem}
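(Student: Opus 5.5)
The plan is to pass through the space $\mathcal H$ of functions analytic on the disk $|w|<R$, with $R=(1+\rho)/2$ and $\rho\ge\|\varphi\|_\infty$, and to exploit that $f^{(n)}$ and $f^{(n+1)}$ are uniformly controlled on $|w|\le R$ over the source unit ball. By \eqref{eq:source-local-interior}, which holds for every listed $X$ (little $F$ sources carry the inherited quasi-norm), the maps $f\mapsto f^{(n)}|_{\overline{D(0,\rho')}}$ and $f\mapsto f^{(n+1)}|_{\overline{D(0,\rho')}}$, for any $\rho<\rho'<R$, are bounded from $X$ into $C(\overline{D(0,\rho')})$ with constants depending only on $\rho,n$ and the parameters. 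They are moreover compact: a bounded sequence in $X$ has a locally uniformly convergent subsequence by Lemma~\ref{lem:space-basics}, and Cauchy estimates then give uniform convergence of $f^{(n)},f^{(n+1)}$ on $|w|\le\rho'$.

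\emph{Hardy targets.} Necessity of (3) follows by testing on $z^n/n!$, which gives $\phi\in H^r$ and the lower bound $\|\phi\|_{H^r}\le C\|D^n_{\phi,\varphi}\|$ (the norm of $z^n/n!$ in $X$ depends only on the parameters). For sufficiency and compactness, factor $D^n_{\phi,\varphi}=M_\phi\circ C_\varphi\circ E$, where $E:X\to H^\infty(D(0,\rho'))$, $f\mapsto f^{(n)}$, is compact, and $h\mapsto\phi\,(h\circ\varphi)$ maps $H^\infty(D(0,\rho'))$ into $H^r$ with norm at most $\|\phi\|_{H^r}$. The case $r=\infty$ is included.

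\emph{Derivative targets $Y$.} Necessity comes from testing on $z^n/n!$ and $z^{n+1}/(n+1)!$, whose images are constant multiples of $\phi$ and of $\phi\varphi+c\phi$. This yields $\phi,\phi\varphi\in Y$ and the lower norm bound; for the little target the images already lie in $F_0$. For sufficiency I will write $D^n_{\phi,\varphi}f=\phi\cdot(g\circ\varphi)$ with $g=f^{(n)}$, and expand $g$ in its Taylor series about $0$:
\[
g(w)=\sum_k g_k w^k,\qquad |g_k|\le C\rho'^{-k}\|f\|_X .
\]
Then $D^n_{\phi,\varphi}f=\sum_k g_k\,\phi\varphi^k$. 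The key step is the estimate
\[
\|\phi\varphi^k\|_Y\le C(k+1)^{N}\rho^{k}\bigl(\|\phi\|_Y+\|\phi\varphi\|_Y\bigr).
\]
To prove it, use $(\phi\varphi^k)'=\varphi^{k-1}\bigl(\phi'\varphi+k\phi\varphi'\bigr)$. Since $\phi\varphi'=(\phi\varphi)'-\phi'\varphi$ and $|\varphi|\le\rho$, we get $|(\phi\varphi^k)'|\le C(k+1)\rho^{k-1}\bigl(|\phi'|+|(\phi\varphi)'|\bigr)$ pointwise. All derivative-target quasi-norms ($S^r$, including $r=\infty$, and $F$ via \eqref{eq:F-definition}) are monotone in the pointwise size of the derivative, and $|\phi\varphi^k(0)|\le|\phi(0)|$, so the bound follows. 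For the little target, the same pointwise domination inside the vanishing condition keeps $\phi\varphi^k\in F_0$.

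Summing with the $\tau$-triangle inequality, $\tau=\min\{1,p_2,r\}$, the series $\sum_k|g_k|^\tau\|\phi\varphi^k\|_Y^\tau$ converges geometrically because $\rho/\rho'<1$. This gives the upper norm bound with constants depending only on $\rho$. Compactness then follows because each partial sum $f\mapsto\sum_{k\le K}g_k\phi\varphi^k$ has finite rank (each $g_k$ is a bounded functional, by \eqref{eq:source-local-interior}), and the tails tend to zero in operator quasi-norm. Lemma~\ref{lem:essential-test} (closedness of compact operators, valid since $Y$, and $F_0$ by Lemma~\ref{lem:F-little-approximation}, are complete) finishes the argument. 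The same geometric scheme also handles $Y=H^r$.

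I expect the main obstacle to be the pointwise-domination step: checking that each target quasi-norm really is monotone in $|(\cdot)'|$ together with the value at $0$, especially for $S^\infty$ and the $F_0$ vanishing condition, and that $\phi\varphi'$ is controlled using only the membership $\phi\varphi\in Y$. The identity $\phi\varphi'=(\phi\varphi)'-\phi'\varphi$ handles the latter, and monotonicity is immediate from the definitions. All constants depend only on $\rho,\rho'$ (hence on $\rho$), $n$ and the parameters, which gives the stated uniformity.
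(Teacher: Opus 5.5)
Your proof is correct, but for compactness, and for boundedness into the derivative targets, it takes a different route from the paper.

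\emph{How the paper argues.} The paper gets boundedness in one step. It applies the pointwise bound $|(D_{\phi,\varphi}^{n}f)'|\le C\|f\|_X\bigl(|\phi'|+|(\phi\varphi)'|\bigr)$, together with $|D_{\phi,\varphi}^{n}f(0)|\le C\|f\|_X|\phi(0)|$, once to the whole image. This is the same domination you apply to each $\phi\varphi^k$, so no series is needed. Compactness then comes from the factorization $D_{\phi,\varphi}^{n}=\eta^{-n}D_{\phi,\varphi/\eta}^{n}C_\eta$ with $\|\varphi\|_\infty<\eta<1$. Here the boundedness part is reused for the symbol $\varphi/\eta$, since $\phi\cdot\varphi/\eta=\eta^{-1}\phi\varphi\in Y$. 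The dilation $C_\eta$ is compact by Lemmas~\ref{lem:space-basics} and \ref{lem:F-little-approximation}.

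\emph{How yours compares.} Your expansion into the rank-one pieces $\ell_k\otimes\phi\varphi^k$ is essentially the argument the paper postpones to Theorem~\ref{thm:interior-approximation}. It avoids compactness of dilations altogether, including the $F\to F_0$ statement needed for little sources. It also gives the geometric decay of approximation numbers for free. The paper's factorization is shorter and needs no summation of quasi-norms.

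\emph{Small repairs.}
\begin{enumerate}
\item To get the claimed factor $\rho^k$ in $\|\phi\varphi^k\|_Y$, you need $|\phi(0)\varphi(0)^k|\le\rho^k|\phi(0)|$, not merely $\le|\phi(0)|$. Without it, the value-at-zero terms would not be summable against $|g_k|\le C\rho'^{-k}\|f\|_X$.
\item For $S^r$ with $r<1$, the quasi-norm $|f(0)|+\|f'\|_{H^r}$ is only equivalent to the $r$-norm $(|f(0)|^r+\|f'\|_{H^r}^r)^{1/r}$. So your $\tau$-triangle inequality holds only up to a fixed constant, which is harmless.
\item The image of $z^{n+1}/(n+1)!$ is exactly $\phi\varphi$, not $\phi\varphi+c\phi$.
\end{enumerate}
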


\begin{proof}
Each source contains the polynomials. Testing (1) on $z^n/n!$ gives
$\phi\in Y$, and testing it on $z^{n+1}/(n+1)!$ gives
$\phi\varphi\in Y$. For $Y=H^r$, the second membership already
follows from the first because $|\varphi|\le1$. These tests prove
(1)$\Rightarrow$(3), as well as the lower operator-norm estimates.
They include $n=0$, with the usual convention $0!=1$.

Suppose (3) holds, and fix $0<\rho<1$ with
$\|\varphi\|_\infty\le\rho$. The local estimate \eqref{eq:source-local-interior} holds on each
listed source, and on its little subspace by restriction.

If $Y=H^r$, take $m=n$ in \eqref{eq:source-local-interior}. Then
$|D_{\phi,\varphi}^{n}f(z)|\le C\|f\|_X|\phi(z)|$ for every
$z\in\mathbb D$, so
$\|D_{\phi,\varphi}^{n}f\|_{H^r}\le C\|\phi\|_{H^r}\|f\|_X$.
This proves boundedness and the upper norm estimate for this target.

Consider a derivative target. The product rule gives
\[
 (D_{\phi,\varphi}^{n}f)'
   =\phi'(f^{(n)}\circ\varphi)
                +\phi\varphi'(f^{(n+1)}\circ\varphi).
\]
Since $\phi\varphi'=(\phi\varphi)'-\phi'\varphi$, the local estimates
with $m=n,n+1$ imply
\[
 \begin{aligned}
 |(D_{\phi,\varphi}^{n}f)'(z)|
  &\le C\|f\|_X\bigl(|\phi'(z)|+|(\phi\varphi)'(z)|\bigr),\\
 |D_{\phi,\varphi}^{n}f(0)|
  &\le C\|f\|_X|\phi(0)|.
 \end{aligned}
\]
For $Y=S^r$, take Hardy means of the derivative, or its supremum
when $r=\infty$. For a large $F$
target, integrate its $p_2$th power against $\lambda_a$ and take the
supremum over $a$. In either case, the defining quasi-norm and the
inequality for a power of a sum give
\[
 \|D_{\phi,\varphi}^{n}f\|_Y
       \le C\bigl(\|\phi\|_Y+\|\phi\varphi\|_Y\bigr)\|f\|_X.
\]
For $Y=F_0(p_2,q_2,s_2)$ the same bound holds in the large space.
Moreover, the two defining derivative integrals for $\phi$ and
$\phi\varphi$ tend to zero as $|a|\uparrow1$. The displayed
pointwise estimate therefore puts $D_{\phi,\varphi}^{n}f$ in the
little target as well. This proves (3)$\Rightarrow$(1) and all the
upper norm estimates.
It remains to prove compactness under (3). Choose
$\|\varphi\|_\infty<\eta<1$. The function $\varphi/\eta$ is an
analytic self-map with relatively compact range, and
$\phi(\varphi/\eta)=\eta^{-1}\phi\varphi$. Thus the boundedness
implication already proved applies to
$D_{\phi,\varphi/\eta}^{n}:X\to Y$. Direct differentiation gives
\begin{equation}\label{eq:interior-factorization}
 D_{\phi,\varphi}^{n}
       =\eta^{-n}D_{\phi,\varphi/\eta}^{n}C_\eta,
                \qquad C_\eta f(z)=f(\eta z).
\end{equation}
The dilation is compact by Lemmas~\ref{lem:space-basics} and
\ref{lem:F-little-approximation}.
The factorization proves (3)$\Rightarrow$(2), and
(2)$\Rightarrow$(1) is immediate. Uniformity of the norm constants
follows from \eqref{eq:source-local-interior} and the two polynomial
tests.
\end{proof}

For fixed symbols with $\|\varphi\|_\infty<1$ and a fixed target,
the boundedness and compactness conditions are independent of the
order and of the source within the stated ranges. The norm estimates
need not be uniform in those parameters.

The following is the weighted Dirichlet specialization.

\begin{corollary}\label{cor:Dirichlet-interior}
Let $\beta>-1$, let $n\ge1$ be an integer, let
$\phi\in\mathcal D_\beta$, and let $\varphi$ be an analytic self-map
of $\mathbb D$ with $\|\varphi\|_\infty<1$. The following conditions
are equivalent:
\begin{enumerate}
 \renewcommand{\labelenumi}{\textup{(\arabic{enumi})}}
 \setlength{\itemsep}{2pt}\setlength{\parskip}{0pt}
 \item $D_{\phi,\varphi}^{n}:\mathcal D_\beta\to\mathcal D_\beta$
       is bounded.
 \item $D_{\phi,\varphi}^{n}:\mathcal D_\beta\to\mathcal D_\beta$
       is compact.
 \item $\phi\varphi\in\mathcal D_\beta$.
\end{enumerate}
Condition \textup{(3)} is also equivalent to $\phi\varphi'\in A_\beta^2$.
\end{corollary}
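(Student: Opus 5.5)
This is a specialization of Theorem~\ref{thm:relatively-compact-symbol}. We take $X=Y=F(2,\beta,0)$. For $\beta>-1$ this space equals $\mathcal D_\beta$ with equivalent norms, by the classical identification recorded in Subsection~\ref{subsec:intro-spaces}. The triple $(2,\beta,0)$ is admissible: $q=\beta>-2$, $s=0$ and $q+s=\beta>-1$. Boundedness and compactness are invariant under passing to equivalent quasi-norms, so that theorem applies directly to $D_{\phi,\varphi}^n:\mathcal D_\beta\to\mathcal D_\beta$. It gives (1)$\Leftrightarrow$(2)$\Leftrightarrow$(3$'$), where (3$'$) reads $\phi,\phi\varphi\in\mathcal D_\beta$.

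Since $\phi\in\mathcal D_\beta$ is assumed in the corollary, (3$'$) reduces to $\phi\varphi\in\mathcal D_\beta$, which is condition (3). This gives the three-way equivalence.

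For the last assertion we use the derivative description of $F(2,\beta,0)$: a function $g$ belongs to $\mathcal D_\beta$ if and only if $g'\in A^2_\beta$, because the quasi-norm in \eqref{eq:F-definition} with $s=0$ is $|g(0)|^2+\|g'\|_{A_\beta^2}^2$. Thus $\phi\varphi\in\mathcal D_\beta$ is equivalent to
\[
 (\phi\varphi)'=\phi'\varphi+\phi\varphi'\in A^2_\beta .
\]
Since $\phi\in\mathcal D_\beta$, we have $\phi'\in A^2_\beta$. Since $|\varphi|\le1$, it follows that $\phi'\varphi\in A^2_\beta$. Hence $(\phi\varphi)'\in A^2_\beta$ if and only if $\phi\varphi'\in A^2_\beta$, which is the stated equivalence.

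The only point needing care is the use of equivalent norms, specifically that $\mathcal D_\beta=F(2,\beta,0)$ holds in the whole range $\beta>-1$ with the coefficient norm. The paper cites this from \cite{Zhao1996,Zhao2021} and promises details in Section~\ref{sec:Schatten-exact}. I will simply invoke it; I expect no real obstacle here. The hypothesis $n\ge1$ is not needed for the argument, since the theorem allows every $n\ge0$.
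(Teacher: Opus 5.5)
Your proposal is correct and is essentially the paper's proof. You specialize Theorem~\ref{thm:relatively-compact-symbol} to $X=Y=F(2,\beta,0)$, drop the membership $\phi\in\mathcal D_\beta$ because it is assumed, and get the last equivalence from $(\phi\varphi)'=\phi'\varphi+\phi\varphi'$ together with $\phi'\varphi\in A_\beta^2$. Your side remark that $n\ge1$ is not needed is also right, since that theorem covers every $n\ge0$.
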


\begin{proof}
Apply Theorem~\ref{thm:relatively-compact-symbol} with
both spaces equal to $F(2,\beta,0)$. Since $\phi'\varphi\in A_\beta^2$, the identity
$(\phi\varphi)'=\phi'\varphi+\phi\varphi'$ gives the last equivalence.
\end{proof}

\section{Multipliers and finite-rank approximation}
\label{sec:multipliers-approximation}

\subsection{A characterization involving all weights}

We consider the weights in either $F(p,q,s)$ or its little space.
For either such space $X$, let
\[
 \mathcal M(X)=\{g\in H(\mathbb D):gf\in X\text{ for every }f\in X\},
                  \qquad M_gf=gf.
\]
Since $1\in X$, every multiplier belongs to $X$. The next lemma
also puts it in $H^\infty$, so $\mathcal M(X)\subset X\cap H^\infty$.
Automatic boundedness of multipliers in this setting is also used
in \cite[Section~6]{PauZhao2014}. The following standard argument
includes the little spaces and $p<1$.

\begin{lemma}\label{lem:F-multiplier-bound}
Let $0<p<\infty$, $q>-2$, $s\ge0$ and $q+s>-1$. Let $X$ be
$F(p,q,s)$, or $F_0(p,q,s)$ when $s>0$. If $g\in\mathcal M(X)$,
then $M_g:X\to X$ is bounded and
$\|g\|_\infty\le\|M_g\|_{X\to X}$.
\end{lemma}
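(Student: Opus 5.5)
Boundedness will come from the closed graph theorem, and the sup-norm bound from the standard iteration of $M_g$ combined with point evaluations.

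\emph{Boundedness.} The space $X$ is a quasi-Banach space (an $F$-space). For $F(p,q,s)$ this is Lemma~\ref{lem:space-basics}; for the little space it is Lemma~\ref{lem:F-little-approximation}. Convergence in $X$ implies local uniform convergence by \eqref{eq:source-local-interior}. Now suppose $f_k\to f$ in $X$ and $gf_k\to h$ in $X$. Then $gf_k\to gf$ locally uniformly and $gf_k\to h$ locally uniformly, so $h=gf$. Thus $M_g$ has closed graph. The closed graph theorem is valid for $F$-spaces, which covers quasi-Banach spaces with $p<1$, so $M_g$ is bounded.

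\emph{Sup-norm bound.} Fix $w\in\mathbb D$. I will use the evaluation estimate $|f(w)|\le C_w\|f\|_X$. It follows from \eqref{eq:source-local-interior} with $m=0$ and $R=|w|$, and the constant $C_w$ is finite but may blow up as $|w|\to1$. Since $1\in X$, the function $g^k=M_g^k1$ satisfies
\[
 |g(w)|^k\le C_w\|M_g^k 1\|_X\le C_w\|M_g\|^k\|1\|_X .
\]
Here I use $\|M_g^k\|\le\|M_g\|^k$. This is valid for the operator quasi-norm, since $\|ABf\|\le\|A\|\|Bf\|\le\|A\|\|B\|\|f\|$ requires no triangle inequality. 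Taking $k$th roots and letting $k\to\infty$ gives $|g(w)|\le\|M_g\|$. Taking the supremum over $w$ yields $\|g\|_\infty\le\|M_g\|_{X\to X}$.

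The only subtle point is the applicability of the closed graph theorem in the nonlocally convex case. It is classical for complete metrizable topological vector spaces, and a $p$-normed space with $p=\min\{1,p\}$ is metrizable via $d(f,h)=\|f-h\|^{\min\{1,p\}}$, up to an equivalent quasi-norm obtained by the Aoki--Rolewicz theorem. So I anticipate no real obstacle. For the little space, $1\in F_0$ holds because constants are analytic past the closed disk.
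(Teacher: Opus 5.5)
Your proof is correct. The boundedness half is the paper's argument: the closed graph theorem for quasi-Banach spaces, with the graph closed because norm convergence implies local uniform convergence.

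For the bound $\|g\|_\infty\le\|M_g\|$ you take a different route. You iterate $M_g$ on the constant $1$, obtaining
\[
 |g(w)|^k=|(M_g^k1)(w)|\le C_w\|M_g\|^k\|1\|_X,
\]
and then take $k$th roots. The paper instead uses the identity $e_wM_g=g(w)e_w$ for the point evaluation $e_w\in X^*$. This gives $|g(w)|\,\|e_w\|_{X^*}\le\|e_w\|_{X^*}\|M_g\|$, and the factor $\|e_w\|_{X^*}$ is cancelled because it is positive ($e_w(1)=1$).

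Comparing the two:
\begin{itemize}
 \item The paper's version is a single cancellation with no limiting process.
 \item Yours never mentions the dual quasi-norm. It also proves slightly more: $|g(w)|\le\liminf_k\|M_g^k\|^{1/k}$, so $\|g\|_\infty$ is bounded by the spectral radius of $M_g$, not just its norm.
 \item Both use exactly the same inputs: continuity of point evaluations, from \eqref{eq:source-local-interior}, and $1\in X$.
\end{itemize}

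A small technical point: \eqref{eq:source-local-interior} is stated for $0<R<1$. So for $w=0$ take any admissible $R\ge|w|$ rather than $R=|w|$.
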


\begin{proof}
By Lemmas~\ref{lem:F-dilations} and \ref{lem:F-little-approximation},
$X$ is a quasi-Banach space, and convergence in $X$ implies local
uniform convergence. The graph of $M_g$ is closed: if $f_j\to f$ and
$gf_j\to h$ in $X$, their local limits give $h=gf$. The closed graph
theorem for quasi-Banach spaces therefore makes $M_g$ bounded; see
\cite{KaltonPeckRoberts1984}. This version of the theorem does not
require local convexity.

For $w\in\mathbb D$, the evaluation $e_w(f)=f(w)$ is a continuous
linear functional on $X$. It is nonzero because $1\in X$. The identity
$e_wM_g=g(w)e_w$ gives
\[
 |g(w)|\|e_w\|_{X^*}
     =\|e_wM_g\|_{X^*}
     \le\|e_w\|_{X^*}\|M_g\|_{X\to X}.
\]
Cancel $\|e_w\|_{X^*}>0$ and take the supremum over $w$.
\end{proof}

We equip $\mathcal M(X)$ with the multiplier quasi-norm
$\|g\|_{\mathcal M(X)}=\|M_g\|_{X\to X}$.

\begin{theorem}\label{thm:F-all-weights}
Let $0<p<\infty$, $q>-2$, $s\ge0$ and $q+s>-1$. Let $X$ be
$F(p,q,s)$, or $F_0(p,q,s)$ when $s>0$. Fix an integer $n\ge1$
and an analytic self-map $\varphi$ of $\mathbb D$. The following
conditions are equivalent:
\begin{enumerate}
 \renewcommand{\labelenumi}{\textup{(\arabic{enumi})}}
 \setlength{\itemsep}{2pt}\setlength{\parskip}{0pt}
 \item For every $\phi\in X$, the operator
       $D_{\phi,\varphi}^{n}:X\to X$ is bounded.
 \item For every $\phi\in X$, the operator
       $D_{\phi,\varphi}^{n}:X\to X$ is compact.
 \item $\|\varphi\|_\infty<1$ and $\varphi\in\mathcal M(X)$.
\end{enumerate}
Under these conditions, for every fixed $0<\rho<1$ with
$\|\varphi\|_\infty\le\rho$ and every $\phi\in X$,
\[
 c\|\phi\|_X
   \le\|D_{\phi,\varphi}^{n}\|_{X\to X}
   \le C_\rho\bigl(1+\|\varphi\|_{\mathcal M(X)}\bigr)\|\phi\|_X.
\]
Here $c>0$ depends only on $X,n$, and $C_\rho$ depends only on
$X,n,\rho$, not on the two symbols. In \textup{(1)} no uniform bound
on the operator norms as $\phi$ varies is assumed.
\end{theorem}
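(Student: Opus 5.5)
The proof runs around the cycle (3)$\Rightarrow$(2)$\Rightarrow$(1)$\Rightarrow$(3). The main work is showing that (1) forces $\|\varphi\|_\infty<1$.

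\emph{(3)$\Rightarrow$(2) and the upper bound.} Given $\phi\in X$, we have $\phi\varphi=M_\varphi\phi\in X$ with $\|\phi\varphi\|_X\le\|\varphi\|_{\mathcal M(X)}\|\phi\|_X$. Since $\|\varphi\|_\infty<1$, Theorem~\ref{thm:relatively-compact-symbol}, with $X=Y$ (a large or little $F$ space), gives compactness. It also gives $\|D^n_{\phi,\varphi}\|\approx\|\phi\|_X+\|\phi\varphi\|_X$, with constants uniform for $\|\varphi\|_\infty\le\rho$. This yields the upper norm estimate, with the quasi-triangle constant absorbed into $C_\rho$. The lower bound $c\|\phi\|_X$ comes from the test function $z^n/n!$, exactly as in that theorem. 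The implication (2)$\Rightarrow$(1) is trivial.

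\emph{(1)$\Rightarrow$ multiplier.} For each $\phi\in X$, applying the bounded operator to $z^{n+1}/(n+1)!\in X$ gives $\phi\varphi\in X$. Hence $\varphi\in\mathcal M(X)$, and Lemma~\ref{lem:F-multiplier-bound} puts $\varphi$ in $H^\infty$ with $M_\varphi$ bounded. It remains to exclude $\|\varphi\|_\infty=1$.

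\emph{(1)$\Rightarrow\|\varphi\|_\infty<1$.} First I would make the bound uniform in $\phi$ via the closed graph theorem. Consider the linear map $T:\phi\mapsto D^n_{\phi,\varphi}$ from $X$ into the space of bounded operators $X\to X$. This target is a complete quasi-normed space, because $X$ is quasi-Banach. The graph of $T$ is closed: if $\phi_k\to\phi$ in $X$ and $D^n_{\phi_k,\varphi}\to S$ in operator quasi-norm, then, since norm convergence in $X$ implies local uniform convergence, $Sf=\phi f^{(n)}\circ\varphi$ pointwise. The theorem \cite{KaltonPeckRoberts1984} therefore gives $\|D^n_{\phi,\varphi}\|\le C_0\|\phi\|_X$.

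Next, fix $z\in\mathbb D$ and let $\|e_z\|$ denote the norm of point evaluation on $X$; it is positive and finite because $1\in X$. Choose $\phi\in X$ with $\|\phi\|_X\le1$ and $|\phi(z)|\ge\|e_z\|/2$. Then, for every $f$ in the unit ball,
\[
 \tfrac12\|e_z\|\,|f^{(n)}(\varphi(z))|\le|D^n_{\phi,\varphi}f(z)|\le\|e_z\|\,\|D^n_{\phi,\varphi}f\|_X\le C_0\|e_z\|.
\]
Cancelling $\|e_z\|$ and taking the supremum over $f$ with \eqref{eq:F-derivative-evaluation} gives $(1-|\varphi(z)|^2)^{-b_n}\le 2C_0$, where $b_n=(q+2)/p+n-1>0$. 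The right side is independent of $z$, so $\sup|\varphi|<1$. For a little space, the kernels realizing \eqref{eq:F-derivative-evaluation} are analytic past $\overline{\mathbb D}$, so the same estimate holds there.

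The step I expect to be the main obstacle is this uniformization: justifying the closed graph theorem in the operator quasi-norm setting, and choosing $\phi$ that nearly attains the evaluation norm at an arbitrary point. The latter needs no explicit kernel; the definition of $\|e_z\|$ suffices.
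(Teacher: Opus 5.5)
Your proof is correct. The implications (3)$\Rightarrow$(2)$\Rightarrow$(1), the multiplier membership obtained from the test $z^{n+1}/(n+1)!$, and both norm estimates are exactly as in the paper. You differ from the paper only in how (1) forces $\|\varphi\|_\infty<1$.

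\emph{The paper's route.} It fixes $f$ rather than $\phi$. Since $\phi\,(f^{(n)}\circ\varphi)\in X$ for every $\phi\in X$, the function $f^{(n)}\circ\varphi$ is itself a multiplier of $X$. Lemma~\ref{lem:F-multiplier-bound} then puts it in $H^\infty$. The closed graph theorem is applied to the map $f\mapsto f^{(n)}\circ\varphi$ from $X$ into $H^\infty$. The resulting bound $\sup_z|f^{(n)}(\varphi(z))|\le C\|f\|_X$ is combined with \eqref{eq:F-derivative-evaluation}.

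\emph{Your route.} You apply the closed graph theorem to $\phi\mapsto D^n_{\phi,\varphi}$, with values in the space of bounded operators on $X$. This needs only the standard completeness of that operator space for quasi-Banach $X$. You then replace the multiplier lemma by choosing $\phi$ nearly extremal for evaluation at $z$. This is the identity $e_wM_g=g(w)e_w$ from the proof of that lemma, used inline.

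\emph{Comparison.} Your version extracts the uniform estimate $\|D^n_{\phi,\varphi}\|\le C_0\|\phi\|_X$ directly from (1). This makes the closing remark that no uniform bound is assumed transparent. The paper keeps a scalar target ($H^\infty$) and reuses a lemma it needs anyway for $\mathcal M(X)\subset H^\infty$. The cost is one closed-graph application per multiplier, plus one more for the composition map. Both arguments end with the same evaluation estimate, and both handle the little space through the kernels analytic past $\overline{\mathbb D}$.

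One minor point: invoking Lemma~\ref{lem:F-multiplier-bound} to put $\varphi$ in $H^\infty$ is unnecessary, since $\varphi$ is already a self-map.
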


\begin{proof}
Suppose (1) holds. For each fixed $f\in X$, the function
$g=f^{(n)}\circ\varphi$ satisfies
$g\phi=D_{\phi,\varphi}^{n}f\in X$ for every $\phi\in X$.
Thus $g\in\mathcal M(X)$ and, by
Lemma~\ref{lem:F-multiplier-bound}, $g\in H^\infty$.

Consider the linear map $f\mapsto f^{(n)}\circ\varphi$ from $X$ to
$H^\infty$. Its graph is closed. Indeed, if $f_j\to f$ in $X$ and
$f_j^{(n)}\circ\varphi\to h$ in $H^\infty$, local uniform convergence
of the derivatives gives $h(z)=f^{(n)}(\varphi(z))$ for every
$z\in\mathbb D$. The closed graph theorem, now with codomain
$H^\infty$, supplies a constant $C$ depending on $X,n,\varphi$ such
that
\[
 \|f^{(n)}\circ\varphi\|_\infty\le C\|f\|_X,
                         \qquad f\in X.
\]
For the large space, the derivative-evaluation estimate is
\eqref{eq:F-derivative-evaluation}. It holds with the same comparison
constants on the little space: the upper bound follows by restriction,
and the one-kernel primitives used for the lower bound are analytic
past $\overline{\mathbb D}$, so belong to the little space by
Lemma~\ref{lem:F-little-approximation}. Consequently,
\[
 (1-|\varphi(z)|^2)^{-(q+2)/p-n+1}
   \le C'\sup_{\|f\|_X\le1}|f^{(n)}(\varphi(z))|
   \le C'',\qquad z\in\mathbb D.
\]
The exponent $(q+2)/p+n-1$ is positive. Hence
$\|\varphi\|_\infty<1$.

The polynomial $f(z)=z^{n+1}/(n+1)!$ belongs to $X$. Applying (1) to
this $f$ gives $\phi\varphi\in X$ for every $\phi\in X$.
Thus $\varphi\in\mathcal M(X)$, proving (3).

Conversely, suppose (3) holds and fix $\phi\in X$.
Then $\phi,\phi\varphi\in X$. Theorem~\ref{thm:relatively-compact-symbol},
with this source and target, proves that $D_{\phi,\varphi}^{n}$ is
compact. This proves (3)$\Rightarrow$(2); the implication
(2)$\Rightarrow$(1) is immediate.

Finally, the upper estimate in
Theorem~\ref{thm:relatively-compact-symbol} gives
\[
 \|D_{\phi,\varphi}^{n}\|_{X\to X}
   \le C_\rho\bigl(\|\phi\|_X+\|\phi\varphi\|_X\bigr)
   \le C_\rho\bigl(1+\|\varphi\|_{\mathcal M(X)}\bigr)\|\phi\|_X.
\]
Testing on $z^n/n!$, whose image is $\phi$, proves the lower estimate.
\end{proof}

One parameter range removes the multiplier-algebra notation altogether.
For the large space with $s>0$ and $(q+2)/p<1$, the identity
$\mathcal M(F(p,q,s))=F(p,q,s)$ is
\cite[Theorem~6.2(i)]{PauZhao2014}.
Here the same product estimate applies to the little space and the
$s=0$ endpoint. Indeed, if $(q+2)/p<1$, the derivative estimate and radial integration imply
$\|f\|_\infty\le C\|f\|_X$, for either large or little $X$ in
Theorem~\ref{thm:F-all-weights}. The product rule gives
$\|fg\|_X\le C\|f\|_X\|g\|_X$; for the little space, the
same bound on each defining integral preserves its vanishing property.
Hence $\mathcal M(X)=X$ in this range, since $1\in X$ also gives
the reverse inclusion. Condition \textup{(3)} of the theorem becomes
simply $\|\varphi\|_\infty<1$ and $\varphi\in X$.
For the remaining parameters we retain $\mathcal M(X)$.
Further large-space identifications are given in
\cite[Theorem~6.2(ii)--(iii)]{PauZhao2014} on the parameter ranges
specified there.

The positive-order hypothesis cannot be dropped: $F(2,-1/2,0)$
is an algebra by the preceding argument, so
$D_{\phi,\operatorname{id}}^0=M_\phi$ is bounded for every weight
in that space although $\|\operatorname{id}\|_\infty=1$.

If the weights are restricted to $\mathcal M(X)$, boundedness for
every such weight is equivalent simply to boundedness of
$D_{1,\varphi}^{n}:X\to X$. The same equivalence holds for compactness.
Indeed, $1\in\mathcal M(X)$ and
$D_{\phi,\varphi}^{n}=M_\phi D_{1,\varphi}^{n}$, with $M_\phi$
bounded. Thus the criteria in
Theorems~\ref{thm:F-boundedness}--\ref{thm:F-little-operators}, with
$\phi=1$, also answer the corresponding question with a quantifier
over multiplier weights; compare
\cite[Problem~8.6]{BarikSenPaul2026}.

\subsection{Approximation numbers and nuclearity}

We quantify Theorem~\ref{thm:relatively-compact-symbol} by Taylor
expansion on an interior disk. The approximants work for all its
source and target spaces, including the quasi-Banach cases.

For a bounded linear map $A:X\to Y$, its approximation numbers are
\[
 a_N(A)=\inf\{\|A-K\|_{X\to Y}:K:X\to Y\text{ is bounded and linear},
                   \ \operatorname{rank}K<N\},\qquad N\ge1.
\]
This definition applies also to the quasi-norms used here. We write
$X^*$ for the continuous linear functionals on $X$, with
$\|\ell\|_{X^*}=\sup_{\|f\|_X\le1}|\ell(f)|$.
For Banach spaces $X,Y$ and $0<t\le1$, an operator is called
$t$-nuclear if it has a representation
$Af=\sum_k\ell_k(f)y_k$, where $\ell_k\in X^*$, $y_k\in Y$ and
$\sum_k(\|\ell_k\|_{X^*}\|y_k\|_Y)^t<\infty$; the case $t=1$
is nuclearity. See \cite{Pietsch1980} for these operator-ideal notions.

\begin{theorem}\label{thm:interior-approximation}
Let $X,Y$ be any of the source and target spaces specified before
Theorem~\ref{thm:relatively-compact-symbol}. Let $n\ge0$ be an integer,
let $\phi\in H(\mathbb D)$, and let $\varphi$ be an analytic self-map
of $\mathbb D$ with $\|\varphi\|_\infty<1$. Suppose that
$D_{\phi,\varphi}^{n}:X\to Y$ is bounded.
\begin{enumerate}
 \renewcommand{\labelenumi}{\textup{(\arabic{enumi})}}
 \setlength{\itemsep}{3pt}\setlength{\parskip}{0pt}
 \item For $0<\rho<\eta<1$ with $\|\varphi\|_\infty\le\rho$,
 \[
  a_N(D_{\phi,\varphi}^{n})
       \le C\eta^N\|D_{\phi,\varphi}^{n}\|_{X\to Y},
                          \qquad N\ge1.
 \]
 The constant depends only on $X,Y,n,\rho,\eta$, not on the symbols
 or $N$. In particular,
 \[
  \limsup_{N\to\infty}a_N(D_{\phi,\varphi}^{n})^{1/N}
                         \le\|\varphi\|_\infty.
 \]
 \item Put $\ell_k(f)=f^{(n+k)}(0)/k!$ and
 $y_k=\phi\varphi^k$, with $\varphi^0=1$. Then $\ell_k\in X^*$,
 $y_k\in Y$, and
 \begin{equation}\label{eq:interior-rank-one}
  D_{\phi,\varphi}^{n}
       =\sum_{k=0}^{\infty}\ell_k\otimes y_k,
       \qquad (\ell_k\otimes y_k)f=\ell_k(f)y_k,
 \end{equation}
 with convergence in the operator quasi-norm. Moreover,
 \[
  \sum_{k=0}^{\infty}
        (\|\ell_k\|_{X^*}\|y_k\|_Y)^t<\infty
                         \qquad\text{for every }t>0.
 \]
 Consequently, when $X,Y$ are Banach spaces, the operator is
 $t$-nuclear for every $0<t\le1$.
\end{enumerate}
In particular, every bounded operator of this form from
$\mathcal D_{\beta_1}$ to $\mathcal D_{\beta_2}$, with
$\beta_1,\beta_2>-1$ and $\|\varphi\|_\infty<1$, belongs to all
Schatten classes $\mathcal S_t$, $t>0$.
\end{theorem}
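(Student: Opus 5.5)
The plan is to Taylor-expand $f^{(n)}$ about the origin and substitute $\varphi$. For $f\in X$ write $f^{(n)}(w)=\sum_{k\ge0}\ell_k(f)w^k$ with $\ell_k(f)=f^{(n+k)}(0)/k!$. Then formally
\[
 D_{\phi,\varphi}^nf=\sum_k\ell_k(f)\,\phi\varphi^k ,
\]
which is \eqref{eq:interior-rank-one}.

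The first step bounds the functionals. Cauchy's estimate on the circle $|w|=\eta'$, with $\rho<\eta'<1$ fixed (for instance $\eta'=\sqrt{\eta}$ if $\eta$ is allowed to be any number larger than $\rho$; we take $\eta'\in(\rho,1)$ and adjust later), combined with \eqref{eq:source-local-interior} at $R=\eta'$ and order $n$, gives
\[
 \|\ell_k\|_{X^*}\le C_{\eta'}\,\eta'^{-k}.
\]
This holds on little sources by restriction.

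The second step bounds $y_k=\phi\varphi^k$ in $Y$. For $Y=H^r$ we have $\|\phi\varphi^k\|_{H^r}\le\rho^k\|\phi\|_{H^r}$. For derivative targets we use
\[
 (\phi\varphi^k)'=\phi'\varphi^k+k\,\phi\varphi'\varphi^{k-1}
\]
together with $\phi\varphi'=(\phi\varphi)'-\phi'\varphi$. Pointwise this gives
\[
 |(\phi\varphi^k)'|\le(k+1)\rho^{k-1}\bigl(|\phi'|+|(\phi\varphi)'|\bigr),
\]
and the value at the origin satisfies $|y_k(0)|\le\rho^k|\phi(0)|$. Hence $\|y_k\|_Y\le C(k+1)\rho^{k-1}(\|\phi\|_Y+\|\phi\varphi\|_Y)$, by the same defining-integral argument as in the proof of Theorem~\ref{thm:relatively-compact-symbol}. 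Little-target membership follows from the same domination, and $y_k\in Y$ in all cases. By Theorem~\ref{thm:relatively-compact-symbol}, $\|\phi\|_Y+\|\phi\varphi\|_Y\le C_\rho\|D_{\phi,\varphi}^n\|$ (respectively $\|\phi\|_{H^r}$ in the Hardy case). Therefore
\[
 \|\ell_k\|\,\|y_k\|\le C(k+1)(\rho/\eta')^k\|D_{\phi,\varphi}^n\|,
\]
a geometric bound.

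The third step is convergence and the approximation numbers. The partial sums $\sum_{k<N}\ell_k\otimes y_k$ have rank at most $N$. The remainder $\sum_{k\ge N}$ converges in the operator quasi-norm, and its value at each $f$ agrees with $D^n_{\phi,\varphi}f$, since pointwise $\sum_k\ell_k(f)\varphi(z)^k$ converges to $f^{(n)}(\varphi(z))$ because $|\varphi|\le\rho$. This is the main obstacle in the quasi-Banach setting, because the triangle inequality fails. With quasi-triangle constant $\kappa$, the Aoki--Rolewicz theorem gives an equivalent $\tau$-norm ($0<\tau\le1$) on $Y$. Then
\[
 \Bigl\|\sum_{k\ge N}\ell_k\otimes y_k\Bigr\|^\tau\le C\sum_{k\ge N}\bigl(\|\ell_k\|\,\|y_k\|\bigr)^\tau\le C\bigl(\eta''\bigr)^{N\tau}\|D_{\phi,\varphi}^n\|^\tau
\]
for any $\eta''$ with $\rho/\eta'<\eta''$; the polynomial factor $k+1$ is absorbed. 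Choosing $\eta'$ close to $1$ makes $\rho/\eta'$ arbitrarily close to $\rho$, so any $\eta>\rho$ is attainable. This yields $a_{N+1}\le C\eta^N\|D\|$, and after reindexing and adjusting the constant, $a_N\le C\eta^N\|D\|$. Taking $N$th roots and letting $\eta\downarrow\rho=\|\varphi\|_\infty$ gives the limsup statement. The pointwise identification of the series limit with $D^n_{\phi,\varphi}f$ uses that $Y$-convergence implies local uniform convergence.

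For part (2), the $t$-summability is immediate from the geometric bound for every $t>0$. For Banach $X,Y$ this is the definition of $t$-nuclearity. For the Dirichlet statement, $\mathcal D_\beta=F(2,\beta,0)$ are Hilbert spaces up to equivalent norms. Since $a_N$ decays geometrically and the singular values equal the approximation numbers, $\sum_N a_N^t<\infty$ for all $t>0$, which gives membership in every $\mathcal S_t$.
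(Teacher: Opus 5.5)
Your proposal is correct. Its skeleton matches the paper: the Taylor expansion of $f^{(n)}$, the Cauchy bound $\|\ell_k\|_{X^*}\le C R^{-k}$ on a circle of radius $R\in(\rho/\eta,1)$, the bound $\|y_k\|_Y\le C(k+1)\rho^{k}B$ from the product rule and $\phi\varphi'=(\phi\varphi)'-\phi'\varphi$, and the polynomial tests $B\le C\|D_{\phi,\varphi}^n\|$.

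The real difference is how you control the remainder $D_{\phi,\varphi}^n-K_N$ in a quasi-Banach target.

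\begin{itemize}
\item \textbf{Your route.} You sum the rank-one pieces. This forces you to pass to an Aoki--Rolewicz $\tau$-norm on $Y$. You then have to identify the $Y$-limit of $\sum_k\ell_k(f)y_k$ with $D_{\phi,\varphi}^nf$, which you do via local uniform convergence.
\item \textbf{The paper's route.} It never sums infinitely many quasi-norms. It first records one estimate, valid for every $u\in H(\mathbb D)$:
$\|\phi(u\circ\varphi)\|_Y\le C_YB\bigl(\max_{|w|\le\rho}|u(w)|+\max_{|w|\le\rho}|u'(w)|\bigr)$.
Taking $u=w^k$ gives the bound for $\|y_k\|_Y$. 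Applying it to the Taylor remainder $u_{N,f}=f^{(n)}-\sum_{k<N}\ell_k(f)w^k$ gives $\|(D_{\phi,\varphi}^n-K_N)f\|_Y\le CB(N+1)(\rho/R)^N\|f\|_X$ directly. This works because $(D_{\phi,\varphi}^n-K_N)f=\phi(u_{N,f}\circ\varphi)$ holds exactly.
\end{itemize}

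So the paper's argument is more self-contained: it needs no renorming theorem and no separate identification of the limit. Yours isolates a more general principle: $\tau$-summability of $\|\ell_k\|_{X^*}\|y_k\|_Y$ alone forces convergence in operator quasi-norm into any quasi-Banach target.

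One small repair is needed in the limsup step. You cannot set $\rho=\|\varphi\|_\infty$ when $\varphi\equiv0$, because part (1) requires $\rho>0$. Instead choose $\rho\in(\|\varphi\|_\infty,\eta)$ and then let $\eta\downarrow\|\varphi\|_\infty$, as the paper does.
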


\begin{proof}
Fix $0<\rho<\eta<1$ as in (1), and choose a radius $R$ with
$\rho/\eta<R<1$. In particular, $\rho<R$ and $\rho/R<\eta$.
Cauchy's coefficient formula for $f^{(n)}$, together with
\eqref{eq:source-local-interior}, gives
\begin{equation}\label{eq:interior-coefficient-bound}
 |\ell_k(f)|\le R^{-k}\max_{|w|=R}|f^{(n)}(w)|
       \le C_{R,n,X}R^{-k}\|f\|_X,\qquad k\ge0.
\end{equation}
Thus $\ell_k\in X^*$ for every listed source.

For brevity put
\[
 B=\begin{cases}
       \|\phi\|_{H^r},&Y=H^r,\\
       \|\phi\|_Y+\|\phi\varphi\|_Y,&\text{otherwise}.
    \end{cases}
\]
The polynomial tests in Theorem~\ref{thm:relatively-compact-symbol}
give $B\le C_{X,n}\|D_{\phi,\varphi}^{n}\|_{X\to Y}$.
The same theorem's derivative calculation shows, for every
$u\in H(\mathbb D)$, that
\begin{equation}\label{eq:interior-target-bound}
 \|\phi(u\circ\varphi)\|_Y
     \le C_Y B\left(\max_{|w|\le\rho}|u(w)|
                          +\max_{|w|\le\rho}|u'(w)|\right).
\end{equation}
For $H^r$ only the first maximum is needed. For a derivative target
use the product rule and $\phi\varphi'=(\phi\varphi)'-\phi'\varphi$;
the same estimate preserves the defining vanishing integrals of a
little target.
Taking $u(w)=w^k$ gives $y_k\in Y$ and
\[
 \|y_k\|_Y\le C_{Y,\rho}B(k+1)\rho^k,\qquad k\ge0.
\]
Combining this with \eqref{eq:interior-coefficient-bound}, for every $t>0$,
\[
 \sum_{k=0}^{\infty}(\|\ell_k\|_{X^*}\|y_k\|_Y)^t
       \le C B^t\sum_{k=0}^{\infty}(k+1)^t(\rho/R)^{kt}<\infty.
\]

For $N\ge0$, put
\[
 K_Nf=\sum_{k=0}^{N-1}\ell_k(f)y_k,\qquad
 u_{N,f}(w)=f^{(n)}(w)-\sum_{k=0}^{N-1}\ell_k(f)w^k,
\]
with both sums empty when $N=0$. Then $K_N:X\to Y$ is bounded and
has rank at most $N$. The Taylor series for $f^{(n)}$ and
\eqref{eq:interior-coefficient-bound} give
\[
 \max_{|w|\le\rho}|u_{N,f}(w)|
       +\max_{|w|\le\rho}|u_{N,f}'(w)|
   \le C_{X,n,R,\rho}(N+1)(\rho/R)^N\|f\|_X.
\]
Indeed, after factoring out $\|f\|_X$, the first bound is a geometric
tail. For the derivative, sum
$\rho^{-1}\sum_{k\ge N}k(\rho/R)^k$.
Since $(D_{\phi,\varphi}^{n}-K_N)f=\phi(u_{N,f}\circ\varphi)$,
equation \eqref{eq:interior-target-bound} yields
\[
 \|D_{\phi,\varphi}^{n}-K_N\|_{X\to Y} \le C B(N+1)(\rho/R)^N \le
 C'\|D_{\phi,\varphi}^{n}\|_{X\to Y}\eta^N.
\]
The last step uses $\rho/R<\eta$. This proves \eqref{eq:interior-rank-one} in operator quasi-norm. Since
$\operatorname{rank}K_N\le N<N+1$, it also bounds $a_{N+1}$.
Replacing $N$ by $N-1$ and absorbing the fixed factor $\eta^{-1}$
proves (1) for every $N\ge1$.

For the limiting rate, choose any
$\eta\in(\|\varphi\|_\infty,1)$ and then
$\rho\in(\|\varphi\|_\infty,\eta)$. The estimate in (1) gives a
limit superior at most $\eta$; let $\eta$ decrease to
$\|\varphi\|_\infty$. This argument includes
$\varphi\equiv0$, when the operator has rank at most one.

When $X,Y$ are Banach spaces, the representation and summability in
(2) give $t$-nuclearity by the stated definition for each
$0<t\le1$. No approximation property of either space is required.
For the last assertion, the defining norms of $F(2,\beta_1,0)$
and $F(2,\beta_2,0)$ are Hilbert norms.
The bounds persist under the equivalent Hilbert norms on the weighted
Dirichlet spaces. For compact maps between Hilbert spaces, the
approximation numbers are the singular values; see
\cite{Simon2005}. Their geometric bound makes their $t$th powers
summable for every $t>0$, proving membership in $\mathcal S_t$.
\end{proof}

In particular, every weight in Theorem~\ref{thm:F-all-weights} gives
these geometric approximations. For $p\ge1$, the resulting operators
are $t$-nuclear for all $0<t\le1$.

\section{Schatten classes and exact norms}\label{sec:Schatten-exact}

\subsection{A trace-ideal criterion}

We now restrict the trace-ideal question to Hilbert spaces and use
the coefficient norm on $\mathcal D_\beta$ defined in
Subsection~\ref{subsec:intro-spaces}. For $\beta>-1$, its square is
comparable to $|f(0)|^2+\|f'\|_{A_\beta^2}^2$, and hence to the
squared norm of $F(2,\beta,0)$. Indeed,
\[
 \|z^k\|_{A_\gamma^2}^2
   =\frac{\Gamma(k+1)\Gamma(\gamma+1)}{\Gamma(k+\gamma+2)}
       \approx(k+1)^{-\gamma-1},\qquad \gamma>-1.
\]
\begin{lemma}\label{lem:Dirichlet-derivative-reduction}
Let $\beta\in\mathbb R$, let $m\ge0$ be an integer, and suppose
$\beta+2m>1$. Differentiation $f\mapsto f^{(m)}$ maps
$\mathcal D_\beta$ boundedly onto $A_{\beta+2m-2}^2$ and has the
bounded right inverse $J^m$, where $J$ is integration from zero and
$J^0=I$. Moreover,
\begin{equation}\label{eq:Dirichlet-higher-derivative}
 \|f^{(m)}\|_{A_{\beta+2m-2}^2}^2
 \approx\sum_{k\ge m}(k+1)^{1-\beta}|c_k|^2,
               \qquad f(z)=\sum_{k\ge0}c_kz^k,
\end{equation}
and
\[
 \sup_{\|f\|_{\mathcal D_\beta}\le1}|f^{(m)}(w)|
       \approx(1-|w|^2)^{-(\beta+2m)/2}.
\]
\end{lemma}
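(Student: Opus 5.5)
The plan is a direct coefficient computation followed by standard kernel tests. First I will prove \eqref{eq:Dirichlet-higher-derivative}. For $f=\sum c_kz^k$ one has $f^{(m)}(z)=\sum_{k\ge m}\frac{k!}{(k-m)!}c_kz^{k-m}$. The monomials are orthogonal in $A_\gamma^2$ with $\gamma=\beta+2m-2>-1$, which is exactly the hypothesis $\beta+2m>1$. The norm formula recorded just before the lemma then gives
\[
 \|f^{(m)}\|_{A_\gamma^2}^2=\sum_{k\ge m}\Bigl(\frac{k!}{(k-m)!}\Bigr)^2|c_k|^2\|z^{k-m}\|_{A_\gamma^2}^2
 \approx\sum_{k\ge m}(k+1)^{2m}(k-m+1)^{-\gamma-1}|c_k|^2 .
\]
Since $k\ge m$ and $m$ is fixed, $(k-m+1)\approx(k+1)$. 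The exponent is therefore $2m-\beta-2m+1=1-\beta$, which proves \eqref{eq:Dirichlet-higher-derivative} with constants depending only on $\beta,m$. Boundedness of $f\mapsto f^{(m)}$ into $A_\gamma^2$ follows immediately, because the right side is at most $\|f\|_{\mathcal D_\beta}^2$.

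For surjectivity and the right inverse, I take $g=\sum_k d_kz^k\in A_\gamma^2$. Then $J^mg=\sum_k d_k\frac{k!}{(k+m)!}z^{k+m}$ satisfies $(J^mg)^{(m)}=g$. Its coefficients vanish below index $m$, so its $\mathcal D_\beta$ norm equals the tail sum in \eqref{eq:Dirichlet-higher-derivative}. By that comparison this is $\approx\|g\|_{A_\gamma^2}^2$, so $J^m$ is bounded and surjectivity follows. The case $m=0$ is the identity, where $\gamma=\beta-2>-1$ makes $\mathcal D_\beta=A_{\beta-2}^2$ in the coefficient sense.

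For the point-evaluation estimate, the reproducing kernel of $\mathcal D_\beta$ is $K_w(z)=\sum_k(k+1)^{\beta-1}(\overline wz)^k$. Hence
\[
 \sup_{\|f\|\le1}|f^{(m)}(w)|^2=\sum_{k\ge m}\Bigl(\frac{k!}{(k-m)!}\Bigr)^2(k+1)^{\beta-1}|w|^{2(k-m)}.
\]
This holds exactly, by Cauchy--Schwarz in coefficient space, because $f\mapsto f^{(m)}(w)$ is the functional with coefficient vector $\frac{k!}{(k-m)!}w^{k-m}$. The coefficients are $\approx(k+1)^{2m+\beta-1}$, and $2m+\beta-1>0$. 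The standard estimate $\sum_k(k+1)^{a-1}x^k\approx(1-x)^{-a}$ for $a>0$ and $x=|w|^2\in[0,1)$ then gives $(1-|w|^2)^{-(\beta+2m)}$. The finitely many shifted terms and the factor $|w|^{-2m}$ issue near $w=0$ are harmless: a reindexing $k\mapsto k-m$ keeps comparable weights. Taking square roots finishes the proof.

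The main obstacle is bookkeeping rather than analysis. I must keep the comparison $k!/(k-m)!\approx(k+1)^m$ uniform and valid only for $k\ge m$, and I must handle the standard power-series asymptotic at small $|w|$, where both sides are bounded above and below. No earlier lemma beyond the $A_\gamma^2$ monomial norms is needed.
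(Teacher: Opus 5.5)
Your proof is correct. The coefficient computation for \eqref{eq:Dirichlet-higher-derivative}, the boundedness of $f\mapsto f^{(m)}$, and the right inverse $J^m$ match the paper's argument essentially line by line.

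The point-evaluation estimate is where your route differs. The paper does no further computation in $\mathcal D_\beta$. It uses the Bergman reproducing kernel $(\gamma+1)(1-\overline wz)^{-\gamma-2}$, so that evaluation on $A^2_\gamma$ has norm comparable to $(1-|w|^2)^{-(\gamma+2)/2}$. It then transfers this through the isomorphism just proved: the upper bound comes from composing with $f\mapsto f^{(m)}$, and the lower bound from pulling a near-extremal Bergman function back with $J^m$. Since $\gamma+2=\beta+2m$, this gives the claim.

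You instead compute the dual norm of $f\mapsto f^{(m)}(w)$ exactly in coefficient space. You then apply the series asymptotic $\sum_{j\ge0}(j+1)^{a-1}x^j\approx(1-x)^{-a}$ for $a>0$, after reindexing $j=k-m$; that reindexing also makes small $|w|$ unproblematic. Your version yields an exact series for the evaluation norm in the coefficient norm and is self-contained, but it needs the series asymptotic. That asymptotic is essentially the same fact as the closed-form Bergman kernel norm. The paper's version makes the estimate a formal corollary of the derivative isomorphism, with no additional computation.
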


\begin{proof}
Put $\gamma=\beta+2m-2>-1$. Orthogonality and the displayed monomial
norm formula give, for $k\ge m$,
$\|(z^k)^{(m)}\|_{A_\gamma^2}^2\approx
(k+1)^{2m-\gamma-1}=(k+1)^{1-\beta}$.
Summation proves \eqref{eq:Dirichlet-higher-derivative}, first for
polynomials and then for power series by monotone convergence.
Integrating a Bergman function $m$ times and assigning zero to the
first $m$ coefficients proves boundedness of the right inverse.
The Bergman evaluation norm is comparable to
$(1-|w|^2)^{-(\gamma+2)/2}$, by its reproducing kernel
$(\gamma+1)(1-\overline wz)^{-\gamma-2}$; the derivative map and its
right inverse give both bounds in the last assertion.
\end{proof}

These norm equivalences will be used for comparisons, not for exact
values of norms or singular values.

Inner products are linear in the first variable. For Hilbert spaces
$H_1,H_2$ and $0<t<\infty$, the Schatten class
$\mathcal S_t(H_1,H_2)$ consists of compact operators whose singular
values $s_N$ satisfy $\sum_Ns_N^t<\infty$, with
$\|B\|_{\mathcal S_t}^t=\sum_Ns_N(B)^t$. We use the ideal inequality
$\|UBV\|_{\mathcal S_t}\le\|U\|\|B\|_{\mathcal S_t}\|V\|$ and the
quasi-triangle inequality for this quantity; see \cite{Simon2005}.
For $0<t\le2$ and a finite matrix $(b_{ik})$, we also use
\begin{equation}\label{eq:Schatten-matrix-entries}
 \|(b_{ik})\|_{\mathcal S_t}^t\le\sum_{i,k}|b_{ik}|^t.
\end{equation}
To verify it, apply concavity of $x^{t/2}$ to the spectral measure of
$B^*B$ at each coordinate vector. This gives
$\operatorname{tr}(B^*B)^{t/2}\le\sum_k\|Be_k\|_2^t$, and each
column satisfies $\|Be_k\|_2^t\le\sum_i|b_{ik}|^t$. For $t\ge2$
the spectral-measure inequality is reversed, so
$\sum_k\|Be_k\|_2^t\le\|B\|_{\mathcal S_t}^t$ for an orthonormal
basis, or for any finite part of one.

We recall the positive-measure Bergman embedding criterion of
Luecking \cite{Luecking1987}; see also \cite{Zhu2007}. For
$\gamma>-1$, a positive Borel measure $\mu$ and $0<t<\infty$, the
embedding $I_\mu:A_\gamma^2\to L^2(\mu)$ belongs to $\mathcal S_t$
if and only if
\begin{equation}\label{eq:Bergman-trace-criterion}
 \sum_{j,l}\left(\frac{\mu(Q_{j,l})}{\delta_j^{\gamma+2}}\right)^{t/2}
      <\infty,
 \qquad
 \|I_\mu\|_{\mathcal S_t}^t
      \approx\sum_{j,l}
           \left(\frac{\mu(Q_{j,l})}{\delta_j^{\gamma+2}}\right)^{t/2}
\end{equation}
when these conditions hold. The criterion is often stated for
fixed-radius hyperbolic disks. It has the displayed form for our
partition as well: the marked points are uniformly separated, each
rectangle lies in a disk of a fixed hyperbolic radius about its marked
point, and each such disk meets only a bounded number of rectangles
of comparable size. These comparisons preserve the sums for every
positive exponent, including exponents less than one.

The sufficiency argument below uses this scalar embedding theorem.
Necessity requires separating two derivative terms in a Schatten
norm. Hilbert--Schmidt separation for sums of weighted
composition--differentiation operators was studied in
\cite{ArroussiLiuTongYang2024,BarikSenPaul2026}. We treat $t\ge2$
by the basis inequality and $0<t<2$ by a separated finite-matrix
argument.

For the next two lemmas only, enumerate the rectangles and marked
points as $Q_k,a_k,\delta_k$ instead of using the double index $(j,l)$.
Write
$d(z,w)=\frac12\log\bigl((1+|\sigma_z(w)|)/(1-|\sigma_z(w)|)\bigr)$
for hyperbolic distance.

\begin{lemma}\label{lem:Schatten-kernel-tail}
Let $A>0$, $n\ge1$, $t>0$, and let $M$ be a positive integer with
$(M-A)t>1$. Put
\[
 H_{ik}=\frac{\delta_i^{A+n-1}\delta_k^{M-A}}
                   {|1-\overline{a_k}a_i|^{M+n-1}}.
\]
Then
\begin{equation}\label{eq:Schatten-kernel-tail}
 \lim_{R\to\infty}\sup_i
       \sum_{d(a_i,a_k)>R}H_{ik}^{\,t}=0.
\end{equation}
For each $R>0$, the marked points can be divided into finitely many
sets within each of which distinct points have distance greater
than $R$.
\end{lemma}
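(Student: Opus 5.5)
The plan is to rewrite $H_{ik}$ in a Möbius-invariant form and then compare the sum with a hyperbolic area integral. Since $1-|a_i|^2\approx\delta_i$ and $1-|a_k|^2\approx\delta_k$, the identity
\[
 1-|\sigma_{a_i}(a_k)|^2=\frac{(1-|a_i|^2)(1-|a_k|^2)}{|1-\overline{a_k}a_i|^2}
\]
gives $|1-\overline{a_k}a_i|\approx(\delta_i\delta_k)^{1/2}(1-|\sigma_{a_i}(a_k)|^2)^{-1/2}$. Substituting, I expect
\[
 H_{ik}\approx(1-|\sigma_{a_i}(a_k)|^2)^{(M+n-1)/2}
  \left(\frac{\delta_k}{\delta_i}\right)^{(M-A)-(M+n-1)/2}
  \approx(1-|\sigma_{a_i}(a_k)|^2)^{(M+n-1)/2}
  \left(\frac{1-|a_k|^2}{1-|a_i|^2}\right)^{\kappa},
\]
where $\kappa=(M-n+1)/2-A$. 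It is cleaner, however, to leave $H_{ik}^t$ as a kernel in $a_k$ and replace the sum over $k$ by an integral. Each rectangle $Q_k$ has area $\approx\delta_k^2$, and on $Q_k$ the quantities $|1-\overline{w}a_i|$ and $1-|w|^2$ are comparable to their values at $a_k$. This comparison is the same one used in the proof of Lemma~\ref{lem:F-sampling-synthesis}. It yields
\[
 \sum_{d(a_i,a_k)>R}H_{ik}^t\le C\,\delta_i^{(A+n-1)t}\int_{\{d(a_i,w)>R-c\}}
  \frac{(1-|w|^2)^{(M-A)t-2}}{|1-\overline{w}a_i|^{(M+n-1)t}}\,dA(w),
\]
where $c$ bounds the hyperbolic diameter of the rectangles.

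Next I change variables by $w=\sigma_{a_i}(\zeta)$. The Jacobian is $|\sigma_{a_i}'(\zeta)|^2=(1-|a_i|^2)^2/|1-\overline{a_i}\zeta|^4$. After the standard automorphism identities, the integral becomes
\[
 C\int_{\{|\zeta|>\tanh(R-c)\}}
 (1-|\zeta|^2)^{(M-A)t-2}\,|1-\overline{a_i}\zeta|^{E}\,dA(\zeta)
\]
for some real exponent $E$. Here all powers of $1-|a_i|^2$ cancel, because the original expression is scale-consistent: the exponents satisfy $(A+n-1)t+(M-A)t-(M+n-1)t=0$. Since $(M-A)t>1$, the weight $(1-|\zeta|^2)^{(M-A)t-2}$ has exponent greater than $-1$. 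If $E\ge0$, the factor $|1-\overline{a_i}\zeta|^E$ is bounded by $2^E$, and the integral over $|\zeta|>\tanh(R-c)$ tends to $0$ uniformly in $i$. This proves \eqref{eq:Schatten-kernel-tail}.

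The main obstacle is the case $E<0$, where uniformity in $a_i$ has to come from Lemma~\ref{lem:circle-area-kernels}. I would try to avoid this case by tracking the exponent carefully; I expect $E=(M+n-1)t-2(M-A)t$ up to sign. If negative powers do occur, I would split the region at $|\zeta|$ close to $1$. I would then use the estimate \eqref{eq:weighted-area-kernel} restricted to a thin annulus. That estimate still decays as the annulus shrinks, provided the kernel exponent stays subcritical. If it is supercritical, I would instead bound the sum directly by rows, using the circle estimate of Lemma~\ref{lem:circle-area-kernels} row by row.

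For the final assertion, I use the fact that the marked points are uniformly separated and each hyperbolic ball of radius $R$ contains at most $N_R$ of them. The bound $N_R$ comes from an area count, since the rectangles have comparable hyperbolic size. The graph joining points at distance at most $R$ therefore has bounded degree. A greedy colouring with $N_R+1$ colours then splits the marked points into finitely many $R$-separated sets.
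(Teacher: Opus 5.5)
Your reduction matches the paper's proof. Replacing the sum over $k$ by an area integral over $\{d(a_i,w)>R-c\}$ and substituting $w=\sigma_{a_i}(\zeta)$ is exactly how it begins; the paper uses disjoint hyperbolic disks of radius $r_0$ about the $a_k$ instead of rectangles, which is immaterial. Your exponent is also right: the integrand becomes $(1-|\zeta|^2)^{v}|1-\overline{a_i}\zeta|^{-\lambda}$ with $v=(M-A)t-2>-1$ and $\lambda=-E=(M-2A-n+1)t$. Your colouring argument for the last assertion is the paper's.

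However, you only complete the case $E\ge0$, and that is not the case that matters. $M$ is constrained only by $(M-A)t>1$, so $E<0$ whenever $M>2A+n-1$. This is forced for small $t$, and it is the usual situation in the application in Lemma~\ref{lem:two-jet-Schatten}. No amount of exponent bookkeeping removes it. As written, the decisive case of the lemma is left unproved.

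The missing observation is that the kernel is always subcritical: $v+2-\lambda=(A+n-1)t>0$, because $A>0$ and $n\ge1$. So your ``supercritical'' branch never arises. It also could not succeed if it did: for fixed $R$ the tail integral would then be of order $(1-|a_i|^2)^{v+2-\lambda}$, which is unbounded in $i$.

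Your proposed tool also points the wrong way. The estimate \eqref{eq:weighted-area-kernel} is stated only for $\lambda>\beta+2$, and its bound $C(1-|w|^2)^{\beta+2-\lambda}$ grows at the boundary. Restricting it to a thin annulus does not give uniformity in $a_i$.

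What you need is a uniform tail bound for a subcritical kernel, which is how the paper closes the argument:
\begin{itemize}
\item Choose $0<\varepsilon<\min\{v+1,(A+n-1)t\}$.
\item On $|\zeta|>\tanh(R-c)$, pull out the factor $(1-|\zeta|^2)^{\varepsilon}\le(1-\tanh^2(R-c))^{\varepsilon}$.
\item Show that $\sup_{a\in\mathbb D}\int_{\mathbb D}(1-|\zeta|^2)^{v-\varepsilon}|1-\overline a\zeta|^{-\lambda}\,dA(\zeta)<\infty$.
\end{itemize}
For the last step, integrate in angle first. The circle estimate gives $C(1-|a|\rho)^{1-\lambda}\le C(1-\rho)^{1-\lambda}$ when $\lambda>1$, a logarithmic bound when $\lambda=1$, and a uniform bound when $\lambda<1$. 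The remaining radial integral converges because $v-\varepsilon>-1$ and $\lambda<v-\varepsilon+2$. The tail is then at most $C(1-\tanh^2(R-c))^{\varepsilon}$ uniformly in $i$. With this step added, your proof is complete.
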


\begin{proof}
The spacing of the marked points gives a fixed $r_0>0$ such that the
disks $\{w:d(w,a_k)<r_0\}$ are disjoint. On each disk,
$1-|w|^2\approx\delta_k$,
$|1-\overline wa_i|\approx|1-\overline{a_k}a_i|$, and its area is
comparable to $\delta_k^2$, with constants independent of $i,k$.
Consequently, with $b=A+n-1>0$ and $R>r_0$,
\[
 \sum_{d(a_i,a_k)>R}H_{ik}^{\,t}
 \le C(1-|a_i|^2)^{bt}
       \int_{d(a_i,w)>R-r_0}
       \frac{(1-|w|^2)^{(M-A)t-2}}
            {|1-\overline wa_i|^{(M+n-1)t}}\,dA(w).
\]
Under the substitution $w=\sigma_{a_i}(\zeta)$, the right-hand side
becomes
\[
 C\int_{|\zeta|>\tanh(R-r_0)}
    \frac{(1-|\zeta|^2)^v}{|1-\overline{a_i}\zeta|^\lambda}\,dA(\zeta),
 \qquad v=(M-A)t-2,\quad \lambda=(M-2A-n+1)t.
\]
All factors involving $1-|a_i|^2$ cancel. Here $v>-1$ and
$v+2-\lambda=bt>0$. Choose
$0<\varepsilon<\min\{v+1,bt\}$. The elementary subcritical kernel
estimate gives
\[
 \sup_{a\in\mathbb D}\int_{\mathbb D}
       \frac{(1-|\zeta|^2)^{v-\varepsilon}}
            {|1-\overline a\zeta|^\lambda}\,dA(\zeta)<\infty.
\]
For completeness, this follows by angular integration: for
$\lambda>1$, the angular integral is bounded by
$C(1-r)^{1-\lambda}$; for $\lambda=1$ it is bounded by
$C\log(e/(1-r))$; and for $\lambda<1$ it is uniformly bounded.
The remaining radial integrals converge because
$v-\varepsilon>-1$ and $\lambda<v-\varepsilon+2$.
Factoring $(1-|\zeta|^2)^\varepsilon$ out on the restricted region
now bounds the preceding tail by
$C(1-\tanh^2(R-r_0))^\varepsilon$, uniformly in $i$. This proves
\eqref{eq:Schatten-kernel-tail}.

Finally, any disk of a fixed hyperbolic radius contains at most a
fixed number of marked points. Indeed, the disjoint disks of radius
$r_0$ about these points lie in a slightly larger disk, and their
areas with respect to $(1-|z|^2)^{-2}\,dA(z)$ are equal and positive.
Join two marked points when their distance is at most $R$. The
resulting graph has uniformly bounded degree. A greedy coloring of
its countable set of vertices uses finitely many colors, and each
color class has the required separation.
\end{proof}

\begin{lemma}\label{lem:two-jet-Schatten}
Let $\alpha\in\mathbb R$, let $n\ge0$ be an integer with
$\alpha+2n>1$, and let $t>0$. Let $\tau$ be a positive
$\sigma$-finite Borel measure on $\mathbb D$, let $u,v$ be finite-valued
measurable functions, and let $\varphi$ be an analytic self-map of
$\mathbb D$. Set
\[
 \begin{aligned}
  \mathcal Lf(z)&=u(z)f^{(n)}(\varphi(z))
                  +v(z)f^{(n+1)}(\varphi(z)),\\
  d_k&=\delta_k^{-\alpha-2n}
            \int_{\varphi^{-1}(Q_k)}|u|^2\,d\tau
       +\delta_k^{-\alpha-2n-2}
            \int_{\varphi^{-1}(Q_k)}|v|^2\,d\tau.
 \end{aligned}
\]
The formula defines an operator in
$\mathcal S_t(\mathcal D_\alpha,L^2(\tau))$ if and only if
$\sum_kd_k^{t/2}<\infty$. In that case
\[
 \|\mathcal L\|_{\mathcal S_t}^t\approx\sum_kd_k^{t/2}.
\]
The constants depend only on $\alpha,n,t$ and the partition.
\end{lemma}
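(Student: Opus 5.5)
Sufficiency will go through two scalar embeddings and the ideal property; necessity will use test kernels from Lemma~\ref{lem:two-kernels} split into separated classes via Lemma~\ref{lem:Schatten-kernel-tail}.

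\emph{Sufficiency.} Write $\mathcal L=\mathcal L_0+\mathcal L_1$, with $\mathcal L_0f=u(f^{(n)}\circ\varphi)$ and $\mathcal L_1f=v(f^{(n+1)}\circ\varphi)$. By Lemma~\ref{lem:Dirichlet-derivative-reduction}, $f\mapsto f^{(n)}$ maps $\mathcal D_\alpha$ boundedly into $A_{\gamma_0}^2$ with $\gamma_0=\alpha+2n-2>-1$. Likewise $f\mapsto f^{(n+1)}$ maps into $A_{\gamma_1}^2$ with $\gamma_1=\alpha+2n$. Then $\mathcal L_0$ factors as this derivative map followed by the embedding $A_{\gamma_0}^2\to L^2(\mu_0)$. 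Here $\mu_0=(|u|^2\tau)\circ\varphi^{-1}$, and the isometry $L^2(\mu_0)\to L^2(\tau)$, $g\mapsto u\,(g\circ\varphi)/|u|$ (zero where $u=0$), finishes the factorization. The operator $\mathcal L_1$ factors the same way with $\mu_1=(|v|^2\tau)\circ\varphi^{-1}$. Luecking's criterion \eqref{eq:Bergman-trace-criterion} gives $\|I_{\mu_0}\|_{\mathcal S_t}^t\approx\sum_k(\mu_0(Q_k)\delta_k^{-\alpha-2n})^{t/2}$, and the analogous formula holds for $\mu_1$ with exponent $\alpha+2n+2$. Both sums are at most $\sum_kd_k^{t/2}$. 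The ideal inequality and the quasi-triangle inequality for $\mathcal S_t$ then give the upper bound. Where the derivative map has a nontrivial kernel (polynomials of degree $<n$), this changes nothing.

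\emph{Necessity, case $t\ge2$.} Assume $\mathcal L\in\mathcal S_t$. For $u'=0,1$ take the normalized test functions
\[
 e_{k,u'}=J^{\,n}\bigl(\delta_k^{M+u'-A}(1-\overline{a_k}z)^{-M-u'}\bigr)\cdot\text{(the }(n-1)\text{-primitive adjustment)},
\]
that is, functions whose $n$th and $(n+1)$st derivatives are the kernels $g_{k,u'}^{(n-1)},g_{k,u'}^{(n)}$ of Lemma~\ref{lem:two-kernels}, with $A=(\alpha+2n)/2-n+1$ so that the pointwise size on $Q_k$ matches $\delta_k^{-(\alpha+2n)/2}$. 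They have uniformly bounded $\mathcal D_\alpha$ norms by Lemma~\ref{lem:Dirichlet-derivative-reduction} and the Bergman kernel estimate. Restricting $\|\mathcal Le_{k,u'}\|_{L^2(\tau)}^2$ to $\varphi^{-1}(Q_k)$ and applying \eqref{eq:two-kernels} with $r=2$ gives $d_k\le C\sum_{u'}\|\mathcal Le_{k,u'}\|^2$.

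To sum over $k$, I need the $e_{k,u'}$ to behave like an orthonormal system. I will use Lemma~\ref{lem:Schatten-kernel-tail} to split the indices into finitely many $R$-separated classes. On each class, the Gram matrix of $\{e_{k,u'}\}$, computed in the Bergman model, is a small perturbation of the identity. Its off-diagonal entries are bounded by $H_{ik}$-type quantities, and \eqref{eq:Schatten-kernel-tail} makes these small, for example via a Schur test with $t=1$, once $R$ is large. So on each class the map $e_k\mapsto$ orthonormal basis is bounded with bounded inverse, and the basis inequality yields $\sum_k\|\mathcal Le_k\|^t\le C\|\mathcal L\|_{\mathcal S_t}^t$.

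\emph{Necessity, case $0<t<2$.} This is the main obstacle. For a separated class and a finite set $F$ of indices, consider the finite matrix $B_{ik}=\langle\mathcal Le_k,\chi_{\varphi^{-1}(Q_i)}\mathcal Le_i\rangle/\|\chi_{\varphi^{-1}(Q_i)}\mathcal Le_i\|$. Equivalently, compress $\mathcal L$ between the (almost orthonormal) $e_k$ and the disjointly supported normalized restrictions $\chi_{\varphi^{-1}(Q_i)}\mathcal Le_i$. The ideal property bounds $\|B\|_{\mathcal S_t}$ by $C\|\mathcal L\|_{\mathcal S_t}$.

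Next, the diagonal part $D=\operatorname{diag}(B_{ii})$ satisfies $|B_{ii}|^2\ge c\,d_i$ after choosing the better $u'$. For the off-diagonal part, Cauchy--Schwarz gives $|B_{ik}|\le(\int_{\varphi^{-1}(Q_i)}|\mathcal Le_k|^2)^{1/2}$. On $\varphi^{-1}(Q_i)$ the kernels $e_k^{(n)},e_k^{(n+1)}$ are at most $H_{ik}$ times the size of $e_i$'s kernels, so $|B_{ik}|\le C H_{ik}d_i^{1/2}$.

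By \eqref{eq:Schatten-matrix-entries}, $\|B-D\|_{\mathcal S_t}^t\le C\sum_i d_i^{t/2}\sum_{k\ne i,\,d(a_i,a_k)>R}H_{ik}^t\le\varepsilon_R\sum_i d_i^{t/2}$. The quasi-triangle inequality then gives $\sum_{i\in F}d_i^{t/2}\le C\|\mathcal L\|_{\mathcal S_t}^t+C\varepsilon_R\sum_{i\in F}d_i^{t/2}$. Since $F$ is finite, the last term is absorbed once $R$ is large; then let $F$ increase and sum over the finitely many classes.

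A technical point to watch is that the choice of $u'\in\{0,1\}$ may vary with $i$. I will handle this by splitting each class further according to that choice, which is again a finite split. I also need to choose $M$ with $(M-A)t>1$, as Lemma~\ref{lem:Schatten-kernel-tail} requires.
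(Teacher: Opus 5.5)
Your strategy is the paper's. Sufficiency uses Luecking's criterion for the two pull-back measures together with the ideal and quasi-triangle inequalities. Necessity uses the two-kernel separation of Lemma~\ref{lem:two-kernels}, the basis inequality when $t\ge2$, and, when $0<t<2$, the compression onto disjointly supported normalized restrictions over an $R$-separated class (diagonal $\gtrsim d_i^{1/2}$, off-diagonal $\lesssim d_i^{1/2}H_{ik}$, absorption via \eqref{eq:Schatten-matrix-entries}). The only structural difference is that the paper first passes to $Bh=u(h\circ\varphi)+v(h'\circ\varphi)$ on $A^2_\gamma$, $\gamma=\alpha+2n-2$, via $\mathcal L=B\partial^n$ and $B=\mathcal LJ^n$, whereas you stay on $\mathcal D_\alpha$. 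As written, though, three steps need repair.

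First, $g\mapsto u(g\circ\varphi)/|u|$ is not an isometry from $L^2(\mu_0)$ into $L^2(\tau)$. The right map is $g\mapsto u\,(g\circ\varphi)$, because $\int|u|^2|g\circ\varphi|^2\,d\tau=\int|g|^2\,d\mu_0$.

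Second, your test functions are described inconsistently: $J^n$ of a kernel has that kernel, not its $(n-1)$st derivative, as $n$th derivative. The version with $e_{k,u'}^{(n)}=g_{k,u'}^{(n-1)}$ and $A=\alpha/2+1$ is meaningless for $n=0$, which the statement includes, since Lemmas~\ref{lem:two-kernels} and \ref{lem:Schatten-kernel-tail} require order at least one. It also makes $A\le0$ when $\alpha\le-2$, outside their stated hypotheses. Use instead $e_{k,u'}=J^ng_{k,u'}$ with $A=(\alpha+2n)/2>1/2$, and apply both lemmas at order one, so that the separated combination is $xg_{k,u'}+yg_{k,u'}'$. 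This is precisely the paper's reduction, covers $n=0$, and yields the same array $H_{ik}$ you use.

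Third, the Gram-matrix perturbation step is more than you need, and its Schur estimate is not automatic. Bounding $|\langle e_i,e_k\rangle|$ through $\int_{\mathbb D}|g_{i,0}||g_{k,0}|(1-|z|^2)^\gamma\,dA$ gives only about $\max\{H_{ik},H_{ki}\}$, whose row sums diverge when $\alpha+2n\le2$. You would therefore have to exploit the cancellation in the analytic inner product. This is unnecessary: both the basis inequality and the ideal inequality use only an upper bound for the synthesis map $c\mapsto\sum_kc_kg_{k,u'}$ from $\ell^2$ into $A^2_\gamma$. Lemma~\ref{lem:discrete-estimates} at $p=q=2$ (recall $A^2_\gamma=H(2,2,(\gamma+1)/2)$) gives that bound for all indices at once, with no separation, and composing with $J^n$ transfers it to $\mathcal D_\alpha$. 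With these changes your argument coincides with the paper's proof.
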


\begin{proof}
Put $\gamma=\alpha+2n-2>-1$ and consider
$Bh=u(h\circ\varphi)+v(h'\circ\varphi)$ on $A_\gamma^2$.
Lemma~\ref{lem:Dirichlet-derivative-reduction} gives
$\mathcal L=B\partial^n$ and $B=\mathcal L J^n$, with the derivative
map and its right inverse bounded. The ideal inequality therefore
reduces both the criterion and the norm comparison to $B$.
The exponents in $d_k$ are now $\gamma+2$ and $\gamma+4$.

Suppose $\sum_kd_k^{t/2}<\infty$, and denote the weighted pull-back
measures of $|u|^2d\tau$ and $|v|^2d\tau$ by $\mu_0$ and $\mu_1$.
They are finite, since $d_k$ is bounded and
$\sum_k\delta_k^{\gamma+2}<\infty$.
By \eqref{eq:Bergman-trace-criterion}, the embeddings from
$A_\gamma^2$ into $L^2(\mu_0)$ and from $A_{\gamma+2}^2$ into
$L^2(\mu_1)$ belong to $\mathcal S_t$. The corresponding weighted
composition maps into $L^2(\tau)$ have the same singular values:
their squared norms on each function give the same quadratic forms
as the embeddings. Differentiation maps $A_\gamma^2$ boundedly into
$A_{\gamma+2}^2$, as follows from the monomial norm formula.
The ideal and quasi-triangle inequalities yield
\[
 \|B\|_{\mathcal S_t}^t\le C\sum_kd_k^{t/2}.
\]

Conversely, suppose $B\in\mathcal S_t$. Testing $1$ and $z$ gives
$u=B1\in L^2(\tau)$ and $v=Bz-u\varphi\in L^2(\tau)$, so each
$d_k$ is finite. Set $A=(\gamma+2)/2$ and fix an integer $M$ such
that $M>A+1$ and $(M-A)t>1$. For $e=0,1$, put
\[
 g_{k,e}(z)=\frac{\delta_k^{M+e-A}}
                    {(1-\overline{a_k}z)^{M+e}}.
\]
Since $H(2,2,(\gamma+1)/2)=A_\gamma^2$ with equivalent norms,
Lemma~\ref{lem:discrete-estimates} gives
\begin{equation}\label{eq:Schatten-atom-synthesis}
 \left\|\sum_kc_kg_{k,e}\right\|_{A_\gamma^2}
                      \le C\left(\sum_k|c_k|^2\right)^{1/2}
\end{equation}
for every finite sum. The bound is also uniform if $e=e_k$ varies
with $k$, by splitting the sum into its two possible values.

Let $\Pi_i$ be multiplication by $\mathbf1_{\varphi^{-1}(Q_i)}$
in $L^2(\tau)$. Lemma~\ref{lem:two-kernels}, with orders $0,1$,
gives
\begin{equation}\label{eq:Schatten-local-diagonal}
 d_i\le C\sum_{e=0}^1\|\Pi_i B g_{i,e}\|_{L^2(\tau)}^2.
\end{equation}
Use Lemma~\ref{lem:Schatten-kernel-tail} with its order equal to $1$,
so that
$H_{ik}=\delta_i^A\delta_k^{M-A}|1-\overline{a_k}a_i|^{-M}$.
For $w\in Q_i$, direct differentiation gives
\[
 \delta_i^A|g_{k,e}(w)|+
 \delta_i^{A+1}|g_{k,e}'(w)|\le C H_{ik}.
\]
Here $|1-\overline{a_k}w|\approx|1-\overline{a_k}a_i|$, and both
$\delta_i$ and $\delta_k$ are bounded by a fixed multiple of this
denominator. Consequently,
\begin{equation}\label{eq:Schatten-local-offdiagonal}
 \|\Pi_i B g_{k,e}\|_{L^2(\tau)}\le C d_i^{1/2}H_{ik}.
\end{equation}

For $t\ge2$, let $V_e:\ell^2\to A_\gamma^2$ be the bounded
synthesis map in \eqref{eq:Schatten-atom-synthesis}. If
$(\mathbf e_k)$ is the coordinate basis of $\ell^2$, the basis
inequality gives
\[
 \sum_k\|BV_e\mathbf e_k\|_{L^2(\tau)}^t
       \le\|BV_e\|_{\mathcal S_t}^t
       \le C\|B\|_{\mathcal S_t}^t.
\]
Raise \eqref{eq:Schatten-local-diagonal} to the power $t/2$, sum over
$i$, and use that $\Pi_i$ is a contraction. This proves the lower
norm bound when $t\ge2$.

Now let $0<t<2$. Fix the constants in
\eqref{eq:Schatten-local-diagonal}--\eqref{eq:Schatten-local-offdiagonal}.
Given $\varepsilon>0$, choose $R$ by
Lemma~\ref{lem:Schatten-kernel-tail} such that
$\sup_i\sum_{d(a_i,a_k)>R}H_{ik}^t<\varepsilon$, and partition the
marked points into finitely many $R$-separated sets.
Take a finite subset $F$ of one such set, omitting indices with
$d_i=0$. For each $i\in F$, choose $e_i\in\{0,1\}$ so that
\[
 \|\Pi_i B g_{i,e_i}\|_{L^2(\tau)}\ge c d_i^{1/2},
 \qquad h_i=\frac{\Pi_i B g_{i,e_i}}
                      {\|\Pi_i B g_{i,e_i}\|_{L^2(\tau)}}.
\]
The $h_i$ are orthonormal because their supports are disjoint.
Thus $Uh=(\langle h,h_i\rangle)_{i\in F}$ is a contraction, and
$V_Fc=\sum_{i\in F}c_i g_{i,e_i}$ has uniformly bounded norm by
\eqref{eq:Schatten-atom-synthesis}.
The matrix $Z=UBV_F$ satisfies
$\|Z\|_{\mathcal S_t}^t\le C\|B\|_{\mathcal S_t}^t$ and
\[
 Z_{ii}=\|\Pi_i B g_{i,e_i}\|_{L^2(\tau)}\ge c d_i^{1/2},
 \qquad |Z_{ik}|\le C d_i^{1/2}H_{ik}\quad(i\ne k).
\]
Let $Z_0$ be its diagonal part. By
\eqref{eq:Schatten-matrix-entries},
\[
 \|Z-Z_0\|_{\mathcal S_t}^t
 \le C\sum_{i\in F}d_i^{t/2}\sum_{\substack{k\in F\\k\ne i}}H_{ik}^t
 \le C\varepsilon\sum_{i\in F}d_i^{t/2}.
\]
The quasi-triangle inequality now yields
\[
 c^t\sum_{i\in F}d_i^{t/2}
 \le\|Z_0\|_{\mathcal S_t}^t
 \le C_t\|B\|_{\mathcal S_t}^t+
                         C_t'\varepsilon\sum_{i\in F}d_i^{t/2}.
\]
Choose $\varepsilon$ so that $C_t'\varepsilon<c^t/2$ and absorb
the last term. The bound is independent of the finite set $F$.
Exhaust each separated set and add over the finitely many sets to get
$\sum_i d_i^{t/2}\le C\|B\|_{\mathcal S_t}^t$.
Together with the initial derivative reduction, this proves the
criterion and both norm estimates, including $n=0$.
\end{proof}

Return to the double index $(j,l)$. Let $\beta_1\in\mathbb R$,
$\beta_2>-1$, $n\ge0$ and $\beta_1+2n>1$. For the analytic
symbols $\phi,\varphi$, set
\[
 \begin{aligned}
  \nu_0(E)&=\int_{\varphi^{-1}(E)}|\phi'(z)|^2
                         (1-|z|^2)^{\beta_2}\,dA(z),\\
  \nu_1(E)&=\int_{\varphi^{-1}(E)}|\phi(z)\varphi'(z)|^2
                         (1-|z|^2)^{\beta_2}\,dA(z),\\
  v_{j,l}&=\frac{\nu_0(Q_{j,l})}{\delta_j^{\beta_1+2n}}
             +\frac{\nu_1(Q_{j,l})}{\delta_j^{\beta_1+2n+2}}.
 \end{aligned}
\]
For $\beta_1>-1$ and $n\ge1$, these agree with the arrays in
Section~\ref{sec:F-spaces} at $(p_i,q_i,s_i)=(2,\beta_i,0)$, with
the redundant local parameter suppressed. The coefficient reduction
extends their use to $\beta_1+2n>1$. No preliminary measure
condition is imposed.

\begin{theorem}\label{thm:Dirichlet-Schatten}
Let $\beta_1\in\mathbb R$, $\beta_2>-1$, and let $n\ge0$ be
an integer such that $\beta_1+2n>1$. Let $0<t<\infty$,
$\phi\in H(\mathbb D)$, and let $\varphi$ be an analytic self-map of
$\mathbb D$. The following conditions are equivalent:
\begin{enumerate}
 \renewcommand{\labelenumi}{\textup{(\arabic{enumi})}}
 \setlength{\itemsep}{2pt}\setlength{\parskip}{0pt}
 \item $D_{\phi,\varphi}^{n}:\mathcal D_{\beta_1}\to
                  \mathcal D_{\beta_2}$ belongs to $\mathcal S_t$.
 \item $\displaystyle\sum_{j,l}v_{j,l}^{t/2}<\infty$.
 \item For one, equivalently every, fixed $0<\eta<1$,
 \[
  \int_{\mathbb D}\left(
   \frac{\nu_0(\Delta(w,\eta))}{(1-|w|^2)^{\beta_1+2n}}
   +\frac{\nu_1(\Delta(w,\eta))}{(1-|w|^2)^{\beta_1+2n+2}}
       \right)^{t/2}\frac{dA(w)}{(1-|w|^2)^2}<\infty.
 \]
\end{enumerate}
When these conditions hold,
\begin{equation}\label{eq:Dirichlet-Schatten-norm}
 \|D_{\phi,\varphi}^{n}\|_{\mathcal S_t}^t
   \approx
   \frac{|\phi(0)|^t}
        {(1-|\varphi(0)|^2)^{t(\beta_1+2n)/2}}
           +\sum_{j,l}v_{j,l}^{t/2}.
\end{equation}
In particular, Hilbert--Schmidt membership is equivalent to
\[
 \int_{\mathbb D}\left(
  \frac{|\phi'(z)|^2}{(1-|\varphi(z)|^2)^{\beta_1+2n}}
  +\frac{|\phi(z)\varphi'(z)|^2}
        {(1-|\varphi(z)|^2)^{\beta_1+2n+2}}
       \right)(1-|z|^2)^{\beta_2}\,dA(z)<\infty.
\]
The sum in \eqref{eq:Dirichlet-Schatten-norm} may also be replaced
by the integral in \textup{(3)}. For $t=2$ it may be replaced by
the preceding direct symbol integral.
All comparison constants depend only on $\beta_1,\beta_2,n,t$ and
the fixed partition (and on $\eta$ for the disk-integral version).
\end{theorem}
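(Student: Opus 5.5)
The plan is to reduce the target side to an $L^2$ space, so that Lemma~\ref{lem:two-jet-Schatten} applies directly. For $\beta_2>-1$, the coefficient norm on $\mathcal D_{\beta_2}$ is comparable to $|g(0)|^2+\|g'\|_{A^2_{\beta_2}}^2$. I would therefore let $T:\mathcal D_{\beta_2}\to\mathbb C\oplus L^2((1-|z|^2)^{\beta_2}dA)$ be $Tg=(g(0),g')$. This map is bounded and bounded below onto its range, with a bounded left inverse on that range. The ideal inequality then gives
\[
\|D^n_{\phi,\varphi}\|_{\mathcal S_t}\approx\|TD^n_{\phi,\varphi}\|_{\mathcal S_t}.
\]
The operator $TD^n_{\phi,\varphi}$ splits into two pieces. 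The first is the rank-one scalar part $f\mapsto\phi(0)f^{(n)}(\varphi(0))$. The second is
\[
\mathcal Lf=\phi'\,(f^{(n)}\circ\varphi)+\phi\varphi'\,(f^{(n+1)}\circ\varphi)
\]
into $L^2(\tau)$, where $d\tau=(1-|z|^2)^{\beta_2}dA$. This $\mathcal L$ is exactly the operator of Lemma~\ref{lem:two-jet-Schatten} with $\alpha=\beta_1$, $u=\phi'$ and $v=\phi\varphi'$, and the numbers $d_k$ there are precisely $v_{j,l}$. That lemma gives $\mathcal L\in\mathcal S_t$ if and only if $\sum v_{j,l}^{t/2}<\infty$, with comparable quasi-norms.

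The rank-one part has Schatten norm equal to $|\phi(0)|$ times the norm of the functional $f\mapsto f^{(n)}(\varphi(0))$ on $\mathcal D_{\beta_1}$. By the last assertion of Lemma~\ref{lem:Dirichlet-derivative-reduction} with $m=n$ (valid since $\beta_1+2n>1$), this is comparable to $|\phi(0)|(1-|\varphi(0)|^2)^{-(\beta_1+2n)/2}$. This scalar term is always finite, so (1)$\iff$(2) follows. The upper bound in \eqref{eq:Dirichlet-Schatten-norm} comes from the quasi-triangle inequality. For the lower bound, both pieces are dominated by $\|TD^n_{\phi,\varphi}\|_{\mathcal S_t}$: composing with the two coordinate projections, each of norm one, bounds each piece separately. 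So no separation issue arises here, because the two-term separation is already inside Lemma~\ref{lem:two-jet-Schatten}.

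For (2)$\iff$(3), I would use the rectangle--disk comparison described after Corollary~\ref{cor:Hardy-automatic-compactness}, applied to $\nu_0$ and $\nu_1$. Each $\Delta(w,\eta)$ meets boundedly many rectangles of comparable size, so the integrand at $w\in Q_{j,l}$ is bounded by a constant times the sum of $v_{j',l'}$ over neighbouring rectangles. Integrating over $Q_{j,l}$, whose measure under $dA/(1-|w|^2)^2$ is about $1$, and using $(\sum x_i)^{t/2}\le C\sum x_i^{t/2}$ for boundedly many terms, gives one direction. Conversely, each $Q_{j,l}$ is covered by a fixed number of disks $\Delta(w_i,\eta/2)$. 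For $w$ hyperbolically close to $w_i$, $\Delta(w,\eta)\supset\Delta(w_i,\eta/2)$, and this set of $w$ has invariant area bounded below; this bounds $v_{j,l}^{t/2}$ by the integral over a hyperbolic neighbourhood. Bounded overlap finishes the direction. Independence of $\eta$ follows because every $\eta$ is compared with the same sum.

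For $t=2$, the sum $\sum v_{j,l}$ becomes an integral. Since $1-|w|^2\approx\delta_j$ on $Q_{j,l}$, we get
\[
\sum_{j,l}v_{j,l}\approx\int_{\mathbb D}\Bigl(\frac{|\phi'|^2}{(1-|\varphi|^2)^{\beta_1+2n}}+\frac{|\phi\varphi'|^2}{(1-|\varphi|^2)^{\beta_1+2n+2}}\Bigr)d\tau.
\]
The step I expect to need the most care is the norm equivalence after the reduction $T$. One has to check that $T$ has a bounded left inverse on its range, so the ideal inequality goes both ways, and that in the necessity direction the finiteness hypotheses of Lemma~\ref{lem:two-jet-Schatten} (namely $u,v\in L^2(\tau)$) come out of $\mathcal S_t$-membership. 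That lemma's converse already handles this by testing on $1$ and $z$, after composing with $J^n$.
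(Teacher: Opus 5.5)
Your proposal is correct and is essentially the paper's proof. The paper writes $D_{\phi,\varphi}^{n}f=\ell(f)\,1+JBf$, where $\ell(f)=\phi(0)f^{(n)}(\varphi(0))$ and $B$ is your $\mathcal L$. It applies Lemma~\ref{lem:two-jet-Schatten} with $\alpha=\beta_1$, $u=\phi'$, $v=\phi\varphi'$ and $d\tau=(1-|z|^2)^{\beta_2}\,dA$, and gets the reverse bounds by composing with evaluation at zero and with differentiation, which is exactly what your map $T$ packages. It then uses the same rectangle--disk comparison for (2)$\Leftrightarrow$(3) and the same summation over the partition for $t=2$.
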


\begin{proof}
Write
\[
 Bf=\phi'(f^{(n)}\circ\varphi)
          +\phi\varphi'(f^{(n+1)}\circ\varphi),
 \qquad \ell(f)=\phi(0)f^{(n)}(\varphi(0)).
\]
Lemma~\ref{lem:two-jet-Schatten}, with
$\alpha=\beta_1$, $u=\phi'$, $v=\phi\varphi'$ and
$d\tau=(1-|z|^2)^{\beta_2}\,dA$, characterizes $B$ as a map from
$\mathcal D_{\beta_1}$ to $L^2(\tau)$. Whenever it is bounded its
range consists of analytic functions, so it takes values in
$A_{\beta_2}^2$ and has the same singular values with this codomain.
The evaluation estimate in
Lemma~\ref{lem:Dirichlet-derivative-reduction} gives
\[
 \|\ell\|_{\mathcal D_{\beta_1}^*}
       \approx\frac{|\phi(0)|}{(1-|\varphi(0)|^2)^{(\beta_1+2n)/2}}.
\]
In particular, this functional is always bounded for fixed analytic
symbols.

The maps $h\mapsto h'$ from $\mathcal D_{\beta_2}$ to
$A_{\beta_2}^2$ and $Jg(z)=\int_0^zg(\zeta)\,d\zeta$ in the
opposite direction are bounded. Evaluation at zero is bounded as
well, and $\|1\|_{\mathcal D_{\beta_2}}=1$. The identity
\[
 D_{\phi,\varphi}^{n}f=\ell(f)\,1+JBf
\]
therefore proves, by the ideal and quasi-triangle inequalities,
\[
 \|D_{\phi,\varphi}^{n}\|_{\mathcal S_t}^t
       \approx\|\ell\|_{\mathcal D_{\beta_1}^*}^{\,t}
                     +\|B\|_{\mathcal S_t}^t
\]
whenever the terms on either side are finite. For the reverse
inequalities, compose $D_{\phi,\varphi}^{n}$ with evaluation at zero
and with differentiation. This proves \textup{(1)}$\Longleftrightarrow$\textup{(2)}, and
Lemma~\ref{lem:two-jet-Schatten} gives
\eqref{eq:Dirichlet-Schatten-norm}.

To compare (2) and (3), a disk $\Delta(w,\eta)$ with $w\in Q_i$
meets only a bounded number of rectangles $Q_k$ with comparable
boundary distances. Thus its two normalized masses are bounded by
$C\sum_{k\in N(i)}v_k$, for a uniformly bounded neighbor collection.
The invariant area of $Q_i$ is comparable to one, so integration and
the inequality for a positive power of a finite sum bound (3) by
$C\sum_kv_k^{t/2}$. Conversely, subdivide each $Q_k$ into a fixed
finite number of sets with pseudohyperbolic diameter less than
$\eta/4$. For each such set there is a disk of centers, of fixed
positive invariant area, whose $\eta$-disks contain that set.
Its two normalized masses therefore bound from below the corresponding
partial contributions to $v_k$. Summing those finitely many
contributions and using bounded overlap of the center disks gives
$\sum_kv_k^{t/2}$ bounded by (3). Both arguments work for every
$t>0$, using constants for a finite sum rather than a triangle
inequality that would fail at a small exponent.

For $t=2$, the sequence sum is $\sum_{j,l}v_{j,l}$. On
$Q_{j,l}$ one has $1-|w|^2\approx\delta_j$. Summing over the
partition turns this sum into the two integrals in the statement,
up to fixed constants. The definition of the pull-back measures
then gives the direct symbol integral in the statement. This proves
the Hilbert--Schmidt assertion as well.
\end{proof}

The scalar term in \eqref{eq:Dirichlet-Schatten-norm} cannot be
omitted from a norm estimate. For example, if $\phi$ and $\varphi$
are nonzero constants, both measures vanish but the operator is
nonzero and has rank one.

For $\beta_2>1$, there is also a direct scalar route to the entire
Schatten criterion. The target $\mathcal D_{\beta_2}$ is
$A_{\beta_2-2}^2$ with an equivalent Hilbert norm, and
Lemma~\ref{lem:Dirichlet-derivative-reduction} reduces the source to
$A_{\beta_1+2n-2}^2$. Define
\[
 \mu(E)=\int_{\varphi^{-1}(E)}|\phi(z)|^2
                          (1-|z|^2)^{\beta_2-2}\,dA(z).
\]
With the Bergman integral norms on the reduced spaces, the weighted
composition map and $I_\mu:A_{\beta_1+2n-2}^2\to L^2(\mu)$ have
the same quadratic form $B^*B$, hence the same singular values.
The derivative map, its bounded right inverse and the equivalent target
norm transfer Schatten membership and quasi-norm estimates in both
directions; they do not assert equality of singular values in the
original coefficient norms.
Consequently \cite{Luecking1987} gives the equivalent test
$\sum_{j,l}[\mu(Q_{j,l})/\delta_j^{\beta_1+2n}]^{t/2}<\infty$.

When $\beta_2=1$, the target is $H^2$ and there is a boundary
version of the same reduction. Assume $\phi\not\equiv0$, and put
$\gamma=\beta_1+2n-2>-1$.
The operator reduces, with a bounded derivative map and its right
inverse, to $Bh=\phi(h\circ\varphi):A_\gamma^2\to H^2$.
Since $A_\gamma^2=H(2,2,(\gamma+1)/2)$ with equivalent norms,
Theorem~\ref{thm:mixed-boundary} shows that boundedness forces
$\phi\in H^2$ and $|\varphi^*|<1$ almost everywhere.
For its boundary measure $\mu_\partial$, one has
$\|Bh\|_{H^2}^2=\int|h|^2\,d\mu_\partial$.
Thus $B$ and the corresponding Bergman embedding have the same
singular values, and \cite{Luecking1987} gives the equivalent
Schatten condition
$\sum_{j,l}[\mu_\partial(Q_{j,l})/\delta_j^{\beta_1+2n}]^{t/2}<\infty$,
together with those two boundary hypotheses.
Conversely, these hypotheses and the scalar condition give a bounded
$B$ by Theorem~\ref{thm:mixed-boundary}; the quadratic-form identity
then gives $B\in\mathcal S_t$ and hence
$D_{\phi,\varphi}^n\in\mathcal S_t$.
The zero weight is immediate.

\subsection{Monomial weights and linear symbols}

We now use the coefficient Hilbert norms exactly, for arbitrary real
Dirichlet indices. The monomials have orthogonal images, which gives
exact formulas. The falling factorial is written as $k!/(k-n)!$.

\begin{theorem}\label{thm:monomial-exact}
Let $\beta_1,\beta_2\in\mathbb R$, let $m,n\ge0$ be integers, and
let $\rho\in\mathbb C$ with $|\rho|\le1$. Take
$\phi(z)=z^m$ and $\varphi(z)=\rho z$.

\textup{(a)} Suppose $0<|\rho|\le1$. For $k\ge n$, put
\begin{equation}\label{eq:monomial-weights}
 \sigma_k=\frac{k!}{(k-n)!}|\rho|^{k-n}
        (k+1)^{(\beta_1-1)/2}
        (k-n+m+1)^{(1-\beta_2)/2}.
\end{equation}
Then $D_{z^m,\rho z}^{n}:\mathcal D_{\beta_1}\to
\mathcal D_{\beta_2}$ is bounded if and only if
$\sup_{k\ge n}\sigma_k<\infty$. Under this condition,
\[
 \|D_{z^m,\rho z}^{n}\|=\sup_{k\ge n}\sigma_k,
 \qquad
 \|D_{z^m,\rho z}^{n}\|_e=\limsup_{k\to\infty}\sigma_k.
\]
It is compact if and only if $\sigma_k\to0$. In the compact case,
its nonzero singular values are the nonincreasing rearrangement of
$(\sigma_k)_{k\ge n}$. For every $t>0$,
\[
 D_{z^m,\rho z}^{n}\in\mathcal S_t
       \iff\sum_{k=n}^{\infty}\sigma_k^t<\infty,
 \qquad
 \|D_{z^m,\rho z}^{n}\|_{\mathcal S_t}^t
       =\sum_{k=n}^{\infty}\sigma_k^t
\]
when the sum is finite.

\textup{(b)} If $0<|\rho|<1$, the operator belongs to every
$\mathcal S_t$, $t>0$, and
\[
 \lim_{N\to\infty}a_N(D_{z^m,\rho z}^{n})^{1/N}=|\rho|.
\]
If $|\rho|=1$, the exact parameter conditions are
\begin{enumerate}
 \renewcommand{\labelenumi}{\textup{(\arabic{enumi})}}
 \setlength{\itemsep}{2pt}\setlength{\parskip}{0pt}
 \item boundedness $\iff\beta_2-\beta_1\ge2n$;
 \item compactness $\iff\beta_2-\beta_1>2n$;
 \item membership in $\mathcal S_t$
          $\iff\beta_2-\beta_1>2n+2/t$.
\end{enumerate}
At $\beta_2-\beta_1=2n$, the essential norm is exactly $1$.

\textup{(c)} If $\rho=0$, the operator has rank one, with its only
nonzero singular value equal to
\[
 n!(n+1)^{(\beta_1-1)/2}(m+1)^{(1-\beta_2)/2}.
\]
In particular it belongs to every Schatten class. All assertions
include $n=0$, with $0!=1$.
\end{theorem}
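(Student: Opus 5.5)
The plan is to diagonalize the operator in the monomial orthonormal bases. With the coefficient norm, $e_k=(k+1)^{(\beta_1-1)/2}z^k$, $k\ge0$, is an orthonormal basis of $\mathcal D_{\beta_1}$, and $\varepsilon_i=(i+1)^{(\beta_2-1)/2}z^i$ is one for $\mathcal D_{\beta_2}$. For $k<n$ one has $D^n_{z^m,\rho z}e_k=0$. For $k\ge n$,
\[
 D^n_{z^m,\rho z}z^k=\frac{k!}{(k-n)!}\rho^{k-n}z^{k-n+m}.
\]
Hence
\[
 D^n_{z^m,\rho z}e_k=\frac{k!}{(k-n)!}\rho^{k-n}(k+1)^{(\beta_1-1)/2}(k-n+m+1)^{(1-\beta_2)/2}\varepsilon_{k-n+m},
\]
whose modulus of coefficient is exactly $\sigma_k$ from \eqref{eq:monomial-weights}. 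The map $k\mapsto k-n+m$ is injective, so the operator is a weighted shift: it sends an orthonormal family to pairwise orthogonal vectors of lengths $\sigma_k$, up to unimodular phases. Write it as $A=UD$, where $D$ is the diagonal operator with entries $\sigma_k$ on $\overline{\operatorname{span}}\{e_k:k\ge n\}$, extended by $0$ on $e_0,\dots,e_{n-1}$, and $U$ is a partial isometry carrying $e_k$ to a unimodular multiple of $\varepsilon_{k-n+m}$. All of (a) then follows from standard facts about diagonal operators, because $A^*A=D^2$:
\begin{itemize}
 \item The operator is bounded iff $\sup\sigma_k<\infty$, and then its norm equals this supremum.
 \item It is compact iff $\sigma_k\to0$.
 \item Its nonzero singular values are the decreasing rearrangement of $(\sigma_k)$, which gives the exact $\mathcal S_t$ norm.
\end{itemize}
For the essential norm $\limsup\sigma_k$:
\begin{itemize}
 \item The upper bound comes from the finite-rank truncations $A P_N$, where $P_N$ projects onto $\operatorname{span}\{e_k:k\le N\}$. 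These satisfy $\|A-AP_N\|=\sup_{k>N}\sigma_k$.
 \item The lower bound takes $e_k$ along a subsequence realizing the limsup. These vectors tend weakly to zero, so for compact $K$ we get $\|(A-K)e_k\|\ge\|Ae_k\|-\|Ke_k\|\to\limsup\sigma_k$.
\end{itemize}
In Hilbert spaces weak nullness is available, so this step is simpler than Lemma~\ref{lem:essential-test}.

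Part (b) is asymptotics of $\sigma_k$. Since $k!/(k-n)!\approx k^n$ and $(k-n+m+1)\approx k$, we have
\[
 \sigma_k\approx|\rho|^{k}k^{\,n+(\beta_1-\beta_2)/2}.
\]
For $|\rho|=1$ this is a pure power $k^{n-(\beta_2-\beta_1)/2}$:
\begin{itemize}
 \item Boundedness holds iff the exponent is $\le0$.
 \item Compactness holds iff the exponent is $<0$.
 \item $\sum\sigma_k^t<\infty$ iff $t((\beta_2-\beta_1)/2-n)>1$, i.e. $\beta_2-\beta_1>2n+2/t$.
\end{itemize}
At $\beta_2-\beta_1=2n$ the limit $\sigma_k\to1$ must be checked exactly, not just up to constants. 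Writing
\[
 \sigma_k=\frac{k!}{(k-n)!}(k+1)^{(\beta_1-1)/2}(k-n+m+1)^{(1-\beta_1)/2-n},
\]
each factor ratio tends to $1$, because $k!/(k-n)!\sim k^n$ and the bases $k+1$ and $k-n+m+1$ are asymptotically equal. So the essential norm is $\limsup\sigma_k=1$.

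For $0<|\rho|<1$, the sequence $\sigma_k$ decays geometrically, so every $\mathcal S_t$ class contains the operator.

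The step needing the most care is the exact root asymptotic $a_N^{1/N}\to|\rho|$. In Hilbert spaces $a_N=s_N$, the $N$th term of the nonincreasing rearrangement. Because $\sigma_k^{1/k}\to|\rho|$ but $\sigma_k$ need not be monotone initially, I would argue as follows:
\begin{itemize}
 \item For $\epsilon>0$, only finitely many $k$ have $\sigma_k>(|\rho|+\epsilon)^k$. So the number of $\sigma_k$ exceeding $(|\rho|+\epsilon)^N$ is at most $N+O(1)$, giving $s_{N+C}\le(|\rho|+\epsilon)^N$.
 \item Conversely, $\sigma_k\ge(|\rho|-\epsilon)^k$ for large $k$. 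So at least $N-C$ indices have $\sigma_k\ge(|\rho|-\epsilon)^N$, giving $s_{N-C}\ge(|\rho|-\epsilon)^N$.
\end{itemize}
Taking $N$th roots finishes (b).

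Part (c) is immediate from the same computation. When $\rho=0$, only $k=n$ survives (with $\rho^0=1$), giving the single value $\sigma_n=n!(n+1)^{(\beta_1-1)/2}(m+1)^{(1-\beta_2)/2}$. The case $n=0$ is covered by the same formulas throughout.
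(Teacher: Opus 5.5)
Your proposal is correct and follows essentially the same route as the paper. Both arguments diagonalize the operator in the orthonormal monomial bases, read off the exact norm and singular values from the diagonal operator $A^*A$, obtain the essential norm from finite-column truncations together with weak nullness of $e_k^{(\beta_1)}$, and finish with the asymptotics $\sigma_k\sim|\rho|^{k-n}k^{n+(\beta_1-\beta_2)/2}$. The only small difference is the root asymptotics of $a_N$: you use a counting argument, whereas the paper notes that $\sigma_{k+1}/\sigma_k\to|\rho|<1$ makes $(\sigma_k)$ eventually decreasing, so that $a_N=\sigma_{n+N-1}$ for all large $N$; both arguments are valid.
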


\begin{proof}
The functions $e_k^{(\beta)}(z)=(k+1)^{(\beta-1)/2}z^k$ form an
orthonormal basis of $\mathcal D_\beta$. For $k<n$ the image of
$e_k^{(\beta_1)}$ is zero, whereas for $k\ge n$ and $\rho\ne0$,
\[
 D_{z^m,\rho z}^{n}e_k^{(\beta_1)}
   =\frac{k!}{(k-n)!}\rho^{k-n}(k+1)^{(\beta_1-1)/2}
       (k-n+m+1)^{(1-\beta_2)/2}e_{k-n+m}^{(\beta_2)}.
\]
The target basis indices are distinct. Orthogonality therefore gives,
for every finite sum $f=\sum_kc_ke_k^{(\beta_1)}$,
\[
 \|D_{z^m,\rho z}^{n}f\|_{\mathcal D_{\beta_2}}^2
             =\sum_{k\ge n}|c_k|^2\sigma_k^2.
\]
This proves the boundedness criterion and the exact norm. When the
sequence is bounded, density of polynomials yields the bounded
extension, which agrees with the analytic formula by local uniform
convergence.

Keeping the first finitely many nonzero columns gives a finite-rank
approximation, and the norm of the discarded part is the supremum
of the remaining $\sigma_k$. Hence the essential norm is at most
$\limsup_k\sigma_k$. Conversely, for any compact operator $K$,
$\|Ke_k^{(\beta_1)}\|\to0$, since an orthonormal sequence is weakly
null. Thus
\[
 \|D_{z^m,\rho z}^{n}-K\|
   \ge\limsup_{k\to\infty}
         \|(D_{z^m,\rho z}^{n}-K)e_k^{(\beta_1)}\|
   \ge\limsup_{k\to\infty}\sigma_k.
\]
Taking the infimum over $K$ proves the asserted essential norm and
compactness criterion. In the compact case, the positive operator
$(D_{z^m,\rho z}^{n})^*D_{z^m,\rho z}^{n}$ is diagonal in the
source basis, with diagonal entries $0$ for $k<n$ and $\sigma_k^2$
for $k\ge n$. Its eigenvalues give the singular-value and Schatten
formulas in (a).

Put $\gamma=n+(\beta_1-\beta_2)/2$. Directly from
\eqref{eq:monomial-weights},
\[
 \frac{\sigma_k}{|\rho|^{k-n}k^\gamma}\longrightarrow1
                                    \qquad(k\to\infty).
\]
For $|\rho|=1$, this gives all three conditions in (b), since
$\sum_k k^{t\gamma}<\infty$ if and only if $t\gamma<-1$. If $\gamma=0$, then $\sigma_k\to1$,
which proves the exact essential norm at the boundary.

If $0<|\rho|<1$, exponential decay makes
$\sum_k\sigma_k^t$ finite for every $t>0$. Moreover,
$\sigma_{k+1}/\sigma_k\to|\rho|<1$, so the sequence is eventually
strictly decreasing. It is positive and tends to zero; consequently,
a sufficiently late tail lies below every earlier term. Its
nonincreasing rearrangement therefore satisfies
$a_N(D_{z^m,\rho z}^{n})=\sigma_{n+N-1}$ for all sufficiently large
$N$. Taking $N$th roots proves the limit in (b).

Finally, if $\rho=0$, the formula is simply
$D_{z^m,0}^{n}f=f^{(n)}(0)z^m$. The functional $f\mapsto f^{(n)}(0)$
has norm $n!(n+1)^{(\beta_1-1)/2}$ on the source, and $z^m$ has
norm $(m+1)^{(1-\beta_2)/2}$ in the target. Their product is the
only nonzero singular value of this rank-one operator. This proves
(c) without assigning a value to $0^0$.
\end{proof}

For $\beta_1,\beta_2>-1$ and $0<|\rho|<1$, part (b) also shows
that the exponential rate in Theorem~\ref{thm:interior-approximation}
is sharp. At $|\rho|=1$ and $\beta_2=\beta_1+2n$, the operator is
bounded and noncompact, although it is weakly compact as a map
between Hilbert spaces. Thus weak compactness cannot replace
compactness in a statement covering all the source and target
parameters of this paper.

\begin{samepage}
\vskip.05in
\noindent\textbf{Conflicts of interest.} The authors declare no conflict of interest.

\vskip.05in
\noindent\textbf{Data availability statement.} Data sharing is not applicable to
this article as no datasets were generated or analysed during the current study.
\end{samepage}

\end{document}